\newcommand{\CASE}[1]{{\vspace{0.3cm}\noindent\sc #1.}}
\newtheorem{Theorem}{Theorem}[section]
\newtheorem{Proposition}[Theorem]{Proposition}
\newtheorem{Lemma}[Theorem]{Lemma}
\newtheorem{Corollary}[Theorem]{Corollary}
\newtheorem{Example}[Theorem]{Example}
\numberwithin{equation}{section}
\theoremstyle{definition}
\newtheorem{Definition}[Theorem]{Definition}
\newtheorem{Remark}[Theorem]{Remark}
\newtheorem{Setting}[Theorem]{Setting}
\newcommand{\PpC}{{\rm P}p{\rm C}}
\newcommand{\Q}{{\mathbb{Q}}}
\newcommand{\Zge}{{\mathbb{Z}_{\geq 0}}}
\newcommand{\ch}{{\rm char}}
\newcommand{\PSC}{{{\rm P}S{\rm C}}}
\newcommand{\PSCL}{{{\rm P}S{\rm CL}}}
\newcommand{\PSCC}{{{\rm P}S{\rm CC}}}
\newcommand{\PStauCL}{{{\rm P}S^\tau{\rm CL}}}
\newcommand{\PStauCC}{{{\rm P}S^\tau{\rm CC}}}
\newcommand{\PmcSC}{{{\rm P}\mcS{\rm CL}}}
\newcommand{\PmcSCC}{{{\rm P}\mcS{\rm CC}}}
\newcommand{\PCC}{{\rm PCC}}
\newcommand{\PRC}{{\rm PRC}}
\newcommand{\KtotS}{{K_{{\rm tot},S}}}
\newcommand{\notdivides}{\mbox{$\not|$}}
\newcommand{\smallnotdivides}{{\not\;|}}
\newcommand{\mf}{\mathfrak}
\newcommand{\mc}{\mathcal}
\newcommand{\mfp}{{\mf p}}
\newcommand{\mfP}{{\mf P}}
\newcommand{\mfQ}{{\mf Q}}
\newcommand{\mcS}{{\mathcal{S}}}
\newcommand{\mcP}{{\mathcal{P}}}
\newcommand{\mcO}{{\mathcal{O}}}
\newcommand{\mcL}{{\mathcal{L}}}
\newcommand{\mcLr}{{\mcL_{\rm ring}}}
\newcommand{\tp}{{\rm tp}}
\newcommand{\CC}{{\rm CC}}
\newcommand{\mfpinS}{{\mfp\in S}}
\def\hefresh{
   \def\hefreshD{\mathop{\raise1.5pt\hbox{${\smallsetminus}$}}}
   \def\hefreshS{\mathop{\raise0.85pt\hbox{$\scriptstyle\smallsetminus$}}}
   \mathchoice{\hefreshD}{\hefreshD}{\hefreshS}{\hefreshS}}
\renewcommand{\setminus}{\hefresh}
\def\dotcup{
 \def\dotcupD{\mathbin{\mathop{\smash\cup}\limits^\cdot}}
 \def\dotcupS{\mathbin{\mathop{\smash\cup}\limits^\cdot}}
 \mathchoice{\dotcupD}{\dotcupD}{\dotcupS}{}}
\def\dotunion{
\def\dotunionD{\bigcup\kern-10.5pt\cdot\kern5pt}
\def\dotunionT{\bigcup\kern-8.5pt\cdot\kern3.5pt}
\mathop{\mathchoice{\dotunionD}{\dotunionT}{}{}}}
\renewcommand{\biguplus}{\dotunion}
\renewcommand{\uplus}{\dotcup}
\newcommand{\bfalpha}{\mbox{\boldmath$\alpha$}}
\newcommand{\axS}{{\rm(\hyperref[S1]{S1})}}
\newcommand{\axSS}{{\rm(\hyperref[S2]{S2})}}
\newcommand{\axSSS}{{\rm(\hyperref[S3]{S3})}}
\newcommand{\axSSSS}{{\rm(\hyperref[S4]{S4})}}
\newcommand{\axA}{{\rm(\hyperref[A1]{A1})}}
\newcommand{\axAA}{{\rm(\hyperref[A2]{A2})}}
\newcommand{\axAAA}{{\rm(\hyperref[A3]{A3})}}
\newcommand{\axAAAA}{{\rm(\hyperref[A4]{A4})}}
\newcommand{\axAAAAA}{{\rm(\hyperref[A5]{A5})}}
\newcommand{\axB}{{\rm(\hyperref[B1]{B1})}}
\newcommand{\axBB}{{\rm(\hyperref[B2]{B2})}}
\newcommand{\axC}{{\rm(\hyperref[C1]{C1})}}
\newcommand{\axCC}{{\rm(\hyperref[C2]{C2})}}
\newcommand{\axD}{{\rm(\hyperref[D1]{D1})}}
\newcommand{\axDD}{{\rm(\hyperref[D2]{D2})}}
\newcommand{\axE}{{\rm(\hyperref[E1]{E1})}}
\newcommand{\axEE}{{\rm(\hyperref[E2]{E2})}}
\newcommand{\axH}{{\rm(\hyperref[H1]{H1})}}
\newcommand{\axHH}{{\rm(\hyperref[H2]{H2})}}
\newcommand{\axHHH}{{\rm(\hyperref[H3]{H3})}}
\newcommand{\axN}{{\rm(\hyperref[N1]{N1})}}
\newcommand{\axNN}{{\rm(\hyperref[N2]{N2})}}
\begin{document}

\author{Arno Fehm}
\title{Elementary geometric local-global principles\\ for fields}

\begin{abstract}
We define and investigate a family of local-global principles for fields 
involving both orderings and $p$-valuations.
This family contains the PAC, PRC and P$p$C fields and
exhausts the class of pseudo classically closed fields.
We show that the fields satisfying such a local-global principle
form an elementary class, admit diophantine definitions of holomorphy domains,
and their orderings satisfy the strong approximation property.
\end{abstract}

\maketitle

\bibliographystyle{alpha}

\section{Introduction}

\subsection{Geometric local-global principles}

The topic of this work is the study of {\em geometric local-global principles} for fields
from a model theoretic point of view.
Here, a field $F$ is said to satisfy a geometric local-global principle
for a class of $F$-varieties $\mathcal{V}$ 
and a family $\mathcal{F}$ of extensions of $F$ if each
$V\in\mathcal{V}$ has an $F$-rational point if and only if
it has $F'$-rational points for all $F'\in\mathcal{F}$.
For example, the classical Hasse-Minkowski theorem tells us
that a quadric $V$ over $F=\Q$ has a $\Q$-rational point if and only
if it has rational points over each of the completions $\mathbb{R},\Q_2,\Q_3,\dots$ of $\Q$.
However, this does not hold for arbitrary $\Q$-varieties $V$.
We are interested in fields $F$ that satisfy a geometric local-global principle
for {\em all} $F$-varieties.

A well-studied class of such fields consists of Prestel's {\em pseudo real closed} (PRC) fields,
defined by the property that every $F$-variety that has a smooth rational
point over every real closure of $F$ has an $F$-rational point
\cite{PrestelPRC}, \cite{Ershovrclosed2}, \cite{Prestel1983}
-- a prominent example of a field with this property is the field $\Q_{\rm tr}$
of totally real algebraic numbers.
Among other things, it was shown that the class of PRC fields is elementary
in the language of rings.
That is, the PRC property can be formulated in (possibly infinitely many)
sentences of first-order logic.
Similar work was done for the $p$-adic analogue, the $\PpC$ fields
\cite{Grob}, \cite{HaranJarden88}, \cite{Kuenzi}.
Examples of further modifications and generalizations are
\cite{Kuenzi2}, \cite{ErshovPi}, and \cite{DarniereHasse}. 

\subsection{Pseudo classically closed fields}
The aim of this work is to give a common framework 
for several geometric local-global principles that came up in recent years.
For example, let $S$ be a finite set of absolute values on a number field $K$
and let $\KtotS$ denote the maximal Galois extension of $K$ contained in all of the completions
$\hat{K}_\mfp$, $\mfp\in S$ -- the field of {\em totally $S$-adic numbers}.
It was proven that the field $\KtotS$, as well as certain subfields $F$ of $\KtotS$ satisfy a 
geometric local-global principle -- they are
{\em pseudo-$S$ closed} ($\PSC$):
A $K$-variety $V$ that has smooth $\hat{K}_\mfp$-rational points for all
$\mfp\in S$ has $F$-rational points,
\cite{MoretBailly}, \cite{GPR}, \cite{Pop}, \cite{JardenRazon}, \cite{GeyerJarden}.
This notion of $\PSC$ fields has been defined and studied only for algebraic extensions of $K$.

Another class of interest consists of the pseudo classically closed (PCC) fields of \cite{Popclassical}.
The class of PCC fields contains all $\PRC$ and all $\PpC$ fields,
and the notion PCC is defined for arbitrary fields.
Note however, that the class of PCC fields is not elementary.

In this work we define a family of local-global principles for fields of characteristic zero,
all of which are elementary. Both $\PRC$, $\PpC$ and $\PSC$ fields
are special cases, and all PCC fields are covered.

\subsection{Results}

Let $K$ be a number field, $S$ a finite set of orderings and valuations on $K$, 
$\tau=(e,f)\in\mathbb{N}^2$ a pair of positive integers,
and $F$ an extension of $K$.
For $\mfp\in S$ we denote by $\mcS_\mfp^\tau(F)$ the set of all orderings and $p$-valuations
of $F$ extending $\mfp$, where in the case of $p$-valuations we demand in addition that
the relative initial ramification index and residue degree are at most $e$ resp.~$f$.
We say that $F$ is $\PStauCC$ if $F$ satisfies a geometric local-global principle
for all $F$-varieties with respect to the family of real and $p$-adic closures
of $F$ at elements of $\bigcup_{\mfp\in S}\mcS_\mfp^\tau(F)$.

Note the following special cases\footnote{Here, $\infty$ denotes the unique ordering and $p$ denotes the $p$-adic valuation on $\Q$.}:
\begin{enumerate}
\item $S=\emptyset$: $F$ is $\PStauCC$ $\Leftrightarrow$ $F$ is PAC (see e.g. \cite[Chapter 11]{FJ3})
\item $K=\Q$, $S=\{\infty\}$: $F$ is $\PStauCC$ $\Leftrightarrow$ $F$ is PRC
\item $K=\Q$, $S=\{p\}$, $\tau=(1,1)$: $F$ is $\PStauCC$ $\Leftrightarrow$ $F$ is $\PpC$
\item $K=\Q$: $F$ is $\PStauCC$ for some $S$ and $\tau$ $\Leftrightarrow$ $F$ is PCC
\item $\tau=(1,1)$, $F\subseteq\KtotS$: $F$ is $\PStauCC$ $\Leftrightarrow$ $F$ is $\PSC$
\end{enumerate}

In particular, our main results generalize the corresponding results for $\PRC$ and $\PpC$ fields:

\begin{Theorem}\label{thm1}
The class of $\PStauCC$ fields is elementary in the language
$\mcL_{\rm ring}(K)$ of rings with constants from $K$.
\end{Theorem}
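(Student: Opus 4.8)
The strategy is the standard one for showing that a class of fields defined by a geometric local-global principle is elementary: produce, for each "local type" in $\bigcup_{\mfp\in S}\mcS_\mfp^\tau(F)$, an elementary way of detecting the existence of a suitable local point, and then combine this with the first-order expressibility of the existence of an $F$-rational point on a variety. I would carry this out in three steps. First, I would reduce the geometric statement to a statement about absolutely irreducible varieties (or, equivalently, about the generic point of a variety): a field $F$ is $\PStauCC$ if and only if for every absolutely irreducible $F$-variety $V$ that has a smooth $F_v$-rational point for every $v$ lying over some $\mfp\in S$ (with the ramification and residue-degree constraints built in via $\tau$), $V$ has an $F$-rational point. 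This is the usual translation between the variety formulation and the "every finite system of polynomial equations with a local solution has a global solution" formulation, and it lets me work with a fixed affine $V$ given by explicit polynomials with coefficients that vary in a definable family.

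Second, and this is the technical heart, I would show that the hypothesis "$V$ has a smooth $F_v$-rational point for every $v\in\bigcup_{\mfp\in S}\mcS_\mfp^\tau(F)$" is itself expressible by a (possibly infinite) scheme of $\mcLr(K)$-sentences, uniformly in the coefficients of $V$. For each $\mfp\in S$ one has finitely many "shapes" of completions to worry about: if $\mfp$ is an ordering, the relevant local fields are the real closures of $F$ at orderings extending $\mfp$, and existence of a smooth point over all of them is handled exactly as in the PRC case, using that the theory of real closed fields is model complete and that orderings of $F$ are uniformly definable (by positive cones, quantifying over the finitely many square classes as in Prestel's work). If $\mfp$ is a $p$-valuation, the relevant local fields are the henselizations, equivalently the $p$-adically closed fields of fixed ramification index $e'\le e$ and residue degree $f'\le f$; here I would invoke the model completeness of the theory $p\text{CC}_{e',f'}$ of $p$-adically closed fields of $p$-rank $(e',f')$ (Prestel–Roquette / Macintyre), together with uniform definability of the finitely many $p$-valuation topologies on $F$ extending $\mfp$ with the prescribed invariants — this last point being where one must be careful, since a priori $F$ may carry infinitely many $p$-valuations over $\mfp$, but the bound $\tau$ together with the fact that $F$ is an extension of a fixed number field $K$ makes the relevant data definable. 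In both cases, "$V$ has a smooth $F_v$-point" becomes: for every choice of the defining local data (finitely many parameters), a certain existential $\mcLr(K)$-formula holds in the local completion, which by model completeness is equivalent to that formula holding in $F$ together with the axioms asserting that the local structure embeds into its completion. The upshot is an $\mcLr(K)$-formula $\varphi_{\mathrm{loc}}(\bar c)$ in the coefficients $\bar c$ of $V$ that holds in $F$ iff $V$ has the requisite smooth local points everywhere.

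Third, I would assemble the axiom scheme. For each dimension $d$, each degree $\delta$, and each number $n$ of variables and defining equations, the statement "for every absolutely irreducible $V$ of this shape with coefficients $\bar c$, if $\varphi_{\mathrm{loc}}(\bar c)$ then $V(F)\ne\emptyset$" is a single first-order $\mcLr(K)$-sentence: absolute irreducibility of $V$ in terms of $\bar c$ is first-order (it is a Zariski-constructible condition on the coefficients, by elimination theory), $\varphi_{\mathrm{loc}}(\bar c)$ is first-order by the previous step, and $V(F)\ne\emptyset$ is existential. Taking the conjunction over all shapes (a countable scheme), together with the field axioms and the diagram of $K$, gives the desired elementary class. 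I expect the main obstacle to be the uniformity in the second step: one must ensure that all the orderings and $p$-valuations of $F$ extending a given $\mfp$ — with the correct bounds on ramification and residue degree — are captured by finitely many first-order definable families of parameters, independently of $F$, and that the passage "holds in the local completion $\Leftrightarrow$ expressible in $F$" via model completeness goes through uniformly; handling the residue-degree and ramification constraints coded by $\tau$, rather than just the $p$-rank, will require some care with the relevant version of the Ax–Kochen–Ershov / Prestel–Roquette machinery.
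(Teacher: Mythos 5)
Your broad strategy matches the paper's: reduce to polynomial systems, express the local conditions uniformly, and assemble an axiom scheme over shapes of varieties. Steps~1 and~3 are essentially Section~\ref{sec5} of the paper. The problem is in Step~2, where your proposed mechanism would not go through.

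You propose to capture the local hypothesis by ``uniform definability of the finitely many $p$-valuation topologies on $F$ extending $\mfp$ with the prescribed invariants,'' and similarly for orderings via ``finitely many square classes.'' But there is no reason $F$ should carry only finitely many primes above a given $\mfp\in S$, nor that they should be coded by finitely many parameter families uniformly in $F$: a $\PStauCC$ field can have infinitely many orderings and infinitely many $p$-valuations of bounded type. What makes the local condition first-order in the paper is not a parameterization of primes but the passage through the \emph{holomorphy domain} $R_\mfp^{\tau'}(F)=\bigcap_{\mfP\in\mcS_\mfp^{\tau'}(F)}\mcO_\mfP$. Proposition~\ref{quantifyprime} shows that quantification ``for all $\mfP\in\mcS_\mfp^{\tau'}(F)$'' can be translated into a single $\mcL_R$-formula about $(F,R_\mfp^{\tau'}(F))$, using the boolean structure of the sets $H_\mfp^{\tau'}(a)$ (Lemma~\ref{Hp}) in the $p$-adic case and pre-positive cones in the real case. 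That reduction is what makes an \emph{infinite} intersection of primes expressible.

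This brings in a further gap you do not address: the holomorphy domain itself must be definable by an $\mcLr(K)$-formula independent of $F$, and this is not a consequence of the standard Kochen-ring machinery alone. In the paper it is the content of Sections~\ref{sec2}--\ref{sec3}, where the defining formula $\theta_{R,\mfp}^{\tau'}$ is built from the polynomials $A_i,B_i,C,D_{i,a},E_a,H_u,N_i,R_{i,d},I_{i,d}$, and the proof that it defines $R_\mfp^{\tau'}(F)$ (Proposition~\ref{MainTheoremT}(2)) uses the $\PStauCL$ property itself; one direction holds unconditionally, but the other is a local-global argument. This circularity is resolved by including in the axiom scheme a sub-theory $T_{R,\mfp}^{\tau'}$ asserting that $\theta_{R,\mfp}^{\tau'}$ defines an integrally closed ring (resp.\ semiring) containing the appropriate Kochen ring. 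Finally, the model-completeness transfer between $(F,\mcO_\mfP)$ and $F_\mfP$ requires that $\mfP$ be \emph{quasi-local}; your claim that ``$F$ is an extension of a fixed number field $K$'' handles this is incorrect for transcendental $F$ --- the paper obtains it from Pop's structure theory of $\PCC$ fields (Proposition~\ref{dense}), and that fact must be re-derived inside the axiomatization via Proposition~\ref{PsefCCAx}. So you have correctly located the technical heart of the argument, but the route you sketch through it is not viable; the holomorphy domain and its definability are the missing ingredient.
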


The most important ingredient in the proof is the following definability result:

\begin{Theorem}\label{thm2}
If $F$ is $\PStauCC$ and $\mfp\in S$, then the holomorphy domain
$$
 \bigcap_{\mfP\in\mcS_\mfp^\tau(F)}\mcO_\mfP
$$
where $\mcO_\mfP$ is the positive cone resp.~valuation ring of $\mfP$,
is uniformly diophantine in $F$ over $K$.
\end{Theorem}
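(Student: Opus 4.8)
The plan is to exhibit, for the given $\mfp\in S$, a single existential formula over $K$ defining $\bigcap_{\mfP\in\mcS_\mfp^\tau(F)}\mcO_\mfP$ in every $\PStauCC$ field $F$. Since all elements of $\mcS_\mfp^\tau(F)$ are of the same kind --- orderings if $\mfp$ is an ordering of $K$, $p$-valuations if $\mfp$ is a $p$-valuation of $K$ --- I would treat these two cases separately, but in both the strategy is the same: attach to each $a\in F$ a smooth variety $V_a$, given by polynomials over $K$ with $a$ as a parameter (the polynomials depending only on $K,S,\tau,\mfp$, which yields the asserted uniformity), such that (i) for every $\mfP\in\mcS_\mfp^\tau(F)$, with $F_\mfP$ the associated real or $p$-adic closure of $F$, one has $V_a(F_\mfP)\neq\emptyset$ if and only if $a\in\mcO_\mfP$, and (ii) for every $\mfQ\in\bigcup_{\mfp''\in S\setminus\{\mfp\}}\mcS_{\mfp''}^\tau(F)$ and \emph{every} $a\in F$, one has $V_a(F_\mfQ)\neq\emptyset$. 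Granting (i) and (ii), $V_a$ has a smooth point over every $F_\mfQ$ with $\mfQ\in\bigcup_{\mfp''\in S}\mcS_{\mfp''}^\tau(F)$ precisely when $a$ lies in the holomorphy domain, so by the $\PStauCC$ property this is equivalent to $V_a(F)\neq\emptyset$, an existential condition on $a$ over $K$. Degenerate values of $a$ (where $V_a$ is not defined or not smooth, and which can be dealt with directly) are adjoined to the formula by hand.

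When $\mfp$ is an ordering I would take $V_a$ to be the affine quadric $1+k_1t_1^2+\dots+k_rt_r^2=a\,s^2$, where $k_1,\dots,k_r\in K^\times$ with $r\ge 3$ are chosen so that the orderings of $K$ making all the $k_i$ positive are exactly $\mfp$ (pad with $k_i=1$ to reach $r\ge3$). For $a\neq 0$ this is smooth, with defining quadratic form in $r+2\ge 5$ variables. Over a real closure $F_\mfP$ at an ordering $P$ of $F$ the quadric has a point iff its form is indefinite; if $P$ extends $\mfp$ all the $k_i$ are positive at $P$, so this holds iff $a$ is positive at $P$, giving (i); if $P$ does not extend $\mfp$ some $k_i$ is negative at $P$, so the form is indefinite whatever $a$ is. Over a $p''$-adic closure the form is isotropic because it has $\ge 5$ variables and every local field has $u$-invariant $\le 4$; this gives (ii). The value $a=0$ (which always lies in the holomorphy domain) is added to the defining formula separately.

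When $\mfp$ is a $p$-valuation I would use a norm variety. Fix a prime $q>\max\{e,f,p\}$, a uniformizer $\pi\in K$ of $\mfp$, and --- by the Grunwald--Wang theorem, whose exceptional case does not occur since $q$ is odd --- a cyclic degree-$q$ extension $E/K$ in which $\mfp$ is inert and every other finite place of $S$ splits completely. Let $V_a$ be the variety $N_{E/K}(y)=1+\pi a^q$ in variables $y=(y_1,\dots,y_q)$ (coordinates over a $K$-basis of $E$), restricted to the locus $1+\pi a^q\neq 0$, where it is smooth. For $\mfP\in\mcS_\mfp^\tau(F)$ the field $F_\mfP$ is, by the earlier study of $\mcS_\mfp^\tau$, a $p$-adically closed field of bounded type $\tau$, in particular with finite residue field and with $0<v_\mfP(\pi)\le e$; since $q>f$, the ring $E\otimes_K F_\mfP$ is the unramified degree-$q$ extension of $F_\mfP$, whose norms are exactly the elements of value divisible by $q$. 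Hence if $v_\mfP(a)\ge0$ then $1+\pi a^q$ is $\equiv 1$ modulo the maximal ideal, hence a norm, whereas if $v_\mfP(a)<0$ then $v_\mfP(1+\pi a^q)=v_\mfP(\pi)+q\,v_\mfP(a)\equiv v_\mfP(\pi)\not\equiv0\pmod q$, hence not a norm; this is (i). At the remaining places of $S$ the extension $E$ is split, so $N_{E/K}$ is just the product of the coordinates and is onto the nonzero elements, which gives (ii). (The exceptional locus $1+\pi a^q=0$ is nonempty only when $\mcS_\mfp^\tau(F)=\emptyset$, since otherwise $v_\mfP(a)=-v_\mfP(\pi)/q$ would fail to be an integer; in that case the holomorphy domain equals $F$, and this locus is added to the formula directly.)

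The main obstacle is condition (ii): one must force $V_a$ to have local points at the places of $S$ other than $\mfp$ for \emph{every} $a$, without destroying the detection of $\mcO_\mfP$ at the places over $\mfp$. For orderings the $u$-invariant bound supplies this automatically; for $p$-valuations it is precisely what compels the choice of $E$ (split away from $\mfp$, inert at $\mfp$) together with the inequality $q>\max\{e,f,p\}$ --- the condition $q>f$ being needed so that $E\otimes_K F_\mfP$ does not accidentally split for some $\mfP\in\mcS_\mfp^\tau(F)$, which would make (i) fail there. The second point requiring care is the identification of the local fields $F_\mfP$ for $\mfP\in\mcS_\mfp^\tau(F)$ as real (resp.\ $p$-adically) closed fields of the bounded type $\tau$, so that the assertions about isotropy, valuations and norms used in (i) are legitimate; this should be part of the machinery set up before the proof of Theorem~\ref{thm2}.
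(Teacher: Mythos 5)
Your proposal takes a genuinely different route from the paper. The shared skeleton is the same --- attach to each $a\in F$ a smooth variety $V_a$ over $K$, detect $a\in\mcO_\mfP$ via $V_a(F_\mfP)\neq\emptyset$ at primes over $\mfp$, force local points unconditionally at the other primes of $S$, and invoke the $\PStauCC$ property --- but the polynomials used are very different. You draw on classical arithmetic: quadratic forms with the $u$-invariant bound for local fields in the archimedean case, and a norm form $N_{E/K}(y)=1+\pi a^q$ attached to a cyclic extension supplied by Grunwald--Wang in the non-archimedean case. The paper instead builds $V_a$ from very explicit elementary pieces (polynomials like $X^{2^{e}}-\pi$, $X^{2^e}+\pi X+\pi'$, $aX^{2^e}-X+a$, cyclotomic factors to control the residue degree), verified place-by-place using only Hensel's lemma, and packages them via an Eisenstein-style irreducibility lemma (Lemmas~\ref{absirred} and~\ref{nonsingular}) into a single polynomial $G_{i,a}$ that is absolutely irreducible with a smooth point in every relevant localization. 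Your argument is slicker where it applies; the paper's construction is more laborious but completely elementary.

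Two points to be aware of. First, and most significantly: your construction of $E$ via Grunwald--Wang only makes sense when $K$ is a global field. Setting~\ref{SettingPSCC}, under which Theorem~\ref{thm2} is actually proved, only assumes $K$ of characteristic zero with $S$ a finite set of local primes, and the paper explicitly emphasizes that everything is done ``without the assumption that $K$ is a number field.'' The paper's elementary construction needs nothing more than weak approximation (Lemma~\ref{TimpliesS}), so it survives in that generality, whereas yours does not as written. Second, you implicitly need $\mfP\in\mcS_\mfp^\tau(F)$ to have quasi-local localizations (so that $F_\mfP$ is $p$-adically closed and the norm-group computation is legitimate); you correctly flag this as prerequisite machinery, and it is indeed Proposition~\ref{dense} in the paper. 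Granting quasi-locality and the restriction to a number field base, your verifications of (i) and (ii) --- including the observation that $q>f$ rules out $E$ embedding into any $F_\mfP$ with $\mfP\in\mcS_\mfp^\tau(F)$, and that $0<v_\mfP(\pi)\le e<q$ keeps $1+\pi a^q$ outside the norm group whenever $v_\mfP(a)<0$ --- are correct.
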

This means that this holomorphy domain is the projection of the zero set of a polynomial over $K$ which is independent of $F$.

Prestel proved that the orderings of any $\PRC$ fields satisfy
the so-called strong approximation property:
Given an open-closed set of orderings one can find an element which is positive 
at all of those, and negative at all the other orderings.
We show that this result extends to $\PStauCC$ fields:

\begin{Theorem}\label{thm3}
If $F$ is $\PStauCC$, then $\mcS_\mfp^\tau(F)$ satisfies the strong approximation property
for each $\mfp\in S$. 
\end{Theorem}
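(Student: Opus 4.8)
The plan is to reduce the strong approximation property to the existence of $F$-rational points on suitable $F$-varieties and then to invoke the $\PStauCC$ principle. Recall from the earlier set-up that $\mcS_\mfp^\tau(F)$ is a Boolean space whose algebra of clopen sets is generated, under finite unions, intersections and complements, by the basic sets
$$
 U(a)=\{\,\mfP\in\mcS_\mfp^\tau(F):a\in\mcO_\mfP\,\},\qquad a\in F^\times,
$$
and that the strong approximation property for $\mcS_\mfp^\tau(F)$ asserts that every clopen $Y$ equals $U(c)$ for a single $c\in F^\times$. An induction on the Boolean complexity of $Y$ then reduces the statement to a bounded list of base cases, the representative one being: \emph{for all $a,b\in F^\times$ there is $c\in F^\times$ with $U(c)=U(a)\cap U(b)$} (equivalently, with $U(c)=U(a)\cup U(b)$), the complement case $U(c)=U(a)^{c}$ being handled in the same way. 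This is parallel to Prestel's reduction, in the PRC case, of SAP for orderings to the isotropy over $F$ of $\langle 1,a,b,-ab\rangle$.

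For fixed $a,b\in F^\times$ I would then construct an explicit affine $F$-variety $V=V_{a,b}$, built from $a,b$ and parameters from $K$, carrying a distinguished coordinate function $c$, with two properties. First, every $F$-rational point of $V$ has its $c$-coordinate satisfying $U(c)=U(a)\cap U(b)$; this is arranged by encoding into the defining equations, at the places over $\mfp$, the membership relations cutting out the target clopen. Secondly, $V$ has a \emph{smooth} rational point over the real closure, respectively the $\tau$-bounded $p$-adic closure, of $F$ at \emph{every} place $\mfP'\in\bigcup_{\mfp'\in S}\mcS_{\mfp'}^\tau(F)$ — not merely at the places of $\mcS_\mfp^\tau(F)$, and in particular also at those places of $\mcS_\mfp^\tau(F)$ lying outside the target clopen. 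For the orderings one can follow Prestel: the quadratic form in $a,b$ entering the construction can never be definite over a real closed field, hence the associated quadric is automatically isotropic, uniformly in $a,b$. For the $p$-valuations one uses instead that a suitable Pfister-type form of sufficiently large dimension, built from $a,b$ and constants from $K$, is isotropic over every $\tau$-bounded $p$-adically closed field. The uniform diophantine definitions of the holomorphy domains over $K$ furnished by Theorem~\ref{thm2} are what make this possible: they let one express the clopen-membership conditions by equations over $K$ that hold in all $\PStauCC$ fields, and they provide the bookkeeping needed to keep $V$ everywhere locally solvable while still pinning down the prescribed clopen.

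Given such a $V$, its second property is exactly local solvability everywhere for the family of closures in the definition of $\PStauCC$, so that principle yields $V(F)\neq\emptyset$; the $c$-coordinate of an $F$-rational point then gives $U(c)=U(a)\cap U(b)$, and feeding this into the reduction of the first paragraph produces, for an arbitrary clopen $Y\subseteq\mcS_\mfp^\tau(F)$, a single $c\in F^\times$ with $U(c)=Y$. The main obstacle I anticipate is the construction in the second step: unlike the purely real (PRC) case, where Prestel's quadric is isotropic over every real closed field for trivial sign reasons and irrespective of $a,b$, here one must produce a single variety that stays everywhere locally solvable — over every ordering and every $\tau$-bounded $p$-valuation occurring over every $\mfp'\in S$, including the ``off'' places of the target clopen — while being rigid enough that one coordinate pins that clopen down at the places over $\mfp$. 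Reconciling these two demands is the crux, and it is precisely here that the diophantine control of holomorphy domains from Theorem~\ref{thm2} is indispensable.
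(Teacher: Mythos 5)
Your overall plan for the ordering case is the right one and matches the paper's: reduce SAP to closure under intersections of the sets $U(a)\cap\mcS_\mfp^\tau(F)$, build an affine variety out of Prestel's form $f_{a,b}(X,Y)=abX^2Y^2+aX^2+bY^2-1$ whose $F$-rational point yields the desired $c$, and invoke the $\PStauCC$ hypothesis after checking local solvability. But there are two genuine problems with the proposal as written.

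First, and most substantively, you apply the same geometric strategy to the case $p_\mfp\neq\infty$ and anticipate needing some ``Pfister-type form'' isotropic over every bounded $p$-adic closure. This is unnecessary: for $p$-valuations the SAP property holds for \emph{every} field $F$, without any local-global hypothesis. The point is Lemma~\ref{Hp}(1), which gives the explicit identity
\[
 H_\mfp^{\tau}(a)\cap H_\mfp^{\tau}(b)=H_\mfp^{\tau}\bigl(a^{2^e}+\pi_\mfp b^{2^e}\bigr),
\]
valid because $0<v_\mfP(\pi_\mfp)\leq e<2^e$ for every $\mfP\in\mcS_\mfp^\tau(F)$ (here the boundedness by $\tau$ is essential). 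Combined with Lemma~\ref{Hp}(2) for complements and the patch-compactness argument of Lemma~\ref{boolean}, this immediately gives Lemma~\ref{charpSAP}: $\mcS_\mfp^\tau(F)$ satisfies SAP for $p_\mfp\neq\infty$. So the crux you identify — reconciling everywhere-local solvability with pinning down the clopen — simply does not arise in the $p$-adic case; the only place where $\PStauCC$ is used is $p_\mfp=\infty$.

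Second, in the ordering case you locate the resolution of the local-solvability tension in Theorem~\ref{thm2} (diophantine definability of holomorphy domains), but that theorem plays no role in the paper's proof of Theorem~\ref{thm3}. What actually makes the construction everywhere locally solvable is the machinery already built in Section~\ref{sec2}: one forms
\[
 G_{a,b}(X,Y,Z)=H_2(Z)\bigl(1-A_i(X)C(X)f_{a,b}(X,Y)\bigr)-H_2(1),
\]
where $A_i$ handles local solvability at primes $\mfP\in\mcS\setminus\mcS_i$ with $p_\mfP\neq 2$ by $\axA$, $C$ handles those with $p_\mfP=2$ by $\axC$, Lemma~\ref{Lemmaf}(1) handles the orderings in $\mcS_\mfp(F)$, and $\axAAAAA$, $\axCC$, $\axH$, $\axHH$ together force $f_{a,b}(x,y)>_\mfP-1$ at every ordering over $\mfp$ for a rational solution $(x,y,z)$, so that Lemma~\ref{Lemmaf}(2) produces $c=ab(ax^2+by^2)$ with $H(a)\cap H(b)\cap\mcS_\mfp(F)=H(c)\cap\mcS_\mfp(F)$. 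Your sketch correctly anticipates the shape of this argument but leaves both the $p$-adic simplification and the actual bookkeeping polynomials unspecified, and the reliance on Theorem~\ref{thm2} as the enabling tool is misplaced.
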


Extending the notion of totally real field extensions
we call an extension $E/F$ {\em totally $S^\tau$-adic}
if every element of $\mcS_\mfp^\tau(F)$ extends to an element of
$\mcS_\mfp^\tau(E)$ of the same type (i.e.~same residue field and same initial ramification, in the case of $p$-valuations).
Combining Theorems \ref{thm1}, \ref{thm2} and \ref{thm3} we get the following corollary.

\begin{Corollary}\label{cor1}
If $F$ is $\PStauCC$ and $F\prec F^*$ is an elementary extension,
then $F^*/F$ is totally $S^\tau$-adic.
\end{Corollary}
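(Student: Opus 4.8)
The plan is to show that every $\mfP\in\mcS_\mfp^\tau(F)$ extends to some $\mfP^*\in\mcS_\mfp^\tau(F^*)$ of the same type, and to do this via the diophantine holomorphy domain from Theorem~\ref{thm2}. Fix $\mfp\in S$ and write $\mcO = \bigcap_{\mfP\in\mcS_\mfp^\tau(F)}\mcO_\mfP$ for the holomorphy domain in $F$, and $\mcO^*$ for the corresponding holomorphy domain in $F^*$. By Theorem~\ref{thm2}, $\mcO$ and $\mcO^*$ are defined in $F$ resp.~$F^*$ by the same $\mcL_{\rm ring}(K)$-formula, so since $F\prec F^*$ we have $\mcO = \mcO^*\cap F$, i.e.\ $\mcO^*$ lies over $\mcO$. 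Now let $\mfP\in\mcS_\mfp^\tau(F)$; I want an extension of $\mfP$ in $\mcS_\mfp^\tau(F^*)$. The idea is that $\mfP$ corresponds to a point of the (real/$p$-adic) spectrum of $\mcO$, and the inclusion $\mcO\hookrightarrow\mcO^*$ induces a restriction map on these spectra; what must be checked is that this restriction map is surjective onto the relevant part, and that the type (residue field, initial ramification) is preserved.

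First I would recall the standard characterization of $\mcS_\mfp^\tau(F)$ in terms of the holomorphy domain: by the general theory of holomorphy rings (Prestel--Ziegler style for orderings, and the $p$-adic analogue), the orderings resp.\ $p$-valuations of $F$ lying in $\mcS_\mfp^\tau(F)$ are exactly those that are nonnegative resp.\ have nonnegative-valued coordinates on $\mcO$ — more precisely, each such $\mfP$ is determined by a prime (the maximal ideal of $\mcO_\mfP\cap\mcO$) together with an ordering/valuation of the residue field, and the constraint ``extends $\mfp$ with type $\le\tau$'' is encoded by finitely many polynomial conditions with coefficients in $K$. Then I would argue: given $\mfP\in\mcS_\mfp^\tau(F)$, the restriction of the natural valuation/ordering data to $\mcO^*$ together with the fact that $\mcO^*$ is a localization-free extension of $\mcO$ inside $F^*$ allows one to apply a going-up/extension theorem (Chevalley's extension theorem for valuations, and its real analogue for orderings) to extend $\mfP$ to a $\mfP^*$ on $F^*$; one then checks $\mfP^*\in\mcS_\mfp^\tau(F^*)$ because the defining conditions of $\mcS_\mfp^\tau$ are $\mcL_{\rm ring}(K)$-expressible statements about elements of $\mcO^*$ that already hold in $\mcO$ and hence, by elementarity together with the witnessing of $\mcO^*$ over $\mcO$, propagate. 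That $\mfP^*$ has the \emph{same} type as $\mfP$ (same residue degree and ramification index over $\mfp$, in the $p$-adic case; in particular it does not pick up a strictly larger $e$ or $f$) follows because these are again finitary conditions satisfied on $\mcO\subseteq\mcO^*$: the residue field of $\mfP$ embeds into that of $\mfP^*$, and equality of the relative residue degree and initial ramification is forced by the $\tau$-bound plus the fact that extensions can only increase these quantities.

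Alternatively — and this may be the cleaner route — I would avoid spectra entirely and argue purely model-theoretically: the assertion ``every ordering/$p$-valuation in $\mcS_\mfp^\tau(F)$ extends to one of the same type on $F^*$'' can itself be phrased using the holomorphy domain, because an element $a\in F^*$ lies in $\mcO^*$ iff it satisfies the Theorem~\ref{thm2} formula, and a type-preserving extension of a given $\mfP$ exists iff a certain first-order condition on the pair $(\mcO,\mcO^*)$ holds — and that condition, being an $\mcL_{\rm ring}(K)$-consequence of the $\PStauCC$ axioms (Theorem~\ref{thm1}) that $F$ satisfies, transfers to $F^*$. Concretely: by Theorem~\ref{thm3}, $\mcS_\mfp^\tau(F)$ has the strong approximation property, which gives, for any clopen subset, a single element of $F$ positive/integral exactly on that subset; this lets one show that the Boolean space $\mcS_\mfp^\tau(F)$ is ``rigidly coded'' by $\mcO$, and the same holds for $F^*$. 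Matching these codings across $F\prec F^*$ yields the restriction map $\mcS_\mfp^\tau(F^*)\to\mcS_\mfp^\tau(F)$ and its type-preserving surjectivity.

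The main obstacle I anticipate is the type-preservation in the $p$-adic case: it is easy to extend a $p$-valuation, but one must rule out that the extension to $F^*$ has strictly larger initial ramification index or residue degree than the prescribed $\tau$ allows — equivalently, one must show the extension can be chosen within $\mcS_\mfp^\tau(F^*)$ rather than merely in the set of all $p$-valuations over $\mfp$. The resolution is that $\mcO^*$, being cut out by the \emph{same} formula as $\mcO$, is by construction the holomorphy domain for $\mcS_\mfp^\tau(F^*)$ (not for a larger family), so any valuation of $F^*$ whose valuation ring contains $\mcO^*$ is automatically of type $\le\tau$; combined with Chevalley extension inside this ring and the elementarity $F\prec F^*$ pinning down the residue data, this forces the extension to land in $\mcS_\mfp^\tau(F^*)$ with the correct type. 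The orderings case is analogous but easier, since there is no ``type'' beyond being an ordering, and it reduces to the real analogue of Chevalley's theorem applied to the real holomorphy ring.
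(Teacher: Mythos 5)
The high-level strategy is right (exploit the uniform definability of holomorphy domains across the elementary extension, then use SAP to convert equality of holomorphy domains into surjectivity of restriction maps on primes), but there is a genuine gap in the type-preservation step. You work only with the single holomorphy domain $\mcO = R_\mfp^{\tau}(F) = \bigcap_{\mfP\in\mcS_\mfp^{\tau}(F)}\mcO_\mfP$. That is enough to show that every $\mfP\in\mcS_\mfp^{\tau}(F)$ extends to \emph{some} prime of $F^*$ of relative type $\le\tau$, but the definition of a totally $S^\tau$-adic extension requires more: for every $\mfp$ and every $\tau'\le\tau$, the restriction map $\mcS_\mfp^{\tau'}(F^*)\to\mcS_\mfp^{\tau'}(F)$ must be surjective, equivalently each $\mfP$ must extend to a prime of the \emph{same} relative type. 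Your claim that ``equality of the relative residue degree and initial ramification is forced by the $\tau$-bound plus the fact that extensions can only increase these quantities'' is false in general: if $\mfP$ has relative type $(1,1)$ and $\tau=(4,6)$, an extension of relative type $(2,2)$ stays within the $\tau$-bound yet violates type preservation. The monotonicity you invoke gives a lower bound on the type of the extension, and $\tau$ gives an upper bound, but these do not pinch to equality unless $\tp(\mfP/\mfp)=\tau$.

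The paper closes this gap by definability of $R_\mfp^{\tau'}$ for \emph{every} $\tau'\le\tau$ (Proposition~\ref{MainTheoremT} is stated for general $\tau'\le\tau$, not just $\tau$). With $F^*$ also $\PStauCC$ (from the axiomatization, Proposition~\ref{PsefCCAx}), one gets $R_\mfp^{\tau'}(F^*)\cap F = R_\mfp^{\tau'}(F)$ for all $\tau'\le\tau$; then SAP for $\mcS_\mfp^{\tau'}(F)$ together with Lemma~\ref{totallySadic} gives surjectivity of each $\mcS_\mfp^{\tau'}(F^*)\to\mcS_\mfp^{\tau'}(F)$. A prime of relative type exactly $\tau'$ then lifts to $\mcS_\mfp^{\tau'}(F^*)$, where the $\tau'$-bound plus monotonicity \emph{do} pinch, giving same-type extensions. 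So the fix is not a patch to your argument but a structural change: run your reasoning for each $\tau'\le\tau$ in parallel, which is what the family of definable holomorphy domains buys you. Separately, you should make explicit that $F^*\equiv F$ implies $F^*$ is $\PStauCC$ via Theorem~\ref{thm1}, since without this the definable set in $F^*$ is a priori only contained in the holomorphy domain. The Chevalley-extension route is also avoidable: the paper's Lemma~\ref{totallySadic} derives surjectivity from SAP and profiniteness of the prime space, with no need for a going-up argument.
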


\noindent
These results have immediate consequences for PCC fields:

\begin{Corollary}\label{corPCC}
Let $F$ be a $\PCC$ field.
\begin{enumerate}
\item The intersection over all $p$-valuation rings of $F$ for any $p$,
 as well as the intersection over all positive cones of $F$, are existentially $\emptyset$-definable in $F$.
\item If $E\equiv F$, then $E$ is $\PCC$.
\item If $F\prec E$, then every ordering and every $p$-valuation of $F$ extends to
 an ordering resp.~$p$-valuation of $E$ of the same type.
\item
The space of orderings of $F$ satisfies the strong approximation property.
\end{enumerate}
\end{Corollary}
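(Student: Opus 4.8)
The plan is to read off Corollary~\ref{corPCC} from Theorems~\ref{thm1}--\ref{thm3} and Corollary~\ref{cor1} by specialising to $K=\Q$. Recall from the list of special cases above that a field $F$ is $\PCC$ precisely when it is $\PStauCC$ over $\Q$ for some finite set $S$ of orderings and valuations of $\Q$ and some $\tau=(e,f)\in\mathbb{N}^2$; fix such $S$ and $\tau$. The only point that is not purely formal is that the orderings and $p$-valuations referred to in (1), (3), (4) must be shown to be exactly those collected in $\bigcup_{\mfp\in S}\mcS_\mfp^\tau(F)$; I regard this \emph{completeness} statement as the main obstacle. It can be extracted from the structure theory of PCC fields underlying the equivalence with $\PStauCC$, or else argued directly as follows.

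Suppose $\infty\notin S$ and consider the smooth geometrically integral affine variety $V$ given by $x_1^2+x_2^2+x_3^2+x_4^2+1=0$. Since $-1$ is a sum of four squares in $\Q_p$ for every prime $p$, the variety $V$ has a rational point over every $p$-adic closure of $F$; as $S$ contains no ordering, $\PStauCC$ forces $V(F)\neq\emptyset$, so $F$ is not formally real. Hence in all cases the set of orderings of $F$ equals $\mcS_\infty^\tau(F)$ (empty when $\infty\notin S$), because $\tau$ imposes no condition on orderings and $\infty$ is the only ordering of $\Q$. For $p$-valuations one argues in the same spirit, using local obstruction varieties over $\Q$ that have points over $\mathbb{R}$ and over $\Q_q$ for every prime $q\neq p$ but detect the prime $p$ and bound the relative ramification and residue degree; this shows that $F$ admits a $p$-valuation only if $v_p\in S$, and then only one lying in $\mcS_p^\tau(F)$. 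Producing these varieties for arbitrary, not necessarily algebraic, PCC fields is the delicate part.

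Granting the completeness statement, the four assertions are short. For (1), fix a prime $p$: if $v_p\notin S$ then the intersection of all $p$-valuation rings of $F$ is $F$, hence existentially $\emptyset$-definable by $x=x$; if $v_p\in S$ this intersection equals $\bigcap_{\mfP\in\mcS_p^\tau(F)}\mcO_\mfP$, which by Theorem~\ref{thm2} is the projection of the zero set of a polynomial over $\Q$, and clearing denominators turns this into an existential $\emptyset$-definition; the intersection over all positive cones is handled identically with $\mfp=\infty$. For (2), Theorem~\ref{thm1} axiomatises the $\PStauCC$ fields for our $S$, $\tau$ by $\mcLr(\Q)$-sentences; if $E\equiv F$ then $E$ has characteristic $0$, and since elements of $\Q$ are quantifier-free $\emptyset$-definable we also have $E\equiv F$ in $\mcLr(\Q)$, so $E$ satisfies these sentences, is $\PStauCC$ over $\Q$, and hence $\PCC$. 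For (3), Corollary~\ref{cor1} gives that an elementary extension $F\prec E$ is totally $S^\tau$-adic, i.e.\ every element of each $\mcS_\mfp^\tau(F)$ extends to one of $\mcS_\mfp^\tau(E)$ of the same type; by completeness this is every ordering and every $p$-valuation of $F$. Finally (4) is Theorem~\ref{thm3} applied with $\mfp=\infty$, the space of orderings of $F$ being $\mcS_\infty^\tau(F)$.
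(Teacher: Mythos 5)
Your reduction is sound: Corollary \ref{corPCC} does follow from Theorems \ref{thm1}--\ref{thm3} and Corollary \ref{cor1} once one knows that $\bigcup_{\mfp\in S}\mcS_\mfp^\tau(F)$ is exactly the set of \emph{all} orderings and \emph{all} $p$-valuations of $F$, and you are right to single out this ``completeness'' as the one non-formal step. The intended source in the paper is the equality $\CC(F)=\CC_S^\tau(F)$ established inside the proof of Proposition \ref{PCCPSCC}, which rests on Lemma \ref{Fmin} (Pop's structure results): since every classical prime $\mfQ$ of $F$ admits a classical closure, that closure lies in $\CC(F)=\CC_S^\tau(F)$ and determines $\mfQ$ uniquely, so every classical prime already lies in $\mcS_S^\tau(F)$. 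With that in hand, your derivations of (1)--(4), including the observation that the $\mcLr(\Q)$-formula from Theorem \ref{thm2} becomes an existential $\emptyset$-formula after clearing denominators, are correct.

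Your alternative direct argument, however, is incomplete exactly where you say it is. The sum-of-four-squares variety disposes of orderings when $\infty\notin S$, but for $p$-valuations you would need, for each $p\notin S$ and for each excluded relative type at $p\in S$, a smooth geometrically integral $\Q$-variety with smooth rational points over every member of $\CC_S^\tau(F)$ but no rational points over any $p$-adically closed field of the offending type. Such varieties can plausibly be engineered from the polynomials of Section \ref{sec2} (the $A_i$, $B_i$, $N_i$ are precisely tools for placing zeros in prescribed local closures), but this is a genuine construction rather than an observation, and it duplicates work already packaged into Proposition \ref{PCCPSCC} via the cited results of Pop. You should simply cite that proposition for completeness rather than re-derive it.
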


In fact, we prove everything in greater generality, without the assumption that $K$ is a number field.
The results of this work also answer a question posed by Darni\`ere in \cite{DarnierePAC} and
play a crucial role
in the axiomatization and proof of decidability
of $\KtotS$ and certain subfields of it in \cite{AFKtotSsigma}.

\subsection*{Acknowledgements}

This work extends Chapter 2 of the author's Ph.D. thesis supervised by Moshe Jarden at Tel Aviv University.
As such, it was supported by the European Commission under contract MRTN-CT-2006-035495.
The author would like to thank the Max Planck Institute Bonn for the hospitality during preparation of this manuscript.

I would like to express my sincere gratitude to Moshe Jarden for his valuable guidance and ongoing support.
I also would like to thank Ido Efrat and Dan Haran for helpful discussion concerning Section \ref{sec6},
Itay Kaplan and Alexander Prestel for their contribution to Section \ref{sec:quantifyCC},
and the anonymous referee for many clarifying remarks and questions.

\section{Preliminaries and Notation}
\label{secprelim}

\subsection{Notation}
Every ring and every semiring is commutative with $1$.
If $R$ is a ring, we denote by $R^\times$ the group of invertible elements of $R$.
If $K$ is a field, we denote by $\tilde{K}$ 
a fixed algebraic closure of $K$.
The cardinality of a set $X$ is denoted by $|X|$.
By $\uplus$ we denote the disjoint union of sets.
Varieties are geometrically irreducible and geometrically reduced.
If $V$ is a $K$-variety and $K\subseteq L$ a field extension
we denote by $L(V)$ the function field of $V$ over $L$.

\subsection{Model Theory}
\label{secModelTheory}
For the basic notions of model theory see for example \cite{Marker}.
The language of rings is
$\mcLr=\{+,-,\cdot,0,1\}$ where $+$ and $\cdot$ are binary function symbols,
$-$ is a unary function symbol,
and $0$ and $1$ are constant symbols.
If $\mcL$ is a language containing $\mcLr$,
$K$ is an $\mcL$-structure,
and $C$ is a subset of $K$,
we denote by 
$\mcL(C)=\mcL\cup\{c_x\colon x\in C\}$ 
the language $\mcL$
augmented by constant symbols for the elements in $C$.
If $\varphi(x_1,\dots,x_n)$ is an $\mcL$-formula in $n$ free variables,
and $K$ is an $\mcL$-structure, we denote by 
$\varphi(K)=\{ \mathbf{a}\in K^n\colon K\models\varphi(\mathbf{a}) \}$
the subset defined by $\varphi$ in $K$.

\subsection{Real Closed Fields}
\label{secRCF}
We assume familiarity with the theory of ordered and real closed fields
as presented in \cite{Prestel1}, and only recall a few definitions
and facts.

A {\bf positive cone} of a field $K$ is a semiring $P\subseteq K$ (i.e.~$0,1\in P$, $P+P\subseteq P$, $P\cdot P\subseteq P$)
such that $P\cup(-P)=K$ and $P\cap(-P)=\{0\}$.
A field is {\bf real closed} if it has an ordering but
each proper algebraic extension has no ordering.
A real closed field $K$ has a unique ordering, 
given by the positive cone $K^2$, \cite[3.2]{Prestel1}.
A real closed field $F$ is a {\bf real closure} of an ordered field $K$
if $F$ is an algebraic extension of $K$ and the unique ordering of $F$ extends 
the ordering of $K$.
Any ordered field $K$ has a real closure,
which is unique up to $K$-isomorphism, \cite[3.10]{Prestel1}.

The language of ordered rings
$\mcL_\leq=\mcLr\cup\{\leq\}$ 
is the language of rings augmented by a binary relation symbol $\leq$,
which is interpreted as the ordering of an ordered field.
The $\mcL_{\leq}$-theory of real closed ordered fields is complete and has 
effective quantifier elimination, \cite[3.3.15, 3.3.16]{Marker}.

\subsection{Valued Fields}
\label{secvalued}
We assume familiarity with
the basics of valuation theory, see e.g.~\cite{EnglerPrestel}.

If $v:K\twoheadrightarrow\Gamma\cup\{\infty\}$ is a valuation on a field $K$ with value group $\Gamma$
we denote by $\mcO_v$ the valuation ring,
by $\mathfrak{m}_v$ its maximal ideal,
and by $\bar{K}_v=\mcO_v/\mathfrak{m}_v$ the residue field.
We say that $v$ is of {\bf rank one}
if its value group has no non-trivial proper convex subgroup,
and {\bf discrete} if its value group is discrete in the order topology.
We normalize every discrete valuation such that $\mathbb{Z}$
is a convex subgroup of the value group.
We will use the following variant of Hensel's lemma, \cite[4.1.3(5)]{EnglerPrestel}.

\begin{Lemma}\label{HenselRychlik}
Let $v$ be a Henselian valuation on $K$.
If $f\in\mcO_v[X]$ and $a\in\mcO_v$ with $v(f(a))>2v(f^\prime(a))$,
then there exists $\alpha\in\mcO_v$ with $f(\alpha)=0$
and $v(a-\alpha)>v(f^\prime(a))$.
\end{Lemma}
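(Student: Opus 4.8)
The plan is to reduce Lemma~\ref{HenselRychlik} — which is a Newton-iteration (Rychlik) form of Hensel's lemma — to the more elementary ``simple root'' form: for $g\in\mcO_v[Y]$ whose reduction $\bar g$ over the residue field $\bar K_v$ has a simple root, that root lifts to a root of $g$ in $\mcO_v$. This form holds for every Henselian valuation, so in particular no completeness of $K$ is needed. The bridge between the two forms is an affine substitution that rescales the variable by $f'(a)$, converting the hypothesis $v(f(a))>2v(f'(a))$ into a genuine simple-root situation at the origin.

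In detail, I would set $c:=f'(a)$ and $\gamma:=v(c)$; since $f'\in\mcO_v[X]$ and $a\in\mcO_v$ we have $\gamma\geq 0$, and by hypothesis $\delta:=v(f(a))-2\gamma>0$. First I would observe that for each $i$ the polynomial $\frac{1}{i!}f^{(i)}$ lies in $\mcO_v[X]$ (its coefficients are integer binomial multiples of those of $f$), so that the Taylor identity $f(a+cY)=\sum_{i\geq 0}\frac{1}{i!}f^{(i)}(a)\,c^{i}Y^{i}$ is meaningful over $\mcO_v$. I would then introduce $g(Y):=c^{-2}f(a+cY)$ and check that $g\in\mcO_v[Y]$: the constant term $c^{-2}f(a)$ has value $\delta>0$, the linear term equals $1$, and the degree-$i$ term for $i\geq2$ has value $\geq(i-2)\gamma\geq0$. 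Hence $\bar g(0)=0$ while $\bar g'(0)=\bar 1\neq0$, so $0$ is a simple root of $\bar g$.

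Applying the Henselian property to $g$ at $0$ produces $\beta\in\mcO_v$ with $g(\beta)=0$ and $v(\beta)>0$. Setting $\alpha:=a+c\beta$ I would conclude $\alpha\in\mcO_v$, $f(\alpha)=c^{2}g(\beta)=0$, and $v(a-\alpha)=v(c)+v(\beta)=\gamma+v(\beta)>\gamma=v(f'(a))$, which is precisely the assertion.

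The main point to get right is conceptual rather than computational: a direct Newton iteration $a_{n+1}=a_n-f(a_n)/f'(a_n)$ produces a Cauchy sequence but converges only when $K$ is complete, so one must instead route the argument through a form of Henselianity — the lifting of simple residue roots — that is available for arbitrary Henselian $v$. The accompanying bookkeeping, in particular checking that $g$ has integral coefficients, is routine; note that it is precisely the \emph{strict} inequality $v(f(a))>2v(f'(a))$ that forces $v(g(0))>0$ and hence a simple root of $\bar g$ that can be lifted. Alternatively, since the statement is literally \cite[4.1.3(5)]{EnglerPrestel}, one may simply cite it.
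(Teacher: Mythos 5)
The paper does not prove this lemma at all; it cites \cite[4.1.3(5)]{EnglerPrestel}. Your self-contained derivation is correct, and it is the standard argument: the substitution $g(Y)=f'(a)^{-2}f(a+f'(a)Y)$ turns the hypothesis $v(f(a))>2v(f'(a))$ (which also forces $f'(a)\neq 0$, so the scaling is legitimate) into the statement that $\bar g$ has a simple root at $0$, and then Henselianity in its defining simple-root-lifting form produces $\beta$ with $v(\beta)>0$, hence $\alpha=a+f'(a)\beta$ satisfies $f(\alpha)=0$ and $v(a-\alpha)=v(f'(a))+v(\beta)>v(f'(a))$. The bookkeeping you flag — integrality of $\tfrac{1}{i!}f^{(i)}$, positivity of $v(g(0))$, the linear coefficient of $g$ being $1$ — all checks out, and your remark that naive Newton iteration would illegitimately require completeness is exactly the point that makes the rescaling reduction necessary rather than merely elegant.
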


The language of valued fields
$\mcL_R=\mcLr\cup\{R\}$
is the language of rings
augmented by a unary predicate symbol $R$, which is interpreted as the valuation ring of a valued field.

\subsection{$p$-adically Closed Fields}
\label{secpCF}
We recall the notion of $p$-adically closed fields
and quote some well known results from \cite{PR}.

A valuation $v$ on a field $K$ of characteristic zero
with residue field of characteristic $p>0$
is a {\bf $p$-valuation of $p$-rank $d\in\mathbb{N}$}
if 
${\rm dim}_{\mathbb{F}_p}\mcO_v/p\mcO_v=d$.
The residue field $\bar{K}_v$ of a $p$-valued field $(K,v)$ is finite,
and the value group $v(K^\times)$ is discrete and $v(p)\in\mathbb{Z}$.
If $e=v(p)$ and $f=[\bar{K}_v:\mathbb{F}_p]$, then $d=ef$, \cite[p.~15]{PR}.
We call $(p,e,f)$ the {\bf type} of $(K,v)$.

A $p$-valued field is {\bf $p$-adically closed} if it has no proper $p$-valued algebraic extension
of the same $p$-rank.
Every $p$-adically closed valued field $(K,v)$ has a unique $p$-valuation, \cite[6.15]{PR}.
A {\bf $p$-adic closure} of a $p$-valued field $(K,v)$ is an
algebraic extension of $(K,v)$ which is $p$-adically closed of the same $p$-rank as $(K,v)$.
A $p$-valued field $(K,v)$ is $p$-adically closed if and only if
it is Henselian and the value group $v(K^\times)$ is a $\mathbb{Z}$-group, \cite[3.1]{PR}.
Here, an ordered abelian group $\Gamma$ is a {\bf $\mathbb{Z}$-group}
if it is discrete and $(\Gamma:n\Gamma)=n$ for each $n\in\mathbb{N}$.
Any $p$-valued field $(K,v)$ has a $p$-adic closure.
A $p$-adic closure of $(K,v)$ is unique up to $K$-isomorphism 
if and only if $v(K^\times)$ is a $\mathbb{Z}$-group, \cite[3.2]{PR}.

\section{Classical Primes}
\label{sec1}

\noindent
We start by introducing the notion of a classical prime.
This notion generalizes the notion of a place of a number field
and unifies considerations about orderings and $p$-valuations.

\begin{Definition}
A {\bf prime} $\mf p$ of a field $K$ is
either an equivalence class of valuations on $K$ ($\mf p$ is a {\bf non-archimedean} prime)
or an ordering of $K$ ($\mf p$ is an ${\bf archimedean}$ prime).
If $\mfp$ is an equivalence class of valuations, 
let $v_{\mf p}$ be a fixed valuation in the class $\mf p$,
let $p_\mfp=\ch(\bar{K}_\mfp)$,
the characteristic of the {\bf residue field} 
$\bar{K}_\mfp=\bar{K}_{v_\mfp}$,
and denote by 
$$
 {\mc O}_{\mf p}=\{ x\in K \colon v_{\mf p}(x)\geq 0 \}
$$ 
the corresponding valuation ring.
If $\mfp$ is an ordering, 
denote $\mfp$ by $\leq_{\mf p}$,
let $p_\mfp=\infty$,
and denote by
$$
 {\mc O}_{\mf p}=\{ x\in K \colon x \geq_{\mf p} 0 \}
$$ 
the corresponding positive cone.
The {\bf localization} 
$K_\mfp$
of $K$ with respect to $\mfp$
is a Henselization of $(K,v_\mfp)$ (if $p_\mfp\neq\infty$) 
resp.~a real closure of $(K,\leq_\mfp)$ (if $p_\mfp=\infty$).
It is unique up to $K$-isomorphism.
\end{Definition}

\begin{Remark}\label{complexprimes}
The reader may have noticed that our definition of primes does not include the classical so called 
\textquoteleft complex primes\textquoteright,
i.e.~absolute values for which the corresponding completion is isomorphic to 
$\mathbb{C}$.
The reason for this omission is that both for the ${\rm P}S{\rm C}$ property and for the 
definition of the fields $\KtotS$ we are interested in,
the \textquoteleft complex primes\textquoteright~in $S$ can be disregarded.
\end{Remark}

\begin{Example}\label{Example1}
The field $\mathbb{Q}$ has one archimedean prime, which we denote by $\infty$,
and one non-archimedean prime for each prime number $p$, which we simply denote by $p$.
Note that in our notation, $\mathbb{Q}_p$ is now the field of $p$-adic {\em algebraic} numbers --
we denote the field of $p$-adic numbers by $\hat{\mathbb{Q}}_p$.
\end{Example}

\begin{Definition}\label{liesover}
Let $F/K$ be an extension of fields.
A prime $\mfP$ of $F$ {\bf lies above} a prime $\mfp$ of $K$
if 
$\mathcal{O}_\mfP\cap K=\mathcal{O}_\mfp.$
We write this as $\mfP|_K=\mfp.$
If $\mfp$ is a prime of $K$ and $\sigma\in{\rm Aut}(K)$ is an automorphism of $K$, 
then the {\bf conjugate}
$\sigma\mfp$ 
of $\mfp$ is the unique prime of $K$ with $\mcO_{\sigma\mfp}=\sigma(\mcO_\mfp)$.
\end{Definition}

\begin{Definition}\label{DefprimeCC}\label{defquasilocal}
A {\bf classical} prime $\mfp$ of $K$
is either an equivalence class of $p$-valuations, 
for some prime number $p$,
or an ordering of $K$.
For a classical prime $\mfp$ of $K$, a {\bf classical closure}
of $(K,\mfp)$ is a
$p$-adic closure of $(K,v_\mfp)$ resp.~a real closure of $(K,\leq_\mfp)$.
Let 
$\CC(K,\mfp)$ 
denote the set of all classical closures of $(K,\mfp)$
contained in $\tilde{K}$.
We say that $(K,\mfp)$ is {\bf classically closed} if $K\in\CC(K,\mfp)$,
i.e.~if $K$ is $p$-adically closed resp.~real closed.
A prime $\mfp$ of $K$ is {\bf local}
if it is classical and
the value group of $v_\mfp$ is isomorphic to $\mathbb{Z}$
resp.~the ordering $\leq_\mfp$ is archimedean.
A classical prime $\mfp$ of $K$ is {\bf quasi-local} if $K_\mfp\in\CC(K,\mfp)$,
i.e.~if the localization is a classical closure.
\end{Definition}

\begin{Remark}\label{Remarkpadic}\label{numberfieldlocal}\label{remarkquasilocal}
Note that 
this definition of local primes essentially coincides with
the definition of local primes in \cite{GeyerJarden} and \cite{HJPe},
and the 
\textquoteleft classical P-adic valuations and orderings\textquoteright~in \cite{HJPd},
except for the complex primes (cf.~Remark~\ref{complexprimes}) .
A non-archimedean classical prime is quasi-local
if and only if its value group is a $\mathbb{Z}$-group,
cf.~Section \ref{secpCF}.
If $\mfp$ is quasi-local, 
then all $K^\prime\in\CC(K,\mfp)$ are $K$-conjugate.
Each prime of a number field is local,
and each local prime is quasi-local.
\end{Remark}

\begin{Definition}
The {\bf type} 
$\tp(\mfp)=(p_\mfp,e_\mfp,f_\mfp)$ 
of a classical prime $\mfp$ of $K$ is
the type $(p,e,f)$ of the $p$-valuation $v_\mfp$ if $p_\mfp=p$,
and $(\infty,1,1)$ if $p_\mfp=\infty$.
If $\mfP$ lies above $\mfp$, 
then the {\bf relative type} of $\mfP$ over $\mfp$ is 
$\tp(\mfP/\mfp)=(e_\mfP/e_\mfp,f_\mfP/f_\mfp)\in\mathbb{N}^2$.
We introduce a partial ordering on the set $\mathbb{N}^2$ 
of relative types by defining
$(e,f)\leq(e',f')$ if $e\leq e'$ and $f|f'$,
and a partial ordering on the set of types by defining
$(p,e,f)\leq(p',e',f')$ if $p=p'$, $e\leq e'$ and $f|f'$.
Since in a classically closed field $(F,\mfP)$ the prime $\mfP$ is unique, \cite[Prop.~7.2(c)]{HJPb},
we write $\mfP_F=\mfP$ and $\tp(F)=\tp(\mfP_F)$.
\end{Definition}

\begin{Definition}\label{DefR}
We say that a field $F$ is $\mathbf{P}\mathcal{F}\mathbf{C}$  with respect
to a family $\mathcal{F}$ of algebraic extensions of $F$
if every smooth $F$-variety $V$
has an $F$-rational point, provided it has an $F^\prime$-rational point for each $F^\prime\in\mathcal{F}$,
cf.~\cite[\S 7]{Jarden1991}.

If $\mcS$ is a set of primes of $F$, then
$F$ is {\bf pseudo-$\mcS$-closed with respect to localizations} ($\mathbf{P\mcS CL}$)
if it is ${\rm P}\mathcal{F}{\rm C}$ with respect to the family 
$$
 \mathcal{F}=\{F_\mfP \colon \mfP\in\mcS\}
$$
of localizations.
If $\mcS$ is a set of classical primes of $F$, then
$F$ is {\bf pseudo-$\mcS$-closed with respect to classical closures} ($\mathbf{P\mcS CC}$)
if it is ${\rm P}\mathcal{F}{\rm C}$ with respect to the family 
$$
 \mathcal{F}=\bigcup_{\mfP\in\mcS}\CC(F,\mfP)
$$
of classical closures.
If $\mcS$ is a set of primes of $F$, then
$$
 R(\mcS)=\bigcap_{\mfP\in\mcS}\mathcal{O}_\mfP
$$
is the {\bf holomorphy domain}\footnote{Note that if $\mcS$ contains archimedean primes, then $R(\mcS)$ is only a semiring but not a ring.} of $\mcS$.
\end{Definition}

\begin{Remark}\label{PSCCimpliesPSCold}
Since every classical closure is Henselian resp.~real closed,
$\PmcSCC$ implies $\PmcSC$.
However, the converse does not hold.
\end{Remark}

\section{$\PStauCC$, $\PStauCL$, and $\PCC$ Fields}

\noindent
In this section we define the class of fields we are working with.
For the rest of this work, we fix the following setting.

\begin{Setting}\label{SettingPSCC}~
\begin{enumerate}
 \item[\textbullet] {\bf $K$ is a fixed base field of characteristic $0$.}
 \item[\textbullet] {\bf $S$ is a finite set of local primes of $K$.}
 \item[\textbullet] {\bf $\tau\in\mathbb{N}^2$ is a relative type.}
 \item[\textbullet] {\bf $F$ is an extension of $K$.}
\end{enumerate}
\end{Setting}

\begin{Definition}\label{DefCC}
For $\mfp\in S$
denote by $\mcS_\mfp^\tau(F)$ 
the set of all classical primes $\mfP$ of $F$ lying above $\mfp$ with
$\tp(\mfP/\mfp)\leq\tau$. Also, let
\begin{eqnarray*}
 \mcS_S^\tau(F)&=&\bigcup_{\mfp\in S}\mcS_\mfp^\tau(F),\\
  R_\mfp^\tau(F)&=&R(\mcS_\mfp^\tau(F)),\\
 {\rm CC}_\mfp^\tau(F)&=&\bigcup_{\mfP\in\mcS_\mfp^\tau(F)}{\rm CC}(F,\mfP),\\
 {\rm CC}_S^\tau(F)&=&\bigcup_{\mfp\in S}{\rm CC}_\mfp^\tau(F).
\end{eqnarray*}
We say that
$F$ is 
{\bf pseudo-$S^\tau$-closed with respect to localizations} ({\bf P}$S^\tau${\bf CL})
resp.~{\bf pseudo-$S^\tau$-closed with respect to classical closures} ({\bf P}$S^\tau${\bf CC}) if $F$ is 
${\rm P}\mcS{\rm CL}$
resp.~${\rm P}\mcS{\rm CC}$
with respect to
$\mcS=\mcS_S^\tau(F)$.
\end{Definition}

\begin{Remark}\label{PSCCimpliesPSC}\label{PCM}
Note that $F$ is $\PStauCC$ if and only if it is ${\rm P}\mathcal{F}{\rm C}$
with respect to the family 
$\mathcal{F}=\CC_S^\tau(F)$.
If $F$ is $\PStauCC$, then $F$ is $\PStauCL$,
cf.~Remark~\ref{PSCCimpliesPSCold}.
In the case $\tau=(1,1)$ we will drop $\tau$ in all notations, 
and write for example $\PSCC$ instead of $\PStauCC$.

Note that for $K=\mathbb{Q}$ and $|S|=1$,
our notion of $\PSCC$ fields coincides with the classical notions
of $\PpC$ resp.~$\PRC$ fields.
For $K=\mathbb{Q}$ and $S$ a finite set of prime numbers (cf.~Example \ref{Example1}), 
the notion of $\PSCC$ fields coincides with the notion
of ${\rm PC}_M$ fields of \cite{Kuenzi2} and \cite{Kuenzi3}.
For $K=\Q$ and $S=\emptyset$, a $\PSCC$ field
is just a PAC field, cf.~\cite[Chapter~11]{FJ3}.

Note that there is a related notion of 
${\rm P}S{\rm C}$ fields in the literature (cf.~the Introduction).
However, in 
\cite{JardenRazon} and \cite{GeyerJarden}
this property is defined only for algebraic extensions of $K$,
and in 
\cite{JardenRazonGeyer}, \cite{RazonDensity} and \cite{HJPe}
only for subextensions of $\KtotS/K$.
For subextensions of $\KtotS/K$, the three notions 
$\PSC$, $\PSCL$, and $\PSCC$ coincide,
but both the $\PSCL$ property and the $\PSCC$ property
are defined for arbitrary extensions of $K$.
The reason for our focus on the $\PSCC$ property is that, 
as we show, it is elementary.
\end{Remark}

We now briefly recall Pop's definition of a pseudo classically closed field
and show how it fits into the picture.

\begin{Definition}
Let $\CC(F)$ denote the set of all classical closures of $F$ with respect to arbitrary classical primes of $F$.
A {\bf classical field} is either $\mathbb{R}$ or a finite extension of $\hat{\Q}_p$ for some $p$.
If $E$ is a classical field, let
${\rm loc}^E(F)$ be the set of all algebraic extensions of $F$ that
are $\mcL_{\rm ring}$-elementarily equivalent to $E$. 
A field $F$ is $\mathbf{PCC}$ if there exists a finite family of classical fields $\mathcal{E}$ 
such that $F$ is ${\rm P}\mathcal{F}{\rm C}$ for $\mathcal{F}=\bigcup_{E\in\mathcal{E}}{\rm loc}^E(F)$.\footnote{This
definition coincides with the original one 
since the ${\rm P}\mathcal{F}{\rm C}$ property is preserved under enlarging $\mathcal{F}$.}
\end{Definition}

\begin{Lemma}\label{Fmin}
If $F$ is ${\rm P}\mathcal{F}{\rm C}$
with respect to $\mathcal{F}=\bigcup_{E\in\mathcal{E}}{\rm loc}^E(F)$ for a finite family 
of classical fields $\mathcal{E}$,
then $F$ is also ${\rm P}\mathcal{F}_{\rm min}{\rm C}$,
where $\mathcal{F}_{\rm min}$ is the set of minimal elements of $\mathcal{F}$.
Moreover, $\mathcal{F}_{\rm min}=\CC(F)$.
\end{Lemma}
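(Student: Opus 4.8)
The plan is to prove the two assertions of Lemma \ref{Fmin} in turn, the second being the substantive one. For the first assertion, recall that the ${\rm P}\mathcal{F}{\rm C}$ property only becomes weaker when $\mathcal{F}$ is enlarged: if a smooth $F$-variety $V$ has an $F'$-rational point for every $F'\in\mathcal{F}_{\rm min}$, we must produce an $F'$-rational point for every $F'\in\mathcal{F}$. So fix $F'\in\mathcal{F}$, say $F'\in{\rm loc}^E(F)$ with $E\in\mathcal{E}$. The key point is that every element of $\mathcal{F}$ sits above a minimal one: given $F'$, I would find $F''\in\mathcal{F}_{\rm min}$ with $F''\subseteq F'$ (as algebraic extensions of $F$ inside $\tilde F$). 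Since $F''$ has a rational point on $V$ and $F''\subseteq F'$, so does $F'$. The existence of such an $F''$ needs a descending-chain / Zorn argument inside the poset of algebraic extensions of $F$ that are $\mcL_{\rm ring}$-elementarily equivalent to some member of $\mathcal E$; here finiteness of $\mathcal E$ and the fact that each classical field $E$ (being $\mathbb R$ or a finite extension of $\hat\Q_p$) is ``small'' as a model — its elementary class among algebraic extensions of $F$ corresponds, via the classical theory of real and $p$-adically closed fields, to a bounded amount of data (an ordering, or a $p$-valuation of a fixed type $(p,e,f)$ with $ef$ bounded) — will be what guarantees that every chain has a lower bound and hence that minimal elements exist below each element.

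The heart of the lemma is the identification $\mathcal{F}_{\rm min}=\CC(F)$, i.e.\ that the minimal algebraic extensions of $F$ elementarily equivalent to members of $\mathcal E$ are exactly the classical closures of $F$ at its classical primes. For the inclusion $\CC(F)\subseteq\mathcal{F}_{\rm min}$: let $F'\in\CC(F)$, so $F'$ is a $p$-adic closure of $(F,v_\mfp)$ or a real closure of $(F,\leq_\mfp)$ for some classical prime $\mfp$ of $F$. By the quantifier-elimination / completeness facts recalled in \S\ref{secpCF} and \S\ref{secRCF} (completeness of the theory of real closed fields, and of $p$-adically closed fields of a fixed type), $F'$ is $\mcL_{\rm ring}$-elementarily equivalent to $\mathbb R$, respectively to the $p$-adic closure of $\Q$ of the corresponding type — a finite extension of $\hat\Q_p$ — hence to a classical field. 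One must check this classical field lies in (or is dominated by a member of) $\mathcal E$; this uses that $F$ itself is ${\rm P}\mathcal F{\rm C}$ and hence, via a standard argument producing a variety that witnesses a prime of $F$ (a curve with points only at one local completion), every classical prime of $F$ that admits a closure not dominating some $E\in\mathcal E$ would contradict ${\rm P}\mathcal F{\rm C}$ — in other words $\CC(F)\subseteq\mathcal F$. Minimality of $F'$ within $\mathcal F$ then follows because a classical closure is a \emph{minimal} extension realizing that elementary type: any proper subextension has strictly smaller $p$-rank or is not real/$p$-adically closed, hence is not elementarily equivalent to a classical field at all. For the reverse inclusion $\mathcal{F}_{\rm min}\subseteq\CC(F)$: a minimal $F'\in\mathcal F$ is an algebraic extension of $F$ with $F'\equiv E$ for some classical field $E$; being elementarily equivalent to $\mathbb R$ forces $F'$ real closed, and being elementarily equivalent to a finite extension of $\hat\Q_p$ forces $F'$ to be $p$-adically closed of the corresponding type (again by the QE results). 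Restricting the canonical ordering or $p$-valuation of $F'$ to $F$ yields a classical prime $\mfp$ of $F$ with $F'\in\CC(F,\mfp)$, so $F'\in\CC(F)$.

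The main obstacle I expect is the ``domination'' step: showing that for every algebraic extension $F'$ of $F$ with $F'\equiv E$ there is a classical closure $F''\in\CC(F)$ with $F''\subseteq F'$, and that $F''\in\mathcal F$ — equivalently, controlling which classical primes of $F$ actually occur. The clean way to handle this is to run it through the ${\rm P}\mathcal F{\rm C}$ hypothesis itself: if some classical prime $\mfp$ of $F$ had its closures avoided by all of $\mathcal F$, one builds (using e.g.\ a suitable smooth affine curve over $F$, as in the standard theory — cf.\ \cite{Popclassical}, \cite{FJ3}) a smooth $F$-variety with $F'$-points for all $F'\in\mathcal F$ but no $F$-point, contradicting ${\rm P}\mathcal F{\rm C}$; hence $\CC(F,\mfp)$ meets $\mathcal F$ for every such $\mfp$, and minimality pins it down. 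Once $\CC(F)\subseteq\mathcal F$ is established, the minimality bookkeeping and the two inclusions are routine given the classical theory of real closed and $p$-adically closed fields.
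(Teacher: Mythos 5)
The paper gives no proof of this lemma; it simply cites Pop's Theorem~2.3 and Corollary~2.11 in \cite{Popclassical}, so the substantive argument lives there. Your outline captures the right structure --- reduce the first assertion to showing that every $F'\in\mathcal{F}$ dominates a minimal element, identify $\mathcal{F}_{\rm min}$ with $\CC(F)$, and obtain $\CC(F)\subseteq\mathcal{F}$ by playing the ${\rm P}\mathcal{F}{\rm C}$ hypothesis against a witnessing variety --- and that is indeed roughly how Pop proceeds. But two of your steps, as written, have genuine gaps. First, the Zorn/chain argument you sketch for producing minimal elements below a given $F'\in\mathcal{F}$ does not obviously work: a descending chain of $p$-adically closed algebraic extensions of $F$ of a fixed type need not intersect to a $p$-adically closed field (Henselianity passes to the intersection, but the $\mathbb{Z}$-group condition on the value group can fail), and the ``bounded amount of data'' heuristic does not repair this. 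The cleaner route bypasses Zorn entirely: once $\CC(F)\subseteq\mathcal{F}$ is known, each $F'\in\mathcal{F}$, being real closed or $p$-adically closed, carries a canonical classical prime; restricting it to $F$ and taking a classical closure of $F$ inside $F'$ exhibits a member of $\CC(F)\subseteq\mathcal{F}$ below $F'$ directly.

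Second, and more seriously, the minimality of a classical closure $F''\in\CC(F)$ within $\mathcal{F}$ requires more than ``a proper subextension has strictly smaller $p$-rank or is not real/$p$-adically closed.'' One must also rule out that an intermediate field $F\subsetneq F_0\subsetneq F''$ is elementarily equivalent to a classical field of a \emph{different} type --- say $q$-adically closed with $q\neq p$, or real closed, when $F''$ is $p$-adically closed --- and that is a nontrivial Galois-theoretic fact about absolute Galois groups of local fields (cf.~\cite[Prop.~7.2]{HJPb}), not a consequence of the $p$-rank bookkeeping you cite. Finally, the crucial inclusion $\CC(F)\subseteq\mathcal{F}$, i.e.\ that every classical prime of $F$ has a closure elementarily equivalent to some $E\in\mathcal{E}$, is precisely the density result \cite[Theorems~2.5 and~2.9]{Popclassical} that Proposition~\ref{dense} of this paper cites separately. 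You correctly identify it as the heart of the matter, but the witnessing-variety argument is only gestured at, and it is where all the real work of the lemma lies.
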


\begin{proof}
See \cite[Theorem 2.3 and Corollary 2.11]{Popclassical}.
\end{proof}

\begin{Proposition}\label{PCCPSCC}
A field is $\PCC$ if and only if it is
$\PStauCC$ for some finite set $S$ of primes of $K=\Q$ and some $\tau\in\mathbb{N}^2$.
\end{Proposition}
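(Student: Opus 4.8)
The plan is to prove both implications by comparing the two families of algebraic extensions with respect to which the ${\rm P}\mathcal F{\rm C}$ property is tested, using Lemma~\ref{Fmin} to normalize the PCC side to $\CC(F)$. First I would fix $K=\Q$ and recall from Example~\ref{Example1} that the classical primes of $\Q$ are exactly $\infty$ and the prime numbers $p$, each of which is local, so Setting~\ref{SettingPSCC} applies to any finite set $S$ of them.

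For the direction ``$\PStauCC$ $\Rightarrow$ $\PCC$'': suppose $F$ is $\PStauCC$ for some finite $S$ and some $\tau=(e_0,f_0)$. I would produce a finite family $\mcE$ of classical fields such that $\bigcup_{E\in\mcE}{\rm loc}^E(F)\supseteq\CC_S^\tau(F)$. For $\mfp=\infty\in S$ take $E=\mathbb R$; for a prime $p\in S$, every $\mfP\in\mcS_p^\tau(F)$ has a classical closure $F'$ which is a $p$-adically closed field of some type $(p,e,f)$ with $e\le e_0$, $f\mid f_0$ --- there are only finitely many such types, and for each the $p$-adic closure is elementarily equivalent (over $\mcL_{\rm ring}$) to a fixed finite extension $E_{p,e,f}$ of $\hat\Q_p$ by the relevant completeness/model-completeness facts for $p$-adically closed fields of fixed type (Section~\ref{secpCF}, \cite{PR}). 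Let $\mcE$ consist of $\mathbb R$ (if $\infty\in S$) together with all these $E_{p,e,f}$. Then each member of $\CC_S^\tau(F)$ lies in ${\rm loc}^E(F)$ for a suitable $E\in\mcE$, so $\CC_S^\tau(F)\subseteq\bigcup_{E\in\mcE}{\rm loc}^E(F)$; since the ${\rm P}\mathcal F{\rm C}$ property is preserved under enlarging the family, $F$ is ${\rm P}\mathcal F{\rm C}$ for $\mathcal F=\bigcup_{E\in\mcE}{\rm loc}^E(F)$, i.e.\ $F$ is $\PCC$.

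For the converse ``$\PCC$ $\Rightarrow$ $\PStauCC$'': suppose $F$ is $\PCC$. By Lemma~\ref{Fmin}, $F$ is ${\rm P}\mathcal F_{\min}{\rm C}$ with $\mathcal F_{\min}=\CC(F)$, the set of all classical closures of $F$ with respect to arbitrary classical primes of $F$. Now I would choose $S$ and $\tau$ large enough to capture all of these: each classical closure of $F$ is either a real closure (coming from some ordering of $F$, which lies above $\infty$) or a $p$-adic closure of some $p$-valued subfield, whose type $(p,e,f)$ I would like to bound. The point is that the finite family $\mcE$ witnessing $\PCC$ involves only finitely many primes $p$ and, crucially, the types of the $p$-adic closures appearing in $\CC(F)=\mathcal F_{\min}$ are constrained by $\mcE$; take $S$ to be $\{\infty\}$ together with these finitely many $p$, and take $\tau=(e_0,f_0)$ with $e_0$ the maximum of the ramification indices and $f_0$ a common multiple of the residue degrees occurring. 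Then every classical prime of $F$ contributing to $\CC(F)$ lies above some $\mfp\in S$ with relative type $\le\tau$ (over the primes of $\Q$, where $\tp(\mathfrak p)=(p,1,1)$ or $(\infty,1,1)$, so the relative type is just the type of the closure), hence $\CC(F)\subseteq\CC_S^\tau(F)$; again by monotonicity of ${\rm P}\mathcal F{\rm C}$ under enlarging $\mathcal F$, $F$ is $\PStauCC$.

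\textbf{Main obstacle.} The delicate point is the bookkeeping of types on the $\PCC$ side: I need to know that $\CC(F)=\mathcal F_{\min}$ only contains $p$-adic closures of \emph{boundedly many types}, and that these types can be read off from the witnessing family $\mcE$ of classical fields. This should follow because a $p$-adic closure $F'$ of $F$ of type $(p,e,f)$ is elementarily equivalent to the (unique up to elementary equivalence) finite extension of $\hat\Q_p$ of that type, and being $\le$-minimal in $\mathcal F$ forces $F'$ to be $\mcL_{\rm ring}$-equivalent to a minimal element of $\mcE$ --- of which there are only finitely many, each with a fixed type. I would also need to double-check that in the definition of $\mcS_\mfp^\tau(F)$ the condition $\tp(\mfP/\mfp)\le\tau$ with $\mfp$ a prime of $\Q$ (type $(p,1,1)$) is exactly the condition ``the $p$-adic closure has type $(p,e,f)$ with $e\le e_0$, $f\mid f_0$'', which is immediate from the definition of relative type, so no genuine difficulty there. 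Everything else is a routine application of the monotonicity of ${\rm P}\mathcal F{\rm C}$ in $\mathcal F$ and of the structure theory of $p$-adically closed fields recalled in Section~\ref{secpCF}.
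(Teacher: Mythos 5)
Your proof follows essentially the same route as the paper: for both directions you use Lemma~\ref{Fmin} to normalize the $\PCC$ side to $\mathcal{F}_{\min}=\CC(F)$, and you pass between $\CC(F)$ and $\CC_S^\tau(F)$ by bounding types and using monotonicity of ${\rm P}\mathcal{F}{\rm C}$ under enlarging $\mathcal{F}$. One factual slip, though: you assert (in the forward direction and again in the ``main obstacle'') that a $p$-adic closure of type $(p,e,f)$ is elementarily equivalent over $\mcL_{\rm ring}$ to \emph{the} (unique up to elementary equivalence) finite extension of $\hat\Q_p$ of that type. That is false for $p$-rank $>1$: e.g.\ $\hat\Q_p(\sqrt p)$ and $\hat\Q_p(\sqrt{up})$ for a nonsquare unit $u$ both have type $(p,2,1)$ but are not $\mcL_{\rm ring}$-elementarily equivalent (one satisfies $\exists x\,(x^2=p)$, the other does not). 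This is precisely why the paper invokes \cite[Prop.~7.2(j),(k)]{HJPb}: a given type is realized by only \emph{finitely many} classical fields up to elementary equivalence, not exactly one. The repair is cosmetic --- let $\mcE$ consist of \emph{all} finite extensions of $\hat\Q_p$ (and $\mathbb{R}$) whose type lies below $\tau$, which is still a finite collection --- and does not change the structure of your argument.
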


\begin{proof}
Suppose $F$ is ${\rm P}\mathcal{F}{\rm C}$
with respect to $\mathcal{F}=\bigcup_{E\in\mathcal{E}}{\rm loc}^E(F)$ for a finite family 
of classical fields $\mathcal{E}=\{E_1,\dots,E_n\}$.
If $\tp(E_i)=(p_i,e_i,f_i)$, let $S=\{p_1,\dots,p_n\}$, $e=e_1\cdots e_n$, $f=f_1\cdots f_n$, and $\tau=(e,f)$.
By Lemma \ref{Fmin} it follows that
$F$ is ${\rm P}\mathcal{F}_{\rm min}{\rm C}$,
and that $\mathcal{F}_{\rm min}=\CC(F)$.
But then $\CC(F)=\CC_S^\tau(F)$,
since if $F'\equiv E_i$, then $F'$ is classically closed and $\tp(F')=\tp(E_i)$, \cite[Prop.~7.2(h)]{HJPb}.
Thus, $F$ is $\PStauCC$.

Conversely, let $F$ be $\PStauCC$. 
Since every classically closed field is elementarily equivalent to a classical field,
and only finitely many types occur among $\CC_S^\tau(F)$,
each of which is the type of only finitely many classical fields,
\cite[Prop.~7.2(j),(k)]{HJPb},
there exists a finite family of classical fields $\mathcal{E}$ such that
$\CC_S^\tau(F)\subseteq\mathcal{F}:=\bigcup_{E\in\mathcal{E}}{\rm loc}^E(F)$.
It follows that $F$ is ${\rm P}\mathcal{F}{\rm C}$, and hence $\PCC$.
\end{proof}

\begin{Definition}
We say that $F$ is {\bf $S^\tau$-quasi-local}
if every $\mfP\in\mcS_S^\tau(F)$ is quasi-local
(cf.~Definition \ref{defquasilocal}).
\end{Definition}

\begin{Lemma}\label{quasilocal}
If $F/K$ is algebraic, then $F$ is $S^\tau$-quasi-local.
\end{Lemma}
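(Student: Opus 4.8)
The plan is to reduce the statement to the corresponding local statements about $p$-valuations and orderings, using the fact that $F/K$ is algebraic so that $K$ is a number field-free but still "small" base. Let $\mfP\in\mcS_S^\tau(F)$, say $\mfP$ lies above $\mfp\in S$ with $\tp(\mfP/\mfp)\leq\tau$. Since $S$ consists of \emph{local} primes of $K$ (Setting~\ref{SettingPSCC}), each $\mfp\in S$ is in particular quasi-local by Remark~\ref{remarkquasilocal} (indeed, each local prime is quasi-local). I must show that $\mfP$ itself is quasi-local, i.e. that the localization $F_\mfP$ is already a classical closure of $(F,\mfP)$.

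First I would treat the non-archimedean case. Here $\mfp$ is an equivalence class of $p$-valuations with value group isomorphic to $\mathbb{Z}$ (local), and $\mfP$ is a $p$-valuation of $F$ above it. By Remark~\ref{remarkquasilocal}, $\mfP$ is quasi-local if and only if the value group $v_\mfP(F^\times)$ is a $\mathbb{Z}$-group. Since $F/K$ is algebraic, $v_\mfP(F^\times)$ is contained in the divisible hull of $v_\mfp(K^\times)\cong\mathbb{Z}$, hence is a subgroup of $\mathbb{Q}$; moreover $[v_\mfP(F^\times):v_\mfp(K^\times)]$ divides $e_\mfP/e_\mfp\leq e$ (the ramification bound from $\tp(\mfP/\mfp)\leq\tau$), so the value group is discrete, in fact $\cong\mathbb{Z}$. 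A discrete subgroup of $\mathbb{Q}$ isomorphic to $\mathbb{Z}$ is automatically a $\mathbb{Z}$-group (it equals $\frac{1}{n}\mathbb{Z}$ for some $n$), so $\mfP$ is quasi-local.

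For the archimedean case, $\mfp$ is an archimedean ordering of $K$ and $\mfP$ is an ordering of $F$ above it. By Definition~\ref{defquasilocal}, $\mfP$ is quasi-local precisely when $F_\mfP$, a real closure of $(F,\leq_\mfP)$, is a classical closure — but for orderings a real closure always \emph{is} the classical closure, so the only content is that $\mfP$ is local, i.e. archimedean. Since $F/K$ is algebraic and $\leq_\mfP$ extends the archimedean ordering $\leq_\mfp$, the ordering $\leq_\mfP$ is again archimedean (an algebraic extension of an archimedean ordered field is archimedean), so $\mfP$ is local, hence quasi-local. The main obstacle is really just bookkeeping: making sure the ramification/residue-degree bound $\tp(\mfP/\mfp)\leq\tau$ (rather than arbitrary finiteness) is what forces discreteness of the value group in the non-archimedean case, and invoking the right normalization convention for discrete valuations so that "value group $\cong\mathbb{Z}$" indeed gives a $\mathbb{Z}$-group.
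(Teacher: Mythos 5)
Your proof reaches the right conclusion, and the core of the non-archimedean argument (value group of $v_\mfP$ is contained in $\mathbb{Q}$ since $F/K$ is algebraic, discrete, hence $\cong\mathbb{Z}$, hence a $\mathbb{Z}$-group) is the same as the paper's. Two points where you deviate unnecessarily:

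In the non-archimedean case you invoke the ramification bound from $\tp(\mfP/\mfp)\leq\tau$ to get finite index $[v_\mfP(F^\times):v_\mfp(K^\times)]$ and thereby discreteness. The paper does not need $\tau$ at all: since $\mfP\in\mcS_\mfp^\tau(F)$ is a \emph{classical} prime, $v_\mfP$ is a $p$-valuation, and by the facts recalled in Section~\ref{secpCF} every $p$-valuation automatically has discrete value group. Combined with containment in the divisible hull of $\mathbb{Z}$, this already gives $v_\mfP(F^\times)\cong\mathbb{Z}$. Your route has a mild circularity: $e_\mfP$ is defined via the normalization $v_\mfP(p)\in\mathbb{Z}$, which presupposes the discreteness you are trying to derive. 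This is harmless only because discreteness is in fact automatic -- but then the whole appeal to $e_\mfP/e_\mfp\leq e$ becomes redundant.

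In the archimedean case you prove that $\leq_\mfP$ is archimedean, i.e.~that $\mfP$ is \emph{local}, and then conclude quasi-locality. This is more than is needed. By Definition~\ref{defquasilocal}, for an ordering the localization $F_\mfP$ is a real closure and the classical closures of $(F,\mfP)$ are exactly the real closures, so every ordering is quasi-local with no hypothesis on $F/K$ whatsoever. The paper's proof accordingly treats only the non-archimedean case.
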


\begin{proof}
This follows from the assumption that $S$ consists of local primes:
If $\mfp\in S$ is a $p$-valuation with value group $\mathbb{Z}$
and $\mfP\in\mcS_\mfp^\tau(F)$, then the value group of $\mfP$ is discrete and contained in the divisible hull of $\mathbb{Z}$ since $F/K$ is algebraic,
hence it is isomorphic to $\mathbb{Z}$ itself.
\end{proof}

\begin{Proposition}\label{dense}
If $F$ is $\PStauCC$, then $F$ is $S^\tau$-quasi-local.
\end{Proposition}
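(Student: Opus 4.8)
The plan is to reduce the problem to showing that every $\mfP \in \mcS_S^\tau(F)$ has value group isomorphic to $\mathbb{Z}$ (equivalently, is quasi-local, since by Remark~\ref{remarkquasilocal} a non-archimedean classical prime is quasi-local iff its value group is a $\mathbb{Z}$-group, and a discrete value group with $\mathbb{Z}$ as a convex subgroup that is a $\mathbb{Z}$-group must be $\mathbb{Z}$ itself; archimedean primes are automatically local hence quasi-local). So fix $\mfp \in S$ a $p$-valuation with value group $\mathbb{Z}$, and fix $\mfP \in \mcS_\mfp^\tau(F)$ with value group $\Gamma = v_\mfP(F^\times)$. The residue degree and ramification of $\mfP$ over $\mfp$ are bounded by $\tau = (e,f)$, so $\Gamma$ is discrete with $e_\mfP \cdot \mathbb{Z}$ contained as a convex subgroup of finite index; I want to show $\Gamma$ is actually a $\mathbb{Z}$-group. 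Suppose not: then there is some $n \in \mathbb{N}$ with $(\Gamma : n\Gamma) > n$, so there is an element of $F$ whose $\mfP$-value is not an $n$-th multiple of a generator in the expected way — more concretely, after passing to the $p$-adic closure $F'$ of $(F,\mfP)$, the value group $v_{F'}({F'}^\times)$ is not a $\mathbb{Z}$-group, and the type of $F'$ differs from what the bound $\tau$ together with quasi-locality would give.

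The key step is to build a smooth $F$-variety witnessing a failure of the $\PStauCC$ property. The standard device (as in the classical $\PpC$ case, going back to Prestel–Roquette and Kuhlmann) is: if $\Gamma$ fails to be an $n$-divisible-group in the $\mathbb{Z}$-group sense, choose $a \in F^\times$ with $v_\mfP(a)$ generating $\Gamma / n\Gamma$ nontrivially in a way not achievable in a $\mathbb{Z}$-group, and consider the variety defined by an equation like $Y^n = a X^{?} + \dots$ or more robustly a norm-form / Weil-restriction construction, engineered so that it acquires a smooth rational point over every classical closure in $\CC_S^\tau(F)$ but has no $F$-rational point. For the primes in $S$ other than $\mfp$, and for the $p$-adic/real closures attached to other primes above $\mfp$, one must check the variety still has smooth local points; this is arranged by taking the variety to be (geometrically) a rational variety of the form $\{Y^n = a\} \times \mathbb{A}^m$ or similar, so that local solvability at "well-behaved" primes is automatic, while at a closure of $(F,\mfP)$ with non-$\mathbb{Z}$-group value group one gets a point precisely because there $a$ becomes an $n$-th power (the value is divisible), yet over $F$ itself $a$ is not an $n$-th power. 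Then $\PStauCC$ forces an $F$-rational point, a contradiction — so no such $\mfP$ exists, i.e.\ $F$ is $S^\tau$-quasi-local.

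An alternative, possibly cleaner route avoids cooking up the variety by hand: use that $F$ being $\PStauCC$ implies (via Theorem~\ref{thm2}, whose proof presumably does not circularly depend on this proposition, or via a direct diophantine argument) that $R_\mfp^\tau(F)$ is diophantine; one can then argue that if some $\mfP$ were not quasi-local, the holomorphy domain would behave incompatibly — but I expect the paper takes the direct variety-construction approach since it is more elementary. The main obstacle is the variety construction: one needs a single smooth $F$-variety that simultaneously (i) has smooth points over every member of $\CC_S^\tau(F)$, including all the "good" primes, and (ii) has no $F$-point, with the failure localized exactly at the hypothetical bad $\mfP$; getting smoothness and the local point count right at the archimedean closures and at $p$-adic closures of the other primes — without inadvertently killing local solvability — is the delicate bookkeeping. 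The rest (the value-group / $\mathbb{Z}$-group manipulations and the reduction from quasi-local to "value group is $\mathbb{Z}$") is routine valuation theory as in Section~\ref{secpCF}.
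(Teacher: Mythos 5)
Your proposal takes a fundamentally different route from the paper, and it is incomplete exactly where the difficulty lies. The paper does not construct a witnessing variety at all: it first reduces the claim to the $\PCC$ setting via Proposition~\ref{PCCPSCC} (every $\PStauCC$ field is ${\rm P}\mathcal{F}{\rm C}$ for a finite family $\mathcal{E}$ of classical fields), then invokes Lemma~\ref{Fmin} to conclude that $\CC_S^\tau(F)\subseteq\mathcal{F}_{\rm min}=\CC(F)$, and finally cites Pop's Theorems~2.5 and~2.9 from \cite{Popclassical}, which supply quasi-locality for PCC fields. Your approach, by contrast, is to exhibit a smooth $F$-variety with local points over every member of $\CC_S^\tau(F)$ but no $F$-point whenever some $\mfP$ fails to be quasi-local. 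You correctly identify this variety construction as ``the main obstacle'' and ``delicate bookkeeping,'' but you never carry it out: the candidate equations (``$Y^n=aX^{?}+\dots$'', ``a norm-form / Weil-restriction construction, engineered so that\dots'') are left as placeholders, and the verification that local solvability at the other primes in $S$ and at the archimedean closures is not destroyed is exactly the nontrivial content. So the heart of the argument is missing.

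There are also errors in the reduction step. You assert that a discrete value group with $\mathbb{Z}$ as a convex subgroup that is a $\mathbb{Z}$-group ``must be $\mathbb{Z}$ itself''; this is false ($\mathbb{Z}\times_{\rm lex}\mathbb{Q}$ is a discrete $\mathbb{Z}$-group with convex subgroup $\mathbb{Z}$). The proposition to be proved is that the value group is a $\mathbb{Z}$-group, i.e.\ that $\mfP$ is quasi-local (Remark~\ref{remarkquasilocal}), not that it is local; you are conflating the two notions. Similarly, archimedean primes are not ``automatically local'' --- by Definition~\ref{defquasilocal} an ordering is local iff it is archimedean as an ordering --- though they \emph{are} automatically quasi-local since every real closure is a classical closure, so that part of your conclusion happens to be right for the wrong reason. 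Finally, your ``alternative, possibly cleaner route'' via Theorem~\ref{thm2} would indeed be circular: quasi-locality (via Proposition~\ref{dense}) is used, through Proposition~\ref{quantifylocal} and Proposition~\ref{PsefCCAx}, in establishing the axiomatization on which the full strength of Theorem~\ref{thm2} for $\PStauCC$ fields rests, so you cannot freely assume it here.
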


\begin{proof}
By Proposition \ref{PCCPSCC}, 
$F$ is ${\rm P}\mathcal{F}{\rm C}$
with respect to $\mathcal{F}=\bigcup_{E\in\mathcal{E}}{\rm loc}^E(F)$ for a finite family 
of classical fields $\mathcal{E}$.
By Lemma \ref{Fmin}, $F$ is ${\rm P}\mathcal{F}_{\rm min}{\rm C}$,
and $\mathcal{F}_{\rm min}=\CC(F)$, hence $\CC_S^\tau(F)\subseteq\mathcal{F}_{\rm min}$.
Thus the claim follows from
\cite[Theorem 2.5 and Theorem 2.9]{Popclassical}.
\end{proof}

\section{Defining Holomorphy Domains}
\label{sec2}

\noindent
This section contains the technical first-order definition
of the holomorphy domains.
For a moment we forget about $K$ and $S$ and consider the following setting.

\begin{Setting}\label{settingS}~
\begin{enumerate}
 \item[\textbullet] $F$ is a field of characteristic zero.
 \item[\textbullet] $\mcS$ is a set of classical\footnote{This condition can be weakened. For example, 
   most results of this section apply also to valuations of residue characteristic zero.}
                   primes of $F$.
 \item[\textbullet] $\mcS$ is partitioned as $\mcS=\biguplus_{i=1}^n \mcS_i$.
 \item[\textbullet] For each $i$, $(p_i',e_i',f_i')\leq(p_i,e_i,f_i)$ are types
                    such that $p_\mfP=p_i$ and $f_\mfP|f_i$ for each $\mfP\in\mcS_i$.
 \item[\textbullet] For each $i$, $\pi_i$ is an element of $F^\times$ that satisfies the following conditions:
\begin{enumerate}
 \item[(S1)\label{S1}] If $\mfP\in\mcS_i$ and $p_i\neq\infty$, then $v_\mfP(\pi_i)>0$ and $v_\mfP(\pi_i)\leq e_i$.
 \item[(S2)\label{S2}] If $\mfP\in\mcS\setminus\mcS_i$ and $p_\mfP\neq\infty$, then $v_\mfP(\pi_i-1)>0$.
 \item[(S3)\label{S3}] If $\mfP\in\mcS_i$ and $p_i=\infty$, then $\pi_i<_\mfP-1$.
 \item[(S4)\label{S4}] If $\mfP\in\mcS\setminus\mcS_i$ and $p_\mfP=\infty$, then $\pi_i>_\mfP 0$.
\end{enumerate}
\end{enumerate}
\end{Setting}

\begin{Definition}
Let $\pi=\prod_{i=1}^n\pi_i$ and
$$
 \mcS_i' = \left\{ \mfP\in\mcS_i : v_\mfP(\pi_i)\leq e_i', f_\mfP|f_i'\right\}
$$ 
if $p_i\neq\infty$, and $\mcS_i'=\mcS_i$ if $p_i=\infty$.
\end{Definition}

Our first goal is to give a first-order definition of the 
holomorphy domain $R(\mcS_i')$ in the case that $F$ is $\PmcSC$.
The case $n=m=1$ of the following lemma can be found in
\cite{HeinemannPrestel}.

\begin{Lemma}\label{absirred}
Let $f\in F[X_1,\dots,X_n]$ and $g\in F[Y_1,\dots,Y_m]$ be non-constant polynomials,
and let $c\in F^\times$.
If $g$ is square-free in $\tilde{F}[\mathbf{Y}]$, then
$$
 h(\mathbf{X},\mathbf{Y}) = f(\mathbf{X})g(\mathbf{Y}) + c\;\;\in F[\mathbf{X},\mathbf{Y}]
$$
is absolutely irreducible.
\end{Lemma}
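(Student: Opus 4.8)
The plan is to show that $h = f(\mathbf{X})g(\mathbf{Y}) + c$ is irreducible over $\tilde{F}$ by working in the polynomial ring in the $\mathbf{Y}$-variables with coefficients in the field $\tilde{F}(\mathbf{X})$. Write $L = \tilde{F}(\mathbf{X})$; then $h$, viewed in $L[\mathbf{Y}]$, is $f\cdot g(\mathbf{Y}) + c$ where $f \in L^\times$ (since $f$ is a nonzero polynomial, hence a nonzero element of the rational function field). Thus $h$ differs from $g(\mathbf{Y}) + c/f$ by the unit $f$, so $h$ is irreducible in $L[\mathbf{Y}]$ if and only if $g(\mathbf{Y}) + c/f$ is. First I would reduce to the one-variable situation in $\mathbf{Y}$: since $g$ is non-constant, after a linear change of the $Y_j$ (over $F$, harmless for absolute irreducibility) we may assume $g$ has positive degree in $Y_1$; but actually it is cleaner to argue directly that a shift of a square-free polynomial by a ``generic'' constant stays square-free and then irreducible.

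The key steps, in order: (i) Observe $h$ is irreducible in $\tilde{F}[\mathbf{X},\mathbf{Y}]$ iff it is irreducible in $L[\mathbf{Y}]$ together with the (automatic) fact that $h$ is primitive over $\tilde{F}[\mathbf{X}]$ — the content of $h$ in $\tilde{F}[\mathbf{X}]$ divides $c \in F^\times$, hence is a unit, so by Gauss's lemma irreducibility in $L[\mathbf{Y}]$ suffices, provided $h$ has positive degree in $\mathbf{Y}$, which holds since $g$ is non-constant. (ii) In $L[\mathbf{Y}]$, factor out the unit $f$ to reduce to showing $G := g(\mathbf{Y}) + t$ is irreducible, where $t = c/f \in L$. (iii) Here is the heart of the matter: $t$ is transcendental over $\tilde{F}$ (it is a nonzero scalar multiple of $1/f$ with $f$ a non-constant polynomial in the $\mathbf{X}$), so $\tilde{F}(t)[\mathbf{Y}]$ sits inside $L[\mathbf{Y}]$, and it suffices to show $g(\mathbf{Y}) + t$ is irreducible in $\tilde{F}(t)[\mathbf{Y}]$ and that no extra factorizations appear upon enlarging the constant field from $\tilde{F}(t)$ to $L$. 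For the first point, pick a variable, say $Y_1$, in which $g$ genuinely occurs; then $g(\mathbf{Y}) + t$ is linear in the ``variable'' $t$ over $\tilde{F}[\mathbf{Y}]$, hence irreducible in $\tilde{F}[\mathbf{Y}][t] = \tilde{F}[\mathbf{Y}, t]$ as long as its content in $\tilde{F}[\mathbf{Y}]$ is trivial, which it is (the coefficient of $t$ is $1$); by Gauss it is irreducible in $\tilde{F}(\mathbf{Y})[t]$ and hence, swapping roles, $g + t$ is irreducible in $\tilde{F}(t)[\mathbf{Y}]$. (iv) Finally, to pass from $\tilde{F}(t)$ to $L = \tilde{F}(\mathbf{X})$: since $\tilde{F}$ is algebraically closed, $\tilde{F}(t)$ is a purely transcendental extension of an algebraically closed field, and the extension $L/\tilde{F}(t)$ is again regular; a polynomial absolutely irreducible over $\tilde{F}(t)$ — equivalently over the algebraic closure of $\tilde{F}(t)$ — stays irreducible over any regular extension field, so $g+t$ remains irreducible over $L$. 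Multiplying back by the unit $f$ gives irreducibility of $h$ over $L$, and combined with (i) this gives absolute irreducibility of $h$ over $\tilde{F}$.

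The one place where square-freeness of $g$ is actually used — and the main obstacle to watch — is step (iii)/(iv): I claimed $g + t$ is irreducible, but a priori from the above argument one only gets that $g+t$ has \emph{no repeated factors is not} what the linearity-in-$t$ trick gives; rather the linearity trick gives irreducibility directly, \emph{independently} of whether $g$ is square-free. So where does square-freeness enter? It enters if one instead argues via specialization/Bertini-type statements, or if one wants the cleaner route: $\tilde{F}[\mathbf{Y}]/(g)$ being reduced (equivalently $g$ square-free) is what makes $g$ a non-zero-divisor-free description and, more to the point, guarantees that $h = fg + c$ does not acquire a factor of the form (something dividing $fg$). Concretely, the careful version of the argument is: suppose $h = h_1 h_2$ nontrivially over $\tilde{F}$; reducing modulo the prime $\mathfrak{p} = (c)$ is not available, so instead one uses that over $\tilde{F}(\mathbf{X})$ the factor $f$ is a unit and the Newton-polygon / Eisenstein-type criterion at the ``place $1/f = 0$'' of $\tilde{F}(\mathbf{X})$ forces one of the $h_i$ to be a unit — this is exactly where one needs $g$ to have no repeated factor, so that $g + t \equiv g \pmod{t}$ has the right multiplicity structure. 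I would structure the written proof around this valuation-theoretic argument: put the $t$-adic (i.e.\ $(1/f)$-adic) valuation $w$ on $L$, note $w(t) = 1$ after normalization, and apply the Eisenstein-type irreducibility criterion to $G = g(\mathbf{Y}) + t$ over the valued field $(L, w)$, using that $g \bmod w$ is square-free to control the relevant factorization; alternatively present the clean linearity-in-$t$ argument if, on reflection, square-freeness turns out to be needed only to ensure $g$ is non-constant in enough variables. The hard part is getting this reduction airtight while honestly pinpointing the role of the square-free hypothesis; everything else is Gauss's lemma and the regularity of purely transcendental extensions.
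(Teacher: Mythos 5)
Your steps (i)--(iii) are sound, but step (iv) contains a genuine gap, and both auxiliary claims you rely on there are false. First, $L/\tilde{F}(t)$ is not in general a regular extension: take $n=1$, $f=X^2$, $c=1$; then $t=1/X^2$, so $\tilde{F}(t)=\tilde{F}(X^2)$, and $X\in L=\tilde{F}(X)$ is algebraic of degree $2$ over $\tilde{F}(t)$, hence $\tilde{F}(t)$ is not algebraically closed in $L$. Second, and more fundamentally, $g(\mathbf{Y})+t$ is almost never absolutely irreducible over $\tilde{F}(t)$: when $m=1$, the field $\overline{\tilde{F}(t)}$ is algebraically closed, so $g(Y)+t$ splits into linear factors over it as soon as $\deg g\geq 2$, regardless of whether $g$ is square-free. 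So irreducibility over $\tilde{F}(t)$ (which you did establish) does not transfer to $L$ by any regularity argument, and what you actually need --- irreducibility over $L=\tilde{F}(\mathbf{X})$ --- is, after dividing out the unit $f$, exactly the original problem. Your proposed valuation fix does not close the hole either: any place $w$ of $L$ over $\tilde{F}$ with $w(f)<0$ is trivial on $\tilde{F}$, so the non-constant coefficients of $g$ have $w$-value $0$; after shifting $\mathbf{Y}$ to kill the constant term one can arrange the constant coefficient of $G=g+t$ to have positive $w$-value, but Eisenstein requires \emph{all} non-leading coefficients to have positive value, which fails here. You correctly sense that square-freeness must enter somewhere, but your set-up points the wrong way: the Eisenstein prime should live in $\tilde{F}[\mathbf{Y}]$ (a prime factor $r$ of $g$), not at a place of $\tilde{F}(\mathbf{X})$, with $h$ viewed as a polynomial in a \emph{single} $X$-variable so that one-variable Eisenstein applies.

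That is the paper's argument. After reducing to $F=\tilde{F}$, the base case $f\in F[X_1]$ writes $h=\sum_i a_i g(\mathbf{Y})X_1^i+c$ as a polynomial in $X_1$ over $F[\mathbf{Y}]$, picks a prime factor $r$ of $g$, and applies (reversed) Eisenstein: $r$ divides every coefficient except the constant one $a_0g+c$ (using $c\neq 0$), and $r^2$ does not divide the leading coefficient $a_dg$ --- this is precisely where square-freeness of $g$ is used. Primitivity of $h$ over $F[\mathbf{Y}]$ (again using $c\neq 0$) and Gauss's lemma finish the one-variable case. For $f$ involving further $X$-variables, the paper runs an induction on $n$: given a nontrivial factorization $h=h_1h_2$, it shows each $h_i$ must involve some $X_j$ with $j>1$, specializes $X_1\mapsto x$ for a suitable $x\in F$ keeping everything non-degenerate, and contradicts the inductive hypothesis. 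Orienting the Eisenstein step this way keeps it one-variable, which is what makes the argument close.
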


\begin{proof}
Without loss of generality assume that $F=\tilde{F}$.
We prove the claim by induction on $n$.

First assume that $n=1$.
Let $r({\mathbf Y})$ be any prime factor of $g({\mathbf Y})$.
Since $g$ is square-free, $r|g$ but $r^2\notdivides g$.
Write $h$ as a polynomial in $X_1$. 
Then $r$ divides all coefficients of $h$ except the constant one.
Thus, by Eisenstein's criterion,
$h$ is irreducible in $F({\mathbf Y})[\mathbf{X}]$.
Since $c\neq 0$, it follows that $h$ is irreducible in $F[\mathbf{X},\mathbf{Y}]$.

Now assume that $n>1$ and $f\notin F[X_1]$.
Suppose that $h$ decomposes as $h=h_1h_2$ with $h_1,h_2\in F[\mathbf{X},\mathbf{Y}]\setminus F$.
Since $c\neq 0$ we have $h_1,h_2\notin F[X_1]$:
Indeed, if, say, $h_1\in F[X_1]$, then looking at the constant term of $h$ with respect to $\mathbf{Y}$ gives $h_1(X_1)|f(\mathbf{X})g(0)+c$,
while each non-constant term gives $h_1(X_1)|f(\mathbf{X})$, a contradiction.
Hence, there exists $x\in F$ such that
$h_1(x,X_2,\dots,X_n,\mathbf{Y})\notin F$, $h_2(x,X_2,\dots,X_n,\mathbf{Y})\notin F$, 
and  $f(x,X_2,\dots,X_n)\notin F$.
Consequently, 
$f(x,X_2,\dots,X_n)g(\mathbf{Y})+c$
decomposes in $F[X_2,\dots,X_n,\mathbf{Y}]$, contradicting the induction hypothesis.
\end{proof}

\begin{Lemma}\label{nonsingular}
Let $f\in F[X_1,\dots,X_n]$ be non-constant,
and let $g\in F[Y]$ be non-constant and square-free in $\tilde{F}[Y]$ 
with $g(1)\neq 0$ and $g^\prime(1)\neq 0$.
Then the polynomial
$$
 G(\mathbf{X},Y)=g(Y)(1+f(\mathbf{X}))-g(1) \;\;\in F[\mathbf{X},Y]
$$
is absolutely irreducible, and for every root $\mathbf{x}$ of $f$,
$(\mathbf{x},1)$ is a non-singular point on the hypersurface defined by $G$.
\end{Lemma}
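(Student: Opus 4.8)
The plan is to prove the two assertions separately: absolute irreducibility of $G$, and non-singularity of the points $(\mathbf{x},1)$.

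For absolute irreducibility, the idea is to reduce to Lemma~\ref{absirred}. Write $G(\mathbf{X},Y) = g(Y)f(\mathbf{X}) + \bigl(g(Y)-g(1)\bigr)$. This does not quite have the shape $f\cdot g + c$, so instead I would perform a change of variables in $Y$: since $g(1)\neq 0$ and $g'(1)\neq 0$, the point $Y=1$ is a simple root of $g(Y)-g(1)$, so $g(Y)-g(1) = (Y-1)\tilde g(Y)$ with $\tilde g(1)\neq 0$. Actually a cleaner route: substitute $Y \mapsto Y+1$ and set $\hat g(Y) = g(Y+1)$; then $\hat g(0) = g(1)\neq 0$, and $G$ becomes $\hat g(Y)(1+f(\mathbf{X})) - \hat g(0)$. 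Over $\tilde F$, consider this as a polynomial and factor out: write $\hat g(Y) = \hat g(0) + Y\,\hat h(Y)$ where $\hat h(0) = \hat g'(0) = g'(1)\neq 0$, so $Y\nmid \hat h(Y)$ is not needed, but $\hat g$ is still square-free in $\tilde F[Y]$ (square-freeness is preserved by translation). Then $G = \hat g(Y) f(\mathbf{X}) + \bigl(\hat g(Y) - \hat g(0)\bigr) = \hat g(Y) f(\mathbf{X}) + Y\hat h(Y)$. Hmm, this is $f\cdot\hat g + Y\hat h$, which is of the form (polynomial in $\mathbf X,Y$) but not $f\cdot g + c$ with $c$ constant. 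Let me reconsider: the right reduction is to treat $Y$ as \emph{absorbed into the $\mathbf{Y}$-block with $m=1$} but with the constant $c$ replaced. In fact, I think one should instead just set $h(\mathbf X, Y) := 1 + f(\mathbf X)$ play the role of "$f$" — no. The honest approach is: write $G = g(Y)\cdot(1+f(\mathbf X)) + (-g(1))$; here $1+f(\mathbf X)$ is non-constant (as $f$ is non-constant), $g(Y)$ is non-constant and square-free in $\tilde F[Y]$, and $-g(1)\in F^\times$. This is exactly the hypothesis of Lemma~\ref{absirred} with the roles $f \rightsquigarrow g$, $g \rightsquigarrow 1+f$, $c \rightsquigarrow -g(1)$ (the lemma is symmetric in which block is square-free, and here we use square-freeness of $g$ in the $Y$-variable). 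Hence $G$ is absolutely irreducible.

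For the non-singularity claim, I would compute the partial derivatives of $G$ at a point $(\mathbf{x},1)$ with $f(\mathbf{x})=0$. We have $\frac{\partial G}{\partial Y}(\mathbf x, 1) = g'(1)(1 + f(\mathbf x)) = g'(1)\cdot 1 = g'(1) \neq 0$ by hypothesis. Since one partial derivative is already nonzero, the point is non-singular on the hypersurface $\{G=0\}$, \emph{provided} the point actually lies on it: and indeed $G(\mathbf x, 1) = g(1)(1+f(\mathbf x)) - g(1) = g(1) - g(1) = 0$. So every such $(\mathbf x,1)$ is a non-singular $\tilde F$-point of the hypersurface.

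I do not anticipate a serious obstacle; the only point requiring a little care is matching $G$ to the exact hypotheses of Lemma~\ref{absirred} (choosing $1+f$ as the non-square-free factor and $-g(1)$ as the constant $c$, and noting $1+f$ is non-constant because $f$ is), and checking that translation preserves square-freeness is not even needed with this framing. One should also note that "non-singular point on the hypersurface" presupposes $G$ is not identically zero, which follows from absolute irreducibility (or directly from $g, f$ non-constant). The computations of $G(\mathbf x,1)$ and $\partial_Y G(\mathbf x,1)$ are immediate.
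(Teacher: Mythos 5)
Your proposal is correct and takes essentially the same route as the paper: apply Lemma~\ref{absirred} for absolute irreducibility and verify $G(\mathbf{x},1)=0$ and $\partial_Y G(\mathbf{x},1)=g'(1)\neq 0$ for non-singularity. One small notational slip: the role assignment you wrote ($f\rightsquigarrow g$, $g\rightsquigarrow 1+f$) would, read literally, put the square-freeness requirement on $1+f$, which is not a hypothesis; the clean and direct application is $f_{\mathrm{lemma}}=1+f(\mathbf{X})$, $g_{\mathrm{lemma}}=g(Y)$, $c_{\mathrm{lemma}}=-g(1)$, so no appeal to any symmetry of the lemma is needed.
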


\begin{proof}
This follows from Lemma~\ref{absirred} and a direct computation.
\end{proof}

Our formula defining $R(\mcS_i')$ makes use
of a polynomial of the form $G(\mathbf{X},Y)$ in Lemma~\ref{nonsingular}.
More precisely, 
we let $f(\mathbf{X})$ depend on a parameter $a\in F$
such that $R(\mcS_i')$ consists of all $a\in F$ for which $G(\mathbf{X},Y)$ has a zero in $F$.
We construct $f(\mathbf{X})$ as a product of several polynomials,
each of which has a zero in a certain class of localizations of $F$,
so that the hypersurface $G=0$ has a smooth point in every localization.
The basic idea for this approach appears in \cite{Kuenzi2}.

\begin{Lemma}\label{LemmaA}
Under Setting~\ref{settingS}, the polynomial
$$
 A_i(X)=X^{2^{e_i}}-\pi_i
$$
satisfies the following conditions:
\begin{enumerate}
 \item[(A1)\label{A1}] If $\mfP\in\mcS\setminus\mcS_i$ and $p_\mfP\neq 2$, 
						then $A_i$ has a zero in $F_\mfP$.
 \item[(A2)\label{A2}] If $\mfP\in\mcS_i$ and $p_i\neq\infty$, 
						then for all $x\in F$, $v_\mfP(A_i(x))\leq e_i$.
 \item[(A3)\label{A3}] If $\mfP\in\mcS_i$ and $p_i\neq\infty$, 
						then $v_\mfP(A_i(1))=0$.
 \item[(A4)\label{A4}] $A_i(X)$ is square-free in $\tilde{F}[X]$, and $A_i^\prime(1)\neq 0$.
 \item[(A5)\label{A5}] If $\mfP\in\mcS_i$ and $p_i=\infty$, 
						then for all $x\in F$, $A_i(x)>_\mfP 1$.
\end{enumerate}
\end{Lemma}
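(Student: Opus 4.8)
The plan is to check the five assertions one at a time; none is deep, and in each case the work is just to invoke the right tool. It is convenient to treat \axAAAA\ first, since it is pure algebra in characteristic zero: $A_i'(X)=2^{e_i}X^{2^{e_i}-1}$ is nonzero and has $0$ as its only root in $\tilde F$, while $A_i(0)=-\pi_i\neq 0$ because $\pi_i\in F^\times$; hence $\gcd(A_i,A_i')=1$ in $\tilde F[X]$, so $A_i$ is square-free, and $A_i'(1)=2^{e_i}\neq 0$.

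For \axA\ I would distinguish the type of $\mfP\in\mcS\setminus\mcS_i$. If $\mfP$ is non-archimedean, then \axSS\ gives $v_\mfP(\pi_i-1)>0$, so $\pi_i\in\mcO_\mfP$, and I apply Lemma~\ref{HenselRychlik} to $A_i\in\mcO_\mfP[X]$ at $a=1$: here $v_\mfP(A_i(1))=v_\mfP(1-\pi_i)>0$ while $v_\mfP(A_i'(1))=v_\mfP(2^{e_i})=0$ since $p_\mfP\neq 2$, so $v_\mfP(A_i(1))>2v_\mfP(A_i'(1))$ and $A_i$ has a zero in the Henselization $F_\mfP$. If $\mfP$ is archimedean, then \axSSSS\ gives $\pi_i>_\mfP 0$, so $\pi_i$ is a positive element of the real closure $F_\mfP$ and therefore has a $2^{e_i}$-th root there, obtained by extracting square roots $e_i$ times.

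The remaining three conditions concern $\mfP\in\mcS_i$. If $p_i\neq\infty$, then $\mfP$ is a $p_i$-valuation, so its value group is discrete with $\mathbb{Z}$ a convex subgroup; hence $1$ is its smallest positive value, and $v_\mfP(x)>0$ forces $v_\mfP(x)\geq 1$. Combining this with \axS, which gives $0<v_\mfP(\pi_i)\leq e_i$, and the elementary bound $2^{e_i}\geq e_i+1$, a case distinction on the sign of $v_\mfP(x)$ proves \axAA: when $v_\mfP(x)\leq 0$ one has $v_\mfP(x^{2^{e_i}})\leq 0<v_\mfP(\pi_i)$, when $v_\mfP(x)\geq 1$ one has $v_\mfP(x^{2^{e_i}})\geq 2^{e_i}>e_i\geq v_\mfP(\pi_i)$, and in either case $v_\mfP(A_i(x))$ equals the smaller of the two valuations and so is $\leq e_i$; specializing to $x=1$ gives \axAAA, where in fact $v_\mfP(1-\pi_i)=0$ because $v_\mfP(1)=0<v_\mfP(\pi_i)$. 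Finally, if $p_i=\infty$ then $\mfP$ is an ordering, $x^{2^{e_i}}=(x^{2^{e_i-1}})^2\geq_\mfP 0$, so $A_i(x)=x^{2^{e_i}}-\pi_i\geq_\mfP-\pi_i>_\mfP 1$ by \axSSS, which is \axAAAAA.

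I do not expect a genuine obstacle. The only point that needs care is verifying the hypothesis $v_\mfP(A_i(1))>2v_\mfP(A_i'(1))$ of Lemma~\ref{HenselRychlik} in \axA, i.e.\ that $v_\mfP(A_i'(1))=0$; this is precisely where the assumption $p_\mfP\neq 2$ enters, and it is the reason the exponent in $A_i$ is a power of $2$ rather than, say, a power of $p_\mfP$.
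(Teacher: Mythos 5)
Your proof is correct and takes essentially the same approach as the paper's (which is much terser, deferring to Hensel's lemma, (S1)--(S4), and $\operatorname{char} F=0$ with minimal elaboration). You simply spell out in detail the Hensel--Rychlik hypothesis check at $a=1$ for \axA, the discreteness/convexity argument bounding $v_\mfP(x^{2^{e_i}})$ away from $v_\mfP(\pi_i)$ for \axAA\ and \axAAA, and the sign computations for \axAAAAA.
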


\begin{proof}
(A1): If $p_\mfP\neq\infty$, then (A1) follows from $\axSS$ and Hensel's lemma,
otherwise it follows from $\axSSSS$ and the fact that $F_\mfP$ is real closed. 
(A2): The inequality $e_i<2^{e_i}$ implies
that $v_\mfP(x^{2^{e_i}})\neq v_\mfP(\pi_i)\leq e_i$ by $\axS$.
(A3) follows from $\axS$, (A4) from $\ch(F)=0$, and (A5) from $\axSSS$.
\end{proof}

\begin{Lemma}\label{LemmaB}
Under Setting~\ref{settingS}, the polynomial
$$
 B_i(X)=X^{2^{e_i}}+\pi_iX+\pi
$$
satisfies the following conditions:
\begin{enumerate}
 \item[(B1)\label{B1}] If $\mfP\in\mcS\setminus\mcS_i$ and $p_\mfP= 2$, then $B_i$ has a zero in $F_\mfP$.
 \item[(B2)\label{B2}] If $\mfP\in\mcS_i$ and $p_i\neq\infty$, then for all $x\in F$, $v_\mfP(B_i(x))\leq e_i$.
\end{enumerate}
\end{Lemma}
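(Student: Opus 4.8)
The plan is to verify $\axB$ and $\axBB$ by direct computations with $v_\mfP$ at the relevant prime $\mfP$, after first using the partition $\mcS=\biguplus_{j}\mcS_j$ together with $\axS$ and $\axSS$ to pin down $v_\mfP(\pi_i)$ and $v_\mfP(\pi)=\sum_j v_\mfP(\pi_j)$.

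For $\axB$: Let $\mfP\in\mcS\setminus\mcS_i$ with $p_\mfP=2$, and let $j$ be the unique index with $\mfP\in\mcS_j$; then $j\neq i$, and $p_j=p_\mfP\neq\infty$, so $v_\mfP(\pi_j)>0$ by $\axS$. Applying $\axSS$ to every index $k\neq j$ gives $v_\mfP(\pi_k)=0$; in particular $v_\mfP(\pi_i)=0$, whence $v_\mfP(\pi)=v_\mfP(\pi_j)>0$. Since $B_i(0)=\pi$ and $B_i^\prime(0)=\pi_i$, the coefficients of $B_i$ have nonnegative value, and $v_\mfP(B_i(0))>0=2v_\mfP(B_i^\prime(0))$; as $F_\mfP$ is Henselian, Lemma~\ref{HenselRychlik} applied with $a=0$ produces a zero of $B_i$ in $F_\mfP$. (This argument in fact works whenever $p_\mfP\neq\infty$; the case $p_\mfP\neq 2$ is already covered by $\axAA$ of Lemma~\ref{LemmaA}.)

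For $\axBB$: Let $\mfP\in\mcS_i$ with $p_i\neq\infty$. By $\axS$ we have $0<v_\mfP(\pi_i)\leq e_i$, and since the value group of the $p$-valuation $v_\mfP$ is discrete with $\mathbb{Z}$ as a convex subgroup, this forces $\delta:=v_\mfP(\pi_i)\in\{1,\dots,e_i\}$. For $k\neq i$, $\axSS$ gives $v_\mfP(\pi_k)=0$, so $v_\mfP(\pi)=\delta$ as well. Now fix $x\in F$. By convexity of $\mathbb{Z}$ in the value group, exactly one of $v_\mfP(x)\geq 1$, $v_\mfP(x)=0$, $v_\mfP(x)\leq -1$ holds, and in each case $v_\mfP(B_i(x))$ is read off from the ultrametric inequality by identifying the term of smallest value among $x^{2^{e_i}}$, $\pi_i x$, $\pi$: one gets $v_\mfP(B_i(x))$ equal to $\delta$, to $0$, and to $2^{e_i}v_\mfP(x)$ respectively, each of which is $\leq e_i$. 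The only numerical input is the inequality $2^{e_i}>e_i$ (valid since $e_i\geq 1$), which makes $x^{2^{e_i}}$ negligible against $\pi$ when $v_\mfP(x)\geq 1$ and strictly dominant when $v_\mfP(x)\leq -1$.

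I do not expect any genuine obstacle: the content is the bookkeeping of the valuations forced by $\axS$ and $\axSS$ plus an elementary ultrametric estimate. The only mild subtlety is that the value group of $v_\mfP$ may properly contain $\mathbb{Z}$, but this is harmless since $\mathbb{Z}$ is convex, so no value of an element of $F^\times$ lies strictly between $0$ and $\pm 1$, which is exactly what the three–case split in $\axBB$ uses.
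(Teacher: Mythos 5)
Your proof is correct and takes essentially the same approach as the paper: Hensel's lemma at $a=0$ for (B1), and a case split on the sign of $v_\mfP(x)$ combined with the ultrametric estimate $e_i<2^{e_i}$ for (B2); you simply spell out the valuation bookkeeping forced by (S1) and (S2) that the paper leaves implicit. One tiny slip in your parenthetical remark: the case $\mfP\in\mcS\setminus\mcS_i$, $p_\mfP\neq 2$ is handled by (A1), not (A2), of Lemma~\ref{LemmaA} — (A2) concerns $\mfP\in\mcS_i$.
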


\begin{proof}
(B1) follows from Hensel's lemma.
(B2): The inequality $e_i<2^{e_i}$ implies that 
$v_\mfP(B_i(x))=v_\mfP(\pi)\leq e_i$ if $v_\mfP(x)>0$, 
and $v_\mfP(B_i(x))=v_\mfP(x^{2^{e_i}})\leq 0$ if $v_\mfP(x)\leq 0$.
\end{proof}

\begin{Lemma}\label{LemmaD}
Under Setting~\ref{settingS}, if $p_i\neq\infty$, then for every $a\in F$ the polynomial
$$
 D_{i,a}(X)=a\pi_iX^{2^{e_i}}-X+a
$$
satisfies the following conditions:
\begin{enumerate}
 \item[(D1)\label{D1}] If $\mfP\in\mcS_i$ and $v_\mfP(a)\geq 0$, then $D_{i,a}$ has a zero in $F_\mfP$.
 \item[(D2)\label{D2}] If $\mfP\in\mcS_i$ and $v_\mfP(a)< 0$, then $v_\mfP(D_{i,a}(x))\leq v_\mfP(a)$ for all $x\in F$.
					  Thus, if $v_\mfP(D_{i,a}(x))\geq 0$ for some $x\in F$, then $v_\mfP(a)\geq 0$.
\end{enumerate}
\end{Lemma}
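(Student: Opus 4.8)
The plan is to prove the two assertions separately: (D1) by an application of the Henselian property of $F_\mfP$ through Lemma~\ref{HenselRychlik}, and (D2) by a direct computation of $v_\mfP(D_{i,a}(x))$ that, as in Lemmas~\ref{LemmaA} and~\ref{LemmaB}, exploits the gap $e_i<2^{e_i}$ together with the bound $v_\mfP(\pi_i)\leq e_i$ coming from $\axS$.

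For (D1): the case $a=0$ is trivial, since then $D_{i,0}(X)=-X$ has the zero $0$, so assume $a\neq 0$. Because $v_\mfP(a)\geq 0$ and $v_\mfP(\pi_i)>0$ by $\axS$, all coefficients of $D_{i,a}$ lie in $\mcO_\mfP$ and $a\in\mcO_\mfP$, so I would apply Lemma~\ref{HenselRychlik} at the point $x=a$. A short calculation gives $D_{i,a}(a)=\pi_i a^{2^{e_i}+1}$, whence $v_\mfP(D_{i,a}(a))=v_\mfP(\pi_i)+(2^{e_i}+1)v_\mfP(a)\geq v_\mfP(\pi_i)>0$; and $D_{i,a}'(a)=2^{e_i}\pi_i a^{2^{e_i}}-1$ satisfies $v_\mfP(D_{i,a}'(a))=0$, since the first summand has strictly positive value (again using $v_\mfP(\pi_i)>0$) while $-1$ is a unit. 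Hence $v_\mfP(D_{i,a}(a))>0=2v_\mfP(D_{i,a}'(a))$, and Lemma~\ref{HenselRychlik}, applied in the Henselian field $F_\mfP$, produces a zero $\alpha\in\mcO_\mfP\subseteq F_\mfP$ of $D_{i,a}$.

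For (D2): fix $\mfP\in\mcS_i$ with $v_\mfP(a)<0$ and $x\in F$, and write $D_{i,a}(x)=a(\pi_i x^{2^{e_i}}+1)-x$. I would distinguish two cases according to the sign of $v_\mfP(x)$. If $v_\mfP(x)\geq 0$, then $v_\mfP(\pi_i x^{2^{e_i}})\geq v_\mfP(\pi_i)>0$, so $v_\mfP(\pi_i x^{2^{e_i}}+1)=0$, hence $v_\mfP\!\left(a(\pi_i x^{2^{e_i}}+1)\right)=v_\mfP(a)<0\leq v_\mfP(x)$, and therefore $v_\mfP(D_{i,a}(x))=v_\mfP(a)$. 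If $v_\mfP(x)<0$, then $v_\mfP(\pi_i x^{2^{e_i}})=v_\mfP(\pi_i)+2^{e_i}v_\mfP(x)\leq e_i-2^{e_i}<0$ by $\axS$ and $e_i<2^{e_i}$, so again $v_\mfP(\pi_i x^{2^{e_i}}+1)=v_\mfP(\pi_i)+2^{e_i}v_\mfP(x)$; combining with $v_\mfP(a)<0$, one checks that $v_\mfP\!\left(a(\pi_i x^{2^{e_i}}+1)\right)=v_\mfP(a)+v_\mfP(\pi_i)+2^{e_i}v_\mfP(x)$ is strictly smaller than both $v_\mfP(x)$ and $v_\mfP(a)$, so $v_\mfP(D_{i,a}(x))=v_\mfP(a)+v_\mfP(\pi_i)+2^{e_i}v_\mfP(x)<v_\mfP(a)$. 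In both cases $v_\mfP(D_{i,a}(x))\leq v_\mfP(a)$, which is the first assertion of (D2), and the second assertion is its contrapositive.

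All the computations are elementary; the only points that require genuine care are the choice of the evaluation point $x=a$ for Hensel's lemma in (D1) — one needs a point where $D_{i,a}$ acquires positive value while its derivative remains a unit — and, in the case $v_\mfP(x)<0$ of (D2), verifying that the term $a(\pi_i x^{2^{e_i}}+1)$ strictly dominates $-x$ in $F_\mfP$, which is exactly where both $e_i<2^{e_i}$ and the bound $v_\mfP(\pi_i)\leq e_i$ are used.
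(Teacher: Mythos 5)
Your proof is correct and matches the paper's approach: for (D2) the case split by the sign of $v_\mfP(x)$ and the use of $v_\mfP(\pi_i)\leq e_i<2^{e_i}$ is exactly what the paper does (the paper keeps the three terms $a\pi_i x^{2^{e_i}}$, $-x$, $a$ separate rather than grouping $a(\pi_i x^{2^{e_i}}+1)$, but the estimates are the same), and for (D1) the paper simply cites Hensel's lemma and \axS, which your computation at $x=a$ (giving $D_{i,a}(a)=\pi_i a^{2^{e_i}+1}$ of positive value with derivative a unit) fills in correctly.
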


\begin{proof}
(D1) follows from Hensel's lemma and $\axS$.
(D2): If $v_\mfP(x)\geq 0$, then
$v_\mfP(a\pi_ix^{2^{e_i}})>v_\mfP(a)$ and $v_\mfP(x)>v_\mfP(a)$,
so $v_\mfP(D_{i,a}(x))=v_\mfP(a)$.
If $v_\mfP(x)<0$, then the inequality $2^{e_i}\geq e_i+1$ implies that
$v_\mfP(a\pi_ix^{2^{e_i}})<2^{e_i}v_\mfP(x)+v_\mfP(\pi_i)\leq -e_i+v_\mfP(x)+v_\mfP(\pi_i)\leq v_\mfP(x)$
and 
$v_\mfP(a\pi_ix^{2^{e_i}})=v_\mfP(a)+v_\mfP(\pi_i)+2^{e_i}v_\mfP(x)\leq v_\mfP(a)+e_i-2^{e_i}<v_\mfP(a)$,
so $v_\mfP(D_{i,a}(x))=v_\mfP(a\pi_ix^{2^{e_i}})<v_\mfP(a)$.
\end{proof}

\begin{Lemma}\label{LemmaR}
Under Setting~\ref{settingS} and $p_i\neq\infty$, let $d\leq e_i$.
Then the polynomial
$$
 R_{i,d}(X,Y)=(X^{2d}+\pi_i^2)Y^{2^{e_i}}-X^dY+\pi_i^{-1}X^{2d}+\pi_i
$$
satisfies the following condition:
\begin{enumerate}
 \item[(R1)] If $\mfP\in\mcS_i$ with $d|v_\mfP(\pi_i)$, then $R_{i,d}$ has a zero in $F_\mfP$.
 \item[(R2)] If $\mfP\in\mcS_i$ with $d\notdivides v_\mfP(\pi_i)$, then $v_\mfP(R_{i,d}(x,y))\leq {e_i}$ for all $x,y\in F$.
\end{enumerate}
\end{Lemma}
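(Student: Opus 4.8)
The plan is to produce, for each of the two conclusions (R1) and (R2), an argument in the same spirit as Lemmas \ref{LemmaB} and \ref{LemmaD}: (R1) by exhibiting a smooth zero via Hensel's lemma after choosing a suitable value of $X$, and (R2) by a direct valuation-counting argument using the gap between $2^{e_i}$ and $e_i$.

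For (R1), suppose $\mfP\in\mcS_i$ with $d\mid v_\mfP(\pi_i)$; write $v_\mfP(\pi_i)=dk$ with $0<dk\leq e_i$ by \axS. I would first choose $x=x_0\in F$ with $v_\mfP(x_0)=k$, so that $v_\mfP(x_0^d)=dk=v_\mfP(\pi_i)$ and $v_\mfP(x_0^{2d})=2v_\mfP(\pi_i)$; then substitute $X=x_0$ and treat $R_{i,d}(x_0,Y)$ as a polynomial in $Y$ over $\mcO_\mfP$ after clearing an appropriate power of $\pi_i$. Tracking valuations: the constant term $\pi_i^{-1}x_0^{2d}+\pi_i$ has valuation $v_\mfP(\pi_i)$ (the two summands have equal valuation $v_\mfP(\pi_i)$, but one expects their sum still to have that valuation — this is the one place a small genericity adjustment of $x_0$ within its valuation may be needed, exactly as $\pi_i$ is chosen generically in Setting~\ref{settingS}), the linear coefficient $-x_0^d$ has valuation $v_\mfP(\pi_i)$, and the leading coefficient $x_0^{2d}+\pi_i^2$ has valuation $2v_\mfP(\pi_i)$. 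After dividing through by $\pi_i$, the resulting polynomial $\tilde R(Y)\in\mcO_\mfP[Y]$ has a unit linear coefficient and a constant term that is a unit or zero; in either case Hensel's lemma (Lemma~\ref{HenselRychlik}) — applied at $Y=0$ in the Henselian localization $F_\mfP$ — produces a root, using $e_i<2^{e_i}$ to guarantee the leading term does not interfere. The archimedean case does not arise since $p_i\neq\infty$.

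For (R2), suppose $\mfP\in\mcS_i$ with $d\nmid v_\mfP(\pi_i)$, and fix arbitrary $x,y\in F$. I would split into cases according to $v_\mfP(x)$ and $v_\mfP(y)$ and show that in each case one of the four terms of $R_{i,d}(x,y)$ has strictly smallest valuation, hence dominates, and that this dominant valuation is $\leq e_i$. The key arithmetic input is that $v_\mfP(\pi_i)\leq e_i$ together with $e_i<2^{e_i}$ (equivalently $2^{e_i}\geq e_i+1$), mirroring the estimates in the proofs of Lemmas~\ref{LemmaB} and~\ref{LemmaD}. When $v_\mfP(x)$ is large the terms $\pi_i^{-1}x^{2d}$ and $\pi_i^2 y^{2^{e_i}}$ or the constant $\pi_i$ compete; when $v_\mfP(x)$ is small (very negative), the high powers $x^{2d}$ and $x^{2d}y^{2^{e_i}}$ blow down and one of them wins; the term $-x^dY$ is genuinely intermediate and one checks it cannot cancel the extremes because the relevant valuations differ (here the condition $d\nmid v_\mfP(\pi_i)$ is used to rule out exact cancellation, analogously to how coprimality of $2^{e_i}$ with the value group index forces $v_\mfP(x^{2^{e_i}})\neq v_\mfP(\pi_i)$ in Lemma~\ref{LemmaA}(A2)).

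The main obstacle I anticipate is the bookkeeping in (R2): unlike the earlier lemmas, $R_{i,d}$ is a polynomial in \emph{two} variables with four monomials whose valuations depend on two free parameters, so the case analysis is genuinely two-dimensional, and one must be careful that the claimed strict inequalities — in particular that $-x^dy$ never ties with \emph{both} of the terms it sits between — really follow from $d\nmid v_\mfP(\pi_i)$ rather than requiring a further generic choice of $\pi_i$. I would organize this by first recording the valuations $v_\mfP$ of the four monomials as affine functions of $v_\mfP(x)$ and $v_\mfP(y)$, then observing that the lower envelope of these four functions is always $\leq e_i$ on the relevant region, and finally checking uniqueness of the minimum at the breakpoints using the indivisibility hypothesis; the subexponential growth $2^{e_i}\geq e_i+1$ is what keeps the envelope bounded above by $e_i$.
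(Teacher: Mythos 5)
Your outline for (R1) is workable, but two remarks. First, the genericity worry you raise is a red herring: if the two summands in $\pi_i^{-1}x_0^{2d}+\pi_i$ (or in $x_0^{2d}+\pi_i^2$) cancel, that only \emph{raises} the valuation of a coefficient, which is harmless for the Hensel step — no adjustment of $x_0$ is needed. Second, and more to the point, you are essentially reproving Lemma~\ref{LemmaD}(D1) from scratch. The observation the paper uses, and that your proposal misses, is the factorization
\[
 R_{i,d}(X,Y)=X^{d}\,D_{i,\gamma(X)}(Y),
 \qquad \gamma(X)=\pi_i^{-1}X^{d}+\pi_iX^{-d},
\]
together with the remark that for $x\neq 0$ one has $v_\mfP(\gamma(x))\geq 0$ if and only if $d v_\mfP(x)=v_\mfP(\pi_i)$. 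With this identity, (R1) is literally (D1) applied to $a=\gamma(x)$ for any $x$ with $d v_\mfP(x)=v_\mfP(\pi_i)$, and (R2) follows from (D2): if $d\nmid v_\mfP(\pi_i)$, then for every $x\neq 0$ the two summands of $\gamma(x)$ have distinct valuations $\pm(d v_\mfP(x)-v_\mfP(\pi_i))$, hence $v_\mfP(\gamma(x))<0$ and $v_\mfP(\gamma(x))\leq v_\mfP(\pi_i)-d v_\mfP(x)$; by (D2) then $v_\mfP(D_{i,\gamma(x)}(y))\leq v_\mfP(\gamma(x))$, so $v_\mfP(R_{i,d}(x,y))=d v_\mfP(x)+v_\mfP(D_{i,\gamma(x)}(y))\leq v_\mfP(\pi_i)\leq e_i$, and the $x=0$ case is a one-line check.

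Your plan for (R2), by contrast, has a genuine gap. Bounding the lower envelope of the five monomial valuations by $e_i$ is easy (the term $\pi_i$ alone guarantees it), but that does not yield the claim: you must also rule out cancellation when several monomials tie for the minimum. Such ties do occur and are \emph{not} excluded by $d\nmid v_\mfP(\pi_i)$ — for instance $x^{2d}y^{2^{e_i}}$ ties with $\pi_i^{-1}x^{2d}$, and simultaneously $\pi_i^2y^{2^{e_i}}$ ties with $\pi_i$, precisely when $2^{e_i}v_\mfP(y)=-v_\mfP(\pi_i)$, which has nothing to do with the indivisibility hypothesis. The factorization dissolves exactly this difficulty: writing $R_{i,d}(X,Y)=(\pi_i^{-1}X^{2d}+\pi_i)(\pi_iY^{2^{e_i}}+1)-X^{d}Y=X^{d}D_{i,\gamma(X)}(Y)$ groups the five monomials into three, the indivisibility hypothesis enters once through $v_\mfP(\gamma(x))<0$, and (D2) absorbs all remaining bookkeeping. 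Without discovering that grouping I do not see how to complete your two-dimensional case analysis as sketched.
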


\begin{proof}
First note that with $\gamma(X)= \pi_i^{-1}X^d + \pi_iX^{-d}$
we have $R_{i,d}(X,Y)=X^d D_{i,\gamma(X)}(Y)$.
Furthermore, note that for $x\in F^\times$ and $\mfP\in\mcS_i$,
$v_\mfP(\gamma(x))\geq 0$ if and only if $dv_\mfP(x)=v_\mfP(\pi_i)$.

(R1): 
There exists $x\in F^\times$ with $dv_\mfP(x)=v_\mfP(\pi_i)$, i.e.~$v_\mfP(\gamma(x))\geq 0$.
Therefore, by (D1), $D_{i,\gamma(x)}(Y)$ has a zero $y\in F_\mfP$,
so $(x,y)$ is a zero of $R_{i,d}$.

(R2):
Since $v_\mfP(\gamma(x))<0$ for all $x\in F^\times$,
we have that $v_\mfP(D_{i,\gamma(x)}(y))\leq v_\mfP(\gamma(x))<0$ for all $x\in F^\times$, $y\in F$ by (D2).
Assume that there are $x,y\in F$ with $v_\mfP(R_{i,d}(x,y))>{e_i}$.
Then $x\neq 0$, since $R_{i,d}(0,y)=\pi_i^2(y^{2^{e_i}}+\pi_i^{-1})$, and thus
$v_\mfP(R_{i,d}(0,y))\leq v_\mfP(\pi_i)\leq {e_i}$.
It follows that $v_\mfP(D_{i,\gamma(x)}(y))>v_\mfP(x^{-d})+v_\mfP(\pi_i)\geq v_\mfP(\gamma(x))$,
a contradiction.
\end{proof}

\begin{Lemma}\label{LemmaI}
Under Setting~\ref{settingS} and $p_i\neq\infty$, let $d|f_i$.
Let 
$$
 I_{i,d}(X)=\Phi_{p_i^d-1}(X)\in\mathbb{Z}[X]
$$
be the $(p_i^d-1)$-th cyclotomic polynomial.
Then $I_{i,d}(X)$ satisfies the following conditions.
\begin{enumerate}
 \item[(I1)] If $\mfP\in\mcS_i$ with $d|f_\mfP$,
                 then $I_{i,d}$ has a zero in $F_\mfP$.
 \item[(I2)] If $\mfP\in\mcS_i$ with $d\notdivides f_\mfP$,
               then $v_\mfP(I_{i,d}(x))\leq 0$ for all $x\in F$.
\end{enumerate}
\end{Lemma}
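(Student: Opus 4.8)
The polynomial $I_{i,d}(X)=\Phi_{p_i^d-1}(X)$ is the $(p_i^d-1)$-th cyclotomic polynomial, whose roots are exactly the primitive $(p_i^d-1)$-th roots of unity. The key number-theoretic fact is that a finite field $\mathbb{F}_{p_i^m}$ contains a primitive $(p_i^d-1)$-th root of unity if and only if $(p_i^d-1)\mid(p_i^m-1)$, which (since $p_i$ is fixed and $>1$) is equivalent to $d\mid m$. I will use this twice, once for each clause.

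\emph{Proof of (I1).} Suppose $\mfP\in\mcS_i$ with $d\mid f_\mfP$. Then $F_\mfP$ is a $p_i$-adically closed field, hence Henselian, with residue field $\bar{F}_{v_\mfP}=\mathbb{F}_{p_i^{f_\mfP}}$. Since $d\mid f_\mfP$, this residue field contains a primitive $(p_i^d-1)$-th root of unity, i.e.\ a simple root $\bar\zeta$ of $I_{i,d}$ over $\mathbb{F}_{p_i}$ (simple because $p_i\notdivides p_i^d-1$, so $I_{i,d}$ is separable mod $p_i$). The coefficients of $I_{i,d}$ lie in $\mathbb{Z}\subseteq\mcO_{v_\mfP}$, and $\bar\zeta$ is a simple root of the reduction; by Hensel's lemma (Lemma~\ref{HenselRychlik}) it lifts to a root $\zeta\in\mcO_{v_\mfP}\subseteq F_\mfP$ of $I_{i,d}$.

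\emph{Proof of (I2).} Suppose $\mfP\in\mcS_i$ with $d\notdivides f_\mfP$, and let $x\in F$. If $v_\mfP(x)<0$, then since $I_{i,d}$ is monic of degree $\varphi(p_i^d-1)\geq 1$ with all other coefficients in $\mathbb{Z}$ (hence of value $\geq 0$), the leading term dominates and $v_\mfP(I_{i,d}(x))=\deg(I_{i,d})\cdot v_\mfP(x)<0\leq 0$. If $v_\mfP(x)=0$, suppose for contradiction that $v_\mfP(I_{i,d}(x))>0$; then the reduction $\bar x\in\bar{F}_{v_\mfP}=\mathbb{F}_{p_i^{f_\mfP}}$ is a root of $I_{i,d}$ mod $p_i$, hence a primitive $(p_i^d-1)$-th root of unity in $\mathbb{F}_{p_i^{f_\mfP}}$, forcing $d\mid f_\mfP$, a contradiction. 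If $v_\mfP(x)>0$, then $\bar x=0$; but $I_{i,d}(0)=\Phi_{p_i^d-1}(0)=\pm 1$ (the constant term of a cyclotomic polynomial $\Phi_n$ for $n>1$ is $1$, or $p$ when $n$ is a prime power $p^k$ — and $p_i^d-1\geq 3$ is never a power of $p_i$, indeed is coprime to $p_i$, so the constant term is $\pm1$), whence $v_\mfP(I_{i,d}(x))=v_\mfP(\bar I_{i,d}(0))=0$. In all cases $v_\mfP(I_{i,d}(x))\leq 0$.

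\emph{Main obstacle.} The only subtlety is bookkeeping about cyclotomic polynomials over residue fields: one must be sure that $I_{i,d}$ stays separable modulo $p_i$ and that it genuinely picks out the condition ``$\mathbb{F}_{p_i^{f_\mfP}}$ contains a primitive $(p_i^d-1)$-th root of unity,'' i.e.\ that $I_{i,d}$ does not accidentally acquire a root in a smaller field after reduction. Separability is immediate since $\gcd(p_i,p_i^d-1)=1$, so $X^{p_i^d-1}-1$ — and hence its factor $\Phi_{p_i^d-1}$ — is separable mod $p_i$; and a primitive $(p_i^d-1)$-th root of unity in a field of characteristic $p_i$ generates exactly $\mathbb{F}_{p_i^d}$ over $\mathbb{F}_{p_i}$ (by definition of the multiplicative order), so it lies in $\mathbb{F}_{p_i^{f_\mfP}}$ precisely when $d\mid f_\mfP$. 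With these two facts the two clauses follow as above.
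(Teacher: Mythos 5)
Your argument is correct and matches the paper's: both reduce the question to whether $I_{i,d}$ has a root in the finite residue field $\bar F_{v_\mfP}=\mathbb{F}_{p_i^{f_\mfP}}$, use Hensel's lemma to lift for (I1), and use monicity for (I2). One small slip: in Setting~\ref{settingS}, $F_\mfP$ is only a Henselization of $(F,v_\mfP)$, not necessarily $p_i$-adically closed (that would require the value group to be a $\mathbb{Z}$-group); since you only use Henselianity this is harmless, and likewise the case $v_\mfP(x)>0$ in (I2) can be folded into $v_\mfP(x)\geq 0$ by noting that $0$ is never a root of $X^{p_i^d-1}-1$ modulo $p_i$.
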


\begin{proof}
Note that $I_{i,d}$ has a zero in the finite field $\bar{F}_\mfP$ if and only if
$\mathbb{F}_{p_i^d}\subseteq\bar{F}_\mfP$, \cite[V \S11 Lemma 3]{BourbakiA2},
which is the case if and only if $d|f_\mfP$.
(I1) follows from Hensel's lemma.
(I2) follows immediately since $I_{i,d}$ is monic.
\end{proof}

\begin{Lemma}\label{LemmaN}
Under Setting~\ref{settingS} and $p_i\neq\infty$,
the polynomial
$$
 N_i(X,Y)=\prod_{d\leq e_i, d\not\leq e_i'}R_{i,d}(X,Y)\cdot\prod_{d|f_i, d\smallnotdivides f_i^\prime}I_{i,d}(X)
$$
satisfies the following conditions:
\begin{enumerate}
 \item[(N1)\label{N1}] If $\mfP\in\mcS_i\setminus\mcS_i^\prime$, then $N_i$ has a zero in $F_\mfP$.
 \item[(N2)\label{N2}] If $\mfP\in\mcS_i^\prime$, then $v_\mfP(N_i(x,y))\leq e_i^2$ for all $x,y\in F$.
\end{enumerate}
\end{Lemma}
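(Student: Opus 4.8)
The plan is to derive the two conditions (N1) and (N2) directly from the corresponding conditions for the factors $R_{i,d}$ (Lemma~\ref{LemmaR}) and $I_{i,d}$ (Lemma~\ref{LemmaI}). Recall that $N_i$ is a product of these two kinds of polynomials, where the $R$-factors range over $d\leq e_i$ with $d\not\leq e_i'$ (meaning $d>e_i'$, since for integers $d\leq e_i'$ is the same as $d\leq e_i'$) and the $I$-factors range over $d\mid f_i$ with $d\nmid f_i'$.

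For (N1), suppose $\mfP\in\mcS_i\setminus\mcS_i'$. By definition of $\mcS_i'$ (valid since $p_i\neq\infty$), this means that either $v_\mfP(\pi_i)>e_i'$ or $f_\mfP\nmid f_i'$. In the first case, I would argue that there exists some $d$ with $e_i'<d\leq e_i$ and $d\mid v_\mfP(\pi_i)$: indeed, by $\axS$ we have $0<v_\mfP(\pi_i)\leq e_i$, so we may simply take $d=v_\mfP(\pi_i)$ itself, which satisfies $d>e_i'$ and $d\mid v_\mfP(\pi_i)$ trivially and $d\leq e_i$. Then (R1) of Lemma~\ref{LemmaR} gives a zero of $R_{i,d}$ in $F_\mfP$, hence a zero of $N_i$. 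In the second case, $f_\mfP\nmid f_i'$; since $f_\mfP\mid f_i$ (by the standing hypothesis in Setting~\ref{settingS}), I would choose $d=f_\mfP$, which satisfies $d\mid f_i$ and $d\nmid f_i'$, so $I_{i,d}$ occurs as a factor of $N_i$, and by (I1) of Lemma~\ref{LemmaI} it has a zero in $F_\mfP$ (as $d\mid f_\mfP$ trivially). Either way $N_i$ has a zero in $F_\mfP$.

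For (N2), suppose $\mfP\in\mcS_i'$, so $v_\mfP(\pi_i)\leq e_i'$ and $f_\mfP\mid f_i'$. I need to bound $v_\mfP(N_i(x,y))$ above by $e_i^2$ for all $x,y\in F$. Since $v_\mfP(N_i(x,y))$ is the sum of $v_\mfP$ of the values of all the factors, and a product over at most $e_i$ many $R$-factors and $f_i$ many $I$-factors, I would show each factor contributes a bounded amount. For each $R$-factor with $d>e_i'\geq v_\mfP(\pi_i)$, we certainly have $d\nmid v_\mfP(\pi_i)$ unless $v_\mfP(\pi_i)=0$, but $v_\mfP(\pi_i)\geq 1$ by $\axS$; more carefully, $d\nmid v_\mfP(\pi_i)$ since $0<v_\mfP(\pi_i)\leq e_i'<d$. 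Hence (R2) gives $v_\mfP(R_{i,d}(x,y))\leq e_i$. For each $I$-factor with $d\nmid f_i'$: since $f_\mfP\mid f_i'$, also $d\nmid f_\mfP$ (if $d\mid f_\mfP$ then $d\mid f_i'$), so (I2) gives $v_\mfP(I_{i,d}(x))\leq 0$. Summing: the number of $R$-factors is at most $e_i$ (one for each $d\in\{1,\dots,e_i\}$), each contributing at most $e_i$, and the $I$-factors contribute at most $0$ each, so the total is at most $e_i\cdot e_i=e_i^2$, as claimed. (One should note the factors $v_\mfP(R_{i,d})$ could be negative, which only helps the bound; and the $I$-factor values are nonzero elements of $F_\mfP$, so their valuations are finite.)

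The only mildly delicate point is making sure the index sets match up: that ``$d\not\leq e_i'$'' in the product really does capture exactly the situation $v_\mfP(\pi_i)>e_i'$ via a suitable choice of $d$, and that the counting in (N2) uses the right bound on the number of factors. Neither presents a genuine obstacle; everything reduces to Lemmas~\ref{LemmaR} and~\ref{LemmaI} together with the inequalities $1\leq v_\mfP(\pi_i)\leq e_i$ from $\axS$ and the divisibility $f_\mfP\mid f_i$ from Setting~\ref{settingS}. I would write the proof as: ``(N1) follows from (R1) and (I1); (N2) follows from (R2) and (I2)'', expanding the case distinction as above.
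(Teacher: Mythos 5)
Your proof is correct and follows essentially the same route as the paper: for (N1) pick $d=v_\mfP(\pi_i)$ or $d=f_\mfP$ to get a zero of a factor via (R1) or (I1), and for (N2) use $v_\mfP(\pi_i)\leq e_i'<d$ and $f_\mfP\mid f_i'$ to force $d\nmid v_\mfP(\pi_i)$ resp.~$d\nmid f_\mfP$ for every factor, then bound the valuation of the product by (R2) and (I2). The paper states the slightly sharper bound $(e_i-e_i')e_i\leq e_i^2$ where you use $e_i\cdot e_i$, but the argument is the same.
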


\begin{proof}
(N1):
If $v_\mfP(\pi_i)\not\leq e_i^\prime$, then
$R_{i,v_\mfP(\pi_i)}$ has a zero in $F_\mfP$ by (R1).
If $f_\mfP\notdivides f_i^\prime$, then
$I_{i,f_\mfP}$ has a zero in $F_\mfP$ by (I1).

(N2):
Since $v_\mfP(\pi_i)\leq e_i^\prime$ and
$f_\mfP|f_i^\prime$,
it follows that
for all $d\leq e_i$ with $d\not\leq e_i^\prime$,
we have that $d\notdivides v_\mfP(\pi_i)$, and
for $d|f_i$ with $d\notdivides f_i^\prime$
we have
$d\notdivides f_\mfP$.
Therefore, by (R2) and (I2),
$v_\mfP(N_i(x,y))\leq (e_i-e_i')e_i\leq e_i^2$ for all $x,y\in F$.
\end{proof}

\begin{Lemma}\label{MainLemma}
Under Setting~\ref{settingS} and $p_i\neq\infty$, 
let $a\in F$.
If $A_i$ satisfies \axA-\axAAAA,
$B_i$ satisfies \axB-\axBB,
$D_{i,a}$ satisfies \axD-\axDD,
and $N_i$ satisfies \axN-\axNN,
then the polynomial
\begin{eqnarray*}
 G_{i,a}(X,Y,Z)=A_i(Z)(1+\pi_i^{-4e_i-e_i^2}A_i(X)B_i(X)D_{i,a}(X)N_i(X,Y))-A_i(1)
\end{eqnarray*}
satisfies the following conditions:
\begin{enumerate}
 \item If $G_{i,a}$ has a zero in $F$, then $v_\mfP(a)\geq 0$ for all $\mfP\in\mcS_i'$.
 \item If $F$ is $\PmcSC$ and $v_\mfP(a)\geq 0$ for all $\mfP\in\mcS_i'$, then $G_{i,a}$ has a zero in $F$.
\end{enumerate}
\end{Lemma}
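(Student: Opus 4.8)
The plan is to analyze the polynomial $G_{i,a}$ by studying its behavior at the primes in $\mcS$, exploiting that it has the shape $G(\mathbf{X},Y) = g(Y)(1+f(\mathbf{X})) - g(1)$ from Lemma~\ref{nonsingular} with $g = A_i$ and $f = \pi_i^{-4e_i - e_i^2}A_iB_iD_{i,a}N_i$. By Lemma~\ref{LemmaA}\,\axAAAA, $A_i$ is square-free in $\tilde F[X]$ with $A_i'(1) \neq 0$, and $A_i(1) = 1 - \pi_i \neq 0$ by \axS (more precisely by the nonvanishing built into \axA); hence Lemma~\ref{nonsingular} applies, $G_{i,a}$ is absolutely irreducible, and for every zero $x$ of $f$ the point $(x,1)$ is a smooth point of the hypersurface $G_{i,a} = 0$. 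This geometric input is exactly what the $\PmcSC$ hypothesis needs.

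For part (1), suppose $G_{i,a}(x_0,y_0,z_0) = 0$ for some $(x_0,y_0,z_0) \in F^3$, and fix $\mfP \in \mcS_i'$. The idea is that at such a $\mfP$, every "large positive valuation" claim about the constituent polynomials fails in a controlled way, so the product $\pi_i^{-4e_i-e_i^2}A_i(x_0)B_i(x_0)D_{i,a}(x_0)N_i(x_0,y_0)$ cannot be a $\mfP$-unit unless $v_\mfP(a) \geq 0$. Concretely: if $v_\mfP(a) < 0$, then by \axDD we have $v_\mfP(D_{i,a}(x_0)) \leq v_\mfP(a) < 0$, while \axAA gives $v_\mfP(A_i(x_0)) \leq e_i$, \axBB gives $v_\mfP(B_i(x_0)) \leq e_i$, and \axNN gives $v_\mfP(N_i(x_0,y_0)) \leq e_i^2$; combining with $v_\mfP(\pi_i) \leq e_i$ (from \axS, applied via the definition of $\mcS_i'$, $v_\mfP(\pi_i) \le e_i'\le e_i$) shows $v_\mfP(1 + \pi_i^{-4e_i-e_i^2}A_iB_iD_{i,a}N_i(x_0,y_0)) = v_\mfP(\pi_i^{-4e_i-e_i^2}A_iB_iD_{i,a}N_i)$, which is negative. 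But then, taking $\mfP$-valuations in $A_i(z_0)\cdot(\text{that factor}) = A_i(1)$ and using $v_\mfP(A_i(1)) = 0$ (\axAAA) together with $v_\mfP(A_i(z_0)) \leq e_i$ (\axAA), one gets a contradiction once the exponent $4e_i + e_i^2$ is large enough to dominate $e_i + e_i + e_i + e_i^2 + e_i$ — which is exactly why it was chosen (note $4e_i + e_i^2 > 4e_i + e_i^2$ is false, so the precise bookkeeping must verify $4e_i+e_i^2 \ge 3e_i + e_i^2 + e_i = 4e_i+e_i^2$ with equality-tolerant inequalities, using that $v_\mfP(A_i(z_0))$ and $v_\mfP(A_i(1))$ enter on opposite sides). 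So $v_\mfP(a) \geq 0$, proving (1).

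For part (2), assume $F$ is $\PmcSC$ and $v_\mfP(a) \geq 0$ for all $\mfP \in \mcS_i'$. We must produce an $F$-rational point on $G_{i,a} = 0$, and by the $\PmcSC$ property it suffices to find, for every $\mfP \in \mcS$, a smooth $F_\mfP$-rational point. By the smoothness remark above, it is enough to find $x \in F$ with $f(x) = 0$ in $F_\mfP$ and then take $(x,1)$; equivalently, it suffices that at least one of $A_i, B_i, D_{i,a}, N_i$ acquires a zero in $F_\mfP$. Now split into cases on $\mfP$. If $\mfP \in \mcS \setminus \mcS_i$: when $p_\mfP \neq 2$ use \axA (zero of $A_i$ in $F_\mfP$), and when $p_\mfP = 2$ use \axB (zero of $B_i$); the case $p_\mfP = \infty$ is subsumed in $p_\mfP \neq 2$. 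If $\mfP \in \mcS_i \setminus \mcS_i'$: use \axN (zero of $N_i$). If $\mfP \in \mcS_i'$: since $v_\mfP(a) \geq 0$, use \axD (zero of $D_{i,a}$). In every case we obtain a zero of $f$ in $F_\mfP$, hence a smooth $F_\mfP$-point $(x,1,z)$ of $G_{i,a}$ — strictly, we use the localization $F_\mfP$, which is Henselian or real closed, so $\PmcSC$ (P$\mcS$CL, which follows, cf.~Remark~\ref{PSCCimpliesPSCold}) delivers an actual $F$-rational zero.

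The main obstacle is the valuation bookkeeping in part (1): one must track the exponent $4e_i + e_i^2$ against the worst-case valuations of the five factors simultaneously and confirm the chosen power of $\pi_i$ is exactly large enough to force $v_\mfP(a) \ge 0$ and not merely some weaker bound; this is a finite but delicate computation where the inequalities $v_\mfP(\pi_i) \le e_i$, \axAA, \axBB, \axDD, \axNN must be combined with the correct signs. Part (2) is comparatively routine once one has the case division and the smoothness statement of Lemma~\ref{nonsingular} in hand.
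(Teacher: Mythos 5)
Your plan follows the paper's: invoke Lemma~\ref{nonsingular} with $g=A_i$ and $f=\pi_i^{-4e_i-e_i^2}A_iB_iD_{i,a}N_i$, show in part (2) that $f$ acquires a zero in $F_\mfP$ for every $\mfP\in\mcS$ via \axA/\axB/\axN/\axD, and extract $v_\mfP(a)\geq 0$ in part (1) from the valuation estimates. However, there are two genuine gaps.

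First, the part (1) bookkeeping is both incomplete and cites the wrong inequality. From $A_i(z)(1+T)=A_i(1)$, \axAA\ and \axAAA\ give $v_\mfP(1+T)\geq -e_i$, hence $v_\mfP(T)\geq -e_i$. To convert this into $v_\mfP(D_{i,a}(x))\geq 0$ you need a \emph{lower} bound on $v_\mfP(\pi_i)$, namely $v_\mfP(\pi_i)\geq 1$ from the clause $v_\mfP(\pi_i)>0$ of \axS\ -- not the upper bound $v_\mfP(\pi_i)\leq e_i'\leq e_i$ you invoke. The estimate is then a three-line computation:
$v_\mfP(D_{i,a}(x))\geq -e_i+(4e_i+e_i^2)\cdot 1-e_i-e_i-e_i^2=e_i\geq 0$, whence \axDD\ gives $v_\mfP(a)\geq 0$. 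Your scrambled tally $e_i+e_i+e_i+e_i^2+e_i$ and the remark about a ``delicate computation'' indicate you have not actually carried it through; it should be done, and with the correct half of \axS.

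Second, your opening assertion that $A_i(1)=1-\pi_i\neq 0$ follows from \axS\ is false when $\mcS_i=\emptyset$: \axS\ quantifies over $\mfP\in\mcS_i$ and becomes vacuous, so $\pi_i=1$ is allowed, in which case $A_i(1)=0$, the hypothesis $g(1)\neq 0$ of Lemma~\ref{nonsingular} fails, and $G_{i,a}$ is reducible. The paper treats this degenerate case explicitly at the start of part (2): if $A_i(1)=0$ then $G_{i,a}(0,0,1)=0$ directly, so one may assume $A_i(1)\neq 0$. Your proof needs this step. A smaller slip: a root of $f(X,Y)$ is a pair $(x,y)$, so the smooth point on $G_{i,a}=0$ supplied by Lemma~\ref{nonsingular} is $(x,y,1)$, not $(x,1)$ or $(x,1,z)$.
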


\begin{proof}
Let $x,y,z\in F$ with $G_{i,a}(x,y,z)=0$ and let $\mfP\in\mcS_i'$.
Then 
\begin{eqnarray*}
 v_\mfP(1+\pi_i^{-4e_i-e_i^2}A_i(x)B_i(x)D_{i,a}(x)N_i(x,y)) = \\
   \quad\quad\quad = v_\mfP(A_i(1))-v_\mfP(A_i(z)) 
   \geq -e_i
\end{eqnarray*}
by $\axAA$ and $\axAAA$.
Thus, 
$$
 v_\mfP(\pi_i^{-4e_i-e_i^2}A_i(x)B_i(x)D_{i,a}(x)N_i(x,y))\geq -e_i,
$$
so $v_\mfP(D_{i,a}(x))\geq0$
by $\axS$, $\axAA$, and $\axBB$, and $\axNN$.
Therefore, $v_\mfP(a)\geq 0$ by $\axDD$.

Now assume that $F$ is $\PmcSC$ and $v_\mfP(a)\geq 0$ for all $\mfP\in\mcS_i'$.
If $A_i(1)=0$, then $G_{i,a}(0,0,1)=0$. Hence, assume without loss of
generality that $A_i(1)\neq 0$.
Let $\mfP\in\mcS$. We claim that $A_i(X)B_i(X)D_{i,a}(X)N_i(X,Y)$ has a zero in $F_\mfP$.
If $\mfP\in\mcS\setminus\mcS_i$ and $p_\mfP\neq 2$, this follows from $\axA$.
If $\mfP\in\mcS\setminus\mcS_i$ and $p_\mfP= 2$, this follows from $\axB$.
If $\mfP\in\mcS_i\setminus\mcS_i'$, this follows from $\axN$.
If $\mfP\in\mcS_i'$, this follows from $\axD$.
Therefore, by Lemma~\ref{nonsingular} and $\axAAAA$,
$G_{i,a}$ is absolutely irreducible and has a simple zero in $F_\mfP$ for all $\mfP\in\mcS$.
Since $F$ is $\PmcSC$, $G_{i,a}$ has a zero in $F$.
\end{proof}

This almost concludes the proof of the definability of $R(\mcS_i')$ for $p_i\neq\infty$.
We now turn to the case $p_i=\infty$.

\begin{Lemma}\label{LemmaC}
Under Setting~\ref{settingS}, if $p_i=\infty$, then the polynomial
$$
 C(X)=X^2+X+2
$$
satisfies the following conditions:
\begin{enumerate}
 \item[(C1)\label{C1}] If $\mfP\in\mcS\setminus\mcS_i$ and $p_\mfP=2$, then $C$ has a zero in $F_\mfP$.
 \item[(C2)\label{C2}] If $\mfP\in\mcS_i$, then $C(x)>_\mfP 1$ for every $x\in F$.
\end{enumerate}
\end{Lemma}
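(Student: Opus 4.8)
The plan is to verify the two conditions completely directly. Note first that $C(X)=X^2+X+2$ has integer coefficients and, unlike $A_i$, $B_i$, $N_i$, does not involve $\pi_i$, so none of \axS--\axSSSS\ enters here. Recall also that under Setting~\ref{settingS} with $p_i=\infty$ every $\mfP\in\mcS_i$ is an ordering, while for a non-archimedean prime $\mfP$ one has $\mathbb{Z}\subseteq\mcO_\mfP$, so that $C\in\mcO_\mfP[X]$, and the localization $F_\mfP$ is Henselian.

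To prove \axC, I would take $\mfP\in\mcS\setminus\mcS_i$ with $p_\mfP=2$ and apply the Hensel-type Lemma~\ref{HenselRychlik} to $f=C$ at the point $a=0$. Since the residue field $\bar{F}_\mfP$ has characteristic $2$ we have $v_\mfP(C(0))=v_\mfP(2)>0$, while $C'(X)=2X+1$ gives $v_\mfP(C'(0))=v_\mfP(1)=0$; hence $v_\mfP(C(0))>2\,v_\mfP(C'(0))$, and Lemma~\ref{HenselRychlik} produces $\alpha\in\mcO_\mfP\subseteq F_\mfP$ with $C(\alpha)=0$. (Equivalently: $\bar{C}(X)=X(X+1)$ has the two distinct roots $0$ and $1$ over $\bar{F}_\mfP$, each of which lifts by Henselianity.)

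To prove \axCC, I would complete the square: for every $x\in F$,
$$
 C(x)-1 \;=\; x^2+x+1 \;=\; \left(x+\tfrac12\right)^2+\tfrac34,
$$
which is a square plus a positive rational and hence $>_\mfP 0$ for every ordering $\mfP\in\mcS_i$; therefore $C(x)>_\mfP 1$. There is no genuine obstacle in this lemma: the only points worth noting are that in residue characteristic $2$ the constant term $2$ of $C$ lies in the maximal ideal while the linear coefficient stays a unit -- which is exactly what makes Hensel's lemma applicable at $0$ -- and that $X^2+X+1$ is totally positive. In the construction to follow, $C$ will play, in the archimedean case $p_i=\infty$, the role that $A_i$ and $B_i$ (Lemmas~\ref{LemmaA} and~\ref{LemmaB}) play in the non-archimedean case.
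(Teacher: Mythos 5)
Your proof is correct and takes essentially the same approach as the paper, which simply says ``(C1) follows from Hensel's lemma. (C2) is clear.''\ -- you have just filled in the routine details (applying Lemma~\ref{HenselRychlik} at $a=0$, and completing the square for positivity).
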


\begin{proof}
(C1) follows from Hensel's lemma. (C2) is clear.
\end{proof}

\begin{Lemma}\label{LemmaE}
Under Setting~\ref{settingS}, if $p_i=\infty$, then for every $a\in F$, the polynomial
$$
 E_{a}(X)=X^2-a
$$
satisfies the following conditions:
\begin{enumerate}
 \item[(E1)\label{E1}] If $\mfP\in\mcS_i$ and $a\geq_\mfP 0$, then $E_{a}$ has a zero in $F_\mfP$.
 \item[(E2)\label{E2}] If $\mfP\in\mcS_i$, $x,\epsilon\in F$, and $E_{a}(x)\leq_\mfP\epsilon$,
         then $a\geq_\mfP-\epsilon$.
\end{enumerate}
\end{Lemma}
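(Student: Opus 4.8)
The plan is to exploit that the standing hypothesis $p_i=\infty$ forces every $\mfP\in\mcS_i$ to be an ordering: by the conditions in Setting~\ref{settingS} we have $p_\mfP=p_i=\infty$ for all $\mfP\in\mcS_i$, so each such $\mfP$ is an archimedean prime in the sense of the paper, denoted $\leq_\mfP$, and its localization $F_\mfP$ is by definition a real closure of $(F,\leq_\mfP)$. Hence $F_\mfP$ is real closed and carries a unique ordering extending $\leq_\mfP$, whose positive cone is $F_\mfP^2$ by \cite[3.2]{Prestel1}. Both claims then reduce to elementary facts about real closed and ordered fields.

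For \axE I would argue as follows: if $a\geq_\mfP 0$, then $a$ lies in the positive cone of the ordering of $F_\mfP$, so $a\in F_\mfP^2$; writing $a=b^2$ with $b\in F_\mfP$ exhibits $b$ as a zero of $E_a(X)=X^2-a$ in $F_\mfP$. For \axEE the only input is that squares are non-negative in an ordered field: given $x,\epsilon\in F$ with $E_a(x)=x^2-a\leq_\mfP\epsilon$, adding $a-\epsilon$ to both sides gives $x^2-\epsilon\leq_\mfP a$, and since $x^2\geq_\mfP 0$ this yields $-\epsilon\leq_\mfP x^2-\epsilon\leq_\mfP a$, i.e.\ $a\geq_\mfP-\epsilon$.

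I do not expect any genuine obstacle: the lemma follows at once from the definition of a real closed field together with the sign rule for squares, in complete analogy with the easy archimedean verifications in Lemmas~\ref{LemmaA} and \ref{LemmaC}. If anything, the only point worth stating explicitly is that $F_\mfP$ being a \emph{real closure} (rather than merely a real field) is what supplies the square in \axE; \axEE uses nothing about $F_\mfP$ at all and takes place already inside $(F,\leq_\mfP)$.
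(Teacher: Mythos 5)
Your proposal is correct and follows precisely the paper's (telegraphic) argument: (E1) from the fact that $F_\mfP$ is real closed, so non-negative elements are squares, and (E2) from the non-negativity of $x^2$ in the ordered field $(F,\leq_\mfP)$. You have merely spelled out what the paper dismisses as "obvious."
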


\begin{proof}
(E1) holds since $F_\mfP$ is real closed.
(E2) is obvious.
\end{proof}

\begin{Lemma}\label{LemmaH}
Under Setting~\ref{settingS}, if $p_i=\infty$, then for every $u\in F^\times$, the polynomial
$$
 H_{u}(X)=X^2+u^2
$$
satisfies the following conditions:
\begin{enumerate}
 \item[(H1)\label{H1}] If $\mfP\in\mcS_i$, then for all $x\in F$, $H_u(x)\geq_\mfP u^2$.
 \item[(H2)\label{H2}] If $\mfP\in\mcS_i$, then $H_u(1) = 1+u^2>_\mfP0$.
 \item[(H3)\label{H3}] $H_u(X)$ is square-free in $\tilde{F}[X]$, and $H_u^\prime(1)\neq 0$.
\end{enumerate}
\end{Lemma}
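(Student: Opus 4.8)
The plan is to verify the three conditions by direct computation, using only that $\mcS_i$ consists of orderings (since $p_i=\infty$) and that $\ch(F)=0$; unlike the earlier lemmas of this section, no form of Hensel's lemma is needed here.

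For (H1), I would note that $H_u(x)-u^2=x^2$, and since every square is nonnegative in any ordering $\mfP$ of $F$, this gives $H_u(x)\geq_\mfP u^2$. For (H2), since $u\in F^\times$ we have $u\neq 0$, hence $u^2>_\mfP 0$; combining this with $1>_\mfP 0$ yields $H_u(1)=1+u^2>_\mfP 0$. For (H3), I would work in $\tilde{F}$ and factor $H_u(X)=(X-\alpha)(X+\alpha)$, where $\alpha\in\tilde{F}$ satisfies $\alpha^2=-u^2$; because $u\neq 0$ and $\ch(F)=0$, the roots $\alpha$ and $-\alpha$ are distinct, so $H_u$ is square-free in $\tilde{F}[X]$, and since $H_u'(X)=2X$ we have $H_u'(1)=2\neq 0$.

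There is no genuine obstacle: the lemma simply packages the elementary facts about $H_u$ that are needed in the archimedean case. Its role is to provide a polynomial of the shape demanded by Lemma~\ref{nonsingular} — square-free with $H_u(1)\neq 0$ and $H_u'(1)\neq 0$ — together with the positivity bound (H1), mirroring what $A_i$ does via Lemma~\ref{LemmaA} when $p_i\neq\infty$.
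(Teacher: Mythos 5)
Your verification is correct and matches what the paper intends — the paper simply states that the claims are easily verified, and your direct computations for (H1), (H2), and (H3) are exactly the obvious checks.
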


\begin{proof}
All claims are easily verified.
\end{proof}

\begin{Lemma}\label{MainLemmaInfty}
Under Setting~\ref{settingS} and $p_i=\infty$, 
let $a\in F$ and $u\in F^\times$.
If $A_i$ satisfies $\axA$ and $\axAAAAA$,
$C$ satisfies $\axC$-$\axCC$,
$E_a$ satisfies $\axE$-$\axEE$,
and $H_u$ satisfies $\axH$-$\axHHH$,
then the polynomial
$$
 G_{i,a,u}(X,Y)=H_u(Y)(1+A_i(X)C(X)E_{a}(X))-H_u(1)
$$
satisfies the following conditions:
\begin{enumerate}
 \item If $G_{i,a,u}$ has a zero in $F$, then $a\geq_\mfP-{u^{-2}}$ for all $\mfP\in\mcS_i$.
 \item If $F$ is $\PmcSC$ and $a\geq_\mfP 0$ for all $\mfP\in\mcS_i$, then $G_{i,a,u}$ has a zero in $F$.
\end{enumerate}
\end{Lemma}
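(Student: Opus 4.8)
The plan is to mirror the structure of Lemma~\ref{MainLemma}, but now working with the ordered localizations $F_\mfP$ for $\mfP\in\mcS_i$ (which are real closures) instead of the $p$-adic ones. The polynomial $G_{i,a,u}(X,Y)=H_u(Y)(1+A_i(X)C(X)E_a(X))-H_u(1)$ is exactly of the form $g(Y)(1+f(\mathbf{X}))-g(1)$ with $g=H_u$ and $f(X)=A_i(X)C(X)E_a(X)$, so that Lemma~\ref{nonsingular} applies via $\axHHH$: $G_{i,a,u}$ is absolutely irreducible, and $(\mathbf{x},1)$ is a non-singular point for every root $\mathbf{x}$ of $f$.

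For claim (1), suppose $G_{i,a,u}(x,y)=0$ for some $x,y\in F$ and fix $\mfP\in\mcS_i$. Then $H_u(y)(1+A_i(x)C(x)E_a(x))=H_u(1)=1+u^2$. Since $H_u(y)\geq_\mfP u^2>_\mfP 0$ by $\axH$, the factor $1+A_i(x)C(x)E_a(x)$ is positive and equals $(1+u^2)/H_u(y)\leq_\mfP (1+u^2)/u^2 = 1+u^{-2}$. Hence $A_i(x)C(x)E_a(x)\leq_\mfP u^{-2}$. Now $A_i(x)>_\mfP 1$ by $\axAAAAA$ and $C(x)>_\mfP 1$ by $\axCC$, so $A_i(x)C(x)>_\mfP 1>_\mfP 0$, and dividing gives $E_a(x)\leq_\mfP u^{-2}$ (using that $A_i(x)C(x)\geq_\mfP 1$ makes the quotient no larger than $u^{-2}$). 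Applying $\axEE$ with $\epsilon = u^{-2}$ yields $a\geq_\mfP -u^{-2}$, as desired.

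For claim (2), assume $F$ is $\PmcSC$ and $a\geq_\mfP 0$ for all $\mfP\in\mcS_i$. We show $G_{i,a,u}$ has a simple zero in every localization $F_\mfP$, $\mfP\in\mcS$, and then invoke the $\PmcSC$ property together with absolute irreducibility. For $\mfP\in\mcS_i$: by $\axE$, $E_a$ has a zero $x\in F_\mfP$ (here $a\geq_\mfP 0$ is used), hence $f(x)=A_i(x)C(x)E_a(x)=0$, and $(x,1)$ is a simple zero of $G_{i,a,u}$ by Lemma~\ref{nonsingular} (note $H_u(1)=1+u^2\neq 0$). For $\mfP\in\mcS\setminus\mcS_i$ with $p_\mfP=\infty$: by $\axSSSS$ we have $\pi_i>_\mfP 0$, so in the real closed field $F_\mfP$ the element $\pi_i$ is a square, hence $A_i(X)=X^{2^{e_i}}-\pi_i$ has a zero $x\in F_\mfP$, giving $f(x)=0$. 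For $\mfP\in\mcS\setminus\mcS_i$ with $p_\mfP=2$: by $\axC$, $C$ has a zero $x\in F_\mfP$, so $f(x)=0$. For $\mfP\in\mcS\setminus\mcS_i$ with $p_\mfP\neq 2,\infty$: by $\axA$, $A_i$ has a zero $x\in F_\mfP$, so $f(x)=0$. In every case $f$ has a zero in $F_\mfP$, hence by Lemma~\ref{nonsingular} $G_{i,a,u}$ has a simple $F_\mfP$-point; since $G_{i,a,u}$ is absolutely irreducible and $F$ is $\PmcSC$, it has a zero in $F$.

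The only mildly delicate point is the inequality chase in claim (1): one must be careful that dividing the inequality $A_i(x)C(x)E_a(x)\leq_\mfP u^{-2}$ by the positive quantity $A_i(x)C(x)$, which is $\geq_\mfP 1$, preserves the bound $u^{-2}$ on $E_a(x)$ only because the divisor is at least $1$; if $E_a(x)$ were negative this step would still be valid since $A_i(x)C(x)\geq_\mfP 1$ only makes $E_a(x)$ more negative, i.e. the relevant direction is $E_a(x)\leq_\mfP A_i(x)C(x)E_a(x)\leq_\mfP u^{-2}$ when $E_a(x)\geq_\mfP 0$, while if $E_a(x)<_\mfP 0$ then trivially $E_a(x)<_\mfP 0\leq_\mfP u^{-2}$. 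Either way $\axEE$ applies. No genuine obstacle arises; the argument is a routine adaptation of Lemma~\ref{MainLemma} to the archimedean setting, with $u^{-2}$ playing the role of the slack that will later be driven to $0$ by a limiting/approximation argument over the finitely many primes in $\mcS_i$.
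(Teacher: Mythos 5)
Your proof follows the same route as the paper's: (1) read off the inequality at a zero of $G_{i,a,u}$, divide through using \axAAAAA\ and \axCC, and close with \axEE; (2) show $f(X)=A_i(X)C(X)E_a(X)$ has a zero in every localization $F_\mfP$, $\mfP\in\mcS$, then apply Lemma~\ref{nonsingular}, \axHHH, and the $\PmcSC$ property. Your case split in (2) is just the paper's with the $p_\mfP\neq 2$ branch further subdivided, and you re-derive \axA\ in the archimedean case (harmless redundancy, since Lemma~\ref{LemmaA} already proves it).

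One genuine gap: you write \emph{``note $H_u(1)=1+u^2\neq 0$''} as if this were automatic, but it is not. When $\mcS_i=\emptyset$, the hypothesis ``$a\geq_\mfP 0$ for all $\mfP\in\mcS_i$'' is vacuous, and $F$ may contain a square root of $-1$, so $1+u^2=0$ can occur. In that case Lemma~\ref{nonsingular} does not apply directly (it requires $g(1)\neq 0$). The fix is easy and is exactly what the paper does: if $H_u(1)=0$ then $G_{i,a,u}(X,Y)=H_u(Y)(1+f(X))$ and $G_{i,a,u}(0,1)=H_u(1)\cdot f(0)=0$, so $(0,1)$ is a zero. You should insert this one-line case distinction before invoking Lemma~\ref{nonsingular}.

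A minor point: your closing remark that $u^{-2}$ is slack to be ``driven to $0$ by a limiting/approximation argument'' is not quite how the slack is removed. In the proof of Proposition~\ref{MainTheoremS}, the slack disappears by a single algebraic substitution ($u=a^{-1}(a+1)$ and replacing $a$ by $a-u^{-2}$), not a limit over the primes in $\mcS_i$. This does not affect the correctness of the Lemma itself.
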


\begin{proof}
Let $x,y\in F$ such that $G_{i,a,u}(x,y)=0$ and let $\mfP\in\mcS_i$.
Then 
$$
 1+A_i(x)C(x)E_{a}(x)=\frac{H_u(1)}{H_u(y)}\leq_\mfP\frac{1+u^2}{u^2}=1+u^{-2}
$$
by 
$\axH$, $\axHH$.
Thus, $E_{a}(x)\leq_\mfP u^{-2}$ by 
$\axAAAAA$ and $\axCC$.
Therefore, $a\geq_\mfP-u^{-2}$ by $\axEE$.

Now assume that $F$ is $\PmcSC$ and $a\geq_\mfP 0$ for all $\mfP\in\mcS_i$.
If $H_u(1)=0$, then $G_{i,a,u}(0,1)=0$.
Hence, assume without loss of generality that $H_u(1)\neq0$.
Let $\mfP\in\mcS$. We claim that $A_i(X)C(X)E_{a}(X)$ has a zero in $F_\mfP$.
If $\mfP\in\mcS\setminus\mcS_i$ and $p_\mfP\neq 2$, this follows from $\axA$.
If $p_\mfP=2$, it follows from $\axC$.
If $\mfP\in\mcS_i$, it follows from $\axE$. 
Therefore, by Lemma~\ref{nonsingular}, $\axHHH$, and the assumption that $F$ is $\PmcSC$,
it follows that $G_{i,a,u}$ has a zero in $F$.
\end{proof}

For the following proposition, let $A_i$, $B_i$, $C$, $D_{i,a}$, $E_{a}$, $H_u$, $N_i$ be the concrete polynomials defined above.

\begin{Proposition}\label{MainTheoremS}
Under Setting~\ref{settingS}, 
for $p_i\neq\infty$ let $\varphi_i(a)$ be the 
$\mcLr(\pi_1,\dots,\pi_n)$-formula
$$
 (\exists x,y,z)(A_i(z)(1+\pi_i^{-4e_i-e_i^2}A_i(x)B_i(x)D_{i,a}(x)N_i(x,y))-A_i(1)=0),
$$
and for $p_i=\infty$ let $\varphi_i(a)$ be the 
$\mcLr(\pi_i)$-formula
$$
 (\exists u\neq0)(\exists x,y)(a(H_u(y)(1+A_i(x)C(x)E_{a-u^{-2}}(x))-H_u(1))=0).
$$
Then the following holds for the subset $\varphi_i(F)\subseteq F$ defined by $\varphi_i$:
\begin{enumerate}
 \item $\varphi_i(F)\subseteq R(\mcS_i')$.
 \item If $F$ is $\PmcSC$, then $\varphi_i(F)=R(\mcS_i')$.
\end{enumerate}
\end{Proposition}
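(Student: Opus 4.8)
The plan is to read off both statements from the two ``main lemmas'' above --- Lemma~\ref{MainLemma} when $p_i\neq\infty$ and Lemma~\ref{MainLemmaInfty} when $p_i=\infty$ --- once one checks that the concrete polynomials $A_i,B_i,C,D_{i,a},E_a,H_u,N_i$ fixed before the statement satisfy the hypotheses those lemmas quantify over. That check is pure bookkeeping, since the needed properties are exactly the assertions of the auxiliary lemmas: in the case $p_i\neq\infty$, Lemmas~\ref{LemmaA}, \ref{LemmaB}, \ref{LemmaN} give \axA--\axAAAA for $A_i$, \axB--\axBB for $B_i$, and \axN--\axNN for $N_i$, while Lemma~\ref{LemmaD} gives \axD--\axDD for $D_{i,a}$, for every $a\in F$; in the case $p_i=\infty$, Lemma~\ref{LemmaA} gives $\axA$ and $\axAAAAA$ for $A_i$, Lemma~\ref{LemmaC} gives $\axC$--$\axCC$ for $C$, and Lemmas~\ref{LemmaE}, \ref{LemmaH} give $\axE$--$\axEE$ for $E_b$ and $\axH$--$\axHHH$ for $H_u$, for every $b\in F$ and $u\in F^\times$. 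The one substantive point beyond this is matching the set defined by $\varphi_i$ with the holomorphy domain: unwinding definitions, $R(\mcS_i')=\{a\in F:v_\mfP(a)\geq 0\text{ for all }\mfP\in\mcS_i'\}$ when $p_i\neq\infty$, and $R(\mcS_i')=R(\mcS_i)=\{a\in F:a\geq_\mfP 0\text{ for all }\mfP\in\mcS_i\}$ when $p_i=\infty$. When $p_i\neq\infty$ we are then done at once: the polynomial inside $\varphi_i$ is precisely the $G_{i,a}$ of Lemma~\ref{MainLemma}, so its part~(1) gives $\varphi_i(F)\subseteq R(\mcS_i')$ and, when $F$ is $\PmcSC$, its part~(2) gives the reverse inclusion.

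Now assume $p_i=\infty$. Clearing the denominator $u^{-2}$ (which is why the quantifier reads $\exists u\neq 0$), $\varphi_i(a)$ is equivalent to the $\mcLr(\pi_i)$-formula $(\exists u\neq 0)(\exists x,y)\bigl(a\cdot G_{i,a-u^{-2},u}(x,y)=0\bigr)$, where $G_{i,b,u}$ denotes the polynomial of Lemma~\ref{MainLemmaInfty} with $a$ replaced by $b$; since that lemma's hypotheses hold for every value in its ``$a$''-slot and every $u\in F^\times$, it applies with $b=a-u^{-2}$ for every $a\in F$ and $u\in F^\times$. For statement~(1): if $\varphi_i(a)$ holds and $a=0$ then $a\in R(\mcS_i')$ trivially; if $a\neq 0$, then for the witnessing $u$ the polynomial $G_{i,a-u^{-2},u}$ has a zero in $F$, so Lemma~\ref{MainLemmaInfty}(1) gives $a-u^{-2}\geq_\mfP -u^{-2}$, i.e.\ $a\geq_\mfP 0$, for all $\mfP\in\mcS_i$, whence $a\in R(\mcS_i')$.

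For statement~(2) assume $F$ is $\PmcSC$ and $a\in R(\mcS_i')$. If $a=0$ the outer factor $a$ makes $\varphi_i(a)$ hold, and if $\mcS_i=\emptyset$ one takes $u=1$ and applies Lemma~\ref{MainLemmaInfty}(2) with its hypothesis vacuously true. So suppose $a\neq 0$ and $\mcS_i\neq\emptyset$; then $F$ is formally real, and since $a$ is nonzero and lies in every positive cone $\mcO_\mfP$, $\mfP\in\mcS_i$, we have $a>_\mfP 0$ for all such $\mfP$. The one genuinely non-mechanical step --- and the only point where I expect any resistance, the heavy lifting having already been done in the main lemmas --- is to produce a \emph{single} $u\in F^\times$ with $a-u^{-2}\geq_\mfP 0$ for all $\mfP\in\mcS_i$ simultaneously; granting such a $u$, Lemma~\ref{MainLemmaInfty}(2) (with $b=a-u^{-2}$) yields a zero $(x,y)$ of $G_{i,a-u^{-2},u}$ in $F$, so $a\cdot G_{i,a-u^{-2},u}(x,y)=0$, $\varphi_i(a)$ holds, and together with statement~(1) this gives $\varphi_i(F)=R(\mcS_i')$. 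I would take $u=(1+a^2)a^{-1}$, which lies in $F^\times$ because $1+a^2\neq 0$ in a formally real field and $a\neq 0$; then $u^{-2}=a^2(1+a^2)^{-2}$, and a routine inequality valid in any ordered field --- $(1+a^2)^2-a\geq(1+a^2)-a>0$ --- shows $a<_\mfP(1+a^2)^2$, hence $u^{-2}<_\mfP a$ and $a-u^{-2}>_\mfP 0$, at every ordering with $a>_\mfP 0$, in particular at every $\mfP\in\mcS_i$.
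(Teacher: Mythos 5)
Your proof is correct and follows essentially the same route as the paper's: reduce to Lemmas~\ref{MainLemma} and~\ref{MainLemmaInfty} after checking via Lemmas~\ref{LemmaA}--\ref{LemmaH} and~\ref{LemmaN} that the concrete polynomials satisfy the required axioms, and in the $p_i=\infty$ case produce a single $u\in F^\times$ making $a-u^{-2}$ nonnegative at all $\mfP\in\mcS_i$. The only (immaterial) difference is the choice of $u$: the paper takes $u=a^{-1}(a+1)$, while you take $u=(1+a^2)a^{-1}$; both do the job, and your explicit treatment of the $\mcS_i=\emptyset$ case is a small extra touch of care that the paper leaves implicit.
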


\begin{proof}
For $p_i\neq\infty$, this follows directly from Lemma~\ref{MainLemma}.
For $p_i=\infty$, proceed as follows: 
If $a=0$, then $a\in\varphi_i(F)$ and $a\in R(\mcS_i)$.
If $a\in\varphi_i(F)\setminus\{0\}$, then 
Lemma~\ref{MainLemmaInfty}(1) implies that for some $u\in F^\times$, $a-u^{-2}\geq_\mfP -u^{-2}$ for all $\mfP\in\mcS_i$,
so $a\in R(\mcS_i)$.
If $F$ is $\PmcSC$ and $a\in R(\mcS_i)\setminus\{0\}$,
then a simple calculation shows that with $u=a^{-1}(a+1)\in F$, also $a-u^{-2}\in R(\mcS_i)$.
Hence, by Lemma~\ref{MainLemmaInfty}(2),
$\varphi_i(a)$ is satisfied in $F$.
\end{proof}

\section{Holomorphy Domains in $\PSCL$ Fields}
\label{sec3}

\noindent
Now we apply the general construction of the previous section to
the fields we are interested in.
We continue to work in Setting~\ref{SettingPSCC} 
and, for the rest of this paper, make the following additional assumptions:

\begin{enumerate}
 \item[\textbullet] {\bf For $\mfp\in S$ with $p_\mfp\neq\infty$, fix $\pi_\mfp\in K$ with $v_\mfp(\pi_\mfp)=1$.}
 \item[\textbullet] {\bf For $\mfpinS$ with $p_\mfp=\infty$, let $\pi_\mfp=-1$.}
\end{enumerate}

\begin{Lemma}\label{TimpliesS}
Let $\tau'\leq\tau$ be a relative type and write
$S=\{\mfp_1,\dots,\mfp_n\}$, $\tau=(e,f)$, and $\tau'=(e',f')$.
Let
$\mcS_i=\mcS_{\mfp_i}^\tau(F)$,
$\mcS=\biguplus_{i=1}^n\mcS_i$,
$p_i=p_{\mfp_i}$, $e_i=e$, $f_i=ff_{\mfp_i}$, $p_i'=p_{\mfp_i}$, $e_i'=e'$, $f_i'=f'f_{\mfp_i}$.
Then there exist $\pi_1,\dots,\pi_n\in K$
such that the conditions of Setting~\ref{settingS} are satisfied.
\end{Lemma}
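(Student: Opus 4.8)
The plan is to construct each $\pi_i$ inside $K$ and to deduce the conditions of Setting~\ref{settingS} at the primes $\mfP\in\mcS_i$ of $F$ from the behavior of $\pi_i$ at $\mfp_i$ itself, using crucially that $\mfp_i$ is \emph{local}; the statement is purely about approximation and does not involve the $\PStauCC$ hypothesis. First I would dispose of the bookkeeping items. $F$ has characteristic zero because $K$ does; the union $\mcS=\biguplus_i\mcS_i$ is disjoint, since a prime of $F$ lying above $\mfp_i$ determines $\mfp_i$ via $\mcO_\mfP\cap K=\mcO_{\mfp_i}$ and the $\mfp_i$ are distinct; for $\mfP\in\mcS_i$ one has $p_\mfP=p_{\mfp_i}=p_i$, and since the second coordinate of $\tp(\mfP/\mfp_i)\leq\tau$ gives $f_\mfP/f_{\mfp_i}\mid f$, also $f_\mfP\mid f f_{\mfp_i}=f_i$; and $(p_i',e_i',f_i')\leq(p_i,e_i,f_i)$ is just a restatement of $\tau'\leq\tau$. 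I would also record once and for all the elementary consequences of $\mcO_\mfP\cap K=\mcO_\mfp$ for any prime $\mfP$ of $F$ lying above a prime $\mfp$ of $K$: $\mfP$ is archimedean if and only if $\mfp$ is (a valuation ring contains $-1$, a proper positive cone does not), $\mcO_\mfp\subseteq\mcO_\mfP$, $\mathfrak{m}_\mfp\subseteq\mathfrak{m}_\mfP$, and every unit of $\mcO_\mfp$ is a unit of $\mcO_\mfP$.

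Next I would reduce \axS--\axSSSS\ to conditions on $\pi_i$ at the primes $\mfp_1,\dots,\mfp_n$ of $K$ alone. For \axS, assuming $\mfp_i$ non-archimedean, it suffices that $v_{\mfp_i}(\pi_i)=1$: writing $p=\pi_{\mfp_i}^{e_{\mfp_i}}u$ with $u\in\mcO_{\mfp_i}^\times$ (possible since $v_{\mfp_i}(\pi_{\mfp_i})=1$ and $v_{\mfp_i}(p)=e_{\mfp_i}$), one gets $e_\mfP=v_\mfP(p)=e_{\mfp_i}\,v_\mfP(\pi_{\mfp_i})$ for $\mfP\in\mcS_i$, hence $v_\mfP(\pi_{\mfp_i})=e_\mfP/e_{\mfp_i}$; as $\pi_i$ and $\pi_{\mfp_i}$ differ by a unit of $\mcO_{\mfp_i}$, also $v_\mfP(\pi_i)=e_\mfP/e_{\mfp_i}$, which is $>0$ and, being the first coordinate of $\tp(\mfP/\mfp_i)\leq\tau$, is $\leq e=e_i$. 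For \axSS: if $j\neq i$ and $\mfp_j$ is non-archimedean, then $v_{\mfp_j}(\pi_i-1)>0$ forces $\pi_i-1\in\mathfrak{m}_{\mfp_j}\subseteq\mathfrak{m}_\mfP$ for every $\mfP\in\mcS_j$. For \axSSS: if $\mfp_i$ is archimedean, then $\pi_i<_{\mfp_i}-1$ in $K$ gives $-1-\pi_i\in\mcO_{\mfp_i}\setminus\{0\}\subseteq\mcO_\mfP\setminus\{0\}$, i.e.\ $\pi_i<_\mfP-1$ for all $\mfP\in\mcS_i$. For \axSSSS: if $j\neq i$ and $\mfp_j$ is archimedean, then $\pi_i>_{\mfp_j}0$ in $K$ gives $\pi_i\in\mcO_{\mfp_j}\setminus\{0\}\subseteq\mcO_\mfP\setminus\{0\}$, i.e.\ $\pi_i>_\mfP0$ for all $\mfP\in\mcS_j$.

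It thus remains to find, for each $i$, an element $\pi_i\in K^\times$ satisfying $v_{\mfp_i}(\pi_i)=1$ if $\mfp_i$ is non-archimedean, resp.\ $\pi_i<_{\mfp_i}-1$ if $\mfp_i$ is archimedean, together with $v_{\mfp_j}(\pi_i-1)>0$ for every non-archimedean $\mfp_j$ with $j\neq i$ and $\pi_i>_{\mfp_j}0$ for every archimedean $\mfp_j$ with $j\neq i$. For the condition at $\mfp_i$ it is enough that $\pi_i$ be congruent to $\pi_{\mfp_i}$ modulo $\mathfrak{m}_{\mfp_i}^2$, resp.\ sufficiently close to $-2$ at $\mfp_i$; for the conditions at $\mfp_j$ ($j\neq i$) it is enough that $\pi_i$ be congruent to $1$ modulo $\mathfrak{m}_{\mfp_j}$, resp.\ sufficiently close to $1$ at $\mfp_j$. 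So $\pi_i$ is produced by simultaneously approximating prescribed elements of $K$ at the finitely many primes $\mfp_1,\dots,\mfp_n$, which is possible by the weak approximation theorem, applicable because the $\mfp_j$ are pairwise independent: distinct discrete rank-one valuations have incomparable valuation rings; a $p$-valuation is incompatible with every ordering, its residue field $\mathbb{F}_{p^f}$ not being formally real; and distinct archimedean orderings induce inequivalent archimedean absolute values, which are pairwise independent. I expect the verification of \axS\ to be the only genuinely non-formal point, since it is there that locality of $\mfp_i$ is used — it yields $v_\mfP(\pi_i)=e_\mfP/e_{\mfp_i}$ with no extra slack, which is exactly why $v_{\mfp_i}(\pi_i)$ must be $1$ on the nose and why the bound $\tp(\mfP/\mfp_i)\leq\tau$ enters; everything else, including the existence of $\pi_i$ once pairwise independence of the $\mfp_j$ is in hand, is routine.
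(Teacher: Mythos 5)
Your proof is correct and follows the same route as the paper, which simply invokes weak approximation at the finite set $S$ of primes of $K$; you have supplied the details the paper leaves implicit, in particular the reduction of \axS--\axSSSS\ to conditions at $\mfp_1,\dots,\mfp_n$ and the computation $v_\mfP(\pi_i)=e_\mfP/e_{\mfp_i}$, which is indeed the step where locality of the $\mfp_i$ is used.
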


\begin{proof}
The existence of $\pi_i$ follows from the weak approximation theorem
applied to the finite set $S$, see e.g.~\cite[1.1.3]{EnglerPrestel}.
\end{proof}

\begin{Proposition}\label{MainTheoremT}
Let $\mfp\in S$ and $\tau'\leq\tau$ a relative type.
There exists an existential $\mcLr(K)$-formula $\theta_{R,\mfp}^{\tau'}(z)$ 
that satisfies the following:
\begin{enumerate}
 \item $\theta_{R,\mfp}^{\tau'}(F)\subseteq R_\mfp^{\tau'}(F)$.
 \item If $F$ is $\PStauCL$, then $\theta_{R,\mfp}^{\tau'}(F)=R_\mfp^{\tau'}(F)$.
\end{enumerate}
\end{Proposition}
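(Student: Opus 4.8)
The plan is to specialise the machinery of Section~\ref{sec2} to the partition provided by Lemma~\ref{TimpliesS} and then to check that the resulting formula has all its constants in $K$ and that its zero set is the holomorphy domain of the right set of primes. First I would enumerate $S=\{\mfp_1,\dots,\mfp_n\}$ with $\mfp=\mfp_{i_0}$, and apply Lemma~\ref{TimpliesS} for the given $\tau'\leq\tau$ to obtain $\pi_1,\dots,\pi_n\in K$ for which Setting~\ref{settingS} holds with $\mcS_i=\mcS_{\mfp_i}^\tau(F)$, $\mcS=\biguplus_i\mcS_i$, $e_i=e$, $f_i=ff_{\mfp_i}$, and $(p_i',e_i',f_i')=(p_{\mfp_i},e',f'f_{\mfp_i})$. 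Reading off the weak approximation argument in the proof of that lemma, I would take the $\pi_i$ so that $v_{\mfp_i}(\pi_i)=1$ if $p_{\mfp_i}\neq\infty$ and $\pi_i<_{\mfp_i}-1$ if $p_{\mfp_i}=\infty$, and $v_{\mfp_j}(\pi_i-1)>0$ resp.~$\pi_i>_{\mfp_j}0$ for $j\neq i$; since for $x\in K^\times$ and any $\mfP\in\mcS$ lying above a finite $\mfp_j\in S$ one has $v_\mfP(x)=(e_\mfP/e_{\mfp_j})v_{\mfp_j}(x)$, and orderings restrict to orderings, these choices force \axS--\axSSSS\ for \emph{every} extension of $K$, and in particular the $\pi_i$ depend only on $K$ and $S$. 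Proposition~\ref{MainTheoremS} with index $i_0$ then provides the existential formula $\varphi_{i_0}(z)$ involving only the constants $\pi_1,\dots,\pi_n$ (only $\pi_{i_0}$ when $p_\mfp=\infty$), with $\varphi_{i_0}(F)\subseteq R(\mcS_{i_0}')$ and with equality when $F$ is $\PmcSC$. I would then set $\theta_{R,\mfp}^{\tau'}:=\varphi_{i_0}$; since all $\pi_i$ lie in $K$, this is an existential $\mcLr(K)$-formula, and it does not depend on $F$.

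What remains is to identify $\mcS_{i_0}'$ with $\mcS_\mfp^{\tau'}(F)$, and this is the step I expect to be the main obstacle, because it turns entirely on the normalisation conventions for valuations. If $p_\mfp=\infty$ there is nothing to do: $\mcS_{i_0}'=\mcS_{i_0}=\mcS_\mfp^\tau(F)$, and every ordering lying above an archimedean prime has relative type $(1,1)$, with $(1,1)\leq\tau'\leq\tau$, so $\mcS_\mfp^\tau(F)=\mcS_\mfp^{\tau'}(F)$. If $p_\mfp\neq\infty$, I would fix $\mfP\in\mcS_{i_0}$ lying above $\mfp$ and compute $v_\mfP(\pi_{i_0})$: writing $\pi_{i_0}^{e_\mfp}=pu$ with $u\in\mcO_\mfp^\times=\mcO_\mfP^\times\cap K$ gives $e_\mfp\cdot v_\mfP(\pi_{i_0})=v_\mfP(p)=e_\mfP$, so $v_\mfP(\pi_{i_0})=e_\mfP/e_\mfp$ is the first component of $\tp(\mfP/\mfp)$. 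Since $f_\mfp\mid f_\mfP$ (the finite residue field of $\mfp$ embeds into that of $\mfP$), the two conditions cutting $\mcS_{i_0}'$ out of $\mcS_{i_0}$, namely $v_\mfP(\pi_{i_0})\leq e_i'=e'$ and $f_\mfP\mid f_i'=f'f_\mfp$, are equivalent to $e_\mfP/e_\mfp\leq e'$ and $f_\mfP/f_\mfp\mid f'$, i.e.~to $\tp(\mfP/\mfp)\leq\tau'$; as $\tau'\leq\tau$, this already forces $\mfP\in\mcS_\mfp^\tau(F)$, whence $\mcS_{i_0}'=\mcS_\mfp^{\tau'}(F)$ and $R(\mcS_{i_0}')=R_\mfp^{\tau'}(F)$.

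Finally, $F$ being $\PStauCL$ means exactly that $F$ is $\PmcSC$ for $\mcS=\mcS_S^\tau(F)=\biguplus_i\mcS_{\mfp_i}^\tau(F)$, which is the very set fed into Proposition~\ref{MainTheoremS}. So part~(1) will follow from Proposition~\ref{MainTheoremS}(1) together with the identification above, and part~(2) from Proposition~\ref{MainTheoremS}(2). The only real work beyond this bookkeeping is, as already noted, verifying that the weak approximation choice of the $\pi_i$ is uniform in $F$ — so that $\theta_{R,\mfp}^{\tau'}$ is a single $\mcLr(K)$-formula, not a family depending on $F$ — together with the small valuation-theoretic computation identifying $v_\mfP(\pi_{i_0})$ with the relative ramification.
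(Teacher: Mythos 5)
Your proposal is correct and takes essentially the same route as the paper, namely applying Lemma~\ref{TimpliesS} and reading off the formula $\varphi_{i}$ of Proposition~\ref{MainTheoremS}. The bookkeeping you spell out — that the weak-approximation choice of $\pi_1,\dots,\pi_n\in K$ is uniform in $F$, the computation $v_\mfP(\pi_{i_0})=e_\mfP/e_\mfp$, and the resulting identification $\mcS_{i_0}'=\mcS_\mfp^{\tau'}(F)$ — is exactly the content the paper leaves implicit in its two-sentence proof, and you have carried it out correctly.
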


\begin{proof}
Apply Lemma \ref{TimpliesS} and assume $\mfp=\mfp_i$.
Then the corresponding formula $\varphi_i$ of
Proposition~\ref{MainTheoremS}
satisfies the claim.
\end{proof}

This also proves Theorem \ref{thm2} of the introduction.
Indeed, if $p_i\neq\infty$, then the formula $\theta_{R,\mfp}^{\tau}(z)$ 
is already diophantine (and independent of $F$).
In the case $p_i=\infty$, the formula $\theta_{R,\mfp}^{\tau}(z)$ 
is of the form 
$$
 (\exists u\neq 0)(\exists x,y)(f(z,u,x,y)=0)
$$ 
with $f\in K[Z,U,X,Y]$ independent of $F$,
which for a $\PStauCC$ field is equivalent to the diophantine formula
$$
 (\exists u,v,x,y)(f(z,u,x,y)^2+(uv-1)^2=0)
$$ 
(note that if $F$ is not real, then already $\theta_{R,\mfp}^{\tau}(F)=F$).

\begin{Definition}\label{DefKochen}
Let $\mfp\in S$ and $\tau'=(e',f')\leq\tau$.
If $p_\mfp\neq\infty$,
let $q=p^{f'f_\mfp}$,
and define the {\bf $\mfp$-adic Kochen operator} over $K$ of type ${\tau'}$ by
$$
 \gamma_\mfp^{\tau'}(x)=\frac{1}{\pi_\mfp}\cdot((x^q-x)-(x^q-x)^{-1})^{-e'}
$$
if this expression is well defined, and $\gamma_\mfp^{\tau'}(x)=0$ otherwise.
Define the {\bf $\mfp$-adic Kochen ring} over $K$ of type ${\tau'}$ of $F$ by
$$
 \Gamma_\mfp^{\tau'}(F)=\left\{ \frac{b}{1+\pi_\mfp c} \colon b,c\in\mcO_\mfp[\gamma_\mfp^{\tau'}(F)],\; 1+\pi_\mfp c\neq 0 \right\}.
$$
If $p_\mfp=\infty$, let $\gamma_\mfp^{\tau'}(x)=\gamma(x)=x^2$
and $\Gamma_\mfp^{\tau'}(F)=\mathcal{O}_\mfp[\gamma(F)]$,
the semiring generated by $\gamma(F)$ over $\mathcal{O}_\mfp$.
\end{Definition}

\begin{Lemma}\label{Kochenring}
Let $\mfp\in S$ and $\tau'\leq\tau$.
Then $\mcS_\mfp^{\tau'}(F)\neq\emptyset$ if and only if $\pi_\mfp^{-1}\notin\Gamma_\mfp^{\tau'}(F)$.
In that case, if $p_\mfp\neq\infty$, then 
$R_\mfp^{\tau'}(F)$ is the integral closure of $\Gamma_\mfp^{\tau'}(F)$;
if $p_\mfp=\infty$, then $R_\mfp^{\tau'}(F)=\Gamma_\mfp^{\tau'}(F)$.
\end{Lemma}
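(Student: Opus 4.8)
The plan is to treat the archimedean case $p_\mfp=\infty$, which reduces to Artin--Schreier theory of preorderings, separately from the non-archimedean case $p_\mfp\neq\infty$, which is essentially the Kochen ring theory of \cite{PR} in the relative form over $(K,\mfp)$. In the archimedean case $\gamma(x)=x^2$, so $\Gamma_\mfp^{\tau'}(F)=\mcO_\mfp[\gamma(F)]$ is the preordering of $F$ generated by the positive cone $\mcO_\mfp$ of $K$; moreover, since an ordering has relative type $(1,1)\leq\tau'$ over $\mfp$ and no $p$-valuation of $F$ can lie above an ordering $\mfp$ (a positive cone is not a ring), $\mcS_\mfp^{\tau'}(F)$ is exactly the set of orderings of $F$ extending $\leq_\mfp$, i.e.\ the orderings of $F$ containing $\Gamma_\mfp^{\tau'}(F)$. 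I would then quote two facts from \cite[\S1]{Prestel1}: a preordering not containing $-1$ is contained in an ordering, and such a preordering equals the intersection of the orderings containing it. Applied to $\Gamma_\mfp^{\tau'}(F)$, the first gives $\mcS_\mfp^{\tau'}(F)\neq\emptyset$ iff $-1=\pi_\mfp^{-1}\notin\Gamma_\mfp^{\tau'}(F)$, and the second gives $R_\mfp^{\tau'}(F)=\Gamma_\mfp^{\tau'}(F)$ in that case.

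For $p_\mfp\neq\infty$, write $q=p^{f'f_\mfp}$ and $y=x^q-x$. First I would establish the inclusion $\gamma_\mfp^{\tau'}(F)\subseteq\mcO_\mfP$ for every $\mfP\in\mcS_\mfp^{\tau'}(F)$: if $v_\mfP(x)<0$ then $v_\mfP(y)=qv_\mfP(x)<0$, while if $v_\mfP(x)\geq 0$ then $\bar F_\mfP\subseteq\mathbb{F}_q$ forces $\bar x^q=\bar x$ and hence $v_\mfP(y)>0$; in either case $v_\mfP(y)\neq 0$, and since the value group of the $p$-valuation $v_\mfP$ has $\mathbb{Z}$ as a convex subgroup it has no value in $(0,1)$, so $|v_\mfP(y)|\geq 1$. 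As $v_\mfP(\pi_\mfp)=e_\mfP/e_\mfp\leq e'$ this gives $v_\mfP(\gamma_\mfp^{\tau'}(x))=-v_\mfP(\pi_\mfp)+e'|v_\mfP(y-y^{-1})|\geq -v_\mfP(\pi_\mfp)+e'\geq 0$ (the degenerate cases give $\gamma_\mfp^{\tau'}(x)=0$). Hence $\mcO_\mfp[\gamma_\mfp^{\tau'}(F)]\subseteq\mcO_\mfP$, and since $v_\mfP(\pi_\mfp)>0$ every denominator $1+\pi_\mfp c$ with $c\in\mcO_\mfp[\gamma_\mfp^{\tau'}(F)]$ is a unit of $\mcO_\mfP$, so $\Gamma_\mfp^{\tau'}(F)\subseteq\mcO_\mfP$. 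Intersecting over $\mcS_\mfp^{\tau'}(F)$ yields $\Gamma_\mfp^{\tau'}(F)\subseteq R_\mfp^{\tau'}(F)$, hence also $\overline{\Gamma_\mfp^{\tau'}(F)}\subseteq R_\mfp^{\tau'}(F)$ since valuation rings are integrally closed; and from $\pi_\mfp^{-1}\notin\mcO_\mfP$ one reads off $\pi_\mfp^{-1}\notin\Gamma_\mfp^{\tau'}(F)$ whenever $\mcS_\mfp^{\tau'}(F)\neq\emptyset$.

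For the converse I would isolate a ``Kochen forcing'' lemma: if $\mcO_w$ is a valuation ring of $F$ with $\Gamma_\mfp^{\tau'}(F)\subseteq\mcO_w$ and $w(\pi_\mfp)>0$, then $w$ is a $p$-valuation whose prime lies in $\mcS_\mfp^{\tau'}(F)$. For the residue bound, an $x\in\mcO_w$ with $\bar x\notin\mathbb{F}_q$ — which, after modifying $x$ inside its residue class (possible as $\mathfrak m_w\neq 0$), may be taken with $x^q-x\neq\pm 1$ — would satisfy $w(\gamma_\mfp^{\tau'}(x))=-w(\pi_\mfp)-e'w(y-y^{-1})<0$, contradicting $\gamma_\mfp^{\tau'}(x)\in\mcO_w$; hence $\bar F_w\subseteq\mathbb{F}_q$. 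For the ramification bound, evaluating $\gamma_\mfp^{\tau'}$ at an $x$ with $0<w(x)<w(\pi_\mfp)$ forces $w(x)\geq w(\pi_\mfp)/e'$, so distinct values of $w$ on $F^\times$ below $w(\pi_\mfp)$ are at least $w(\pi_\mfp)/e'$ apart; together with $w|_K=v_\mfp$ (forced because $\mcO_\mfp$ is a rank-one valuation ring of $K$ and $\pi_\mfp^{-1}\notin\mcO_w$) this makes $\mcO_w/p\mcO_w$ finite, i.e.\ $w$ a $p$-valuation of relative type $\leq\tau'$ over $\mfp$. Granting this lemma, if $\pi_\mfp^{-1}\notin\Gamma_\mfp^{\tau'}(F)$ then $\pi_\mfp$ is a non-unit of $\Gamma_\mfp^{\tau'}(F)$, hence lies in a maximal ideal, and Chevalley's extension theorem (see \cite{EnglerPrestel}) produces a valuation ring $\mcO_w$ of $F$ dominating the corresponding localization; then $w(\pi_\mfp)>0$ and $\Gamma_\mfp^{\tau'}(F)\subseteq\mcO_w$, so its prime lies in $\mcS_\mfp^{\tau'}(F)$ and $\mcS_\mfp^{\tau'}(F)\neq\emptyset$.

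What remains, and where I expect the real work, is the inclusion $R_\mfp^{\tau'}(F)\subseteq\overline{\Gamma_\mfp^{\tau'}(F)}$. Since $\overline{\Gamma_\mfp^{\tau'}(F)}$ is the intersection of all valuation overrings of $\Gamma_\mfp^{\tau'}(F)$ in $F$, and by the forcing lemma those in which $\pi_\mfp$ is a non-unit are precisely the $\mcO_\mfP$ with $\mfP\in\mcS_\mfp^{\tau'}(F)$, one must show that the remaining overrings (in which $\pi_\mfp$ is a unit) do not shrink the intersection below $R_\mfp^{\tau'}(F)$. This is exactly Kochen's theorem that the Kochen ring is a Pr\"ufer ring equal to the holomorphy ring of a formally $p$-adic field, now in the relative setting over $(K,\mfp)$ and with residue fields confined to $\mathbb{F}_q$ and relative ramification at most $e'$; I would obtain it by transcribing the argument of \cite[Ch.~6]{PR}. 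The main obstacle is thus not a valuation estimate but this structural statement — that $\Gamma_\mfp^{\tau'}(F)$ is Pr\"ufer up to integral closure, with the ideal generated by $\pi_\mfp$ controlling exactly the primes in $\mcS_\mfp^{\tau'}(F)$.
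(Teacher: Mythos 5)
Your proposal is correct and follows essentially the same route as the paper: for $p_\mfp=\infty$ it reduces to the Artin--Schreier facts about preorderings cited from \cite[1.6]{Prestel1}, and for $p_\mfp\neq\infty$ it is the relative Kochen-ring theory of \cite{PR}, which the paper dispatches by citing \cite[6.4, 6.8, 6.9]{PR}. You usefully unpack the elementary valuation estimates (the inclusion $\Gamma_\mfp^{\tau'}(F)\subseteq R_\mfp^{\tau'}(F)$ and the Chevalley-plus-forcing argument for nonemptiness) that the paper's citation subsumes, and you correctly isolate the genuinely hard step --- that $\Gamma_\mfp^{\tau'}(F)$ is Pr\"ufer up to integral closure, giving $R_\mfp^{\tau'}(F)\subseteq\overline{\Gamma_\mfp^{\tau'}(F)}$ --- but defer it to PR, which is exactly the move the paper makes.
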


\begin{proof}
For the case $p_\mfp\neq\infty$ see \cite[6.4, 6.8, 6.9]{PR}.
For the case $p_\mfp=\infty$
note that
if $-1\notin\Gamma_\mfp^{\tau'}(F)$, then 
$\Gamma_\mfp^{\tau'}(F)$ is a pre-positive cone,
so $\Gamma_\mfp^{\tau'}(F)=R_\mfp^{\tau'}(F)$,
see \cite[1.6]{Prestel1}.
\end{proof}

\begin{Definition}\label{DefTholomp}
For $\mfp\in S$ and $\tau'\leq\tau$, let $T_{R,\mfp}^{\tau'}$ be the $\mcLr(K)$-theory consisting of the following sentences.
\begin{enumerate}
 \item A recursive set of sentences stating that $\theta_{R,\mfp}^{\tau'}$ defines 
an integrally closed ring (if $p_\mfp\neq\infty$)
         resp.~a semiring (if $p_\mfp=\infty$).
 \item For every $a\in\mathcal{O}_\mfp$ the sentence
$$
 \theta_{R,\mfp}^{\tau'}(a).
$$
 \item The sentence
$$
 (\forall x)(\theta_{R,\mfp}^{\tau'}(\gamma_\mfp^{\tau'}(x))).
$$
 \item If $p_\mfp\neq\infty$, the sentence
$$
 (\forall x)(\theta_{R,\mfp}^{\tau'}(x)\wedge 1+\pi_\mfp x\neq0\rightarrow\theta_{R,\mfp}^{\tau'}((1+\pi_\mfp x)^{-1})).
$$
 \item The sentence
$$
 \theta_{R,\mfp}^{\tau'}(\pi_\mfp^{-1})\rightarrow(\forall x)(\theta_{R,\mfp}^{\tau'}(x)).
$$
\end{enumerate}
\end{Definition}

\begin{Proposition}\label{Tholom}
Let $\mfp\in S$ and $\tau'\leq\tau$.
Then $F$ satisfies $T_{R,\mfp}^{\tau'}$ if and only if
the formula $\theta_{R,\mfp}^{\tau'}$ defines the holomorphy domain $R_\mfp^{\tau'}(F)$ in $F$.
\end{Proposition}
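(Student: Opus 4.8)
The plan is to prove Proposition~\ref{Tholom} as an essentially immediate consequence of Proposition~\ref{MainTheoremT}, Lemma~\ref{Kochenring}, and the bookkeeping built into Definition~\ref{DefTholomp}. First I would record the direction that is almost definitional: if $\theta_{R,\mfp}^{\tau'}$ defines $R_\mfp^{\tau'}(F)$ in $F$, then each of the five groups of sentences in $T_{R,\mfp}^{\tau'}$ holds, because $R_\mfp^{\tau'}(F)$ is an integrally closed ring (resp.\ a semiring when $p_\mfp=\infty$) by the structure of a holomorphy domain, contains $\mcO_\mfp$ and the image of the Kochen operator $\gamma_\mfp^{\tau'}$ and is closed under the relevant inversions by Lemma~\ref{Kochenring}, and satisfies the implication in item~(5) since $\pi_\mfp^{-1}\in R_\mfp^{\tau'}(F)$ forces $\mcS_\mfp^{\tau'}(F)=\emptyset$, hence $R_\mfp^{\tau'}(F)=F$, again by Lemma~\ref{Kochenring}. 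Here one must check that the recursive set of sentences in item~(1) is chosen precisely so that "$\theta_{R,\mfp}^{\tau'}$ defines an integrally closed ring/semiring" is literally one of its consequences; this is the only place the exact wording of Definition~\ref{DefTholomp}(1) matters.

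For the converse I would argue as follows. Assume $F\models T_{R,\mfp}^{\tau'}$, and abbreviate $A=\theta_{R,\mfp}^{\tau'}(F)$. By Proposition~\ref{MainTheoremT}(1) we always have $A\subseteq R_\mfp^{\tau'}(F)$, so the content is the reverse inclusion. Distinguish two cases according to whether $\pi_\mfp^{-1}\in A$. If $\pi_\mfp^{-1}\in A$, then item~(5) of $T_{R,\mfp}^{\tau'}$ gives $A=F$, and then $F=A\subseteq R_\mfp^{\tau'}(F)\subseteq F$, so $\theta_{R,\mfp}^{\tau'}$ defines all of $F$; one then notes that in this case $R_\mfp^{\tau'}(F)=F$ as well, e.g.\ because $\pi_\mfp^{-1}\in R_\mfp^{\tau'}(F)$ combined with Lemma~\ref{Kochenring} forces $\mcS_\mfp^{\tau'}(F)=\emptyset$, whence $R_\mfp^{\tau'}(F)=R(\emptyset)=F$. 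If instead $\pi_\mfp^{-1}\notin A$, then for $p_\mfp\neq\infty$ items~(1)--(4) tell us that $A$ is an integrally closed ring containing $\mcO_\mfp$, containing $\gamma_\mfp^{\tau'}(F)$, and closed under $x\mapsto(1+\pi_\mfp x)^{-1}$; hence $A$ contains $\mcO_\mfp[\gamma_\mfp^{\tau'}(F)]$ and all elements $b/(1+\pi_\mfp c)$ with $b,c$ in that subring, i.e.\ $A\supseteq\Gamma_\mfp^{\tau'}(F)$, and since $A$ is integrally closed, $A$ contains the integral closure of $\Gamma_\mfp^{\tau'}(F)$, which by Lemma~\ref{Kochenring} is exactly $R_\mfp^{\tau'}(F)$ (and the hypothesis $\pi_\mfp^{-1}\notin A\supseteq\Gamma_\mfp^{\tau'}(F)$ is compatible with $\mcS_\mfp^{\tau'}(F)\neq\emptyset$, which is the regime in which that description of $R_\mfp^{\tau'}(F)$ is valid). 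For $p_\mfp=\infty$ the same argument with "ring" replaced by "semiring", $\gamma_\mfp^{\tau'}(x)=x^2$, and no item~(4) gives $A\supseteq\mcO_\mfp[\gamma(F)]=\Gamma_\mfp^{\tau'}(F)=R_\mfp^{\tau'}(F)$ by Lemma~\ref{Kochenring}. In all cases $A\supseteq R_\mfp^{\tau'}(F)$, so combined with $A\subseteq R_\mfp^{\tau'}(F)$ we get $A=R_\mfp^{\tau'}(F)$.

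The only subtlety, and the step I expect to be the real (if modest) obstacle, is the interplay between the two descriptions of $R_\mfp^{\tau'}(F)$ in Lemma~\ref{Kochenring} and the case division on whether $\pi_\mfp^{-1}$ lies in $A$: one must be careful that in the case $\mcS_\mfp^{\tau'}(F)=\emptyset$ — where $R_\mfp^{\tau'}(F)=F$ and $\pi_\mfp^{-1}\in\Gamma_\mfp^{\tau'}(F)$ — the theory $T_{R,\mfp}^{\tau'}$ still forces $A=F$, which is exactly what item~(5) is for, and conversely that in the case $\mcS_\mfp^{\tau'}(F)\neq\emptyset$ the element $\pi_\mfp^{-1}$ is automatically outside $A$ because $A\supseteq\Gamma_\mfp^{\tau'}(F)\not\ni\pi_\mfp^{-1}$. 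Once this dichotomy is organized cleanly, the proof is a direct chase through items~(1)--(5) and Lemma~\ref{Kochenring}, with Proposition~\ref{MainTheoremT}(1) supplying the cheap inclusion $A\subseteq R_\mfp^{\tau'}(F)$ that makes the equality fall out; no new geometric or model-theoretic input is needed beyond what is already established.
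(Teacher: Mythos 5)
Your proof is correct and is essentially the detailed version of the paper's one-line argument, which cites exactly the ingredients you use: Proposition~\ref{MainTheoremT}(1) for the cheap inclusion, Lemma~\ref{Kochenring} for identifying $R_\mfp^{\tau'}(F)$ with (the integral closure of) $\Gamma_\mfp^{\tau'}(F)$, and the structure of $\Gamma_\mfp^{\tau'}(F)$ to show items (1)--(4) of $T_{R,\mfp}^{\tau'}$ force $\theta_{R,\mfp}^{\tau'}(F)\supseteq\Gamma_\mfp^{\tau'}(F)$. One small simplification: in your first case ($\pi_\mfp^{-1}\in A$), the conclusion $R_\mfp^{\tau'}(F)=F$ already follows from $F=A\subseteq R_\mfp^{\tau'}(F)$, so the extra appeal to Lemma~\ref{Kochenring} there is harmless but unnecessary.
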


\begin{proof}
This follows from Proposition~\ref{MainTheoremT}(1), Lemma \ref{Kochenring} and the definition of $\Gamma_\mfp^{\tau'}(F)$.
\end{proof}

\section{Quantification over Classical Primes}
\label{sec4}

\noindent
In this section we translate first-order 
statements concerning the classical primes of $F$ to statements
about $F$ and the corresponding holomorphy domains.

\begin{Lemma}\label{Hp}
Let $\mfp\in S$ with $p_\mfp\neq\infty$, and $\tau'\leq\tau=(e,f)$.
For $a\in F$ let 
$$
 H_\mfp^{\tau'}(a)=\{\mfP\in\mcS_\mfp^{\tau'}(F) \colon a\in\mcO_\mfP\}.
$$
Then the following holds:
\begin{enumerate}
\item If $a,b\in F$, then $H_\mfp^{\tau'}(a)\cap H_\mfp^{\tau'}(b)=H_\mfp^{\tau'}(a^{2^{e}}+\pi_\mfp b^{2^{e}})$.
\item If $a\in F^\times$, then $\mcS_\mfp^{\tau'}(F)\setminus H_\mfp^{\tau'}(a) = H_\mfp^{\tau'}((\pi_\mfp a^{2^{e}})^{-1})$.
\item If $P(Z_1,\dots,Z_n)$ is a boolean polynomial\footnote{i.e.~a term in the language of boolean algebras, cf.~\cite[Chapter 7.6]{FJ3}},
        then there exists a rational function
        $r(\mathbf{X})\in\mathbb{Q}(\pi_\mfp)(X_1,\dots,X_n)$ 
        independent of $F$ such that for all $a_1,\dots,a_n\in F$,
        \begin{eqnarray}\label{eqnboolean}
          P(H_\mfp^{\tau'}(a_1),\dots,H_\mfp^{\tau'}(a_n))=H_\mfp^{\tau'}(r(a_1,\dots,a_n)).
        \end{eqnarray}
\end{enumerate}
\end{Lemma}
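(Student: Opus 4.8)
\emph{The plan.}
Everything rests on a single arithmetic observation about the primes in $\mcS_\mfp^{\tau'}(F)$, which I would isolate first. Let $\mfP\in\mcS_\mfp^{\tau'}(F)$. Since $\mfP$ lies above the local prime $\mfp$, since $v_\mfp(\pi_\mfp)=1$, and since $\tp(\mfP/\mfp)\leq\tau'\leq\tau=(e,f)$, the value $v_\mfP(\pi_\mfp)$ is a positive integer with $v_\mfP(\pi_\mfp)\leq e$ (this is condition \axS\ specialized to the single prime $\mfp$; cf.\ the verification of Lemma~\ref{TimpliesS}). Moreover $\mathbb{Z}$ is convex in the discrete value group of $v_\mfP$, so $1$ is its smallest positive element, and hence for every $x\in F^\times$ the value $v_\mfP(x^{2^e})=2^ev_\mfP(x)$ is either $0$ or of absolute value $\geq 2^e$. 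Using only $0<v_\mfP(\pi_\mfp)\leq e<2^e$, the rest is a computation with valuations.

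\emph{Part (1).}
I would fix $\mfP$ and distinguish cases by the signs of $v_\mfP(a)$ and $v_\mfP(b)$; the cases $a=0$ or $b=0$ are immediate from $H_\mfp^{\tau'}(0)=\mcS_\mfp^{\tau'}(F)$. If $v_\mfP(a),v_\mfP(b)\geq 0$, then $v_\mfP(a^{2^e})\geq 0$ and $v_\mfP(\pi_\mfp b^{2^e})\geq v_\mfP(\pi_\mfp)>0$, so $v_\mfP(a^{2^e}+\pi_\mfp b^{2^e})\geq 0$. If $v_\mfP(a)\geq 0$ but $v_\mfP(b)<0$, then $v_\mfP(\pi_\mfp b^{2^e})\leq e-2^e<0\leq v_\mfP(a^{2^e})$, so the sum has the negative value $v_\mfP(\pi_\mfp b^{2^e})$. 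The key case is $v_\mfP(a)<0$: then $v_\mfP(a^{2^e})\leq -2^e$, and the two summands cannot have equal value, for $v_\mfP(a^{2^e})=v_\mfP(\pi_\mfp b^{2^e})$ would give $2^e(v_\mfP(a)-v_\mfP(b))=v_\mfP(\pi_\mfp)$, forcing $v_\mfP(a)-v_\mfP(b)$ to be positive, hence $\geq 1$, hence $2^e(v_\mfP(a)-v_\mfP(b))\geq 2^e>e\geq v_\mfP(\pi_\mfp)$ --- impossible; so $v_\mfP(a^{2^e}+\pi_\mfp b^{2^e})=\min\{v_\mfP(a^{2^e}),v_\mfP(\pi_\mfp b^{2^e})\}\leq -2^e<0$. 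Collecting the cases, $v_\mfP(a^{2^e}+\pi_\mfp b^{2^e})\geq 0$ holds exactly when $v_\mfP(a)\geq 0$ and $v_\mfP(b)\geq 0$, which is (1). This ``no accidental cancellation'' estimate is the substantive part of the lemma.

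\emph{Part (2) and the reduction for (3).}
Part (2) falls out of the same estimate: for $a\in F^\times$ one has $v_\mfP((\pi_\mfp a^{2^e})^{-1})\geq 0$ iff $v_\mfP(\pi_\mfp)+2^ev_\mfP(a)\leq 0$, and since $0<v_\mfP(\pi_\mfp)<2^e$ while $v_\mfP(a)$ is $0$ or of absolute value $\geq 1$, this happens precisely when $v_\mfP(a)<0$; thus $H_\mfp^{\tau'}((\pi_\mfp a^{2^e})^{-1})=\mcS_\mfp^{\tau'}(F)\setminus H_\mfp^{\tau'}(a)$. For (3) I would induct on the structure of $P$: an atom $Z_j$ goes to $X_j$; the boolean constants $1$ and $0$ go to the constants $0$ and $\pi_\mfp^{-1}$ (using $H_\mfp^{\tau'}(0)=\mcS_\mfp^{\tau'}(F)$ and $H_\mfp^{\tau'}(\pi_\mfp^{-1})=\emptyset$); a conjunction $Q\wedge Q'$ with rational functions $r_Q,r_{Q'}$ goes to $r_Q^{2^e}+\pi_\mfp r_{Q'}^{2^e}$ by (1); a negation $\neg Q$ goes to $(\pi_\mfp r_Q^{2^e})^{-1}$ by (2); and $\vee$ is reduced to these by De Morgan. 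The resulting $r$ is built from $X_1,\dots,X_n$ and $\pi_\mfp$ alone, so it lies in $\mathbb{Q}(\pi_\mfp)(X_1,\dots,X_n)$ and does not depend on $F$.

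\emph{The one place needing care.}
In the negation step the hypothesis $a\in F^\times$ of (2) can fail, i.e.\ $r_Q(\mathbf a)=0$ for certain tuples $\mathbf a$, so that $(\pi_\mfp r_Q(\mathbf a)^{2^e})^{-1}$ is a division by $0$ --- this is the delicate point in (3). It is harmless: when $r_Q(\mathbf a)=0$ one has $H_\mfp^{\tau'}(r_Q(\mathbf a))=\mcS_\mfp^{\tau'}(F)$, so the boolean value is $\emptyset$, and reading $r$ projectively --- so that a value produced by a division by $0$ is $\infty$ and $H_\mfp^{\tau'}(\infty)=\emptyset$ --- keeps the recursion correct; alternatively one reads the identity of (3) only for $\mathbf a$ at which $r$ is defined. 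I would fix whichever convention the rest of the paper uses and verify that each inductive step is compatible with it, which is the only genuinely fiddly bookkeeping (e.g.\ making sure degenerate evaluations such as $0/0$ still land on $\emptyset$).
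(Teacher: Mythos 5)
Parts (1) and (2) of your proposal are correct; your argument for (1) is in fact more explicit than the paper's (you spell out why the two summands cannot have equal value, where the paper asserts $v_\mfP(a^{2^e}+\pi_\mfp b^{2^e})=\min\{\cdot,\cdot\}$ without comment), and the underlying arithmetic, $0<v_\mfP(\pi_\mfp)\leq e'\leq e<2^e$ with $1$ the least positive element of the value group, is exactly right.

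Part (3) has a genuine gap, and it is precisely the one you flag but do not resolve. Your recursion starts from $Z_j\mapsto X_j$, and the resulting rational function can hit a genuine $0/0$. Concretely, for $P(Z_1,Z_2)=(\neg Z_1)\wedge(\neg Z_2)$ your recipe gives, after clearing denominators,
$$r(X_1,X_2)=\frac{X_2^{2^{2e}}+\pi_\mfp X_1^{2^{2e}}}{\pi_\mfp^{2^e}X_1^{2^{2e}}X_2^{2^{2e}}},$$
which is already in lowest terms (the numerator has no factor of $X_1$ or $X_2$), yet at $(a_1,a_2)=(0,0)$ both numerator and denominator vanish. So $r(0,0)$ simply has no value --- not $0$, not $\infty$ --- and no ``projective reading'' convention can assign it one. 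Your fallback of restricting the identity to tuples where $r$ is defined also does not meet the statement of the lemma, which demands the equality for \emph{all} $a_1,\dots,a_n\in F$, and that unrestricted form is what is actually used in the proof of Proposition~\ref{quantifyprime}.

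The paper closes this gap with one extra move you are missing: it first disposes of the trivial case $\mcS_\mfp^{\tau'}(F)=\emptyset$, and then, using (1), replaces the base case $Z_j\mapsto X_j$ by $Z_j\mapsto X_j^{2^e}+\pi_\mfp$ (legitimate since $H_\mfp^{\tau'}(a)=H_\mfp^{\tau'}(a)\cap H_\mfp^{\tau'}(1)=H_\mfp^{\tau'}(a^{2^e}+\pi_\mfp)$). Because $\mcS_\mfp^{\tau'}(F)\neq\emptyset$ forces $-\pi_\mfp$ to be a non-$2^e$-th power (else some $\mfP$ would have $v_\mfP(\pi_\mfp)\in 2^e\mathbb{Z}$, contradicting $0<v_\mfP(\pi_\mfp)\leq e<2^e$), this base case is never $0$, and the invariant ``$r(\mathbf a)\notin\{0,\infty\}$ for all tuples'' is then preserved under your complement step $r\mapsto(\pi_\mfp r^{2^e})^{-1}$ and intersection step $(r,r')\mapsto r^{2^e}+\pi_\mfp r'^{2^e}$, by the same non-$2^e$-th-power argument. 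With that one adjustment your induction goes through cleanly and matches the paper.
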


\begin{proof}
(1):
Let $\mfP\in\mcS_\mfp^{\tau'}(F)$.
If $v_\mfP(a)\geq0$ and $v_\mfP(b)\geq 0$, then
$v_\mfP(a^{2^e}+\pi_\mfp b^{2^e})\geq0$.
If $v_\mfP(a)<0$ or $v_\mfP(b)<0$, then
$v_\mfP(a^{2^{e}}+\pi_\mfp b^{2^{e}})=\min\{v_\mfP(a^{2^e}),v_\mfP(\pi_\mfp b^{2^e})\}<0$,
since $0<v_\mfP(\pi_\mfp)\leq e'\leq e < 2^e$.

(2):
Let $\mfP\in\mcS_\mfp^{\tau'}(F)$.
If $v_\mfP(a)\geq 0$, then $v_\mfP(\pi_\mfp a^{2^e})\geq v_\mfP(\pi_\mfp)>0$, so
$v_\mfP((\pi_\mfp a^{2^e})^{-1})< 0$.
If $v_\mfP(a)<0$, then 
$v_\mfP(a^{2^e})\leq-2^e<-e\leq-v_\mfp(\pi_\mfp)$,
so $v_\mfP((\pi_\mfp a^{2^e})^{-1})\geq 0$.

(3):
If $\mcS_\mfp^{\tau'}(F)=\emptyset$, then every $r(\mathbf{X})$ satisfies (\ref{eqnboolean}).
Thus, assume that $\mcS_\mfp^{\tau'}(F)\neq\emptyset$ and hence $a^{2^e}+\pi_\mfp\neq0$
for every $a\in F$.
By (1),
$H_\mfp^{\tau'}(a)=H_\mfp^{\tau'}(a)\cap H_\mfp^{\tau'}(1)=H_\mfp^{\tau'}(a^{2^e}+\pi_\mfp)$.
Hence,
the set of boolean polynomials $P(\mathbf{Z})$
for which 
there exists a rational function
$r(\mathbf{X})\in\mathbb{Q}(\pi_\mfp)(X_1,\dots,X_n)$ such that
$r(\mathbf{a})\notin\{0,\infty\}$ and
(\ref{eqnboolean})
hold for all $a_1,\dots,a_n\in F$
contains $Z_1,\dots,Z_n$.
By (1), it is closed under intersections.
By (2), it is closed under complements.
Hence, it contains all boolean polynomials.
\end{proof}

\begin{Remark}
In what comes,
the predicate symbol $R$ of the language $\mcL_R$
will be used in two different ways.
It will interpret either a valuation ring resp.~positive cone $\mcO_\mfP$,
or a holomorphy domain $R_\mfp^{\tau'}(F)$. 
We write $(F,\mcO_\mfP)$ and $(F,R_\mfp^{\tau'}(F))$, respectively, for the corresponding structures.

Note that formally
we work in the language $\mcLr$ of rings,
i.e.~there is no function $\cdot^{-1}$ in our language.
However, it is common to use this function in first-order formulas when working in fields,
knowing that it can always be eliminated by introducing
either an existential or a universal quantifier.
\end{Remark}

The following proposition makes explicit some ideas from \cite[p.~154]{PrestelPRC}
and \cite[proof of Theorem 4.01]{Grob}.

\begin{Proposition}\label{quantifyprime}
Let $\mfp\in S$ and $\tau'\leq\tau$.

\begin{enumerate}
\item[$(1)$]
There exists a recursive map $\varphi(\mathbf{x})\mapsto\varphi_{\mfp,\exists}^{\tau'}(\mathbf{x})$ 
from existential $\mathcal{L}_R$-formulas to
$\mathcal{L}_R(\pi_\mfp)$-formulas such that
for every extension $F/K$ and elements $a_1,\dots,a_n\in F$
the following statements are equivalent:
\begin{enumerate}
 \item[{\rm(1a)}] There exists $\mfP\in\mcS_\mfp^{\tau'}(F)$ such that $(F,\mathcal{O}_\mfP)\models\varphi(\mathbf{a})$.
 \item[{\rm(1b)}] $(F,R_\mfp^{\tau'}(F))\models\varphi_{\mfp,\exists}^{\tau'}(\mathbf{a})$.
\end{enumerate}
\item[$(2)$]
There exists a recursive map $\varphi(\mathbf{x})\mapsto\varphi_{\mfp,\forall}^{\tau'}(\mathbf{x})$ 
from universal $\mathcal{L}_R$-formulas to
$\mathcal{L}_R(\pi_\mfp)$-formulas such that
for every extension $F/K$ and elements $a_1,\dots,a_n\in F$
the following statements are equivalent:
\begin{enumerate}
 \item[{\rm(2a)}] $(F,\mathcal{O}_\mfP)\models\varphi(\mathbf{a})$ for all $\mfP\in\mcS_\mfp^{\tau'}(F)$.
 \item[{\rm(2b)}] $(F,R_\mfp^{\tau'}(F))\models\varphi_{\mfp,\forall}^{\tau'}(\mathbf{a})$.
\end{enumerate}
\end{enumerate}
\end{Proposition}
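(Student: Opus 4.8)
The plan is to reduce the two statements to the single combinatorial fact recorded in Lemma~\ref{Hp} by unwinding the structure of existential (resp.\ universal) formulas, translating membership in $\mcO_\mfP$ into membership in a Boolean combination of basic sets $H_\mfp^{\tau'}(\cdot)$, and then replacing quantification over primes by the condition that this Boolean combination be nonempty. First I would treat part $(1)$, the existential case. An existential $\mcL_R$-formula $\varphi(\mathbf x)$ can be put in prenex form $(\exists y_1,\dots,y_k)\,\psi(\mathbf x,\mathbf y)$, where $\psi$ is quantifier-free, hence a Boolean combination of polynomial equations $t(\mathbf x,\mathbf y)=0$ and atomic formulas $R(t(\mathbf x,\mathbf y))$ with $t\in\Z[\mathbf X,\mathbf Y]$. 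The key observation is that the existential witnesses for the inner variables $\mathbf y$ may, without loss, be taken \emph{in $F$ rather than in $F_\mfP$}: since we quantify over $\mfP$ on the outside, a formula of the shape ``there is $\mfP$ and there are $\mathbf y\in F_\mfP$'' is \emph{not} obviously equivalent to ``there is $\mfP$ and there are $\mathbf y\in F$'' — but here the formula is evaluated in the structure $(F,\mcO_\mfP)$, whose universe is $F$, so the inner quantifiers already range over $F$. Thus $(1a)$ literally says: there exist $\mathbf y\in F$ and there exists $\mfP\in\mcS_\mfp^{\tau'}(F)$ such that $(F,\mcO_\mfP)\models\psi(\mathbf a,\mathbf y)$. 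Pull the $\exists\mathbf y$ to the front (it will become an honest existential quantifier in the target formula over the structure $(F,R_\mfp^{\tau'}(F))$), so it remains to handle, for fixed parameters, the statement ``$\exists\mfP\in\mcS_\mfp^{\tau'}(F):(F,\mcO_\mfP)\models\psi$'' where $\psi$ is quantifier-free.

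Next I would put $\psi$ in disjunctive normal form and distribute the $\exists\mfP$ over the disjunction, reducing to a single conjunction $\bigwedge_j \ell_j$ of literals. The polynomial literals $t(\mathbf a)=0$ and $t(\mathbf a)\neq 0$ do not involve $\mfP$ at all, so they simply get copied into the target formula as side conditions on the parameters. For the remaining literals, involving $R$, I would use that $(F,\mcO_\mfP)\models R(b)$ exactly when $\mfP\in H_\mfp^{\tau'}(b)$; and for a negated literal $\neg R(b)$, note $(F,\mcO_\mfP)\models\neg R(b)$ means $\mfP\in\mcS_\mfp^{\tau'}(F)\setminus H_\mfp^{\tau'}(b)$, which by Lemma~\ref{Hp}(2) equals $H_\mfp^{\tau'}((\pi_\mfp b^{2^e})^{-1})$ when $b\neq 0$ (and is all of $\mcS_\mfp^{\tau'}(F)$ when $b=0$, a case to be carried along as a polynomial side condition). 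Hence the conjunction of $R$-literals, intersected over the relevant $\mfP$, becomes $\bigcap_j H_\mfp^{\tau'}(b_j')$ for suitable $b_j'\in F$, which by Lemma~\ref{Hp}(1) (iterated) equals a single $H_\mfp^{\tau'}(r(\mathbf a))$ for an explicit rational function $r\in\Q(\pi_\mfp)(\mathbf X)$ independent of $F$ — this is precisely Lemma~\ref{Hp}(3) applied to the Boolean polynomial corresponding to the literal pattern. So ``$\exists\mfP\in\mcS_\mfp^{\tau'}(F)$ realizing this conjunction'' is equivalent to ``$H_\mfp^{\tau'}(r(\mathbf a))\neq\emptyset$'', together with the side conditions. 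Finally I must express $H_\mfp^{\tau'}(c)\neq\emptyset$ as an $\mcL_R(\pi_\mfp)$-condition in the structure $(F,R_\mfp^{\tau'}(F))$: writing things out, $H_\mfp^{\tau'}(c)=\emptyset$ iff $(\pi_\mfp c^{2^e})^{-1}$ lies in every $\mcO_\mfP$, $\mfP\in\mcS_\mfp^{\tau'}(F)$, i.e.\ iff $(\pi_\mfp c^{2^e})^{-1}\in R_\mfp^{\tau'}(F)$; so $H_\mfp^{\tau'}(c)\neq\emptyset$ is the formula $\neg R((\pi_\mfp c^{2^e})^{-1})$ (with the $c=0$ degeneracy again split off), which is a legitimate $\mcL_R(\pi_\mfp)$-formula. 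Assembling all the pieces and reinstating the outer $\exists\mathbf y$ produces the desired $\varphi_{\mfp,\exists}^{\tau'}$; since DNF, the Lemma~\ref{Hp}(3) construction, and the substitutions are all effective, the map $\varphi\mapsto\varphi_{\mfp,\exists}^{\tau'}$ is recursive.

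Part $(2)$ is then obtained by duality. A universal formula $\varphi$ is $\neg\varphi'$ for an existential $\varphi'$ (after moving the negation inward at the quantifier-free level, which does not change the class), and $(2a)$, ``$(F,\mcO_\mfP)\models\varphi(\mathbf a)$ for all $\mfP$'', is the negation of ``there exists $\mfP$ with $(F,\mcO_\mfP)\models\varphi'(\mathbf a)$''; applying part $(1)$ to $\varphi'$ and negating gives $\varphi_{\mfp,\forall}^{\tau'}:=\neg(\varphi')_{\mfp,\exists}^{\tau'}$, still $\mcL_R(\pi_\mfp)$ and still obtained recursively. One should also address the degenerate case $\mcS_\mfp^{\tau'}(F)=\emptyset$: then $(2a)$ holds vacuously and $(1a)$ fails, and one checks the constructed formulas behave correctly here, using that $\mcS_\mfp^{\tau'}(F)=\emptyset$ iff $\pi_\mfp^{-1}\in\Gamma_\mfp^{\tau'}(F)$, which by Lemma~\ref{Kochenring} and Proposition~\ref{Tholom}-type reasoning is detected inside $(F,R_\mfp^{\tau'}(F))$ — or, more simply, since Lemma~\ref{Hp}(3) explicitly allows an arbitrary $r$ in this case, the side conditions already encode it.

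\textbf{Main obstacle.} The real work is bookkeeping rather than a single hard idea: keeping track of the degenerate branches (a basic term $b$ vanishing, so $H_\mfp^{\tau'}(b)$ or its complement is not of the standard form), ensuring the rational function $r$ produced by Lemma~\ref{Hp}(3) is never evaluated at a pole on the relevant parameters (which is why one first normalizes each $b$ to $b^{2^e}+\pi_\mfp$, guaranteeing nonvanishing once $\mcS_\mfp^{\tau'}(F)\neq\emptyset$), and checking that ``$H_\mfp^{\tau'}(c)\neq\emptyset$'' is faithfully captured by $\neg R((\pi_\mfp c^{2^e})^{-1})$ in the holomorphy-domain structure uniformly in $F$. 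I expect the translation of the emptiness predicate, and verifying it commutes correctly with the outer existential quantifier over $\mathbf y$, to be the subtlest point — everything else is a mechanical descent through prenex/DNF normal forms combined with Lemma~\ref{Hp}.
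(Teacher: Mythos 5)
Your plan reproduces the paper's argument in the non-archimedean case almost exactly (put the formula in prenex DNF, pull the inner $\exists\mathbf y$ to the front, use Lemma~\ref{Hp}(2) to convert negated literals $\neg R(b)$ into positive ones, use Lemma~\ref{Hp}(3) to collapse the conjunction into a single $H_\mfp^{\tau'}(r(\mathbf a))$, and read off nonemptiness as ``$r(\mathbf a)\notin R_\mfp^{\tau'}(F)$''), and the reduction of part $(2)$ to part $(1)$ via $\varphi_{\mfp,\forall}^{\tau'}:=\neg(\neg\varphi)_{\mfp,\exists}^{\tau'}$ is also what the paper does. However, there is a genuine gap: you apply Lemma~\ref{Hp} as if it covered all $\mfp\in S$, but that lemma is stated (and true) only for $p_\mfp\neq\infty$. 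In the archimedean case the machinery breaks in several places. The complement formula Lemma~\ref{Hp}(2) becomes false: with $\pi_\mfp=-1$ and $2^e$ even, $(\pi_\mfp a^{2^e})^{-1}=-a^{-2^e}$ is nonpositive at \emph{every} ordering, so $H_\mfp^{\tau'}\bigl((\pi_\mfp a^{2^e})^{-1}\bigr)=\emptyset$, which is certainly not the complement of $H_\mfp^{\tau'}(a)$. More fundamentally, Lemma~\ref{Hp}(1) — that $H_\mfp^{\tau'}(a)\cap H_\mfp^{\tau'}(b)$ equals a single $H_\mfp^{\tau'}(c)$ with $c$ a uniform rational function of $a,b$ — fails for spaces of orderings over an arbitrary $F$: that finite-intersection property is essentially the SAP condition, which Proposition~\ref{PSCCSAP} establishes only for $\PStauCC$ fields, whereas the present proposition must hold for \emph{every} extension $F/K$. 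So there is no uniform $r\in\Q(\pi_\mfp)(\mathbf X)$ encoding a Boolean combination in the ordering case, and your Boolean-collapse step cannot be carried out.

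The paper therefore treats the archimedean case with a separate argument (Parts B1--B2 of its proof). Negated $R$-literals are first eliminated by the ordering-specific rewriting $x\notin R\;\Leftrightarrow\;(-x\in R)\wedge(x\neq 0)$, leaving only conjunctions of positive literals $f_i(\mathbf x)\in R$. Then, rather than squeezing the condition into one $H$-set, one observes that the existence of $\mfP\in\mcS_\mfp^{\tau'}(F)$ with all $f_i(\mathbf a)\geq_\mfP 0$ is equivalent — by the Artin--Schreier theory of pre-positive cones, \cite[1.6]{Prestel1} — to the semiring $R_\mfp^{\tau'}(F)[f_1(\mathbf a),\dots,f_m(\mathbf a)]$ intersecting $-R_\mfp^{\tau'}(F)$ only in $\{0\}$, a condition that can be written down as an $\mcL_R$-formula with universally quantified auxiliary variables $s_1,\dots,s_{2^m}\in R$. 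You would need to supply this (or an equivalent) argument for $p_\mfp=\infty$ to complete the proof.
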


\begin{proof}
First of all, note that we can get (2) from (1) 
via $\varphi_{\mfp,\forall}^{\tau'}:\Leftrightarrow \neg(\neg\varphi)_{\mfp,\exists}^{\tau'}$.
Thus, it suffices to prove (1).

\CASE{Part A1: Case $p_\mfp\neq\infty$}
First assume that $\varphi(\mathbf{x})$ is of the simple form
$$
 \bigwedge_i(f_i(\mathbf{x})\in R)\wedge\bigwedge_i(g_i(\mathbf{x})\notin R),
$$
where $f_i,g_i\in\mathbb{Z}[\mathbf{X}]$ for all $i$.
Let 
$$
 \mathcal{H}(\mathbf{a})=\bigcap_i H_\mfp^{\tau'}(f_{i}(\mathbf{a}))\cap\bigcap_i(\mcS_\mfp^{\tau'}(F)\setminus H_\mfp^{\tau'}(g_{i}(\mathbf{a}))).
$$
By Lemma~\ref{Hp}(3) there
exists a rational function $r\in\mathbb{Q}(\pi_\mfp)(\mathbf{X})$
independent of $F$ and $\mathbf{a}$
such that 
$H_\mfp^{\tau'}(r(\mathbf{a}))=\mcS_\mfp^{\tau'}(F)\setminus\mathcal{H}(\mathbf{a})$.
Then (1a) holds if and only if $\mathcal{H}(\mathbf{a})\neq\emptyset$,
that is, $H_\mfp^{\tau'}(r(\mathbf{a}))\neq\mcS_\mfp^{\tau'}(F)$,
which is equivalent to $r(\mathbf{a})\notin R_\mfp^{\tau'}(F)$.
Thus, if we let $\varphi_{\mfp,\exists}^{\tau'}(\mathbf{x})$ be the formula
$\neg(r(\mathbf{x})\in R)$, then the claim follows.

\CASE{Part A2: Conclusion of the proof for $p_\mfp\neq\infty$}
Now assume that $\varphi(\mathbf{x})$ is an arbitrary existential $\mathcal{L}_R$-formula in prenex disjunctive normal form,
i.e.~$\varphi(\mathbf{x})$ is of the form
\begin{eqnarray*}
 (\exists y_1,\dots,y_m)\bigvee_j&& [\bigwedge_i(f_{ij}(\mathbf{x},\mathbf{y})\in R)\wedge\bigwedge_i(g_{ij}(\mathbf{x},\mathbf{y})\notin R)\wedge\\
   &&\wedge\bigwedge_i(h_{ij}(\mathbf{x},\mathbf{y})=0)\wedge\bigwedge_i(k_{ij}(\mathbf{x},\mathbf{y})\neq0)],
\end{eqnarray*}
where $f_{ij},g_{ij},h_{ij},k_{ij}\in\mathbb{Z}[\mathbf{X},\mathbf{Y}]$.
Let $\varphi_j(\mathbf{x},\mathbf{y})$ be the formula
$$
 \bigwedge_i(f_{ij}(\mathbf{x},\mathbf{y})\in R)\wedge\bigwedge_i(g_{ij}(\mathbf{x},\mathbf{y})\notin R).
$$
Then $\varphi_j$ is of the special form considered in {\sc Part A1}.
Let $\varphi_{\mfp,\exists}^{\tau'}(\mathbf{x})$ be the formula
$$
 (\exists y_1,\dots,y_m) \bigvee_j [(\varphi_j)_{\mfp,\exists}^{\tau'}(\mathbf{x},\mathbf{y})\wedge\bigwedge_i(h_{ij}(\mathbf{x},\mathbf{y})=0)\wedge\bigwedge_i(k_{ij}(\mathbf{x},\mathbf{y})\neq0)].
$$
Then $\varphi_{\mfp,\exists}^{\tau'}$ satisfies the claim.

\CASE{Part B1: Case $p_\mfp=\infty$}
First assume that $\varphi(\mathbf{x})$ is of the form
$$
 \bigwedge_i(f_i(\mathbf{x})\in R)
$$
where $f_1,\dots,f_m\in\mathbb{Z}[\mathbf{X}]$.
Assume that {\rm(1a)} holds.
Then there exists an ordering $\mfP\in\mcS_\mfp^{\tau'}(F)$ with 
$f_1(\mathbf{a})\geq_\mfP 0,\dots,f_m(\mathbf{a})\geq_\mfP 0$.
Hence, 
$R_\mfp^{\tau'}(F)[f_1(\mathbf{a}),\dots,f_m(\mathbf{a})]$, 
the semiring generated by $f_1(\mathbf{a}),\dots,f_m(\mathbf{a})$ over $R_\mfp^{\tau'}(F)$,
is contained in $\mcO_\mfP$.
In particular,
\begin{eqnarray}
 \label{eqn2} R_\mfp^{\tau'}(F)[f_1(\mathbf{a}),\dots,f_m(\mathbf{a})]\cap(-R_\mfp^{\tau'}(F))&=&\{0\},
\end{eqnarray}
so if $\varphi_{\mfp,\exists}^{\tau'}(\mathbf{x})$ is the formula
\begin{eqnarray*}
 (\forall s_1,\dots,s_r\in R)&&(-\sum_{j=1}^r s_jf_1(\mathbf{x})^{k_{j,1}}\cdots f_m(\mathbf{x})^{k_{j,m}}\in R \\
							&& \quad\rightarrow \sum_{j=1}^r s_jf_1(\mathbf{x})^{k_{j,1}}\cdots f_m(\mathbf{x})^{k_{j,m}}= 0 ),
\end{eqnarray*}
where $r=2^m$, and $(k_{j,1},\dots,k_{j,m})$ ranges over $\{0,1\}^m$,
then {\rm(1a)} implies {\rm(1b)}.

Conversely,
suppose that {\rm(1b)} holds.
Then,
since $F^2\subseteq R_\mfp^{\tau'}(F)$,
$\varphi_{\mfp,\exists}^{\tau'}(\mathbf{a})$
holds in $F$ even if
$(k_{j,1},\dots,k_{j,m})$ ranges over any finite subset of $(\Zge)^m$.
Hence, (\ref{eqn2}) holds.
Thus, $R_\mfp^{\tau'}(F)[f_1(\mathbf{a}),\dots,f_m(\mathbf{a})]$ is a pre-positive cone,
and hence there exists
an ordering $\mfP\in\mcS_\mfp^{\tau'}(F)$ with 
$f_1(\mathbf{a})\geq_\mfP 0,\dots,f_m(\mathbf{a})\geq_\mfP 0$, \cite[1.6]{Prestel1}.
That is, (1a) holds.

\CASE{Part B2: Conclusion of the proof for $p_\mfp=\infty$}
Now assume that $\varphi(\mathbf{x})$ is an arbitrary existential $\mathcal{L}_R$-formula in prenex disjunctive normal form.
Replace $x\notin R$ by $(-x\in R)\wedge(x\neq 0)$ 
and conclude the proof as in {\sc Part A2}.
\end{proof}

\section{Quantification over Classical Closures}
\label{sec:quantifyCC}

\noindent
We use the quantification over classical primes of the previous section 
to quantify over classical closures.

We want to make use of the following purely model theoretic lemma, which we prove due to lack of a reference.
Let $T_0\subseteq T$ be theories in a language $\mcL$.
Write $(M_0,M)\models(T_0,T)$ to indicate that $M$ is a model of $T$, and $M_0$ is a substructure of $M$ and a model of $T_0$.
Let $\Delta(M_0)$ denote the quantifier-free diagram of $M_0$ in the language $\mathcal{L}(M_0)$.

\begin{Lemma}\label{Itay}
If $(M_0,M)\models(T_0,T)$ implies that $T\cup\Delta(M_0)$ is complete, then
there exists a map $\varphi(\mathbf{x})\mapsto\varphi^0(\mathbf{x})$ from $\mcL$-formulas to universal $\mcL$-formulas such that
for every $\mcL$-formula $\varphi(\mathbf{x})$,
the following holds:
\begin{enumerate}
\item $T\models\forall\mathbf{x} (\varphi(\mathbf{x})\leftrightarrow\varphi^0(\mathbf{x}))$.
\item If $(M_0,M)\models(T_0,T)$ and $\mathbf{a}\in M_0^r$, then $M\models\varphi^0(\mathbf{a})$ if and only if $M_0\models\varphi^0(\mathbf{a})$.
\end{enumerate}
If both $T$ and $T_0$ are recursively enumerable, then the map $\varphi(\mathbf{x})\mapsto\varphi^0(\mathbf{x})$ is recursive.
\end{Lemma}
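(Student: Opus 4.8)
The hypothesis says exactly that for every pair $(M_0,M)\models(T_0,T)$, the theory $T\cup\Delta(M_0)$ is complete; informally, $M_0$ being an $\mcL$-substructure and model of $T_0$ pins down the full $\mcL$-theory of any $M\supseteq M_0$ modelling $T$. The plan is to exploit this completeness via a compactness argument to extract, for each formula $\varphi(\mathbf{x})$, a universal formula $\varphi^0(\mathbf{x})$ that is $T$-equivalent to $\varphi$ and moreover absolute between $M_0$ and $M$.

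First I would fix $\varphi(\mathbf{x})$ with free variables $\mathbf{x}=(x_1,\dots,x_r)$ and introduce new constant symbols $\mathbf{c}=(c_1,\dots,c_r)$. Consider the $\mcL(\mathbf{c})$-theory $T_0' := T_0\cup\{\psi(\mathbf{c}) : \psi \text{ a universal } \mcL\text{-sentence with } T\models\forall\mathbf{x}(\varphi(\mathbf{x})\to\psi(\mathbf{x}))\}$. The claim is that $T_0'\models\varphi(\mathbf{c})$ in the sense that $T\cup T_0'\models\varphi(\mathbf{c})$. Suppose not: then $T\cup T_0'\cup\{\neg\varphi(\mathbf{c})\}$ is consistent, so it has a model $(M,\mathbf{a})$. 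Let $M_0$ be the $\mcL$-substructure of $M$ generated by the interpretations $\mathbf{a}$ of $\mathbf{c}$ — actually one needs $M_0\models T_0$, and this is where a little care is required: the substructure generated by $\mathbf{a}$ need not model $T_0$. The fix is to work with the \emph{universal consequences} of $T_0$: since all the sentences $\psi(\mathbf{c})\in T_0'$ hold in $M$ and are universal, they persist to any substructure; and $M_0$ models all universal consequences of $T_0$ precisely when no universal sentence of $T_0$ fails in $M_0$, which by persistence follows if $M\models T_0$-universal-part — so I would instead take $T_0$ to be closed under the relevant universal consequences, or argue as follows. Take $M_0\preceq M$ to be any elementary substructure containing $\mathbf{a}$ with $M_0\models T_0$ — but $M$ need not model $T_0$.

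The correct route: replace the above by the standard trick. Let $\Sigma$ be the set of all universal $\mcL(\mathbf{c})$-sentences $\psi(\mathbf{c})$ such that $T\cup\{\varphi(\mathbf{c})\}\models\psi(\mathbf{c})$. I claim $T\cup T_0\cup\Sigma\models\varphi(\mathbf{c})$. If not, there is $(M,\mathbf{a})\models T\cup T_0\cup\Sigma\cup\{\neg\varphi(\mathbf{c})\}$. Form the substructure $M_0 = \langle\mathbf{a}\rangle_M$. Then $M_0$ satisfies the quantifier-free diagram $\Delta(M_0)$, and every universal $\mcL(\mathbf{c})$-sentence true in $(M,\mathbf{a})$ is true in $(M_0,\mathbf{a})$; in particular $M_0$ satisfies all universal consequences of $T_0$ that hold in $M$ — but that is not yet $M_0\models T_0$. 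Here I must simply \emph{assume} (as is implicit in the lemma's phrasing "$(M_0,M)\models(T_0,T)$ implies …") that we only ever apply absoluteness when $M_0$ genuinely models $T_0$; so the honest statement is: consider the theory $T\cup\Delta(M_0)$ where $(M_0,M)\models(T_0,T)$; by hypothesis it is complete, hence it decides $\varphi(\mathbf{a})$; since $M\models\varphi(\mathbf{a})$ by assumption would contradict $\neg\varphi$, we get that $T\cup\Delta(M_0)\models\neg\varphi(\mathbf{c})$ for the \emph{fresh} model — but this $M_0$ need not satisfy $T_0$. The resolution, which I expect to be the one intended, is that $T_0$ should be thought of as \emph{universal} (as $\Delta$ is quantifier-free and the intended application has $T_0$ the theory of integrally-closed subrings, which is universal); then $M_0=\langle\mathbf{a}\rangle_M\models T_0$ automatically. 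With that understood, completeness of $T\cup\Delta(M_0)$ gives $T\cup\Delta(M_0)\vdash\varphi(\mathbf{c})$ or $\vdash\neg\varphi(\mathbf{c})$; the latter is impossible since $(M,\mathbf{a})$ witnesses both $T$ and $\Delta(M_0)$ and $\varphi(\mathbf{a})$ together would be fine — rather, pick $(M,\mathbf{a})\models T\cup\Delta(M_0)\cup\{\varphi(\mathbf{c})\}$ to see $\neg\varphi(\mathbf{c})$ is not forced; so $T\cup\Delta(M_0)\vdash\varphi(\mathbf{c})$. By compactness, finitely many conjuncts $\delta(\mathbf{c},\mathbf{d})$ of $\Delta(M_0)$ suffice, where $\mathbf{d}$ lists the other elements of $M_0$ named; since $M_0=\langle\mathbf{a}\rangle$, each $d$ is a term $t_d(\mathbf{a})$, so substituting and then universally quantifying out the now-spurious structure, the sentence $\forall\mathbf{x}\,(\delta'(\mathbf{x})\to\varphi(\mathbf{x}))$ is a $T$-consequence with $\delta'$ quantifier-free. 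Running this compactness argument back over $T\cup T_0\cup\Sigma$, I extract a single universal $\delta'(\mathbf{c})\in T_0\cup\Sigma$-provable-part with $T\models\forall\mathbf{x}(\delta'(\mathbf{x})\to\varphi(\mathbf{x}))$; and since $\delta'\in\Sigma$ also gives $T\models\forall\mathbf{x}(\varphi(\mathbf{x})\to\delta'(\mathbf{x}))$ after symmetrizing (apply the same argument to $\neg\varphi$), I would set $\varphi^0(\mathbf{x}):=\delta'(\mathbf{x})$, a universal $\mcL$-formula with $T\models\forall\mathbf{x}(\varphi(\mathbf{x})\leftrightarrow\varphi^0(\mathbf{x}))$, proving (1). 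Then (2) is immediate: $\varphi^0$ being universal, if $M\models\varphi^0(\mathbf{a})$ it persists down to $M_0$, and if $M_0\models\varphi^0(\mathbf{a})$ then $M\models\varphi(\mathbf{a})$ by (1) applied to the complete theory $T\cup\Delta(M_0)$ — more directly, $M_0\models\varphi^0(\mathbf{a})$ and $\varphi^0$ universal does \emph{not} go up, so one argues: $M_0\models T_0$, so $T\cup\Delta(M_0)$ is complete; $M_0\models\varphi^0(\mathbf{a})$ means $\Delta(M_0)\vdash$ the quantifier-free instance, wait — rather, since $T\cup\Delta(M_0)$ is complete and consistent with $M_0$ as a submodel, and $\varphi^0$ is $T$-equivalent to $\varphi$, the truth value of $\varphi^0(\mathbf{a})$ in \emph{any} model of $T\cup\Delta(M_0)$ is the same; $M$ is such a model, and we may also find such a model extending $M_0$ in which $\varphi^0(\mathbf{a})$ has the same value as in $M_0$, giving the equivalence. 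Finally, recursiveness: if $T,T_0$ are recursively enumerable, enumerate pairs $(\delta',$ proofs$)$ until one finds $\delta'$ with $T\vdash\forall\mathbf{x}(\varphi\leftrightarrow\delta')$ and $\delta'$ universal; termination is guaranteed by the existence proof above, so $\varphi\mapsto\varphi^0$ is computable.

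\textbf{Main obstacle.} The delicate point is the one flagged above: ensuring that the substructure generated by a tuple, or more generally the "$M_0$" produced by compactness, really is a model of $T_0$, so that the completeness hypothesis applies. I expect this is handled by the intended reading that $T_0$ is universally axiomatized (true in all the paper's applications), or by explicitly closing under the substructure operation; getting the quantifier-complexity bookkeeping right — that the extracted $\varphi^0$ is genuinely \emph{universal} and not merely quantifier-free-plus-some-leftover-existentials — is the part that needs the most care in the write-up.
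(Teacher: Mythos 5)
There is a genuine gap in the compactness step, and you flag it yourself but then paper over it with an unjustified assumption. After passing to $(M,\mathbf{a})\models T\cup\Sigma\cup\{\neg\varphi(\mathbf{c})\}$, you need a substructure $M_0\subseteq M$ containing $\mathbf{a}$ with $M_0\models T_0$, since the completeness hypothesis is only about pairs $(M_0,M)\models(T_0,T)$. Your proposed fix is to take $M_0=\langle\mathbf{a}\rangle_M$, which only models $T_0$ when $T_0$ is universally axiomatized. But the lemma does not assume this, and in the paper's actual application (Proposition~\ref{almostQE}) one takes $T_0$ to be the theory of $p$-valued fields of fixed type whose value group is a $\mathbb{Z}$-group; the $\mathbb{Z}$-group condition $(\Gamma:n\Gamma)=n$ is an $\forall\exists$ condition, not a universal one, so the generated substructure need not model $T_0$. (Your guess that $T_0$ is ``integrally-closed subrings'' is a misreading of where the lemma is used.) The paper avoids this problem entirely by a different compactness set-up: it passes to the expanded language $\mcL^0=\mcL\cup\{P,\mathbf{c}\}$ with a predicate $P$ for the substructure, forms the $\mcL^0$-theory $T^0$ asserting that $P$ defines a substructure containing $\mathbf{c}$ and modelling $T_0$, and runs compactness over a theory $T^1\supseteq T^0$; a model of $T^1$ \emph{comes equipped with} the required pair $(M_0,M)\models(T_0,T)$. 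This is the idea you are missing.

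There is a second, related problem with the conclusion you extract. At the key step you claim to obtain a single \emph{quantifier-free} $\delta'$ with $T\models\forall\mathbf{x}(\varphi(\mathbf{x})\leftrightarrow\delta'(\mathbf{x}))$, because you have written $\Delta(M_0)$ in terms of $\mathbf{a}$ alone (again only possible when $M_0=\langle\mathbf{a}\rangle_M$). A quantifier-free $\varphi^0$ would amount to quantifier elimination for $T$ in the language $\mcL_R$, and that is false for $p$-adically closed fields (one needs Macintyre's power predicates). The paper instead first uses model completeness to replace $\varphi$ by a universal formula, and then sets $\varphi^0=\varphi\wedge\alpha_1\wedge\cdots\wedge\alpha_n$ with the $\alpha_i$ chosen from the set $\Sigma$ of universal $T$-consequences of $\varphi$. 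Absoluteness of $\varphi^0$ is then genuinely non-trivial in the upward direction: if $M\not\models\varphi^0(\mathbf{a})$ (hence $M\not\models\varphi(\mathbf{a})$), completeness of $T\cup\Delta(M_0)$ produces a quantifier-free $\psi(\mathbf{a},\mathbf{b})\in\Delta(M_0)$ with $T\models\forall\mathbf{x}\forall\mathbf{y}(\psi(\mathbf{x},\mathbf{y})\to\neg\varphi(\mathbf{x}))$, so $(\forall\mathbf{y})\neg\psi\in\Sigma$, and one needs this $(\forall\mathbf{y})\neg\psi$ to already be among the $\alpha_i$. That is exactly what the $T^1$-compactness argument arranges for you, and it is where your ``(2) is immediate'' paragraph breaks down: for a merely universal $\varphi^0$, downward persistence is free but upward is not, and the reasoning you sketch there goes in circles.
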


\begin{proof}[Proof (Itay Kaplan).]
Let
\begin{eqnarray*}
\Gamma = \{ \alpha(\mathbf{x}) &:& \alpha\mbox{ universal $\mathcal{L}$-formula, and if } (M_0,M)\models(T_0,T) \\ 
 &&  \mbox{ and } \mathbf{a}\in M_0^r, \mbox{ then } M\models\alpha(\mathbf{a})\mbox{ if and only if }M_0\models\alpha(\mathbf{a})\}
\end{eqnarray*}
and
\begin{eqnarray*}
 \Sigma = \{ \alpha(\mathbf{x}) &:& \alpha\mbox{ universal $\mathcal{L}$-formula, }T\models\varphi\rightarrow\alpha \}.
\end{eqnarray*}
Since $T_0\subseteq T$, the assumption implies that $T$ is model complete.
Thus, we can assume without loss of generality that $\varphi$ is universal, \cite[3.4.12(d)]{Marker}.
If $A=\{\alpha_1,\dots,\alpha_n\}\subseteq\Sigma$ is a finite set and 
$\beta_A:=\varphi\wedge\alpha_1\wedge\dots\wedge\alpha_n$, then 
$T\models\varphi\leftrightarrow\beta_A$.
Hence, if $\beta_A\in\Gamma$, then $\varphi^0:=\beta_A$ satisfies (1) and (2).
Suppose that this does not happen, that is, 
$\beta_A\notin\Gamma$ for every finite set $A\subseteq\Sigma$.
Since $\beta_A$ is universal, this means that there exists $(M_0,M)\models(T_0,T)$ and $\mathbf{a}\in M_0^r$
with $M_0\models\beta_A(\mathbf{a})$ and $M\models\neg\beta_A(\mathbf{a})$.
Since $T\models\varphi\leftrightarrow\beta_A$, we get that $M\models\neg\varphi(\mathbf{a})$.

Let $\mathcal{L}^0=\mathcal{L}\cup\{P,\mathbf{c}\}$, where $P$ is a unary predicate symbol and $\mathbf{c}=(c_1,\dots,c_r)$ are constant symbols.
Let the $\mathcal{L}^0$-theory $T^0$ consist of the theory $T$ and
the statement that $P$ defines a substructure that contains $\mathbf{c}$ and is a model of $T_0$.
Let $T^1$ consists of $T^0$, the sentence $\neg\varphi(\mathbf{c})$,
and for every $\alpha\in\Sigma$ the statement that $\alpha(\mathbf{c})$ holds in the substructure defined by $P$.

By the above assumption, every finite subset of $T^1$ is consistent.
Therefore the compactness theorem implies that $T^1$ has a model. That is, there exists $(M_0,M)\models(T_0,T)$
and $\mathbf{a}\in M_0^r$ such that $M\models\neg\varphi(\mathbf{a})$ and $M_0\models\alpha(\mathbf{a})$ for every $\alpha\in\Sigma$.

Since by assumption $T\cup\Delta(M_0)$ is a complete $\mathcal{L}(M_0)$-theory, 
$T\cup\Delta(M_0)\models\neg\varphi(\mathbf{a})$.
Thus there exists $\psi(\mathbf{x},\mathbf{y})$ and $\mathbf{b}\in M_0^s$ such that $\psi(\mathbf{a},\mathbf{b})\in\Delta(M_0)$
and $T\models\forall \mathbf{x}\forall \mathbf{y}(\psi(\mathbf{x},\mathbf{y})\rightarrow\neg\varphi(\mathbf{x}))$.
Therefore, $(\forall\mathbf{y})(\neg\psi(\mathbf{x},\mathbf{y}))\in\Sigma$.
By construction of $M_0$, this implies that $M_0\models(\forall\mathbf{y})(\neg\psi(\mathbf{a},\mathbf{y}))$,
contradicting $\psi(\mathbf{a},\mathbf{b})\in\Delta(M_0)$.
This contradiction shows that $\beta_A\in\Gamma$ for some $A$, as desired.

If both $T_0$ and $T$ are recursively enumerable, then so is $T^0$.
Since a universal $\mathcal{L}$-formula $\beta$ is in $\Gamma$ if and only if $T^0\vdash\beta(\mathbf{c})\leftrightarrow\beta^P(\mathbf{c})$,
where $\beta^P$ is $\beta$ with all quantifiers restricted to $P$,
$\Gamma$ is recursively enumerable. 
Thus one can recursively determine a universal $\mathcal{L}$-formula $\beta\in\Gamma$ with $T\vdash\varphi\leftrightarrow\beta$. 
Therefore, the map $\varphi\mapsto\varphi^0$ can be chosen recursive.
\end{proof}

Note that the assumption of Lemma \ref{Itay} is satisfied in particular if
$T$ is the {\bf model completion} of $T_0$, cf.~\cite[3.4.14]{Marker}.
Also, the cases $T_0=\emptyset$ and $T_0=T$ of Lemma \ref{Itay} are well-known characterizations
of quantifier elimination resp.~model completeness.

\begin{Proposition}\label{almostQE}
For every type $\tau_1=(p,e_1,f_1)$ there exists a recursive map $\varphi(\mathbf{x})\mapsto\bar{\varphi}^{\tau_1}(\mathbf{x})$
from $\mcL_R$-formulas to universal $\mcL_R$-formulas 
with the following properties:
\begin{enumerate}
\item For every classically closed field $(F',\mfP)$ with $\tp(\mfP)=\tau_1$,
$$
 (F',\mcO_{\mfP}) \models (\forall \mathbf{x})(\varphi(\mathbf{x})\leftrightarrow\bar{\varphi}^{\tau_1}(\mathbf{x})).
$$
\item If $\mfP$ is a quasi-local prime of a field $F$ with $\tp(\mfP)=\tau_1$ and $\mathbf{a}\in F^r$, then
$(F,\mcO_\mfP)\models\bar{\varphi}^{\tau_1}(\mathbf{a})$ if and only if $(F_\mfP,\mcO_{F_\mfP})\models \bar{\varphi}^{\tau_1}(\mathbf{a})$.
\item If $\mfP$ is a prime of a field $F$ with $\tp(\mfP)\leq\tau_1$ but $\tp(\mfP)\neq\tau_1$,
 and $\mathbf{a}\in F^r$, then
$(F,\mcO_\mfP)\models\bar{\varphi}^{\tau_1}(\mathbf{a})$.
\end{enumerate}
\end{Proposition}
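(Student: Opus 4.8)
The plan is to deduce Proposition~\ref{almostQE} from the model-theoretic Lemma~\ref{Itay} by choosing the right theories $T_0\subseteq T$ in the language $\mcL_R$. For a fixed type $\tau_1=(p,e_1,f_1)$, let $T$ be the theory of $p$-adically closed valued fields of type $\tau_1$ (if $p\neq\infty$) resp.~of real closed ordered fields (if $p=\infty$), and let $T_0$ be the theory of $p$-valued fields of type $\tau_1$ with value group a $\mathbb{Z}$-group resp.~of ordered fields, both formulated in $\mcL_R$ with $R$ interpreted as the valuation ring resp.~positive cone. The key input is that $T$ is the model completion of $T_0$: for $p=\infty$ this is the classical fact that RCF is the model completion of the theory of ordered fields, and for $p\neq\infty$ this is the corresponding statement for $p$-adically closed fields of fixed type (cf.~\cite{PR}, using that a $p$-valued field of type $\tau_1$ with value group a $\mathbb{Z}$-group embeds into its $p$-adic closure, which is unique up to isomorphism, and elimination of quantifiers relative to constants). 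In particular the hypothesis of Lemma~\ref{Itay} holds: if $(M_0,M)\models(T_0,T)$ then $T\cup\Delta(M_0)$ is complete. Both $T$ and $T_0$ are recursively enumerable, so Lemma~\ref{Itay} supplies a recursive map $\varphi\mapsto\varphi^0$ to universal $\mcL_R$-formulas.

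Next I would set $\bar\varphi^{\tau_1}:=\varphi^0$ and verify the three clauses. Clause (1) is immediate from Lemma~\ref{Itay}(1) together with the observation that a classically closed field $(F',\mfP)$ with $\tp(\mfP)=\tau_1$ is, as an $\mcL_R$-structure $(F',\mcO_\mfP)$, a model of $T$. Clause (2) is the heart of the matter: if $\mfP$ is quasi-local with $\tp(\mfP)=\tau_1$, then $F_\mfP$ is a classical closure of $(F,\mfP)$, so $(F_\mfP,\mcO_{F_\mfP})\models T$, and $(F,\mcO_\mfP)$ is a substructure of it which is a model of $T_0$ (for $p\neq\infty$ the value group of $\mfP$ is a $\mathbb{Z}$-group by Remark~\ref{remarkquasilocal}; for $p=\infty$ it is simply an ordered field). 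Hence $(M_0,M):=((F,\mcO_\mfP),(F_\mfP,\mcO_{F_\mfP}))\models(T_0,T)$ and Lemma~\ref{Itay}(2) gives exactly the claimed equivalence for $\mathbf{a}\in F^r$.

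For clause (3), the point is that when $\mfP$ has type strictly below $\tau_1$, the structure $(F,\mcO_\mfP)$ is \emph{not} a model of $T_0$, so Lemma~\ref{Itay} says nothing directly; instead one must arrange that $\bar\varphi^{\tau_1}$ is \emph{trivially} satisfied there. The way to achieve this is to build the type constraint into the formula by hand: replace $\bar\varphi^{\tau_1}$ by $\psi_{\tau_1}\rightarrow\varphi^0$, where $\psi_{\tau_1}(\mathbf{x})$ is a universal $\mcL_R$-sentence (with dummy free variables) expressing ``$R$ defines a valuation ring whose residue field has at least $p^{f_1}$ elements and whose uniformizer has value at most $e_1$'' — more precisely, a universal sentence that holds in $(F,\mcO_\mfP)$ exactly when $\tp(\mfP)\not<\tau_1$ among the relevant finitely many possibilities. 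Since $\psi_{\tau_1}\rightarrow\varphi^0$ is still (equivalent to) a universal formula, and since $\psi_{\tau_1}$ holds in all models of $T$ of type $\tau_1$, clauses (1) and (2) are unaffected; and in case (3), $\psi_{\tau_1}(\mathbf{a})$ fails, so the implication holds vacuously.

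The main obstacle I expect is pinning down clause (3) cleanly: one needs a genuinely \emph{universal} first-order condition that distinguishes type $\tau_1$ from the finitely many strictly smaller types, and one must check it does not accidentally exclude some models of $T$. For the archimedean case this is vacuous (there is only one type $(\infty,1,1)$, so clause (3) is empty). For $p\neq\infty$ the relevant smaller types are those $(p,e',f')$ with $e'\le e_1$, $f'\mid f_1$, $(e',f')\neq(e_1,f_1)$, and for each one either $f'<f_1$ (residue field too small, a universal condition: some fixed monic polynomial of degree $f_1$ over $\mathbb{Z}$ has no root in the residue field — but ``no root'' is universal only modulo the valuation machinery, so care is needed) or $e'<e_1$ (ramification too small). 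A cleaner alternative, which I would actually pursue, is to note that by Proposition~\ref{MainTheoremT} and the quantification machinery of Section~\ref{sec4} the condition $\tp(\mfP)=\tau_1$ is expressible, and to absorb any residual quantifiers by observing that for the purposes of Section~\ref{sec5} these formulas are always applied with $R$ interpreted as a holomorphy domain in a $\PStauCL$ field, where existential quantifiers over elements of $F$ are harmless. In any case the bookkeeping with types, rather than any deep model theory, is where the work lies.
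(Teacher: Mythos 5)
Your overall plan — apply Lemma~\ref{Itay} with $T_0$ the theory of $p$-valued fields of type $\tau_1$ with value group a $\mathbb{Z}$-group and $T$ the theory of $p$-adically closed fields of that type, cite \cite{PR} for model completion, take $\bar\varphi^{\tau_1}=\varphi^0$, and then guard against smaller types — matches the paper's proof for clauses (1) and (2). But clause (3) contains a genuine error: you want the guard $\psi_{\tau_1}$ to be a \emph{universal} sentence, and then assert that $\psi_{\tau_1}\rightarrow\varphi^0$ is still equivalent to a universal formula. It is not: $\psi_{\tau_1}\rightarrow\varphi^0$ is $\neg\psi_{\tau_1}\vee\varphi^0$, and if $\psi_{\tau_1}$ is universal then $\neg\psi_{\tau_1}$ is existential, so the disjunction is not universal in general. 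The paper does the opposite: it uses an \emph{existential} guard sentence
$$
 \psi \;=\; (\exists x\in R)(x^{-1}\notin R\wedge px^{-e_1}\in R)\wedge(\exists x\in R)(\Phi_{p^{f_1}-1}(x)^{-1}\notin R),
$$
which expresses $\tp(\mfP)\ge\tau_1$ and is naturally existential (``there exists an element of positive value $\le e_1v(p)/e_1$''; ``there exists a residual root of $\Phi_{p^{f_1}-1}$''). Then $\neg\psi$ is universal, $\varphi^0$ is universal, and $\neg\psi\vee\varphi^0$ is equivalent to a universal formula, as required. Your own parenthetical worry about ``no root is universal only modulo the valuation machinery'' is a symptom of pushing in the wrong direction — the condition $\tp(\mfP)\ge\tau_1$ is not going to be universal in $\mcL_R$.

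The ``cleaner alternative'' you would actually pursue — using Proposition~\ref{MainTheoremT} and the quantification machinery of Section~\ref{sec4} to express $\tp(\mfP)=\tau_1$ — is circular: Lemma~\ref{quantifylocalLemma}, the first step of that machinery, is built directly on the proposition you are trying to prove. And invoking facts about $\PStauCL$ fields is not available here either, since Proposition~\ref{almostQE} is stated and used for arbitrary $F$ with a quasi-local prime, not only $\PStauCL$ fields. The repair is exactly the switch from a universal guard to an existential one; once you make that change the rest of your argument goes through and matches the paper's.
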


\begin{proof}
For $p=\infty$, this follows directly from quantifier elimination
for real closed fields.

For $p\neq\infty$, apply Lemma \ref{Itay} with $T$ the theory of $p$-adically closed fields of type $\tau_1$
and $T_0$ the theory of $p$-valued fields of type $\tau_1$ with value group a $\mathbb{Z}$-group.
The assumptions of Lemma \ref{Itay} are satisfied by \cite[3.2, 3.4, 5.1]{PR}
(in fact, this shows that $T$ is the model completion of $T_0$).
Therefore, if we let $\bar{\varphi}^{\tau_1}(\mathbf{x})$ be
the formula $\varphi^0(\mathbf{x})$ of Lemma \ref{Itay}, then (1) and (2) are satisfied.
In order to satisfy also (3), let $\psi$ be the existential $\mathcal{L}_R$-sentence 
$$
 (\exists x\in R)(x^{-1}\notin R\wedge px^{-e_1}\in R)\wedge(\exists x\in R)(\Phi_{p^{f_1}-1}(x)^{-1}\notin R)
$$
where $\Phi_{p^{f_1}-1}$ is the $(p^{f_1}-1)$-th cyclotomic polynomial.
Note that $(F,\mcO_\mfP)\models\psi$ if and only if $\tp(\mfP)\geq\tau_1$.
Thus, if we let $\bar{\varphi}^{\tau_1}(\mathbf{x})$ be the universal $\mathcal{L}_R$-formula $\psi\rightarrow\varphi^0(\mathbf{x})$,
then (1)-(3) are satisfied.

Since the theories in question are axiomatized by recursive sets of sentences, and hence are recursively enumerable,
the map $\varphi(\mathbf{x})\mapsto\varphi^0(\mathbf{x})$ is recursive by Lemma \ref{Itay}. Therefore, also the map $\varphi(\mathbf{x})\mapsto\bar{\varphi}^{\tau_1}(\mathbf{x})$ is recursive.
\end{proof}

\begin{Lemma}\label{quantifylocalLemma}
Let $\mfp\in S$ and $\tau'\leq\tau$.
There exists a recursive map 
$\varphi(\mathbf{x})\mapsto\hat{\varphi}_{\mfp,\forall,R}^{\tau'}(\mathbf{x})$
from $\mcLr$-formulas to $\mathcal{L}_R(K)$-formulas
such that
for every extension $F/K$ and elements $a_1,\dots,a_m\in F$
the following holds:
\begin{enumerate}
 \item If $F^\prime\models\varphi(\mathbf{a})$ holds for all $F^\prime\in\CC_\mfp^{\tau'}(F)$ with $\tp(\mfP_{F'}/\mfp)=\tau'$, 
          then $(F,R_\mfp^{\tau'}(F))\models\hat{\varphi}_{\mfp,\forall,R}^{\tau'}(\mathbf{a})$.
 \item If $(F,R_\mfp^{\tau'}(F))\models\hat{\varphi}_{\mfp,\forall,R}^{\tau'}(\mathbf{a})$
         and $\mfP\in\mcS_\mfp^{\tau'}(F)$ with $\tp(\mfP/\mfp)=\tau'$ is quasi-local,
         then $F_\mfP\models\varphi(\mathbf{a})$.
\end{enumerate}
\end{Lemma}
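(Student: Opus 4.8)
The plan is to reduce the statement about classical closures $F' \in \CC_\mfp^{\tau'}(F)$ to the statement about the classical primes $\mfP \in \mcS_\mfp^{\tau'}(F)$ of type exactly $\tau'$, and then invoke the quantification-over-primes result, Proposition~\ref{quantifyprime}(2). First I would note that a classical closure $F'$ with $\tp(\mfP_{F'}/\mfp) = \tau'$ is, by definition, $F' = F_\mfP$ for some $\mfP \in \mcS_\mfp^{\tau'}(F)$ with $\tp(\mfP/\mfp) = \tau'$ that is quasi-local; conversely each quasi-local $\mfP \in \mcS_\mfp^{\tau'}(F)$ of relative type exactly $\tau'$ gives such a classical closure $F_\mfP$. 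So the condition ``$F' \models \varphi(\mathbf{a})$ for all $F' \in \CC_\mfp^{\tau'}(F)$ with $\tp(\mfP_{F'}/\mfp) = \tau'$'' needs to be matched, via Proposition~\ref{quantifyprime}, with a condition quantifying over $(F, \mcO_\mfP)$ for $\mfP$ ranging over all of $\mcS_\mfp^{\tau'}(F)$ --- including primes whose relative type is strictly below $\tau'$ and primes that are not quasi-local.

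The bridge between $(F_\mfP, \mcO_{F_\mfP}) \models \varphi(\mathbf{a})$ and a statement about $(F, \mcO_\mfP) \models (\cdot)(\mathbf{a})$ is Proposition~\ref{almostQE}, applied with $\tau_1 = \tp(\mfp) \cdot \tau' = (p_\mfp, e_\mfp e', f_\mfp f')$ where $\tau' = (e', f')$. Passing $\varphi$ through that proposition produces a \emph{universal} $\mcL_R$-formula $\bar\varphi^{\tau_1}(\mathbf{x})$ with three properties: it agrees with $\varphi$ on classically closed fields of type $\tau_1$ (property (1)); for quasi-local $\mfP$ of type $\tau_1$ it reflects between $(F, \mcO_\mfP)$ and $(F_\mfP, \mcO_{F_\mfP})$ (property (2)); and it is automatically true on $(F, \mcO_\mfP)$ whenever $\tp(\mfP) \leq \tau_1$ but $\tp(\mfP) \neq \tau_1$ (property (3)). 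So I would set $\hat\varphi_{\mfp,\forall,R}^{\tau'}(\mathbf{x}) := (\bar\varphi^{\tau_1})_{\mfp,\forall}^{\tau'}(\mathbf{x})$, the image of the universal formula $\bar\varphi^{\tau_1}$ under the map from Proposition~\ref{quantifyprime}(2). Recursiveness of the composite map is immediate since each constituent map is recursive.

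For direction (1): assume $F' \models \varphi(\mathbf{a})$ for every $F' \in \CC_\mfp^{\tau'}(F)$ of relative type exactly $\tau'$. I claim $(F, \mcO_\mfP) \models \bar\varphi^{\tau_1}(\mathbf{a})$ for \emph{every} $\mfP \in \mcS_\mfp^{\tau'}(F)$, whether or not $\mfP$ is quasi-local and whatever its relative type. If $\tp(\mfP) < \tau_1$ strictly, this holds by Proposition~\ref{almostQE}(3). If $\tp(\mfP) = \tau_1$, pick any classical closure $F' \in \CC(F, \mfP) \subseteq \CC_\mfp^{\tau'}(F)$; it has $\tp(\mfP_{F'}) = \tau_1$ hence relative type $\tau'$, so $F' \models \varphi(\mathbf{a})$, and then $(F', \mcO_{\mfP_{F'}}) \models \bar\varphi^{\tau_1}(\mathbf{a})$ by Proposition~\ref{almostQE}(1); now, because $\bar\varphi^{\tau_1}$ is universal and $(F, \mcO_\mfP) \subseteq (F', \mcO_{\mfP_{F'}})$ as $\mcL_R$-structures (the valuation ring / positive cone of the closure restricts to that of $\mfP$), universal sentences with parameters in $F$ descend, giving $(F, \mcO_\mfP) \models \bar\varphi^{\tau_1}(\mathbf{a})$. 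Since this holds for all $\mfP \in \mcS_\mfp^{\tau'}(F)$, Proposition~\ref{quantifyprime}(2) ((2a)$\Rightarrow$(2b)) yields $(F, R_\mfp^{\tau'}(F)) \models \hat\varphi_{\mfp,\forall,R}^{\tau'}(\mathbf{a})$. For direction (2): assume $(F, R_\mfp^{\tau'}(F)) \models \hat\varphi_{\mfp,\forall,R}^{\tau'}(\mathbf{a})$ and let $\mfP \in \mcS_\mfp^{\tau'}(F)$ be quasi-local with $\tp(\mfP/\mfp) = \tau'$, i.e. $\tp(\mfP) = \tau_1$. By Proposition~\ref{quantifyprime}(2) ((2b)$\Rightarrow$(2a)), $(F, \mcO_\mfP) \models \bar\varphi^{\tau_1}(\mathbf{a})$; by Proposition~\ref{almostQE}(2) (applicable as $\mfP$ is quasi-local of type $\tau_1$), $(F_\mfP, \mcO_{F_\mfP}) \models \bar\varphi^{\tau_1}(\mathbf{a})$; and since $F_\mfP$ is then a classically closed field of type $\tau_1$, Proposition~\ref{almostQE}(1) gives $(F_\mfP, \mcO_{F_\mfP}) \models \varphi(\mathbf{a})$, i.e. $F_\mfP \models \varphi(\mathbf{a})$, as desired. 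The main subtlety --- really the only place requiring care --- is the handling in direction (1) of primes $\mfP \in \mcS_\mfp^{\tau'}(F)$ of relative type strictly below $\tau'$ or not quasi-local: these contribute to the universal quantifier in Proposition~\ref{quantifyprime}(2) but have no corresponding classical closure in the hypothesis, and it is precisely clause (3) of Proposition~\ref{almostQE} (the extra implication $\psi \to \varphi^0$) together with the descent of universal sentences that makes them harmless.
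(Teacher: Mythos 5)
Your proof is correct and follows exactly the route taken in the paper: the paper's (very terse) proof sets $\tau_1=(p_\mfp,e'e_\mfp,f'f_\mfp)$, defines $\hat\varphi_{\mfp,\forall,R}^{\tau'}$ as $(\bar\varphi^{\tau_1})_{\mfp,\forall}^{\tau'}$, and simply asserts that this works; you supply the same construction together with the verification the paper omits. One small imprecision in your preliminary orientation: it is \emph{not} true that a classical closure $F'$ with $\tp(\mfP_{F'}/\mfp)=\tau'$ is "by definition $F'=F_\mfP$ for a quasi-local $\mfP$" --- a non-quasi-local $\mfP\in\mcS_\mfp^{\tau'}(F)$ of relative type exactly $\tau'$ still has classical closures, they just do not coincide with the Henselization $F_\mfP$. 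This does not damage the argument, since your actual verification of direction (1) correctly covers all of $\mcS_\mfp^{\tau'}(F)$, handling $\tp(\mfP)<\tau_1$ via Proposition~\ref{almostQE}(3) and $\tp(\mfP)=\tau_1$ (quasi-local or not) by descending the universal formula $\bar\varphi^{\tau_1}$ from an arbitrary element of $\CC(F,\mfP)$.
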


\begin{proof}
Write $\tau'=(e',f')$ and let $\tau_1=(p_\mfp,e'e_\mfp,f'f_\mfp)$.
Let $\psi(\mathbf{x})$ be the formula $\bar{\varphi}^{\tau_1}(\mathbf{x})$ of Proposition \ref{almostQE}
and let $\hat{\varphi}_{\mfp,\forall,R}^{\tau'}(\mathbf{x})$
be the formula $\psi_{\mfp,\forall}^{\tau'}(\mathbf{x})$ that 
Proposition \ref{quantifyprime}
attaches to $\psi(\mathbf{x})$.
Then $\hat{\varphi}_{\mfp,\forall,R}^{\tau'}(\mathbf{x})$
satisfies the claim.
\end{proof}

\begin{Proposition}\label{quantifylocal}
Let $\mfp\in S$.
There exists a recursive map
$\varphi(\mathbf{x})\mapsto\hat{\varphi}_{\mfp,\forall}^\tau(\mathbf{x})$
from $\mcLr$-formulas to $\mcLr(K)$-formulas
such that
for every extension $F/K$
that satisfies $T_{R,\mfp}^{\tau'}$ for all $\tau'\leq\tau$,
and for all elements $a_1,\dots,a_m\in F$
the following holds:
\begin{enumerate}
 \item If $F^\prime\models\varphi(\mathbf{a})$ for all $F^\prime\in\CC_\mfp^\tau(F)$,
        then $F\models\hat{\varphi}_{\mfp,\forall}^\tau(\mathbf{a})$.
 \item If $F\models\hat{\varphi}_{\mfp,\forall}^\tau(\mathbf{a})$ and $\mfP\in\mcS_\mfp^\tau(F)$ is quasi-local,
         then $F_\mfP\models\varphi(\mathbf{a})$.
\end{enumerate}
\end{Proposition}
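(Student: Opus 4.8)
The plan is to assemble the proposition from Lemma~\ref{quantifylocalLemma} applied to each of the finitely many relative types $\tau'\le\tau$, after eliminating the holomorphy-domain predicate. The guiding observation is that $\mcS_\mfp^\tau(F)=\bigcup_{\tau'\le\tau}\mcS_\mfp^{\tau'}(F)$, and that each $\mfP\in\mcS_\mfp^\tau(F)$ lies in $\mcS_\mfp^{\tau'}(F)$ with $\tp(\mfP/\mfp)=\tau'$ for the single value $\tau'=\tp(\mfP/\mfp)$; correspondingly $\CC_\mfp^\tau(F)=\bigcup_{\tau'\le\tau}\{F'\in\CC_\mfp^{\tau'}(F):\tp(\mfP_{F'}/\mfp)=\tau'\}$, and since $\mcS_\mfp^{\tau'}(F)\subseteq\mcS_\mfp^\tau(F)$ each member of this union is contained in $\CC_\mfp^\tau(F)$.

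First I would fix, for each $\tau'\le\tau$, the $\mcL_R(K)$-formula $\hat\varphi_{\mfp,\forall,R}^{\tau'}(\mathbf x)$ that Lemma~\ref{quantifylocalLemma} attaches to $\varphi(\mathbf x)$, and form the $\mcLr(K)$-formula $\hat\varphi_{\mfp,\forall,\theta}^{\tau'}(\mathbf x)$ obtained from it by substituting the existential formula $\theta_{R,\mfp}^{\tau'}$ of Proposition~\ref{MainTheoremT} for every atomic subformula of the form $R(t)$. Since $F$ is assumed to satisfy $T_{R,\mfp}^{\tau'}$, Proposition~\ref{Tholom} tells us that $\theta_{R,\mfp}^{\tau'}$ defines the holomorphy domain $R_\mfp^{\tau'}(F)$ in $F$, so $F\models\hat\varphi_{\mfp,\forall,\theta}^{\tau'}(\mathbf a)$ if and only if $(F,R_\mfp^{\tau'}(F))\models\hat\varphi_{\mfp,\forall,R}^{\tau'}(\mathbf a)$. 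I would then define $\hat\varphi_{\mfp,\forall}^\tau(\mathbf x):=\bigwedge_{\tau'\le\tau}\hat\varphi_{\mfp,\forall,\theta}^{\tau'}(\mathbf x)$.

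For property (1): if $F'\models\varphi(\mathbf a)$ for all $F'\in\CC_\mfp^\tau(F)$, then in particular for all $F'\in\CC_\mfp^{\tau'}(F)$ with $\tp(\mfP_{F'}/\mfp)=\tau'$, for every $\tau'\le\tau$; Lemma~\ref{quantifylocalLemma}(1) then gives $(F,R_\mfp^{\tau'}(F))\models\hat\varphi_{\mfp,\forall,R}^{\tau'}(\mathbf a)$, hence $F\models\hat\varphi_{\mfp,\forall,\theta}^{\tau'}(\mathbf a)$, for each $\tau'$, so $F\models\hat\varphi_{\mfp,\forall}^\tau(\mathbf a)$. For property (2): given a quasi-local $\mfP\in\mcS_\mfp^\tau(F)$, set $\tau':=\tp(\mfP/\mfp)\le\tau$; since $F\models\hat\varphi_{\mfp,\forall}^\tau(\mathbf a)$, the conjunct $\hat\varphi_{\mfp,\forall,\theta}^{\tau'}(\mathbf a)$ holds, hence $(F,R_\mfp^{\tau'}(F))\models\hat\varphi_{\mfp,\forall,R}^{\tau'}(\mathbf a)$, and as $\mfP\in\mcS_\mfp^{\tau'}(F)$ is quasi-local with $\tp(\mfP/\mfp)=\tau'$, Lemma~\ref{quantifylocalLemma}(2) yields $F_\mfP\models\varphi(\mathbf a)$. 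Recursiveness of $\varphi\mapsto\hat\varphi_{\mfp,\forall}^\tau$ follows since there are only finitely many $\tau'\le\tau$, the maps $\varphi\mapsto\hat\varphi_{\mfp,\forall,R}^{\tau'}$ are recursive by Lemma~\ref{quantifylocalLemma}, and the formulas $\theta_{R,\mfp}^{\tau'}$ are produced effectively.

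The one point requiring genuine care is the predicate substitution: one has to verify that replacing $R$ by the existential $\mcLr(K)$-formula $\theta_{R,\mfp}^{\tau'}$ throughout $\hat\varphi_{\mfp,\forall,R}^{\tau'}$ transfers satisfaction correctly between $(F,R_\mfp^{\tau'}(F))$ and $F$, which is precisely where the hypothesis that $F$ satisfies $T_{R,\mfp}^{\tau'}$ enters; beyond that the argument is pure bookkeeping with the finite poset of relative types $\tau'\le\tau$.
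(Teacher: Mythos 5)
Your proof is correct and takes essentially the same approach as the paper: substitute $\theta_{R,\mfp}^{\tau'}$ for the predicate $R$ in $\hat\varphi_{\mfp,\forall,R}^{\tau'}$, conjoin over all $\tau'\le\tau$, and invoke Lemma~\ref{quantifylocalLemma} together with Proposition~\ref{Tholom}. You have merely spelled out the bookkeeping of the decomposition by relative type, which the paper leaves implicit.
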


\begin{proof}
For $\tau'\leq\tau$ let $\psi^{\tau'}(\mathbf{x})$ be the formula $\hat{\varphi}_{\mfp,\forall,R}^{\tau'}(\mathbf{x})$
of Lemma~\ref{quantifylocalLemma}
with all occurences of $x\in R$ replaced by the formula
$\theta_{R,\mfp}^{\tau'}(x)$
of Proposition \ref{MainTheoremT}.
Let $\hat{\varphi}_{\mfp,\forall}^\tau(\mathbf{x})$
be the formula
$\bigwedge_{\tau'\leq\tau}\psi^{\tau'}$.
Then $\hat{\varphi}_{\mfp,\forall}^\tau(\mathbf{x})$ satisfies the claim.
This follows from
Lemma~\ref{quantifylocalLemma}
and Proposition \ref{Tholom}.
\end{proof}

\section{Axiomatization of $\PStauCC$ Fields}
\label{sec5}

\noindent
We use the results of the previous section
to axiomatize
the $\PStauCC$ property.

\begin{Definition}\label{DefTPSCC}
Construct an $\mcLr(K)$-theory  $T_{\PStauCC}$ as follows:
Let 
$$
 f_n(\mathbf{T},\mathbf{Z})=\sum_{\bfalpha} T_{\bfalpha} Z_1^{\alpha_1}\cdots Z_n^{\alpha_n} \in\mathbb{Z}[\mathbf{T},\mathbf{Z}]
$$
be the general polynomial in $n$ variables $Z_1,\dots,Z_n$ of degree $n$ with coefficients $\mathbf{T}$.
Here $\bfalpha$ runs over all $n$-tuples $\bfalpha=(\alpha_1,\dots,\alpha_n)$, $\alpha_i\in\mathbb{Z}_{\geq 0}$,
$\sum_{i=1}^n \alpha_i \leq n$.

For $n\in\mathbb{N}$, let $\psi_n(\mathbf{x},\mathbf{y})$ be an $\mcLr$-formula stating that the 
polynomial $f_n(\mathbf{x},\mathbf{Z})$ with coefficients $\mathbf{x}$
is absolutely irreducible (see for example \cite[Chapter 11.3]{FJ3}),
and all singular points on the affine hypersurface defined by this polynomial
lie on the subvariety defined by the polynomial $f_n(\mathbf{y},\mathbf{Z})$ with coefficients $\mathbf{y}$.
Let $\eta_n(\mathbf{x},\mathbf{y})$ be the $\mcLr$-formula
$$
 (\exists \mathbf{z})( f_n(\mathbf{x},\mathbf{z})=0 \wedge f_n(\mathbf{y},\mathbf{z})\neq 0)
$$
stating that the polynomial with coefficients $\mathbf{x}$
has a zero which is not a zero of the
polynomial with coefficients $\mathbf{y}$.
Let 
$(\hat{\eta}_n)_{\mfp,\forall}^\tau(\mathbf{x},\mathbf{y})$
be the $\mcLr(K)$-formula
that Proposition~\ref{quantifylocal}
attaches to $\eta_n$, and
let $\varphi_n$ be the $\mcLr(K)$-sentence 
$$
 (\forall \mathbf{x},\mathbf{y})[(\psi_n(\mathbf{x},\mathbf{y})\wedge\bigwedge_{\mfp\in S} (\hat{\eta}_n)_{\mfp,\forall}^\tau(\mathbf{x},\mathbf{y})) \rightarrow \eta_n(\mathbf{x},\mathbf{y})].
$$
Let $T_{\PStauCC}$ consist of the following sentences:
\begin{enumerate}
 \item For every $\mfp\in S$ and $\tau'\leq\tau$, the theory $T_{R,\mfp}^{\tau'}$.
 \item For every $n\in\mathbb{N}$, the sentence $\varphi_n$.
\end{enumerate}
\end{Definition}

\begin{Lemma}\label{varieties}
Let $\mfpinS$ and $F^\prime\in\CC_\mfp^\tau(F)$,
and let $V$ be a smooth $F$-variety.
Then $V(F^\prime)\neq\emptyset$ if and only if
there exists $\mfP\in\mcS_\mfp^\tau(F'(V))$ with $\tp(\mfP)=\tp(F')$.
\end{Lemma}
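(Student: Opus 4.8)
The statement relates $F'$-rational points of a smooth $F$-variety $V$ to the existence of a suitable classical prime on the function field $F'(V)$. The plan is to establish the two implications separately, exploiting that $F'$ is classically closed of type $\tp(F')$, i.e.\ $p$-adically closed resp.\ real closed.

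For the forward direction, suppose $\mathbf{x}\in V(F')$. Since $V$ is a smooth $F$-variety, it is in particular geometrically integral, so $F'(V)$ is a regular extension of $F'$; choosing $\mathbf{x}$ to be a smooth point and passing to the local ring at $\mathbf{x}$, one gets an $F'$-place of $F'(V)$ with residue field $F'$ (this is the standard fact that a smooth rational point yields an $F'$-rational place; cf.\ the use of such places in the PAC/PSC literature). Compose this place with the classical prime $\mfP_{F'}$ on $F'$ (which exists and is unique since $F'$ is classically closed). This produces a classical prime $\mfP$ on $F'(V)$ whose restriction to $F'$ is $\mfP_{F'}$, hence whose restriction to $K$ is $\mfp$, and whose type equals $\tp(F')$ — because the residue field extension is trivial and for $p$-valuations the ramification is unchanged under a place with trivial residue extension. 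In particular $\tp(\mfP)=\tp(F')\leq\tau$ (the latter since $F'\in\CC_\mfp^\tau(F)$ forces $\tp(\mfP_{F'}/\mfp)\leq\tau$), so $\mfP\in\mcS_\mfp^\tau(F'(V))$ with $\tp(\mfP)=\tp(F')$.

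For the converse, suppose $\mfP\in\mcS_\mfp^\tau(F'(V))$ with $\tp(\mfP)=\tp(F')$. Restrict $\mfP$ to $F'$: since $\mfP$ lies above $\mfp$ and $F'/K$ is algebraic, $\mfP|_{F'}$ is again a classical prime of $F'$, of type $\leq\tp(\mfP)=\tp(F')$; but $F'$ is classically closed of type $\tp(F')$, and by \cite[Prop.~7.2(c)]{HJPb} its classical prime is unique, so $\mfP|_{F'}=\mfP_{F'}$ and moreover $\mfP|_{F'}$ has type exactly $\tp(F')$, which in turn forces the type of $\mfP$ over $\mfP_{F'}$ to be trivial (same residue degree, same ramification). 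Now consider the localization $(F'(V))_\mfP$ of $F'(V)$ at $\mfP$: it is a classical closure of $(F'(V),\mfP)$ in the $p$-adic case after Henselization (here quasi-locality, available by Lemma~\ref{quasilocal} applied to the algebraic extension $F'/K$ — or directly since $F'$ is classically closed hence quasi-local and the extension of type is trivial), and it contains $F'$ as a classically closed subfield of the same type, hence $F'$ is \emph{relatively algebraically closed and existentially closed} in $(F'(V))_\mfP$ in the appropriate sense. The key point is that $F'$, being $p$-adically closed (resp.\ real closed), is \emph{existentially closed} in any regular extension carrying a $p$-valuation (resp.\ ordering) extending its own: this is the relevant model-completeness input, exactly the kind of statement packaged in \cite[3.2, 3.4, 5.1]{PR} (resp.\ quantifier elimination for real closed fields). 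Since $V$ is smooth and has a point over the larger field $(F'(V))_\mfP$ (namely the generic point, which is a smooth point by construction of the function field), and the existence of a smooth point is an existential condition over $F'$, it already has a point over $F'$. Thus $V(F')\neq\emptyset$.

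\emph{Main obstacle.} The delicate step is the converse direction: making precise that a smooth $F'$-variety with a point in the $p$-adic (resp.\ real) closure $(F'(V))_\mfP$ already has an $F'$-point. One must be careful that the generic point of $V$ over $F'$ really does furnish a \emph{smooth} point over $(F'(V))_\mfP$ and that the relevant existential sentence (with parameters encoding $V$) transfers down along $F'\hookrightarrow (F'(V))_\mfP$ via model completeness of the theory of $p$-adically closed (resp.\ real closed) fields of the fixed type $\tp(F')$. The bookkeeping with types — ensuring $\tp(\mfP/\mfP_{F'})$ is trivial so that $(F'(V))_\mfP$ has the \emph{same} type as $F'$ and the two fields sit in the same elementary class — is routine but must be done; it is where the hypothesis $\tp(\mfP)=\tp(F')$ (rather than merely $\tp(\mfP)\leq\tau$) is used.
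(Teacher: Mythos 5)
The paper disposes of this lemma with a one-line citation — \cite[7.8]{PR} for $p_\mfp\neq\infty$ and \cite[3.13]{Prestel1} for $p_\mfp=\infty$ — so your proposal is a reconstruction of the ideas behind those references, and its overall architecture (forward: compose a smooth $F'$-rational place of $F'(V)$ with $\mfP_{F'}$; converse: transfer an existential statement down from a classical closure of $(F'(V),\mfP)$ via model completeness) is the right one.

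However, the converse direction has a genuine gap where you assert that the Henselization $(F'(V))_\mfP$ is a classical closure. Both of your suggested justifications fail: Lemma~\ref{quasilocal} applies only to extensions of $K$ that are \emph{algebraic}, whereas $F'(V)$ has positive transcendence degree over $K$; and triviality of the relative type $\tp(\mfP/\mfP_{F'})$ only constrains $v_\mfP(p)$ and the residue degree, not the full value group of $\mfP$ on $F'(V)$. In fact $\mfP$ need not be quasi-local: already for $F'=\Q_p$ and $V=\mathbb{A}^1$, composing the $t$-adic $\Q_p$-place of $\Q_p(t)$ with the $p$-adic valuation of $\Q_p$ yields a $p$-valuation of type $(p,1,1)$ whose value group is $\mathbb{Z}\oplus\mathbb{Z}$, lexicographically ordered — not a $\mathbb{Z}$-group — so the Henselization is not $p$-adically closed. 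The repair is easy and local to this step: instead of the Henselization, take an arbitrary $p$-adic (resp.\ real) closure $M$ of $(F'(V),\mfP)$, which always exists. Since $\tp(\mfP)=\tp(F')$, this $M$ is $p$-adically closed (resp.\ real closed) of the same type as $F'$, so model completeness of that theory, \cite[5.1]{PR} (resp.\ quantifier elimination for real closed fields), gives $F'\prec M$, and the existential transfer of the smooth point of $V$ over $F'(V)\subseteq M$ down to $F'$ then goes through exactly as you describe.
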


\begin{proof}
For $p_\mfp\neq\infty$, this follows from
\cite[7.8]{PR}; for $p_\mfp=\infty$, it follows from \cite[3.13]{Prestel1}.
\end{proof}

\begin{Proposition}\label{PsefCCAx}
The field $F$ satisfies $T_\PStauCC$ if and only if $F$ is $\PStauCC$.
\end{Proposition}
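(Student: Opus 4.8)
The plan is to prove both implications by unwinding the definitions and feeding them into the machinery built in Sections \ref{sec2}--\ref{sec4}. The key observation is that $F$ is $\PStauCC$ if and only if for every smooth $F$-variety $V$, if $V(F')\neq\emptyset$ for all $F'\in\CC_S^\tau(F)$, then $V(F)\neq\emptyset$; and by the standard Bertini-type reduction (cf.~\cite[Chapter 11.3]{FJ3}) it suffices to test this on the family of hypersurfaces parametrized by the general polynomial $f_n$, with the singular locus controlled by a second parameter $\mathbf y$. Thus $F$ is $\PStauCC$ exactly when, for all $n$ and all parameters $\mathbf a,\mathbf b\in F$ making $f_n(\mathbf a,\mathbf Z)$ absolutely irreducible with singular locus inside $\{f_n(\mathbf b,\mathbf Z)=0\}$, the existence of smooth $F'$-points for all $F'\in\CC_S^\tau(F)$ forces the existence of an $F$-point off the singular locus. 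This last condition is precisely what $\varphi_n$ is designed to capture.

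First I would establish the direction ``$F$ satisfies $T_\PStauCC$ $\Rightarrow$ $F$ is $\PStauCC$''. Assume $F\models T_\PStauCC$. By part (1) of the theory and Proposition~\ref{Tholom}, the formula $\theta_{R,\mfp}^{\tau'}$ defines $R_\mfp^{\tau'}(F)$ for every $\mfp\in S$ and $\tau'\leq\tau$, so $F$ satisfies the hypotheses of Proposition~\ref{quantifylocal}. Now let $V$ be a smooth $F$-variety with $V(F')\neq\emptyset$ for all $F'\in\CC_S^\tau(F)$; after the reduction to hypersurfaces, pick $n$ and parameters $\mathbf a,\mathbf b$ so that $\psi_n(\mathbf a,\mathbf b)$ holds. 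For each $\mfp\in S$ and each $F'\in\CC_\mfp^\tau(F)$, the hypothesis gives a smooth $F'$-point of the hypersurface, hence $F'\models\eta_n(\mathbf a,\mathbf b)$; by Proposition~\ref{quantifylocal}(1) this yields $F\models(\hat\eta_n)_{\mfp,\forall}^\tau(\mathbf a,\mathbf b)$. Since $F\models\varphi_n$, we conclude $F\models\eta_n(\mathbf a,\mathbf b)$, i.e.~the hypersurface has an $F$-point off the singular locus. Tracing back the Bertini reduction gives $V(F)\neq\emptyset$.

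For the converse, assume $F$ is $\PStauCC$. Part (1) of $T_\PStauCC$ holds by Proposition~\ref{MainTheoremT}(2) together with Lemma~\ref{Kochenring}: a $\PStauCC$ field is $\PStauCL$ (Remark~\ref{PSCCimpliesPSC}), so $\theta_{R,\mfp}^{\tau'}$ defines the holomorphy domain, which is integrally closed by Lemma~\ref{Kochenring}, and the remaining sentences of $T_{R,\mfp}^{\tau'}$ merely record properties of the Kochen ring. For part (2), fix $n$ and suppose $F\models\psi_n(\mathbf a,\mathbf b)\wedge\bigwedge_{\mfp\in S}(\hat\eta_n)_{\mfp,\forall}^\tau(\mathbf a,\mathbf b)$. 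Let $V$ be the smooth $F$-variety obtained by removing the singular locus from the hypersurface with coefficients $\mathbf a$ (nonempty and geometrically irreducible by absolute irreducibility). For each $\mfp\in S$ and each quasi-local $\mfP\in\mcS_\mfp^\tau(F)$, Proposition~\ref{quantifylocal}(2) gives $F_\mfP\models\eta_n(\mathbf a,\mathbf b)$; by Proposition~\ref{dense}, $F$ is $S^\tau$-quasi-local, so this applies to every $\mfP\in\mcS_\mfp^\tau(F)$, and via Lemma~\ref{varieties} (using $F'=F_\mfP\in\CC(F,\mfP)$, since $\mfP$ is quasi-local) we get $V(F')\neq\emptyset$ for all $F'\in\CC_S^\tau(F)$. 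Since $F$ is $\PStauCC$, $V(F)\neq\emptyset$, i.e.~$F\models\eta_n(\mathbf a,\mathbf b)$; hence $F\models\varphi_n$.

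The main obstacle, and the step requiring the most care, is the interface between the geometric ``smooth variety'' formulation of $\PStauCC$ and the concrete hypersurface-with-parameters formulation encoded by $\psi_n$ and $\eta_n$: one must invoke the standard fact (as in \cite[Chapter 11]{FJ3}) that having rational points on all smooth varieties is equivalent to having them on a suitably rich parametrized family of absolutely irreducible hypersurfaces with controlled singular locus, and check that this reduction is compatible with taking points over all the fields $F'\in\CC_S^\tau(F)$ simultaneously. A secondary point demanding attention is the correct matching of types: in invoking Lemma~\ref{varieties} one must ensure the prime $\mfP$ produced on $F'(V)$ has the same type as $F'$, which is exactly the content of that lemma, and that $\CC_\mfp^\tau(F)$ is exhausted by the $F_\mfP$ for $\mfP\in\mcS_\mfp^\tau(F)$ quasi-local, which is where Proposition~\ref{dense} is essential.
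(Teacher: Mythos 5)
Your proposal follows the same two-direction structure and relies on the same key ingredients (Propositions \ref{MainTheoremT}, \ref{Tholom}, \ref{quantifylocal}, \ref{dense}, and the hypersurface encoding via $\psi_n,\eta_n$), so it is essentially the paper's proof. Two small points where you deviate or misattribute, however:

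In the direction ``$F$ is $\PStauCC\Rightarrow F\models\varphi_n$'', you arrive at $F_\mfP\models\eta_n(\mathbf a,\mathbf b)$ for all $\mfP\in\mcS_S^\tau(F)$ and then take a detour: you claim that ``via Lemma~\ref{varieties}'' this gives $V(F')\neq\emptyset$ for all $F'\in\CC_S^\tau(F)$ and then apply the $\PStauCC$ property. But Lemma~\ref{varieties} is not what is needed there; what you would actually need is the conjugacy of all classical closures of $(F,\mfP)$ when $\mfP$ is quasi-local (Remark~\ref{remarkquasilocal}), so that $V(F_\mfP)\neq\emptyset$ propagates to every $F'\in\CC(F,\mfP)$. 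The paper avoids this detour entirely: once you have $F_\mfP$-points for all $\mfP\in\mcS_S^\tau(F)$, you simply invoke the weaker property $\PStauCL$ (implied by $\PStauCC$ via Remark~\ref{PSCCimpliesPSC}), which is \emph{defined} in terms of $F_\mfP$-points. Your route can be repaired, but the paper's is both shorter and avoids the conjugacy step.

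In the direction ``$F\models T_\PStauCC\Rightarrow F$ is $\PStauCC$'', you gloss over the reduction from an arbitrary smooth $F$-variety $V$ to an affine hypersurface in normal form. This is not a mere Bertini reduction: one must justify that the hypersurface model still has $F'$-points for every $F'\in\CC_S^\tau(F)$, and the paper does this by observing that $F'(V)=F'(V')$ for any open $V'\subseteq V$, so that Lemma~\ref{varieties} yields Zariski-density of the $F'$-rational points on $V$. You correctly flag this interface as the main subtlety, but the concrete mechanism -- density via Lemma~\ref{varieties} -- should be named; this is in fact the place where Lemma~\ref{varieties} does real work in the proof, not the other direction.
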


\begin{proof}
First assume that $F$ is $\PStauCC$.
Then $F$ is also $\PStauCL$ (cf.~Remark \ref{PSCCimpliesPSC})
and hence satisfies (1) by
Proposition~\ref{MainTheoremT}
and Proposition~\ref{Tholom}.
For all tuples $\mathbf{a},\mathbf{b}$ from $F$,
if $F\models(\hat{\eta}_n)_{\mfp,\forall}^\tau(\mathbf{a},\mathbf{b})$ then
$F_\mfP\models\eta_n(\mathbf{a},\mathbf{b})$ for every 
$\mfP\in\mcS_\mfp^\tau(F)$ by Proposition~\ref{dense} and Proposition~\ref{quantifylocal}.
Therefore, if 
$F\models \psi_n(\mathbf{a},\mathbf{b})\wedge\bigwedge_{\mfp\in S} (\hat{\eta}_n)_{\mfp,\forall}^\tau(\mathbf{a},\mathbf{b})$,
then the conditions 
\begin{eqnarray}\label{eqnfn}
 f_n(\mathbf{a},\mathbf{Z})=0,\quad f_n(\mathbf{b},\mathbf{Z})\neq 0
\end{eqnarray}
define a non-singular $F$-variety $V$ which has an $F_\mfP$-rational point
for every $\mfP\in\mcS_S^\tau(F)$.
Thus, since $F$ is $\PStauCL$, $V$ has an $F$-rational point,
so $F\models\eta_n(\mathbf{a},\mathbf{b})$.
Consequently, $F$ satisfies (2).

Conversely, assume that $F$ satisfies $T_{\PStauCC}$.
Let $V$ be any smooth $F$-variety
that has an $F^\prime$-rational point for every $F^\prime\in\CC_S^\tau(F)$.
Since $F'(V)=F'(V')$ for any open subvariety $V'$ of $V$,
Lemma~\ref{varieties} implies that
the $F^\prime$-rational points are Zariski-dense on $V$.
So since $V$ is birationally equivalent to a hypersurface,
we can assume without loss of generality that $V$ is given by
tuples $\mathbf{a}$ resp.~$\mathbf{b}$ from $F$ as in 
(\ref{eqnfn}).
Thus, $F^\prime\models\eta_n(\mathbf{a},\mathbf{b})$ for every $F^\prime\in\CC_S^\tau(F)$,
so $F\models(\hat{\eta}_n)_{\mfp,\forall}^\tau(\mathbf{a},\mathbf{b})$
by (1) and Proposition~\ref{quantifylocal}.
Since $F$ satisfies (2), $F\models\eta_n(\mathbf{a},\mathbf{b})$,
i.e.~$V$ has an $F$-rational point, and so $F$ is $\PStauCC$.
\end{proof}

\begin{Remark}
Clearly, we just proved Theorem \ref{thm1} of the introduction.
Note that Proposition~\ref{PsefCCAx} gives an $\mcLr$-axi\-omat\-i\-za\-tion of 
$\PpC$, $\PRC$, and ${\rm PC}_M$ fields,
cf.~Remark \ref{PCM}.
By Proposition \ref{PCCPSCC}
we also get an $\mcL_{\rm ring}$-axiomatization of the class 
of $\PCC$ fields in the Hilbert-type infinitary logic $L_{\omega_1,\omega}$.
Note that the class of $\PCC$ fields is not elementary in our standard
finitary logic $L_{\omega,\omega}$.

We can use our results to prove
the conjecture posed in
\cite[Remark 11]{DarnierePAC}:
Darni\`ere calls a field $F$ {\bf RC-local} if it is ${\rm P}\mathcal{F}{\rm C}$
for $\mathcal{F}=\CC(F)$, and {\bf restricted RC-local}
if every elementary extension of $F$ satisfies the same property.
Let $\mathcal{F}$ be a finite family of fields taken among $\mathbb{R}$
and the finite extensions of the fields $\hat{\mathbb{Q}}_p$,
and denote by $\mathbf{Q}_{\mathcal{F}}$ the maximal Galois extension of $\mathbb{Q}$
contained in every $F\in\mathcal{F}$.
Then $\mathbf{Q}_{\mathcal{F}}$ is $\PCC$
by \cite{MoretBailly} and \cite{GPR}.
Darni\`ere conjectures that it is restricted RC-local and that
$R_{\mathcal{F}}$,
the intersection over all $p$-valuation rings, is $\mcLr$-definable in $\mathbf{Q}_{\mathcal{F}}$.
The first part of this conjecture follows
from our axiomatization of $\PCC$ fields,
the second part follows from
Proposition~\ref{MainTheoremT}.
\end{Remark}

\begin{Corollary}\label{quantifyPSCC}
Let $\mfp\in S$
and let $\varphi(\mathbf{x})$ be an $\mcLr$-formula.
The $\mcLr(K)$-formula $\hat{\varphi}_{\mfp,\forall}^\tau(\mathbf{x})$ 
of Proposition~\ref{quantifylocal} satisfies the following:
For every $\PStauCC$ field $F\supseteq K$
and for all elements $a_1,\dots,a_m\in F$
the following are equivalent:
\begin{enumerate}
 \item $F\models\hat{\varphi}_{\mfp,\forall}^\tau(\mathbf{a})$.
 \item $F^\prime\models\varphi(\mathbf{a})$ for all $F^\prime\in\CC_\mfp^\tau(F)$.
\end{enumerate}
\end{Corollary}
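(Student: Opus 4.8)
The plan is to read this off directly from Proposition~\ref{quantifylocal}, using the fact that $\PStauCC$ fields are $S^\tau$-quasi-local.

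First I would check that Proposition~\ref{quantifylocal} applies to $F$. Since $F$ is $\PStauCC$ it is $\PStauCL$ (Remark~\ref{PSCCimpliesPSC}), so by Proposition~\ref{MainTheoremT}(2) the formula $\theta_{R,\mfp}^{\tau'}$ defines the holomorphy domain $R_\mfp^{\tau'}(F)$; hence, by Proposition~\ref{Tholom}, $F$ satisfies $T_{R,\mfp}^{\tau'}$ for every $\tau'\leq\tau$, which is exactly the hypothesis needed. The implication $(2)\Rightarrow(1)$ is then nothing but Proposition~\ref{quantifylocal}(1).

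For $(1)\Rightarrow(2)$, suppose $F\models\hat{\varphi}_{\mfp,\forall}^\tau(\mathbf{a})$ and fix $F'\in\CC_\mfp^\tau(F)$; by definition $F'\in\CC(F,\mfP)$ for some $\mfP\in\mcS_\mfp^\tau(F)$. By Proposition~\ref{dense}, $F$ is $S^\tau$-quasi-local, so $\mfP$ is quasi-local, and therefore $F_\mfP\in\CC(F,\mfP)$ with all elements of $\CC(F,\mfP)$ being $F$-isomorphic to $F_\mfP$ (Remark~\ref{remarkquasilocal}). Now Proposition~\ref{quantifylocal}(2) gives $F_\mfP\models\varphi(\mathbf{a})$, and since $\mathbf{a}$ is a tuple from $F$ and $F'$ is $F$-isomorphic to $F_\mfP$, also $F'\models\varphi(\mathbf{a})$, as desired.

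I do not expect a genuine obstacle: the corollary is essentially a repackaging of Proposition~\ref{quantifylocal}. The only point requiring care is the use of quasi-locality — over an arbitrary field the classical closures lying over a fixed prime need not be pairwise $F$-isomorphic, so knowing only $F_\mfP\models\varphi(\mathbf{a})$ would not control all of $\CC_\mfp^\tau(F)$; it is Proposition~\ref{dense} (via Pop's results) that removes this difficulty in the $\PStauCC$ case.
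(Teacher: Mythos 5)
Your proof is correct and follows essentially the same route as the paper, which cites Propositions~\ref{PsefCCAx}, \ref{dense}, and \ref{quantifylocal}; you simply unpack the use of Proposition~\ref{PsefCCAx} into its constituent steps (Remark~\ref{PSCCimpliesPSC}, Proposition~\ref{MainTheoremT}, Proposition~\ref{Tholom}) to verify the $T_{R,\mfp}^{\tau'}$ hypothesis, and then correctly observe that quasi-locality (from Proposition~\ref{dense}) plus $F$-conjugacy of classical closures (Remark~\ref{remarkquasilocal}) lets you pass from $F_\mfP$ to an arbitrary $F'\in\CC(F,\mfP)$.
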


\begin{proof}
This follows by combining
Proposition~\ref{PsefCCAx}, 
Proposition~\ref{dense},
and Proposition~\ref{quantifylocal}.
\end{proof}

\section{The Strong Approximation Property}
\label{sec6}

\noindent
We prove that the space of orderings of a  $\PStauCC$ field satisfies the
so called
\textquoteleft strong approximation property\textquoteright~of \cite{Prestel1},
first studied in \cite{KnebuschRosenbergWare}.
We need the strong approximation property
for the characterization of totally $S^\tau$-adic extensions in terms of holomorphy domains,
which follows in the next section.

\begin{Definition}\label{DefSAP}
Let $\tilde{\mcS}(F)$ be the set of {\em all} primes of $F$,
and let $\tilde{\mcS}_\mfp(F)$ be the subset of those lying above $\mfpinS$.
We equip $\tilde{\mcS}(F)$ with the following {\bf Zariski-topology}:
A subbasis of open sets is given by sets of the form
$$
 H(a)=\{\mfP\in\tilde{\mcS}(F) \colon a\in\mathcal{O}_\mfP \},
$$
where $a\in F$.
A set $\mcS\subseteq\tilde{\mathcal{S}}(F)$ is
{\bf profinite} if $\mcS$, as a subspace of $\tilde{\mathcal{S}}(F)$, is a profinite space,
i.e.~a totally disconnected compact Hausdorff space.
We say that $\mcS$ satisfies {\bf SAP} (the Strong Approximation Property)
if $\mcS$ is profinite and the family $H(a)\cap\mcS$, $a\in F$, is closed under finite intersections.

Let $\tilde{\mcS}_\mcP(F)=\tilde{\mcS}(F)\setminus\tilde{\mcS}_\infty(F)$ be the set
of non-archimedean primes of $F$.
We also consider
the following (finer) {\bf patch topology} on $\tilde{\mcS}_\mcP(F)$:
A subbasis of open-closed sets is given by sets of the form
$$
 H_\mcP(a) = \{\mfP\in\tilde{\mcS}_\mcP(F) \colon v_\mfP(a)\geq 0 \}
$$
and
$$
 H_\mcP^\prime(a) = \{\mfP\in\tilde{\mcS}_\mcP(F) \colon v_\mfP(a)>0 \},
$$
where $a\in F$.

We say that $F$ is {\bf $S^\tau$-SAP}
if $\mcS_\mfp^\tau(F)$ satisfies {\rm SAP} for each $\mfpinS$.
\end{Definition}

\begin{Lemma}\label{patchclosed}
Let $\mfp\in S$ with $p_\mfp\neq\infty$.
The following subsets of $\tilde{\mcS}_\mfp(F)$ are closed in the patch topology:
\begin{enumerate}
\item $\mcS_{1,e'}:=\{\mfP\in\tilde{\mcS}_\mfp(F) \colon v_\mfP\mbox{ is discrete and }v_\mfP(\pi_\mfp)\leq e'\}$, $e'\in\mathbb{N}$
\item $\mcS_{2,f'}:=\{\mfP\in\tilde{\mcS}_\mfp(F) \colon f'\notdivides f_\mfP\}$, $f'\in\mathbb{N}$
\item $\mcS_\mfp^\tau(F)$
\end{enumerate}
\end{Lemma}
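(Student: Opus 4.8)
The plan is to realise each of the three sets as an intersection of subsets of $\tilde{\mcS}_\mcP(F)$ that are open-closed in the patch topology; since an open-closed set is in particular closed and an arbitrary intersection of closed sets is closed, this will suffice. I would first record that $\tilde{\mcS}_\mfp(F)$ is itself patch-closed (and contained in $\tilde{\mcS}_\mcP(F)$, as any prime of $F$ above the non-archimedean prime $\mfp$ is non-archimedean): it equals $\bigcap_{a\in\mcO_\mfp}H_\mcP(a)\,\cap\,\bigcap_{a\in K\setminus\mcO_\mfp}(\tilde{\mcS}_\mcP(F)\setminus H_\mcP(a))$, because a non-archimedean prime $\mfP$ of $F$ satisfies $\mcO_\mfP\cap K=\mcO_\mfp$ exactly when it lies in all of these. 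Throughout write $p=p_\mfp$; for $\mfP\in\tilde{\mcS}_\mfp(F)$ the restriction $v_\mfP|_K$ is equivalent to $v_\mfp$, so its value group is infinite cyclic generated by the positive element $v_\mfP(\pi_\mfp)$, and since $v_\mfp(p)=e_\mfp v_\mfp(\pi_\mfp)$ one gets $v_\mfP(p)=e_\mfp\,v_\mfP(\pi_\mfp)$.

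For (1), I claim that for $\mfP\in\tilde{\mcS}_\mfp(F)$ the condition ``$v_\mfP$ is discrete and $v_\mfP(\pi_\mfp)\le e'$'' is equivalent to ``for every $a\in F^\times$ one has $v_\mfP(a)\le 0$ or $v_\mfP(\pi_\mfp a^{-e'})\le 0$''. The latter says precisely that every positive element $g$ of the value group $v_\mfP(F^\times)$ satisfies $e'g\ge v_\mfP(\pi_\mfp)$, i.e.\ $g\ge v_\mfP(\pi_\mfp)/e'$ in the divisible hull. If $v_\mfP$ is discrete then its least positive value is $1$ after our normalisation, and the latter condition becomes $v_\mfP(\pi_\mfp)\le e'$; conversely, if every positive value is bounded below by the positive element $v_\mfP(\pi_\mfp)/e'$ of the divisible hull, then $v_\mfP(F^\times)$ must have a least positive element (the standard dichotomy for ordered abelian groups: if there is no least positive element then for every $g_0>0$ and every $n$ there is a positive $g$ with $ng<g_0$, so the positive elements have infimum $0$; cf.\ \cite{EnglerPrestel}), hence $v_\mfP$ is discrete and the bound gives $v_\mfP(\pi_\mfp)\le e'$. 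Granting the claim,
$$
 \mcS_{1,e'}=\tilde{\mcS}_\mfp(F)\cap\bigcap_{a\in F^\times}\Bigl(\tilde{\mcS}_\mcP(F)\setminus\bigl(H_\mcP^\prime(a)\cap H_\mcP^\prime(\pi_\mfp a^{-e'})\bigr)\Bigr),
$$
an intersection of open-closed sets.

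For (2), interpret $f'\mid f_\mfP$ as $\mathbb{F}_{p^{f'}}\subseteq\bar F_\mfP$. This holds iff the $(p^{f'}-1)$-th cyclotomic polynomial $\Phi_{p^{f'}-1}\in\mathbb{Z}[X]$ has a zero in $\bar F_\mfP$ (the input from \cite[V \S11 Lemma 3]{BourbakiA2} already used in Lemma~\ref{LemmaI}), which in turn holds iff there is $a\in F$ with $v_\mfP(a)\ge 0$ and $v_\mfP(\Phi_{p^{f'}-1}(a))>0$, the last step because $\mcO_\mfP\twoheadrightarrow\bar F_\mfP$ and $\Phi_{p^{f'}-1}$ is monic over $\mathbb{Z}\subseteq\mcO_\mfP$. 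Hence $\{\mfP\in\tilde{\mcS}_\mcP(F):f'\mid f_\mfP\}=\bigcup_{a\in F}\bigl(H_\mcP(a)\cap H_\mcP^\prime(\Phi_{p^{f'}-1}(a))\bigr)$ is patch-open, so its complement is patch-closed, and intersecting with $\tilde{\mcS}_\mfp(F)$ yields $\mcS_{2,f'}$.

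For (3), write $\tau=(e,f)$; the substance is in unwinding $\mcS_\mfp^\tau(F)$. I would show that $\mfP\in\tilde{\mcS}_\mfp(F)$ lies in $\mcS_\mfp^\tau(F)$ iff (i) $v_\mfP$ is discrete and $v_\mfP(\pi_\mfp)\le e$, and (ii) $\bar F_\mfP\subseteq\mathbb{F}_{p^{f f_\mfp}}$. Given (i), convexity of $\mathbb{Z}$ in the normalised value group forces $v_\mfP(\pi_\mfp)\in\{1,\dots,e\}$, so $v_\mfP(p)=e_\mfp v_\mfP(\pi_\mfp)\in\mathbb{Z}$ and only finitely many values lie in $[0,v_\mfP(p))$; together with (ii), which makes $\bar F_\mfP$ finite, a filtration argument (as in \cite[p.~15]{PR}) gives $\dim_{\mathbb{F}_p}\mcO_\mfP/p\mcO_\mfP<\infty$, so $\mfP$ is classical with $\tp(\mfP/\mfp)=(e_\mfP/e_\mfp,f_\mfP/f_\mfp)=(v_\mfP(\pi_\mfp),f_\mfP/f_\mfp)\le(e,f)$; conversely a classical $\mfP$ above $\mfp$ with $\tp(\mfP/\mfp)\le\tau$ has discrete value group, $v_\mfP(\pi_\mfp)=e_\mfP/e_\mfp\le e$, and $\bar F_\mfP=\mathbb{F}_{p^{f_\mfP}}$ with $f_\mfP\mid f f_\mfp$. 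Thus $\mcS_\mfp^\tau(F)=\mcS_{1,e}\cap\{\mfP\in\tilde{\mcS}_\mfp(F):\bar F_\mfP\subseteq\mathbb{F}_{p^{f f_\mfp}}\}$; the first factor is patch-closed by (1), and with $q=p^{f f_\mfp}$ the second equals $\tilde{\mcS}_\mfp(F)\cap\bigcap_{a\in F}\bigl(\tilde{\mcS}_\mcP(F)\setminus(H_\mcP(a)\setminus H_\mcP^\prime(a^{q}-a))\bigr)$, since $\bar F_\mfP\subseteq\mathbb{F}_{p^{q}}$ iff $v_\mfP(a^{q}-a)>0$ for every $a\in F$ with $v_\mfP(a)\ge 0$ --- again an intersection of open-closed sets. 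I expect (3) to be the main obstacle: the bookkeeping that identifies ``classical prime above $\mfp$ of bounded relative type'' with the elementary valuation-theoretic conditions (i)--(ii) --- in particular recognising the relative initial ramification index $e_\mfP/e_\mfp$ as $v_\mfP(\pi_\mfp)$, and seeing that discreteness together with $v_\mfP(\pi_\mfp)$ bounded and $\bar F_\mfP$ finite is exactly finite $p$-rank --- is where the care lies; the ordered-group dichotomy used for (1) is standard but should be cited or given a short direct argument.
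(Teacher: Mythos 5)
Your proof is correct and follows essentially the same approach as the paper's: each set is realised as an intersection of patch-open-closed sets, with (1) and (2) resting on the same two characterisations (boundedness of positive values against $v_\mfP(\pi_\mfp)$, and zeros of $\Phi_{p^{f'}-1}$ in the residue field), and (3) reducing to (1) together with a residue-field condition --- the paper writes this as $\mcS_{1,e}\cap\bigcap_{f'\notdivides ff_\mfp}\mcS_{2,f'}$, which coincides with your $\mcS_{1,e}\cap\{\mfP:\bar F_\mfP\subseteq\mathbb{F}_{p^{ff_\mfp}}\}$. The only blemish is a typo in (3): with $q=p^{ff_\mfp}$, the residue-field condition should be $\bar F_\mfP\subseteq\mathbb{F}_q$ rather than $\mathbb{F}_{p^{q}}$.
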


\begin{proof}
(1): For $\mfP\in\tilde{\mcS}_\mfp(F)$,
$v_\mfP(\pi_\mfp)\leq e'$ if and only if for all $a\in F^\times$,
$v_\mfP(a)\leq 0$ or $v_\mfP(a^{e'})\geq v_\mfP(\pi_\mfp)$,
i.e.
$$
 \mcS_{1,e'} = \tilde{\mcS}_\mfp(F)\cap\bigcap_{a\in F^\times} (H_\mcP(a^{-1})\cup H_\mcP(\pi_\mfp^{-1}a^{e'})).
$$
(2): The following are equivalent: $f'|f_\mfP$; $\Phi_{p^{f'}-1}$ has a zero in $\bar{F}_\mfP$;
  there exists $a\in F^\times$ with $v_\mfP(a) \geq 0$ and $v_\mfP(\Phi_{p^{f'}-1}(a))>0$.
 Thus, 
$$
 \mcS_{2,f'} = \tilde{\mcS}_\mfp(F)\cap\bigcap_{a\in F^\times} (H_\mcP'(a^{-1})\cup H_\mcP(\Phi_{p^{f'}-1}(a)^{-1})).
$$
(3): This follows from (1) and (2), since
$$
 \mcS_\mfp^\tau(F)=\mcS_{1,e}\cap\bigcap_{f'\notdivides ff_\mfp}\mcS_{2,f'}.
$$
\end{proof}

\begin{Lemma}\label{boolean}
For every $\mfp\in S$,
$\mcS_\mfp^\tau(F)$ is profinite, and
the family $H(a)\cap\mcS_\mfp^\tau(F)$, $a\in F$, is closed under complements $($in $\mcS_\mfp^\tau(F)$$)$.
\end{Lemma}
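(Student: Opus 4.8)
The plan is to treat separately the archimedean case $p_\mfp=\infty$, in which $\mcS_\mfp^\tau(F)$ is a set of orderings of $F$, and the non-archimedean case $p_\mfp\neq\infty$, in which it is a set of $p$-valuations, and in both cases to establish closure under complements first and then read off profiniteness from it; if $\mcS_\mfp^\tau(F)=\emptyset$ there is nothing to prove, so assume otherwise. For $p_\mfp\neq\infty$ closure under complements is, up to bookkeeping, Lemma~\ref{Hp}(2) applied with $\tau'=\tau$: writing $H_\mfp^\tau(a)=H(a)\cap\mcS_\mfp^\tau(F)$, that lemma gives $\mcS_\mfp^\tau(F)\setminus H_\mfp^\tau(a)=H_\mfp^\tau((\pi_\mfp a^{2^{e}})^{-1})$ for $a\in F^\times$, while for $a=0$ one has $\mcS_\mfp^\tau(F)\setminus H_\mfp^\tau(0)=\emptyset=H_\mfp^\tau(\pi_\mfp^{-1})$ since $v_\mfP(\pi_\mfp)>0$ for every prime $\mfP$ of $F$ above $\mfp$. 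For $p_\mfp=\infty$ it is elementary: $H(a)\cap\mcS_\mfp^\tau(F)=\{\mfP\colon a\geq_\mfP 0\}$, whose complement in $\mcS_\mfp^\tau(F)$ is, for $a\in F^\times$, the set $\{\mfP\colon -a>_\mfP 0\}=\{\mfP\colon -a\geq_\mfP 0\}=H(-a)\cap\mcS_\mfp^\tau(F)$, and for $a=0$ the empty set $=H(-1)\cap\mcS_\mfp^\tau(F)$.

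For profiniteness I would use the indicator embedding $\iota\colon\mcS_\mfp^\tau(F)\to\{0,1\}^F$, $\mfP\mapsto(\mathbf{1}_{\mcO_\mfP}(a))_{a\in F}$, into the compact, Hausdorff, totally disconnected space $\{0,1\}^F$ with the product topology; it is injective since a classical prime is determined by $\mcO_\mfP$. The subbasic Zariski-open set $H(a)\cap\mcS_\mfp^\tau(F)$ is the $\iota$-preimage of the clopen set $\{x\colon x(a)=1\}$, and by the previous paragraph its complement in $\mcS_\mfp^\tau(F)$ is again a set of the form $H(b)\cap\mcS_\mfp^\tau(F)$; hence the Zariski subspace topology on $\mcS_\mfp^\tau(F)$ is exactly the topology pulled back by $\iota$ from the product topology, and it suffices to show that $\iota(\mcS_\mfp^\tau(F))$ is closed in $\{0,1\}^F$. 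Now being a valuation ring resp.~a positive cone of $F$, as a property of a subset of $F$, is a conjunction of universally quantified implications between finitely many membership statements and so cuts out a closed subset of $\{0,1\}^F$; the condition $\mcO_\mfP\cap K=\mcO_\mfp$ of lying above $\mfp$ is likewise closed; and when $p_\mfp\neq\infty$ the remaining condition $\tp(\mfP/\mfp)\leq\tau$ cuts out a closed set by Lemma~\ref{patchclosed}(3), once one notes that $\iota$, restricted to the valuation rings of $F$, induces precisely the patch topology on $\tilde{\mcS}_\mcP(F)$. Intersecting these closed conditions shows $\iota(\mcS_\mfp^\tau(F))$ is closed, so $\mcS_\mfp^\tau(F)$ is profinite.

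The one genuinely non-formal point is that the Zariski topology on $\tilde{\mcS}(F)$ is a priori strictly coarser than the patch topology, so the whole statement hinges on the closure under complements proved in the first step: it is what promotes the subbasic Zariski-open sets on $\mcS_\mfp^\tau(F)$ to clopen sets and thereby forces the Zariski subspace topology on $\mcS_\mfp^\tau(F)$ to coincide with the induced product (equivalently, patch) topology. Everything else is a routine ``closed conditions in a product of two-point spaces'' verification, supplemented in the non-archimedean case by Lemma~\ref{patchclosed}(3).
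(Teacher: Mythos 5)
Your proof is correct, and the complement-closure part is essentially the paper's own argument: Lemma~\ref{Hp}(2) for the non-archimedean case, the $H(-a)$ computation for orderings, and the degenerate $a=0$ cases handled by noting $\pi_\mfp^{-1}\notin\mcO_\mfP$ on $\mcS_\mfp^\tau(F)$ resp.~$-1\notin\mcO_\mfP$ for orderings.

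Where you genuinely diverge is the profiniteness claim. The paper outsources compactness: for $p_\mfp\neq\infty$ it cites Kuhlmann's quasi-compactness of the space of valuations in the patch topology together with Lemma~\ref{patchclosed}(3); for $p_\mfp=\infty$ it cites Prestel's result that the space of all orderings of a field is profinite in the Harrison topology, and then observes $\mcS_\mfp(F)$ is closed therein. You instead give a self-contained Tychonoff argument via the indicator embedding $\iota\colon\mfP\mapsto(\mathbf{1}_{\mcO_\mfP}(a))_{a\in F}\in\{0,1\}^F$, observing that \textquotedblleft $\mcO$ is a valuation ring (resp.~positive cone)\textquotedblright\ and \textquotedblleft $\mcO\cap K=\mcO_\mfp$\textquotedblright\ are conjunctions of clopen conditions, hence cut out closed subsets. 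This unifies the archimedean and non-archimedean cases and makes explicit the role of closure under complements: it is precisely what upgrades the Zariski subspace topology on $\mcS_\mfp^\tau(F)$ to the induced product topology, so that a closed image in $\{0,1\}^F$ yields a profinite Zariski space. You still need Lemma~\ref{patchclosed}(3) to make the type condition $\tp(\mfP/\mfp)\leq\tau$ closed (after checking, as you do, that $\iota$ induces exactly the patch topology on $\tilde{\mcS}_\mcP(F)$, because $H_\mcP'(a)=H_\mcP(a)\setminus H_\mcP(a^{-1})$ for $a\neq0$), so there the two proofs re-converge. The trade-off: your version reproves part of what the cited results encapsulate, at the gain of a single transparent mechanism covering both cases.
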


\begin{proof}
First, assume that $p_\mfp\neq\infty$.
By \cite[Corollary A.7]{KuhlmannPlaces},
$\tilde{\mcS}_\mfp(F)$ is quasi-compact and hence profinite in the patch topology.
Thus, since $\mcS_\mfp^\tau(F)$ is closed in $\tilde{\mcS}_\mfp(F)$
by Lemma \ref{patchclosed}(3),
also $\mcS_\mfp^\tau(F)$ is profinite in the patch topology.
Lemma~\ref{Hp}(2) implies that
the family $H_\mcP(a)\cap\mcS_\mfp^\tau(F)$, $a\in F$, is closed under complements
and
the patch topology on $\mcS_\mfp^\tau(F)$ coincides with the Zariski-topology,
which proves the claim.

Now assume that $p_\mfp=\infty$.
Since for $a\in F^\times$, $\mcS_\mfp(F)\setminus H(a)=H(-a)\cap\mcS_\mfp(F)$,
the family $H(a)\cap\mcS_\mfp(F)$, $a\in F$, is closed under complements.
By \cite[6.5]{Prestel1}, the Zariski-topology on the space $\mcS_\infty(F)$ of orderings is profinite.
Since
$\mcS_\mfp(F)=\bigcap_{a\in\mcO_\mfp}H(a)\cap\mcS_\infty(F)$
is closed in $\mcS_\infty(F)$,
the claim follows.
\end{proof}

\begin{Lemma}\label{charpSAP}
If $p_\mfp\neq\infty$,
then $\mcS_\mfp^\tau(F)$ satisfies {\rm SAP}.
\end{Lemma}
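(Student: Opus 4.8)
The plan is to read off both conditions in the definition of SAP (Definition~\ref{DefSAP}) directly from results already established, noting in passing that---in contrast to the archimedean case---no $\PStauCC$ hypothesis on $F$ is needed here. Recall that $\mcS_\mfp^\tau(F)$ satisfies SAP precisely when it is a profinite space and the family $H(a)\cap\mcS_\mfp^\tau(F)$, $a\in F$, is closed under finite intersections.

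For the first condition I would simply invoke Lemma~\ref{boolean}, which already asserts that $\mcS_\mfp^\tau(F)$ is profinite; for $p_\mfp\neq\infty$ this rested on the patch-topology quasi-compactness of $\tilde{\mcS}_\mfp(F)$ from \cite{KuhlmannPlaces} together with the fact from Lemma~\ref{patchclosed}(3) that $\mcS_\mfp^\tau(F)$ is closed in the patch topology. For the second condition, write $H_\mfp^\tau(a)=H(a)\cap\mcS_\mfp^\tau(F)$, which is the instance $\tau'=\tau$ of the notation $H_\mfp^{\tau'}$ of Lemma~\ref{Hp}. The whole space equals $H_\mfp^\tau(1)$ (every element lies in every valuation ring), which takes care of the empty intersection, and the identity
$$
 H_\mfp^\tau(a)\cap H_\mfp^\tau(b)=H_\mfp^\tau(a^{2^e}+\pi_\mfp b^{2^e})
$$
of Lemma~\ref{Hp}(1)---valid exactly because $p_\mfp\neq\infty$---reduces any binary, and hence any finite, intersection to a single member of the family. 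Combining the two conditions gives SAP for $\mcS_\mfp^\tau(F)$.

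I do not expect a genuine obstacle: the substance has been front-loaded, into the compactness argument behind Lemma~\ref{boolean} and into the explicit polynomial construction behind Lemma~\ref{Hp}(1). The only points requiring a moment's care are matching the SAP notation of Definition~\ref{DefSAP} with the notation of Section~\ref{sec4}, and observing that the hypothesis $p_\mfp\neq\infty$ is exactly what Lemma~\ref{boolean} and Lemma~\ref{Hp}(1) require in this case (the harder archimedean case, which does invoke $\PStauCC$, being treated separately).
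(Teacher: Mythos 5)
Your proof is correct and follows exactly the same route as the paper, which likewise deduces the lemma from Lemma~\ref{boolean} (profiniteness) and Lemma~\ref{Hp}(1) (closure of the $H_\mfp^\tau(a)$ under binary intersection). You have merely spelled out the details, including the empty-intersection case, which the paper leaves implicit.
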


\begin{proof}
This follows from
Lemma~\ref{boolean} and
Lemma~\ref{Hp}(1).
\end{proof}

\begin{Lemma}\label{algSAP}
If $F/K$ is algebraic, then $F$ is $S^\tau$-{\rm SAP}.
\end{Lemma}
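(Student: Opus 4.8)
The plan is to reduce to archimedean orderings and there to an elementary interpolation argument. By Lemma~\ref{charpSAP}, $\mcS_\mfp^\tau(F)$ satisfies {\rm SAP} for every $\mfp\in S$ with $p_\mfp\neq\infty$, so it only remains to treat $\mfp\in S$ with $p_\mfp=\infty$. For such $\mfp$ we have $\mcS_\mfp^\tau(F)=\mcS_\mfp(F)$, the set of all orderings of $F$ extending $\leq_\mfp$. Since $\mfp$ is a local prime, $\leq_\mfp$ is archimedean, so its real closure $K_\mfp$ is archimedean and fixes an embedding $\iota\colon K\hookrightarrow\mathbb{R}$. As $F/K$ is algebraic, for any $\mfP\in\mcS_\mfp(F)$ the real closure of $(F,\mfP)$ is a real closed algebraic extension of $K$ whose ordering restricts to $\leq_\mfp$, hence is $K$-isomorphic to $K_\mfp$ by uniqueness of real closures; in particular every $\mfP\in\mcS_\mfp(F)$ is archimedean. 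By Lemma~\ref{boolean}, $\mcS_\mfp(F)$ is profinite, so by Definition~\ref{DefSAP} it suffices to produce, for given $a,b\in F^\times$, an element $c\in F$ with $H(a)\cap H(b)\cap\mcS_\mfp(F)=H(c)\cap\mcS_\mfp(F)$.

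First I would pass to a finite subextension: put $F_0=K(a,b)$, which is finite over $K$. Restriction of orderings defines a map $\mcS_\mfp(F)\to\mcS_\mfp(F_0)$, and for $d\in F_0$ one has $d\in\mcO_\mfP$ iff $d\in\mcO_{\mfP|_{F_0}}$; hence it is enough to find $c\in F_0$ with $H(a)\cap H(b)\cap\mcS_\mfp(F_0)=H(c)\cap\mcS_\mfp(F_0)$ inside $\mcS_\mfp(F_0)$ and then pull this back along the restriction map. Now $\mcS_\mfp(F_0)$ is finite, say $\mcS_\mfp(F_0)=\{Q_1,\dots,Q_n\}$ (at most $[F_0:K]$ orderings extend $\leq_\mfp$), and each $Q_i$ is archimedean by the argument above. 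The case $n=0$ is trivial (take $c=1$), so assume $n\geq 1$.

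The core step is to realise an arbitrary sign pattern on $Q_1,\dots,Q_n$ by a single element of $F_0$. Being archimedean, each $(F_0,Q_i)$ admits a unique order embedding $\sigma_i\colon F_0\hookrightarrow\mathbb{R}$, necessarily extending $\iota$; distinct orderings give distinct $\sigma_i$, since an embedding determines its ordering as the pullback of $\mathbb{R}_{\geq 0}$. Writing $F_0=K(\theta)$ (characteristic $0$), the elements $r_i:=\sigma_i(\theta)$ are then pairwise distinct real numbers. Given the pattern with $\epsilon_i=+$ when $a>_{Q_i}0$ and $b>_{Q_i}0$, and $\epsilon_i=-$ otherwise, I would use density of $\mathbb{Q}$ in $\mathbb{R}$ together with Lagrange interpolation at the distinct points $r_1,\dots,r_n$ to find $g\in\mathbb{Q}[X]$ with $|g(r_i)-\epsilon_i|<1$, hence $\mathrm{sign}(g(r_i))=\epsilon_i$, for every $i$. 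Then $c:=g(\theta)\in F_0^\times$ satisfies $\mathrm{sign}_{Q_i}(c)=\mathrm{sign}(g(r_i))=\epsilon_i$, so (using $a,b,c\in F_0^\times$)
$$
 H(c)\cap\mcS_\mfp(F_0)=\{Q_i\colon c>_{Q_i}0\}=\{Q_i\colon a>_{Q_i}0\text{ and }b>_{Q_i}0\}=H(a)\cap H(b)\cap\mcS_\mfp(F_0),
$$
and pulling back along the restriction map yields the claim for $F$ (the remaining finite intersections follow by induction).

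I expect the sign-realisation step to be the main point: it is exactly here that the hypothesis that $F/K$ is algebraic is used, via archimedeanness of the orderings, which collapses the associated real closures to the single field $K_\mfp$ and makes the $Q_i$ come from distinct real embeddings. For general (non-archimedean) orderings one could have a fan on which not every sign pattern is attained, so {\rm SAP} would fail; archimedeanness is precisely what rules this out. Everything else — the reduction to $F_0$, the interpolation, and the pull-back — is routine.
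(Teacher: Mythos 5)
Your proof is correct, but it takes a genuinely different route from the paper's. The paper handles all $\mfp\in S$ uniformly: it passes to the finite subextension $L=K(a,b)$, notes that $\mcS_\mfp^\tau(L)$ is a finite set of absolute values (archimedean or discrete non-archimedean), and produces $c$ in one stroke via the weak approximation theorem for finitely many inequivalent absolute values. You instead dispose of the non-archimedean $\mfp$ by citing Lemma~\ref{charpSAP} (which holds for arbitrary $F$, so algebraicity is not used there), and treat only the archimedean case by hand: archimedeanness of $\leq_\mfp$ forces every $\mfP\in\mcS_\mfp(F)$ to be archimedean, which converts the finitely many orderings of $F_0=K(a,b)=K(\theta)$ extending $\mfp$ into distinct real embeddings $\sigma_i$, and then Lagrange interpolation plus rational perturbation at the distinct points $r_i=\sigma_i(\theta)$ produces an element $g(\theta)$ realizing any prescribed sign pattern. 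This is essentially a self-contained re-proof of archimedean weak approximation for the finite extension $F_0/K$. The paper's argument is shorter and more uniform since it treats valuations and orderings on the same footing; yours makes visible that algebraicity is only really needed for the archimedean primes and that, for those, what underlies {\rm SAP} is the collapse of the real closures to the single archimedean field $K_\mfp$ together with the resulting separation of orderings by real embeddings. Both the reduction to $F_0$ and the pull-back along restriction are handled correctly (restriction need not be surjective for the pull-back step, and you use this correctly), and the passage from intersections of two sets to arbitrary finite intersections by induction is fine.
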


\begin{proof}
Let $\mfp\in S$ and $a,b\in F$. Since $F/K$ is algebraic,
there exists a finite subextension $L/K$ of $F/K$
such that $a,b\in L$.
The weak approximation theorem 
applied to the finite set of local primes (i.e.~absolute values)
$\mcS_\mfp^\tau(L)$ yields $c\in L$ with
$H(a)\cap H(b)\cap\mcS_\mfp^\tau(F)=H(c)\cap\mcS_\mfp^\tau(F)$.
Thus, $\mcS_\mfp^\tau(F)$ satisfies {\rm SAP}.
\end{proof}

If $F$ is $\PRC$, then $\mcS_\infty(F)$ satisfies {\rm SAP}, see \cite[Proposition 1.3]{PrestelPRC}.
In fact this holds for every $\PStauCC$ field.
We prove this by combining the construction of Section \ref{sec2} with
the specific polynomial from \cite{PrestelPRC}.

\begin{Lemma}\label{Lemmaf}
For $a,b\in F^\times$, let
$$
 f_{a,b}(X,Y) = abX^2Y^2 +aX^2+bY^2-1 \in F[X,Y].
$$
If $\mfp\in S$ with $p_\mfp=\infty$, and $\mfP\in\mcS_\mfp(F)$,
then the following holds:
\begin{enumerate}
 \item  $f_{a,b}$ has a zero in $F_\mfP$.
 \item If $x,y\in F$ and $f_{a,b}(x,y)>_\mfP-1$, then 
          $ab(ax^2+by^2)\geq_\mfP0$ if and only if $a\geq_\mfP0$ and $b\geq_\mfP0$.
\end{enumerate}
\end{Lemma}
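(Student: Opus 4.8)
The plan is to reduce both parts to the factorisation
\begin{equation*}
 f_{a,b}(X,Y)+2 \;=\; (aX^2+1)(bY^2+1),
\end{equation*}
which one checks by expanding the right-hand side. Throughout I use that, since $a,b\in F^\times$, their signs under $\mfP$ are nonzero and fixed, and that $F_\mfP$ is a real closure of $(F,\leq_\mfP)$, so every element of $F_\mfP$ which is $\geq_\mfP 0$ is a square in $F_\mfP$.

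For (1) I would split into cases by the signs of $a$ and $b$. If $a>_\mfP 0$, put $y=0$ and pick $x\in F_\mfP$ with $x^2=a^{-1}$; then $f_{a,b}(x,0)=ax^2-1=0$, and symmetrically if $b>_\mfP 0$. If $a<_\mfP 0$ and $b<_\mfP 0$, pick $x,y\in F_\mfP$ with $x^2=-3a^{-1}$ and $y^2=-2b^{-1}$ (these are $>_\mfP 0$, hence squares in $F_\mfP$); then by the factorisation $f_{a,b}(x,y)=(ax^2+1)(by^2+1)-2=(-2)(-1)-2=0$.

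For (2), set $u=ax^2$ and $v=by^2$, so that $ab(ax^2+by^2)=ab(u+v)$ and, by the factorisation, the hypothesis $f_{a,b}(x,y)>_\mfP-1$ is equivalent to $(1+u)(1+v)>_\mfP 1$, i.e.~to $u+v+uv>_\mfP 0$. The implication ``$a\geq_\mfP 0$ and $b\geq_\mfP 0$ imply $ab(u+v)\geq_\mfP 0$'' is immediate (then $u,v\geq_\mfP 0$ and $ab>_\mfP 0$) and needs no hypothesis. For the converse I would run through the remaining sign cases and show $ab(u+v)<_\mfP 0$ in each: if $a>_\mfP 0>_\mfP b$, then $v\leq_\mfP 0$ and $1+u\geq_\mfP 1$, so $u+v+uv=u+v(1+u)\leq_\mfP u+v$, whence $u+v+uv>_\mfP 0$ forces $u+v>_\mfP 0$ and, as $ab<_\mfP 0$, $ab(u+v)<_\mfP 0$; the case $a<_\mfP 0<_\mfP b$ is symmetric; if $a<_\mfP 0$ and $b<_\mfP 0$, then $u,v\leq_\mfP 0$, so $u+v\leq_\mfP 0$ and $uv\geq_\mfP 0$, and $u+v+uv>_\mfP 0$ forces $uv>_\mfP 0$, hence $u,v<_\mfP 0$, hence $u+v<_\mfP 0$ and, as $ab>_\mfP 0$, again $ab(u+v)<_\mfP 0$. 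Hence $ab(ax^2+by^2)\geq_\mfP 0$ holds only when $a>_\mfP 0$ and $b>_\mfP 0$, giving the equivalence.

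There is no genuine obstacle here: the whole argument is the factorisation identity plus elementary manipulation of the ordering $\mfP$, and real-closedness of $F_\mfP$ is used only in part (1) to supply square roots, while part (2) is carried out entirely inside $(F,\leq_\mfP)$. The only step needing mild care is keeping track of the degenerate configurations where some of $x,y$ vanish, which is exactly why the hypothesis of (2) demands the strict inequality $f_{a,b}(x,y)>_\mfP-1$.
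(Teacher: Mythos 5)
Your proposal is correct, and it hinges on a different algebraic identity than the one in the paper. The paper writes $f_{a,b}(X,Y) = aX^2(bY^2+1) + (bY^2-1)$; for part~(1) it fixes $y$ (with three sign-dependent choices) so that $(-\tfrac1a)\tfrac{by^2-1}{by^2+1} >_\mfP 0$ and then solves $x^2 = (-\tfrac1a)\tfrac{by^2-1}{by^2+1}$ in $F_\mfP$, and for part~(2) it rewrites $f_{a,b}(x,y)>_\mfP -1$ directly as $ax^2+by^2 >_\mfP -abx^2y^2$ and argues by sign cases, using $f_{a,b}(0,0)=-1$ to rule out $x=y=0$. You instead use the symmetric multiplicative factorisation $f_{a,b}+2 = (aX^2+1)(bY^2+1)$, which in part~(1) lets you produce zeros by elementary square-root choices ($y=0$ when $a>_\mfP 0$, $x=0$ when $b>_\mfP 0$, and $ax^2=-3$, $by^2=-2$ when both are negative), and in part~(2) turns the hypothesis into $u+v+uv >_\mfP 0$ with $u=ax^2$, $v=by^2$, after which the sign analysis is arguably more transparent. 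Both proofs are elementary sign chases of essentially the same length; your factorisation makes the symmetry of $f_{a,b}$ in $a$ and $b$ visible, while the paper's asymmetric decomposition is tailored to the ``choose $y$, solve for $x$'' strategy of part~(1). Either version is fine.
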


\begin{proof}
(1): First note that 
$$
 f_{a,b}(X,Y)=aX^2(bY^2+1)+(bY^2-1).
$$
One can choose $y\in F$ such that
$(-\frac{1}{a})\frac{by^2-1}{by^2+1}>_\mfP 0$.
Indeed, 
if $a>_\mfP0$, let $y=0$.
If $a<_\mfP0$ and $b>_\mfP0$, let $y=1+b^{-1}$.
If $a<_\mfP0$ and $b<_\mfP0$, let $y=1-b^{-1}$.
Since $F_\mfP$ is real closed, there exists $x\in F_\mfP$ such that
$x^2=(-\frac{1}{a})\frac{by^2-1}{by^2+1}$,
hence $f_{a,b}(x,y)=0$.

(2):
First note that $f_{a,b}(0,0)=-1$, so $x\neq 0$ or $y\neq 0$.
Furthermore, $f_{a,b}(x,y)>_\mfP-1$ implies that
\begin{eqnarray}\label{eqnabxy}
 ax^2+by^2 >_\mfP -abx^2y^2.
\end{eqnarray}
If $a>_\mfP 0$ and $b>_\mfP 0$, then $ab(ax^2+by^2)\geq_\mfP 0$.
If $a<_\mfP0$ and $b<_\mfP0$, then
$ab>_\mfP0$ and $ax^2+by^2<_\mfP0$ (since $x\neq 0$ or $y\neq 0$),
so $ab(ax^2+by^2)<_\mfP0$.
If $a>_\mfP0$ and $b<_\mfP0$, or $a<_\mfP0$ and $b>_\mfP0$, then
$ab<_\mfP0$ and thus
$ab(ax^2+by^2) <_\mfP -a^2b^2x^2y^2 \leq_\mfP 0$ by (\ref{eqnabxy}).
\end{proof}

\begin{Proposition}\label{PSCCSAP}
If $F$ is $\PStauCC$, then $F$ is $S^\tau$-{\rm SAP}.
\end{Proposition}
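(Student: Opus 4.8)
Since, by Lemma~\ref{boolean}, $\mcS_\mfp^\tau(F)$ is profinite and the family $H(a)\cap\mcS_\mfp^\tau(F)$ ($a\in F$) is closed under complements, proving that $F$ is $S^\tau$-SAP reduces to showing, for each $\mfp\in S$, that this family is closed under binary intersections. For non-archimedean $\mfp$ this is exactly Lemma~\ref{charpSAP}, so I would fix $\mfp\in S$ with $p_\mfp=\infty$; here $\mcS_\mfp^\tau(F)=\mcS_\mfp(F)$ is the space of orderings of $F$ above $\mfp$, and if it is empty there is nothing to prove. Given $a,b\in F$, the cases $a=0$ or $b=0$ are trivial (take $c=b$, resp.\ $c=a$), so assume $a,b\in F^\times$; the task is then to produce some $c\in F$ with $H(c)\cap\mcS_\mfp(F)=H(a)\cap H(b)\cap\mcS_\mfp(F)$.

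The idea is to feed Prestel's polynomial $f_{a,b}$ of Lemma~\ref{Lemmaf} into the construction of Section~\ref{sec2}. Writing $S=\{\mfp_1,\dots,\mfp_n\}$ with $\mfp=\mfp_i$, I would apply Lemma~\ref{TimpliesS} (with $\tau'=\tau$) to put us in Setting~\ref{settingS} for $\mcS=\mcS_S^\tau(F)$ and $\mcS_k=\mcS_{\mfp_k}^\tau(F)$, obtaining $\pi_1,\dots,\pi_n\in K$ and hence the polynomials $A_i(Z)=Z^{2^{e_i}}-\pi_i$ of Lemma~\ref{LemmaA} and $C(Z)=Z^2+Z+2$ of Lemma~\ref{LemmaC}. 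I would then consider the affine $F$-variety defined by
$$
 \mathfrak{G}(X,Y,Z,W)=(W^2+1)\bigl(1+A_i(Z)\,C(Z)\,f_{a,b}(X,Y)\bigr)-2 .
$$
Applying Lemma~\ref{nonsingular} with $g(W)=W^2+1$ (non-constant, square-free in $\tilde{F}[W]$, $g(1)=2\neq 0$, $g'(1)\neq 0$) and $f=A_i(Z)C(Z)f_{a,b}(X,Y)$ shows that $\mathfrak{G}$ is absolutely irreducible and that $(x,y,z,1)$ is a simple point of $\{\mathfrak{G}=0\}$ whenever $f(x,y,z)=0$. Moreover $f$ has a zero in $F_\mfP$ for every $\mfP\in\mcS_S^\tau(F)$: if $\mfP\notin\mcS_i$, then $A_i$ (when $p_\mfP\neq 2$, by $\axA$) or $C$ (when $p_\mfP=2$, by $\axC$) has one; and if $\mfP\in\mcS_i$, then $f_{a,b}$ has one by Lemma~\ref{Lemmaf}(1). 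So $\mathfrak{G}$ has a simple $F_\mfP$-rational point for every $\mfP\in\mcS_S^\tau(F)$, and since $F$ is $\PStauCC$, hence $\PStauCL$ (Remark~\ref{PSCCimpliesPSC}), $\mathfrak{G}$ has a zero $(x,y,z,w)\in F^4$.

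It remains to read off $c$. For $\mfP\in\mcS_\mfp(F)$ the relation $\mathfrak{G}(x,y,z,w)=0$ gives $1+A_i(z)C(z)f_{a,b}(x,y)=2/(w^2+1)$, whose right-hand side lies in $(0,2]$ with respect to $\leq_\mfP$; hence $A_i(z)C(z)f_{a,b}(x,y)>_\mfP-1$. As $A_i(z)>_\mfP 1$ by $\axAAAAA$ and $C(z)>_\mfP 1$ by $\axCC$, distinguishing the sign of $f_{a,b}(x,y)$ yields $f_{a,b}(x,y)>_\mfP-1$ for every $\mfP\in\mcS_\mfp(F)$. Setting $c=ab(ax^2+by^2)$, Lemma~\ref{Lemmaf}(2) then gives, for each $\mfP\in\mcS_\mfp(F)$, that $c\geq_\mfP 0$ if and only if $a\geq_\mfP 0$ and $b\geq_\mfP 0$; that is, $H(c)\cap\mcS_\mfp(F)=H(a)\cap H(b)\cap\mcS_\mfp(F)$. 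Combined with Lemma~\ref{boolean}, this shows $\mcS_\mfp^\tau(F)$ satisfies SAP, so $F$ is $S^\tau$-SAP.

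I expect the main obstacle to be the choice of $\mathfrak{G}$: one needs a single absolutely irreducible variety, smooth along the relevant locus, that simultaneously acquires $F_\mfP$-rational points at \emph{all} primes of $S$ -- not just at $\mfp$, which is precisely the role of the factor $A_i(Z)C(Z)$ and of the shape $g(W)(1+f(\mathbf{X}))-g(1)$ imported from Section~\ref{sec2} -- and whose $F$-rational points nevertheless witness the open inequality $f_{a,b}(x,y)>_\mfP-1$ needed to invoke Lemma~\ref{Lemmaf}(2). The small observation that keeps this simple is that a fixed $g(W)=W^2+1$ already suffices here, whereas the stronger conclusion $f_{a,b}(x,y)\geq_\mfP 0$ would have required a family $W^2+u^2$ with $u\to\infty$, as in Lemma~\ref{MainLemmaInfty}; once $\mathfrak{G}$ is fixed, the verifications of absolute irreducibility, local solvability, and the sign bookkeeping are routine.
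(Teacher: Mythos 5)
Your proof is correct and follows essentially the same route as the paper: for non\-archimedean $\mfp$ reduce to Lemma~\ref{charpSAP}, and for archimedean $\mfp$ plug Prestel's polynomial $f_{a,b}$ into the ``$g(Y)(1+f(\mathbf{X}))-g(1)$'' machinery of Section~\ref{sec2} together with $A_i$ and $C$, apply the local-global principle, and read off $c=ab(ax^2+by^2)$ from Lemma~\ref{Lemmaf}(2). The only differences from the paper's proof are cosmetic -- the paper takes $g=H_2(Z)$ rather than $W^2+1$, writes $1-A_iCf_{a,b}$ rather than $1+A_iCf_{a,b}$, and lets $A_i$, $C$ share the variable $X$ with $f_{a,b}$ instead of giving them their own variable -- none of which affects the argument.
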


\begin{proof}
Let $\mfpinS$. 
If $p_\mfp\neq\infty$, then $S_\mfp^\tau(F)$ satisfies SAP by Lemma~\ref{charpSAP}.
Therefore, assume that $p_\mfp=\infty$, and let $a,b\in F^\times$.
We want to use the polynomials constructed in Section \ref{sec2}.
Recall Lemma~\ref{TimpliesS}, which gives a translation 
from our current setting to Setting~\ref{settingS}.
In particular, write $S=\{\mfp_1,\dots,\mfp_n\}$.
Assume $\mfp=\mfp_i$, and let 
$$
 G_{a,b}(X,Y,Z)= H_2(Z)(1-A_i(X)C(X)f_{a,b}(X,Y))-H_2(1),
$$
where $A_i,C,H_u$ are 
the corresponding polynomials defined in Section \ref{sec2},
and $f_{a,b}$ is as in Lemma \ref{Lemmaf}.
By $\axA$, $\axC$, and Lemma~\ref{Lemmaf}(1),
$A_i(X)C(X)f_{a,b}(X,Y)$ has a zero in $F_\mfP$ for each $\mfP\in\mcS_S^\tau(F)$.
Since $F$ is $\PStauCC$, $\axHHH$ and Lemma~\ref{nonsingular} imply that
there exist $x,y,z\in F$ such that $G_{a,b}(x,y,z)=0$.
Thus, if $\mfP\in\mcS_\mfp(F)$, then
$$
 1-A_i(x)C(x)f_{a,b}(x,y)=\frac{H_2(1)}{H_2(z)}\leq_\mfP \frac{5}{4},
$$
by $\axH$ and $\axHH$,
so $A_i(x)C(x)f_{a,b}(x,y)\geq_\mfP -1/4$.
Since $A_i(x)C(x)>_\mfP1$ by $\axAAAAA$ and $\axCC$,
this implies that 
$f_{a,b}(x,y)\geq_\mfP -1/4>_\mfP-1$.
Therefore, by Lemma~\ref{Lemmaf}(2), 
$H(a)\cap H(b)\cap\mcS_\mfp(F)=H(c)\cap\mcS_\mfp(F)$,
where 
$c=ab(ax^2+by^2)\in F$.
Hence,
$\mcS_\mfp(F)$ satisfies SAP,
as claimed.
\end{proof}

This proves Theorem \ref{thm3} of the introduction.
As Ido Efrat pointed out to me, there might be an alternative approach to Proposition \ref{PSCCSAP}
by deducing the SAP property from Galois theoretic properties of $\PStauCC$ fields, like in the real case in \cite{HaranInvolutions}.

\section{Totally $S^\tau$-adic Field Extensions}
\label{sec7}

\noindent
We conclude this work by defining totally $S^\tau$-adic field extensions and
describing them in terms of holomorphy domains.
This also gives an equivalent definition of the $\PStauCC$ property.

\begin{Definition}
Let $\mfpinS$ and $\tau'\leq\tau$. If $M/F$ is an extension,
let ${\rm res}_\mfp^{\tau'}\colon\mcS_\mfp^{\tau'}(M)\rightarrow\mcS_\mfp^{\tau'}(F)$
be the restriction map 
given by $\mfQ\mapsto\mfQ|_F$.
We call an extension $M/F$ {\bf totally $S^\tau$-adic}
if the restriction map ${\rm res}_\mfp^{\tau'}\colon\mcS_\mfp^{\tau'}(M)\rightarrow\mcS_\mfp^{\tau'}(F)$
is surjective for each $\mfp\in S$ and $\tau'\leq\tau$.
\end{Definition}

\begin{Remark}
Note that
the restriction map 
${\rm res}_\mfp^{\tau'}\colon\mcS_\mfp^{\tau'}(M)\rightarrow\mcS_\mfp^{\tau'}(F)$
is continuous in the Zariski-topology,
and that $M/F$ is totally $S^\tau$-adic if and only if
each $\mfP\in\mcS_S^\tau(F)$ extends to a prime of $M$ of the same type.

If $K=\mathbb{Q}$, $|S|=1$, and $\tau=(1,1)$ then our notion of totally $S^\tau$-adic extensions
coincides with the classical notions of 
totally real extensions (as in \cite{PrestelPRC}, \cite{Ershovtotallyreal})
resp.~totally $p$-adic extensions (as in \cite{Grob}, \cite{Jarden1991}).
The following lemmas unify results from these works.
\end{Remark}

\begin{Lemma}\label{totShom}
The field 
$F$ is $\PStauCC$ if and only if
for every domain $R=F[x_1,\dots,x_n]$ 
which is finitely generated over $F$ and
whose quotient field $M$ is regular and totally $S^\tau$-adic over $F$,
there exists an $F$-homomorphism $R\rightarrow F$.
\end{Lemma}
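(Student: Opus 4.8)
The plan is to prove the two implications separately, throughout using the standard dictionary between a finitely generated $F$-domain $R$ and the affine $F$-variety $V=\mathrm{Spec}(R)$: regularity of $M=\mathrm{Frac}(R)$ over $F$ means exactly that $V$ is a variety, and an $F$-homomorphism $R\to F$ is the same thing as an $F$-rational point of $V$.

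For the implication ``$\Leftarrow$'' I would start from a smooth $F$-variety $V$ having an $F'$-point for every $F'\in\CC_S^\tau(F)$ and aim to produce an $F$-point; by Remark~\ref{PSCCimpliesPSC} this is what it means for $F$ to be $\PStauCC$. Replacing $V$ by an affine open subvariety (harmless by Lemma~\ref{varieties}, since the function field does not change and $\CC_S^\tau(F)=\bigcup_{\mfp\in S}\CC_\mfp^\tau(F)$) we may take $V=\mathrm{Spec}(R)$; then $R$ is of the required shape, so it suffices to check that $M=F(V)$ is totally $S^\tau$-adic over $F$ and invoke the hypothesis. To verify this, fix $\mfp\in S$ and $\mfP\in\mcS_\mfp^\tau(F)$, choose a classical closure $F'$ of $(F,\mfP)$, so $F'\in\CC_\mfp^\tau(F)$ and hence $V(F')\neq\emptyset$, and note $\tp(F')=\tp(\mfP)$. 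Lemma~\ref{varieties} then produces $\mfP'\in\mcS_\mfp^\tau(F'(V))$ with $\tp(\mfP')=\tp(F')$, and $\mfQ:=\mfP'|_M$ is a classical prime of $M$ with $\mfQ|_F=\mfP$ (because $F'$, being classically closed, has the single prime $\mfP_{F'}$, forcing $\mfP'|_{F'}=\mfP_{F'}$). Since $p\in F$, the ramification of $\mfQ$ over $F$ equals that of $\mfP$, and the residue field of $\mfQ$ is trapped between those of $\mfP$ and of $\mfP'$, which agree; hence $\tp(\mfQ)=\tp(\mfP)$. As every prime of $\mcS_S^\tau(F)$ thus extends to $M$ with the same type, $M/F$ is totally $S^\tau$-adic by the remark following the definition.

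For ``$\Rightarrow$'' I would assume $F$ is $\PStauCC$ and take $R$, $M$ as in the statement; put $V=\mathrm{Spec}(R)$ and let $W\subseteq V$ be its smooth locus, a dense open subvariety with $F(W)=M$. It is enough to find an $F$-point of $W$, and since $F$ is $\PStauCC$ it suffices (Remark~\ref{PSCCimpliesPSC}) to show $W(F')\neq\emptyset$ for each $F'\in\CC_S^\tau(F)$. Fix such an $F'$, a classical closure of some $\mfP\in\mcS_\mfp^\tau(F)$, with $\tau_1:=\tp(F')=\tp(\mfP)$. Because $M/F$ is totally $S^\tau$-adic, $\mfP$ extends to a prime $\mfQ$ of $M$ of the same type $\tau_1$; let $M^*$ be a classical closure of $(M,\mfQ)$, so $\tp(M^*)=\tau_1$. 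By Proposition~\ref{dense}, $F$ is $S^\tau$-quasi-local, so $\mfP$ is quasi-local, $F'$ is (up to $F$-isomorphism) the localization $F_\mfP$, and the universal property of Henselization resp.\ of real closure gives an $F$-embedding $F'\hookrightarrow M^*$ carrying $\mfP_{F'}$ to $\mfP_{M^*}|_{F'}$. Then $F\subseteq F'(W)=M\cdot F'\subseteq M^*$, and $\mfP'':=\mfP_{M^*}|_{F'(W)}$ is a classical prime of $F'(W)$ above $\mfp$ which restricts to $\mfP_{F'}$ on $F'$ while containing $\mfP_{M^*}|_M$; being trapped between two primes of type $\tau_1$ it has type $\tau_1$, and $\tp(\mfP''/\mfp)=\tp(\mfP/\mfp)\leq\tau$, so $\mfP''\in\mcS_\mfp^\tau(F'(W))$. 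Lemma~\ref{varieties} then gives $W(F')\neq\emptyset$, as wanted.

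The type computations are routine: restricting a $p$-valuation to a subfield containing $p$ preserves $v(p)$ and can only shrink the residue degree, whereas extending a prime can only enlarge it, so any prime squeezed between two primes of type $\tau_1$ has type $\tau_1$. The one point I expect to require care is the $F$-embedding $F'\hookrightarrow M^*$ compatible with the primes: this is precisely where quasi-locality of $\mfP$ — and hence, through Proposition~\ref{dense}, the full $\PStauCC$ hypothesis rather than just the ${\rm P}\mathcal F{\rm C}$ property — enters, via the fact that for a quasi-local prime the localization \emph{is} a classical closure and embeds canonically into every Henselian (resp.\ real closed) valued extension.
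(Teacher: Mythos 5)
Your proof is correct and follows essentially the same route as the paper's: both directions turn on Lemma~\ref{varieties} to go back and forth between rational points in classical closures and classical primes of function fields, Proposition~\ref{dense} supplies quasi-locality in the $\Rightarrow$ direction, and the ``trapped between two primes of the same type'' observation handles the type bookkeeping. Your version is marginally more explicit in a few places — passing to the smooth locus $W$, working with a classical closure $M^*$ of $(M,\mfQ)$ rather than the localization $M_\mfQ$, and treating an arbitrary $F'\in\CC_S^\tau(F)$ rather than the particular $F_\mfP$ — but these are presentational variants, not a different argument.
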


\begin{proof}
First assume that $F$ is $\PStauCC$.
If $M/F$ is regular,
then $R$ is the coordinate ring of an affine $F$-variety $V$.
Let $\mfp\in S$ and $\mfP\in\mcS_\mfp^\tau(F)$.
By Proposition~\ref{dense},
since $F$ is $\PStauCC$, $\mfP$ is quasi-local.
If $M/F$ is totally $S$-adic,
there exists $\mfQ\in\mcS_\mfp^\tau(M)$ with $\mfQ|_F=\mfP$ and $\tp(\mfQ)=\tp(\mfP)$.
Then $M_\mfQ\supseteq F_\mfP(V)$,
so $V$ has a smooth $F_\mfP$-rational point by
Lemma~\ref{varieties}.
So since $F$ is $\PStauCC$, $V$ has an $F$-rational point,
and therefore there exists an $F$-homomorphism $R\rightarrow F$.

Conversely, let $V$ be a smooth $F$-variety
that has an $F^\prime$-rational point for every $F^\prime\in\CC_S^\tau(F)$.
By Lemma \ref{varieties} we can assume without loss of generality that $V$ is affine.
Then the coordinate ring $R=F[V]$ is a domain which is finitely generated over $F$ and whose quotient field $M=F(V)$ is regular over $F$.
Let $\mfp\in S$, $\mfP\in\mcS_\mfp^\tau(F)$, and $F^\prime\in\CC(F,\mfP)$.
There exists $\mfQ\in\mcS_\mfp^{\tau}(F^\prime(V))$
with $\mfQ|_F=\mfP$ and $\tp(\mfQ)=\tp(\mfP)$
by Lemma~\ref{varieties},
so $\mfQ|_M\in\mcS_\mfp^\tau(M)$, $(\mfQ|_M)|_F=\mfP$, and $\tp(\mfQ|_M)=\tp(\mfP)$.
Hence, $M/F$ is totally $S^\tau$-adic,
so by assumption there exists an $F$-homomorphism $R\rightarrow F$,
i.e.~$V$ has an $F$-rational point, as claimed.
\end{proof}

\begin{Lemma}\label{totallySadic}
Let $\mfp\in S$ and $\tau'\leq\tau$.
If $M/F$ is an extension and $\mcS_\mfp^{\tau'}(F)$ satisfies {\rm SAP},
then the following statements are equivalent:
\begin{enumerate}
 \item ${\rm res}_\mfp^{\tau'}\colon\mcS_\mfp^{\tau'}(M)\rightarrow\mcS_\mfp^{\tau'}(F)$ is surjective.
 \item $R_\mfp^{\tau'}(M)\cap F=R_\mfp^{\tau'}(F)$.
 \item $R_\mfp^{\tau'}(M)\cap F\subseteq R_\mfp^{\tau'}(F)$.
\end{enumerate}
\end{Lemma}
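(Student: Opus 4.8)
The plan is to establish the cycle $(1)\Rightarrow(2)\Rightarrow(3)\Rightarrow(1)$, the {\rm SAP} hypothesis entering only in the last implication.

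For $(1)\Rightarrow(2)$ I would argue both inclusions. The inclusion $R_\mfp^{\tau'}(F)\subseteq R_\mfp^{\tau'}(M)\cap F$ holds for \emph{any} extension $M/F$: if $a\in R_\mfp^{\tau'}(F)$ and $\mfQ\in\mcS_\mfp^{\tau'}(M)$, then $\mfQ|_F\in\mcS_\mfp^{\tau'}(F)$, so $a\in\mcO_{\mfQ|_F}=\mcO_\mfQ\cap F$ and hence $a\in\mcO_\mfQ$; thus $a\in R_\mfp^{\tau'}(M)\cap F$. For the reverse inclusion, let $a\in R_\mfp^{\tau'}(M)\cap F$ and $\mfP\in\mcS_\mfp^{\tau'}(F)$; by (1) there is $\mfQ\in\mcS_\mfp^{\tau'}(M)$ with $\mfQ|_F=\mfP$, whence $a\in\mcO_\mfQ\cap F=\mcO_\mfP$, so $a\in R_\mfp^{\tau'}(F)$. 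The implication $(2)\Rightarrow(3)$ is trivial.

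The real content is $(3)\Rightarrow(1)$, which I would prove by contraposition. Suppose the restriction map ${\rm res}:={\rm res}_\mfp^{\tau'}\colon\mcS_\mfp^{\tau'}(M)\to\mcS_\mfp^{\tau'}(F)$ is not surjective. It is continuous for the Zariski topologies, since for each $a\in F$ we have ${\rm res}^{-1}(H(a)\cap\mcS_\mfp^{\tau'}(F))=\{\mfQ\in\mcS_\mfp^{\tau'}(M):a\in\mcO_\mfQ\}$, which is Zariski-open. Moreover $\mcS_\mfp^{\tau'}(M)$ is compact and $\mcS_\mfp^{\tau'}(F)$ is Hausdorff, both being profinite (as in Lemma~\ref{boolean}, whose proof applies with $\tau'$ in place of $\tau$, using Lemma~\ref{patchclosed} and Lemma~\ref{Hp}). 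Hence the image $I:={\rm res}(\mcS_\mfp^{\tau'}(M))$ is closed, and by hypothesis we may pick $\mfP_0\in\mcS_\mfp^{\tau'}(F)\setminus I$. Now the family $\mcB=\{H(a)\cap\mcS_\mfp^{\tau'}(F):a\in F\}$ is closed under finite intersections (by {\rm SAP}) and under complements (by Lemma~\ref{boolean}); being a subbasis for the topology closed under finite intersections it is a basis, and since $\mcS_\mfp^{\tau'}(F)$ is profinite, $\mcB$ is exactly the Boolean algebra of all clopen subsets. As $\mcS_\mfp^{\tau'}(F)\setminus I$ is an open neighbourhood of $\mfP_0$, there is a clopen set $U\in\mcB$ with $\mfP_0\in U\subseteq\mcS_\mfp^{\tau'}(F)\setminus I$; its complement $\mcS_\mfp^{\tau'}(F)\setminus U$ then lies in $\mcB$, contains $I$, and misses $\mfP_0$, say $\mcS_\mfp^{\tau'}(F)\setminus U=H(b)\cap\mcS_\mfp^{\tau'}(F)$ for some $b\in F$.

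It remains to check that this $b$ contradicts (3). For every $\mfQ\in\mcS_\mfp^{\tau'}(M)$ we have $\mfQ|_F\in I\subseteq H(b)$, so $b\in\mcO_{\mfQ|_F}=\mcO_\mfQ\cap F$ and hence $b\in\mcO_\mfQ$; thus $b\in R_\mfp^{\tau'}(M)\cap F$. On the other hand $\mfP_0\in\mcS_\mfp^{\tau'}(F)$ and $\mfP_0\notin H(b)$, i.e.~$b\notin\mcO_{\mfP_0}$, so $b\notin R_\mfp^{\tau'}(F)$. This contradicts $R_\mfp^{\tau'}(M)\cap F\subseteq R_\mfp^{\tau'}(F)$ and completes the proof. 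The only non-formal ingredient is {\rm SAP}: it is precisely what turns the open neighbourhood of $\mfP_0$ disjoint from $I$ into a genuine element $b\in F$ witnessing the failure of (3); everything else is an unwinding of the definitions together with compactness of the prime spectra.
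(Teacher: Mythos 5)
Your proof is correct and follows essentially the same route as the paper's: $(1)\Rightarrow(2)$ by direct comparison of the intersections via surjectivity, $(2)\Rightarrow(3)$ trivially, and $(3)\Rightarrow(1)$ by contraposition using profiniteness of the spectra, continuity of the restriction map, and SAP to realize a proper clopen superset of the image as $H(b)\cap\mcS_\mfp^{\tau'}(F)$. The only cosmetic difference is that where the paper cites \cite[6.6]{Prestel1} to conclude that every clopen set of $\mcS_\mfp^{\tau'}(F)$ lies in the subbasis $\mcB$, you spell out the argument that a subbasis closed under complements and finite intersections in a profinite space is the full Boolean algebra of clopen sets.
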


\begin{proof}
$(1)\Rightarrow(2)$:
Assume that ${\rm res}_\mfp^{\tau'}$ is surjective.
Then
$$
 R_\mfp^{\tau'}(F)=\bigcap_{\mfP\in\mcS_\mfp^{\tau'}(F)}\mcO_\mfP=\bigcap_{\mfQ\in\mcS_\mfp^{\tau'}(M)}(\mcO_\mfQ\cap F)=R_\mfp^{\tau'}(M)\cap F.
$$
$(2)\Rightarrow(3)$: This is trivial.

$(3)\Rightarrow(1)$:
Assume that  
${\rm res}_\mfp^{\tau'}$
is not surjective.
By Lemma~\ref{boolean}, $\mcS_\mfp^{\tau'}(M)$ and $\mcS_\mfp^{\tau'}(F)$ are profinite spaces.
Hence,
since ${\rm res}_\mfp^{\tau'}$ is continuous,
${\rm res}_\mfp^{\tau'}(\mcS_\mfp(M))$ is closed in $\mcS_\mfp^{\tau'}(F)$.
Therefore, 
$\mcS_\mfp^{\tau'}(F)\setminus{\rm res}_\mfp^{\tau'}(\mcS_\mfp^{\tau'}(M))$ is nonempty and open.
It follows that the complement of a basic open-closed set contained in
$\mcS_\mfp^{\tau'}(F)\setminus{\rm res}_\mfp^{\tau'}(\mcS_\mfp^{\tau'}(M))$
is an open-closed proper subset $X$ of $\mcS_\mfp^{\tau'}(F)$ containing ${\rm res}_\mfp^{\tau'}(\mcS_\mfp^{\tau'}(M))$.
By Lemma~\ref{boolean},
the subbasis $H(a)\cap\mcS_\mfp^{\tau'}(F)$, $a\in F$, 
of $\mcS_\mfp^{\tau'}(F)$
is closed under complements.
Hence,
since $\mcS_\mfp^{\tau'}(F)$ satisfies {\rm SAP}, $X=H(x)\cap\mcS_\mfp^{\tau'}(F)$ for some $x\in F$ by 
\cite[6.6]{Prestel1}.
Therefore,
$$
 {\rm res}_\mfp^{\tau'}(\mcS_\mfp^{\tau'}(M))\subseteq H(x)\cap\mcS_\mfp^{\tau'}(F)\subsetneqq \mcS_\mfp^{\tau'}(F).
$$
Then $x\in R_\mfp^{\tau'}(M)\cap F$ but $x\notin R_\mfp^{\tau'}(F)$, so 
$R_\mfp^{\tau'}(M)\cap F \not\subseteq R_\mfp^{\tau'}(F)$.
\end{proof}

\begin{Corollary}\label{elementary}
Assume that $F$ is $\PStauCC$.
If $F\prec M$ is an elementary extension,
then $M/F$ is regular and totally $S^\tau$-adic. 
\end{Corollary}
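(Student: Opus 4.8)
The plan is to treat the two conclusions separately and to observe that the corollary is essentially an assembly of Theorems~\ref{thm1},~\ref{thm2} and~\ref{thm3} through Lemma~\ref{totallySadic}. Regularity of $M/F$ is the easy half: since $\ch(F)=0$ the extension $M/F$ is separable, so it suffices to show that $F$ is relatively algebraically closed in $M$, and this holds for any elementary extension. Indeed, if $\alpha\in M$ has minimal polynomial $f\in F[X]$ over $F$, then $M\models(\exists x)(f(x)=0)$, hence $F\models(\exists x)(f(x)=0)$ because $F\prec M$; as $f$ is irreducible over $F$ this forces $\deg f=1$, i.e.~$\alpha\in F$.

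For the totally $S^\tau$-adic part, first note that, since $K\subseteq F$, the extension $F\prec M$ is elementary in $\mcLr(K)$ as well; by Theorem~\ref{thm1} (Proposition~\ref{PsefCCAx}) the class of $\PStauCC$ fields is axiomatized by the $\mcLr(K)$-theory $T_{\PStauCC}$, so $F\models T_{\PStauCC}$ yields $M\models T_{\PStauCC}$. Thus $M$ is $\PStauCC$, and in particular both $F$ and $M$ are $\PStauCL$. Now fix $\mfp\in S$ and $\tau'\leq\tau$; we must show that ${\rm res}_\mfp^{\tau'}\colon\mcS_\mfp^{\tau'}(M)\to\mcS_\mfp^{\tau'}(F)$ is surjective. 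The space $\mcS_\mfp^{\tau'}(F)$ satisfies SAP: for $p_\mfp\neq\infty$ this is Lemma~\ref{charpSAP}, whose proof is unconditional on $F$ and goes through verbatim with $\tau'$ in place of $\tau$; for $p_\mfp=\infty$ one has $\mcS_\mfp^{\tau'}(F)=\mcS_\mfp^\tau(F)$, which satisfies SAP by Proposition~\ref{PSCCSAP} since $F$ is $\PStauCC$. By Lemma~\ref{totallySadic} it therefore suffices to prove $R_\mfp^{\tau'}(M)\cap F\subseteq R_\mfp^{\tau'}(F)$.

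To finish, apply Proposition~\ref{MainTheoremT}(2) to the $\PStauCL$ fields $F$ and $M$: both holomorphy domains $R_\mfp^{\tau'}(F)$ and $R_\mfp^{\tau'}(M)$ are defined, in $F$ and in $M$ respectively, by one and the same existential $\mcLr(K)$-formula $\theta_{R,\mfp}^{\tau'}(z)$. Hence, if $a\in R_\mfp^{\tau'}(M)\cap F$, then $M\models\theta_{R,\mfp}^{\tau'}(a)$, and since $a\in F$ and $F\prec M$ also $F\models\theta_{R,\mfp}^{\tau'}(a)$, i.e.~$a\in R_\mfp^{\tau'}(F)$; in fact one gets $R_\mfp^{\tau'}(M)\cap F=R_\mfp^{\tau'}(F)$. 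This completes the argument.

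I do not expect a genuine obstacle here — all the substance lies in the earlier sections. The only points that need a word of care are that ``totally $S^\tau$-adic'' demands surjectivity of ${\rm res}_\mfp^{\tau'}$ for \emph{all} $\tau'\leq\tau$, not merely for $\tau$, so one must note that the SAP conclusion of Proposition~\ref{PSCCSAP} extends to every sub-type $\tau'$ (which it does, as indicated above), and that the formula $\theta_{R,\mfp}^{\tau'}$ of Proposition~\ref{MainTheoremT} is literally the same over $F$ and over $M$, so that membership in the holomorphy domain transfers along $F\prec M$.
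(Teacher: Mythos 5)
Your proof is correct and follows essentially the same route as the paper: deduce that $M$ is $\PStauCC$ from Proposition~\ref{PsefCCAx}, use Proposition~\ref{MainTheoremT} together with $F\prec M$ to get $R_\mfp^{\tau'}(M)\cap F=R_\mfp^{\tau'}(F)$, invoke the SAP property from Proposition~\ref{PSCCSAP}, and conclude via Lemma~\ref{totallySadic}. In fact you are slightly more careful than the paper at one point: the paper's proof passes directly from ``$F$ is $S^\tau$-SAP'' to ``$\mcS_\mfp^{\tau'}(F)$ satisfies SAP for all $\tau'\leq\tau$'' without comment, whereas you explicitly justify it by observing that Lemma~\ref{charpSAP} is unconditional in the non-archimedean case and that $\mcS_\mfp^{\tau'}(F)=\mcS_\mfp^{\tau}(F)$ in the archimedean case.
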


\begin{proof}
Every elementary extension is regular, see for example \cite[7.3.3]{FJ3}.
By Proposition~\ref{PsefCCAx}, since $F$ is $\PStauCC$ and $M\equiv F$, $M$ is $\PStauCC$.
Thus, by Proposition~\ref{MainTheoremT}, since $F\prec M$, $R_\mfp^{\tau'}(M)\cap F=R_\mfp^{\tau'}(F)$ for each $\mfpinS$ and $\tau'\leq\tau$.
By Proposition~\ref{PSCCSAP},
since $F$ is $\PStauCC$,
$F$ is $S^\tau$-{\rm SAP},
so $\mcS_\mfp^{\tau'}(F)$ satisfies SAP for each $\mfp\in S$ and $\tau'\leq\tau$.
Therefore, by Lemma~\ref{totallySadic}, $M/F$ is totally $S^\tau$-adic.
\end{proof}

This finishes the proof of Corollary \ref{cor1} of the introduction.

\small


\begin{thebibliography}{KRW71}

\bibitem[Bou88]{BourbakiA2}
Nicolas Bourbaki.
\newblock {\em Algebra II}.
\newblock Springer, 1988.

\bibitem[Dar00]{DarniereHasse}
Luck Darni\`ere.
\newblock Nonsingular {H}asse principle for rings.
\newblock {\em Journal f\"ur die reine und angewandte Mathematik}, 529:75--100,
  2000.

\bibitem[Dar01]{DarnierePAC}
Luck Darni\`ere.
\newblock Pseudo-algebraically closed rings.
\newblock {\em Manuscripta Mathematica}, 105(1):13--46, 2001.

\bibitem[EP05]{EnglerPrestel}
Antonio~J. Engler and Alexander Prestel.
\newblock {\em Valued Fields}.
\newblock Springer, 2005.

\bibitem[Ers82]{Ershovtotallyreal}
Yuri Ershov.
\newblock Totally real field extensions.
\newblock {\em Soviet Mathematics Doklady}, 25(2):477--480, 1982.

\bibitem[Ers83]{Ershovrclosed2}
Yuri Ershov.
\newblock Regularly $r$-closed fields.
\newblock {\em Algebra and Logic}, 22(4):277--291, 1983.

\bibitem[Ers92]{ErshovPi}
Yuri Ershov.
\newblock Relative regular closeness and $\pi$-valuations.
\newblock {\em Algebra and Logic}, 31(6):342--360, 1992.

\bibitem[Feh12]{AFKtotSsigma}
Arno Fehm.
\newblock Decidability of large fields of algebraic numbers.
\newblock 2012.
\newblock In preparation.

\bibitem[FJ08]{FJ3}
Michael~D. Fried and Moshe Jarden.
\newblock {\em Field Arithmetic}.
\newblock Springer, third edition, 2008.

\bibitem[GJ02]{GeyerJarden}
Wulf-Dieter Geyer and Moshe Jarden.
\newblock {PSC} {G}alois extensions of {H}ilbertian fields.
\newblock {\em Mathematische Nachrichten}, 236(1):119--160, 2002.

\bibitem[GPR95]{GPR}
Barry Green, Florian Pop, and Peter Roquette.
\newblock On {R}umely's local-global principle.
\newblock {\em Jahresbericht der Deutschen Mathematiker-Vereinigung},
  97(2):43--74, 1995.

\bibitem[Gro87]{Grob}
Camilla Grob.
\newblock {\em Die Entscheidbarkeit der Theorie der maximalen pseudo p-adisch
  abgeschlossenen K\"orper}.
\newblock Dissertation, Konstanz, 1987.

\bibitem[Har90]{HaranInvolutions}
Dan Haran.
\newblock Closed subgroups of {G(Q)} with involutions.
\newblock {\em Journal of Algebra}, 129:393--411, 1990.

\bibitem[HJ88]{HaranJarden88}
Dan Haran and Moshe Jarden.
\newblock The absolute {G}alois group of a pseudo $p$-adically closed field.
\newblock {\em Journal f\"ur die reine und angewandte Mathematik},
  383:147--206, 1988.

\bibitem[HJP05]{HJPb}
Dan Haran, Moshe Jarden, and Florian Pop.
\newblock P-adically projective groups as absolute {G}alois groups.
\newblock {\em International Mathematics Research Notices}, 32:1957--1995,
  2005.

\bibitem[HJP09a]{HJPe}
Dan Haran, Moshe Jarden, and Florian Pop.
\newblock The absolute {G}alois group of subfields of the field of totally
  {$S$}-adic numbers.
\newblock Manuscript, 2009.

\bibitem[HJP09b]{HJPd}
Dan Haran, Moshe Jarden, and Florian Pop.
\newblock The absolute {G}alois group of the field of totally {$S$}-adic
  numbers.
\newblock {\em Nagoya Mathematical Journal}, 194:91--147, 2009.

\bibitem[HP84]{HeinemannPrestel}
Bernhard Heinemann and Alexander Prestel.
\newblock Fields regularly closed with respect to finitely many valuations and
  orderings.
\newblock In C.~R. Riehm and I.~Hambleton, editors, {\em Quadratic and
  Hermitian Forms. 1983 Conference on Quadratic Forms and Hermitian K-Theory
  held at McMaster University, Hamilton, Ontario}, volume~4 of {\em Canadian
  Mathematical Society Conference Proceedings}, pages 297--336. American
  Mathematical Society, 1984.

\bibitem[Jar91]{Jarden1991}
Moshe Jarden.
\newblock Algebraic realization of $p$-adically projective groups.
\newblock {\em Compositio Mathematica}, 79:21--62, 1991.

\bibitem[JR98]{JardenRazon}
Moshe Jarden and Aharon Razon.
\newblock Rumely's local global principle for algebraic {PSC} fields over
  rings.
\newblock {\em Transactions of the American Mathematical Society},
  350(1):55--85, 1998.

\bibitem[JR01]{JardenRazonGeyer}
Moshe Jarden and Aharon Razon.
\newblock Skolem density problems over large {G}alois extensions of global
  fields ({W}ith an appendix by {W}ulf-{D}ieter {G}eyer).
\newblock In Denef, Lipshitz, Pheidas, and van Geel, editors, {\em Proceedings
  of the workshop: Hilbert's 10'th problem, Relations with Arithmetic and
  Algebraic Geometry}, Contemporary Mathematics 270, pages 213--235. American
  Mathematical Society, 2001.

\bibitem[KRW71]{KnebuschRosenbergWare}
Manfred Knebusch, Alex Rosenberg, and Roger Ware.
\newblock Structure of {W}itt rings, quotients of abelian group rings, and
  orderings of fields.
\newblock {\em Bulletin of the American Mathematical Society}, 77(2):205--210,
  1971.

\bibitem[Kuh04]{KuhlmannPlaces}
Franz-Viktor Kuhlmann.
\newblock Places of algebraic function fields in arbitrary characteristic.
\newblock {\em Advances in Mathematics}, 188, 2004.

\bibitem[K{\"u}n89a]{Kuenzi2}
Urs-Martin K{\"u}nzi.
\newblock Corps multiplement pseudo-$p$-adiquement clos.
\newblock {\em Comptes Rendus de l'Acad\'emie des Sciences Paris, S\'erie I},
  309:205--208, 1989.

\bibitem[K{\"u}n89b]{Kuenzi}
Urs-Martin K{\"u}nzi.
\newblock Decidable theories of pseudo-p-adic closed fields.
\newblock {\em Algebra and Logic}, 28(6):421--438, 1989.

\bibitem[K{\"u}n92]{Kuenzi3}
Urs-Martin K{\"u}nzi.
\newblock Multiply pseudo-$p$-adically closed fields.
\newblock In L.~A. Bokut, Yu~L. Ershov, and A.~I. Kostrikin, editors, {\em
  Proceedings of the International Conference on Algebra: Dedicated to the
  Memory of A. I. Mal'cev}, Contemporary Mathematics 131, pages 463--468.
  American Mathematical Society, 1992.

\bibitem[Mar02]{Marker}
David Marker.
\newblock {\em Model Theory: An Introduction}.
\newblock Springer, 2002.

\bibitem[MB89]{MoretBailly}
Laurent Moret-Bailly.
\newblock Groupes de {P}icard et probl\`emes de {S}kolem. {II}.
\newblock {\em Annales scientifiques de l'\'Ecole Normale Sup\'erieure S\'erie
  4}, 22(2):181--194, 1989.

\bibitem[Pop96]{Pop}
Florian Pop.
\newblock Embedding problems over large fields.
\newblock {\em Annals of Mathematics}, 144(1):1--34, 1996.

\bibitem[Pop03]{Popclassical}
Florian Pop.
\newblock Classically projective groups and pseudo classically closed fields.
\newblock In F.-V. Kuhlmann, S.~Kuhlmann, and M.~Marshall, editors, {\em
  Valuation Theory and its Applications Vol. II. Fields Institute
  Communications}, pages 251--283. American Mathematical Society, 2003.

\bibitem[PR84]{PR}
Alexander Prestel and Peter Roquette.
\newblock {\em Formally p-adic Fields}.
\newblock Springer, 1984.

\bibitem[Pre81]{PrestelPRC}
Alexander Prestel.
\newblock Pseudo real closed fields.
\newblock In R.~B. Jensen and A.~Prestel, editors, {\em Set Theory and Model
  Theory, Proceedings, Bonn 1979}, pages 127--156. Springer, 1981.

\bibitem[Pre84]{Prestel1}
Alexander Prestel.
\newblock {\em Lectures on Formally Real Fields}.
\newblock Springer, 1984.

\bibitem[Pre85]{Prestel1983}
Alexander Prestel.
\newblock On the axiomatization of {PRC}-fields.
\newblock In C.~A.~Di Prisco, editor, {\em Methods in Mathematical Logic.
  Proceedings of the 6th Latin American Symposium on Mathematical Logic held in
  Caracas, Venezuela, August 1--6, 1983}, pages 351--359. Springer, 1985.

\bibitem[Raz02]{RazonDensity}
Aharon Razon.
\newblock On the density property of {P$S$C} fields.
\newblock {\em Mathematische Nachrichten}, 235(1):163--177, 2002.

\end{thebibliography}
\end{document}